\IfFileExists{pdfmanagement.sty}{%
  \RequirePackage{pdfmanagement}
  \SetKeys[document/metadata]{uncompress}
}{%
  \ifdefined\pdfcompresslevel
    \pdfcompresslevel=0\relax
  \fi
  \ifdefined\pdfobjcompresslevel
    \pdfobjcompresslevel=0\relax
  \fi
}
\documentclass[11pt]{amsart}

\usepackage{etoolbox}

\makeatletter
\patchcmd{\@setauthors}
  {\MakeUppercase{\authors}}
  {\authors}
  {}
  {\PackageWarning{authorcase}{Author patch failed}}
\makeatother

 \usepackage{amsfonts,graphics,amsmath,amsthm,amsfonts,amscd,amssymb,amsmath,latexsym,multicol,
 mathrsfs}
\usepackage{epsfig,url}
\usepackage{flafter}
\usepackage{fancyhdr}
\usepackage{hyperref}
\usepackage[dvipsnames]{xcolor}
\usepackage{hyperref}
\hypersetup{
	colorlinks=true,
	linkcolor=purple,
	citecolor=ForestGreen,
	filecolor=cyan,
	urlcolor=purple
}

\usepackage{graphicx}
\usepackage{xcolor}
\usepackage{float}
\usepackage{bm}
\usepackage{enumitem}
\usepackage{multirow}
\usepackage{scalerel,stackengine}
\usepackage{stmaryrd}
\usepackage{datetime}
\usepackage[title]{appendix}
\usepackage{url}
\usepackage{tabulary}
\usepackage{booktabs}
\usepackage{tikz}
\usetikzlibrary{positioning}
\usepackage{verbatim}

\usepackage[utf8]{inputenc}
\usepackage[english]{babel}
\usepackage{csquotes}
\usepackage{comment}
\usepackage[
backend=biber,
style=alphabetic,
sorting=anyt,
doi=false,isbn=false,url=false,
maxalphanames=99,maxbibnames=99,backref=true,
]{biblatex}
\DefineBibliographyStrings{english}{
  backrefpage  = {\hspace{-0.35em}},
  backrefpages = {\hspace{-0.35em}},
}

\DeclareFieldFormat{bracketswithperiod}{\mkbibbrackets{#1}}

\renewbibmacro*{pageref}{%
  \iflistundef{pageref}
    {}
    {\printtext[bracketswithperiod]{%
       \ifnumgreater{\value{pageref}}{1}
         {\bibstring{backrefpages}\ppspace}
         {\bibstring{backrefpage}\ppspace}%
       \printlist[pageref][-\value{listtotal}]{pageref}}%
     }}

 \theoremstyle{plain}
 \newtheorem{theorem}[subsection]{Theorem}
 \newtheorem{lemma}[subsection]{Lemma}
 \newtheorem{corollary}[subsection]{Corollary}
 \newtheorem{proposition}[subsection]{Proposition}

 \theoremstyle{definition}
 \newtheorem{definition}[subsection]{Definition}
 \newtheorem{definitionnotation}[subsection]{Notation}

 \newtheorem{remark}[subsection]{Remark}

 \theoremstyle{remark}

 \numberwithin{equation}{subsection}

 \theoremstyle{plain} 
\newcommand{\thistheoremname}{}
\newtheorem{genericthm}[subsection]{\thistheoremname}

\newcommand{\abs}[1]{\left\vert#1\right\vert}
\newcommand{\set}[1]{\left\{#1\right\}}

\newcommand{\norm}[1]{\left\Vert#1\right\Vert}

\newcommand{\spec}{\operatorname{Spec}}

\newcommand{\codim}{\operatorname{codim}}
\newcommand{\Div}{\operatorname{Div}}
\newcommand{\hg}{\operatorname{H}}
\newcommand{\chara}{\operatorname{char}}

\newcommand{\hilb}{\operatorname{Hilb}}

\newcommand{\Supp}{\operatorname{Supp}}

\newcommand{\sdeg}{\operatorname{sdeg}}

\newcommand{\centre}{\operatorname{centre}}

\newcommand{\Ex}{\operatorname{Ex}}
\newcommand{\red}{\operatorname{red}}

\newcommand{\coeff}{\operatorname{coeff}}
\newcommand{\nor}{\operatorname{nor}}
\newcommand{\vol}{\operatorname{vol}}

\newcommand{\tr}{\operatorname{tr}}

\newcommand{\bb}{\operatorname{b}}

\newcommand{\pic}{\operatorname{Pic}}

\newcommand{\ZE}{\operatorname{ZE}}
\newcommand{\Z}{\operatorname{Z}}
\newcommand{\N}{\operatorname{N}}
\newcommand{\Gal}{\operatorname{Gal}}

\newcommand{\NE}{\operatorname{\overline{NE}}}
\newcommand{\Nef}{\operatorname{Nef}}
\newcommand{\Amp}{\operatorname{Amp}}
\newcommand{\ADiv}{\operatorname{ADiv}}

\newcommand{\an}{\operatorname{an}}
\newcommand{\mld}{\operatorname{mld}}
\newcommand{\univ}{\operatorname{univ}}
\newcommand{\interior}{\operatorname{int}}
\newcommand{\eff}{\operatorname{eff}}
\newcommand{\ord}{\operatorname{ord}}

\newcommand{\mni}{\medskip\noindent}

\newcommand{\mbb}{\mathbb}
\newcommand{\QQ}{\mbb{Q}}
\newcommand{\NN}{\mbb{N}}
\newcommand{\ZZ}{\mbb{Z}}
\newcommand{\CC}{\mbb{C}}
\newcommand{\RR}{\mbb{R}}
\newcommand{\AAA}{\mbb{A}}
\newcommand{\PP}{\mbb{P}}

\newcommand{\mc}{\mathcal}
\newcommand{\mcA}{\mc{A}}

\newcommand{\mcC}{\mc{C}}
\newcommand{\mcD}{\mc{D}}
\newcommand{\mcE}{\mc{E}}
\newcommand{\mcF}{\mc{F}}
\newcommand{\mcG}{\mc{G}}
\newcommand{\mcH}{\mc{H}}
\newcommand{\mcI}{\mc{I}}

\newcommand{\mcL}{\mc{L}}
\newcommand{\mcM}{\mc{M}}

\newcommand{\mcO}{\mc{O}}
\newcommand{\mcP}{\mc{P}}
\newcommand{\mcQ}{\mc{Q}}
\newcommand{\mcR}{\mc{R}}

\newcommand{\mcU}{\mc{U}}
\newcommand{\mcV}{\mc{V}}
\newcommand{\mcW}{\mc{W}}
\newcommand{\mcX}{\mc{X}}
\newcommand{\mcY}{\mc{Y}}

\newcommand{\mf}{\mathfrak}

\newcommand{\mb}{\mathbf}
\newcommand{\OO}{\mc{O}}

\newcommand{\wh}{\widehat}
\newcommand{\wt}{\widetilde}
\newcommand{\ol}{\overline}

\newcommand{\bir}{\dashrightarrow}

\usepackage{graphicx}
\usepackage{xcolor}
\usepackage{float}
\usepackage{amssymb}
\usepackage{amsmath}
\usepackage{mathrsfs}
\usepackage{bm}
\usepackage[all,cmtip]{xy}
\usepackage{enumitem}
\usepackage[utf8]{inputenc}
\usepackage[english]{babel}
\usepackage{multirow}
\usepackage{mathtools}
\usepackage{scalerel,stackengine}
\usepackage{stmaryrd}
\usepackage{datetime}
\usepackage[title]{appendix}
\usepackage{url}
\usepackage{tabulary}
\usepackage{booktabs}
\usepackage{tikz}
\usetikzlibrary{positioning}
\usepackage{verbatim}
\usepackage{braket}

\DeclarePairedDelimiter\ceil{\lceil}{\rceil}
\DeclarePairedDelimiter\rddown{\lfloor}{\rfloor}
\DeclareFieldFormat{postnote}{#1}
\DeclareFieldFormat{multipostnote}{#1}

\makeatletter
\def\moverlay{\mathpalette\mov@rlay}
\def\mov@rlay#1#2{\leavevmode\vtop{%
   \baselineskip\z@skip \lineskiplimit-\maxdimen
   \ialign{\hfil$\m@th#1##$\hfil\cr#2\crcr}}}
\newcommand{\charfusion}[3][\mathord]{
    #1{\ifx#1\mathop\vphantom{#2}\fi
        \mathpalette\mov@rlay{#2\cr#3}
      }
    \ifx#1\mathop\expandafter\displaylimits\fi}
\makeatother

\newcommand{\bigcupdot}{\charfusion[\mathop]{\bigcup}{\cdot}}

\makeatletter
\newcommand*{\da@rightarrow}{\mathchar"0\hexnumber@\symAMSa 4B }
\newcommand*{\da@leftarrow}{\mathchar"0\hexnumber@\symAMSa 4C }
\newcommand*{\xdashrightarrow}[2][]{%
  \mathrel{%
    \mathpalette{\da@xarrow{#1}{#2}{}\da@rightarrow{\,}{}}{}%
  }%
}
\newcommand{\xdashleftarrow}[2][]{%
  \mathrel{%
    \mathpalette{\da@xarrow{#1}{#2}\da@leftarrow{}{}{\,}}{}%
  }%
}
\newcommand*{\da@xarrow}[7]{%
  \sbox0{$\ifx#7\scriptstyle\scriptscriptstyle\else\scriptstyle\fi#5#1#6\m@th$}%
  \sbox2{$\ifx#7\scriptstyle\scriptscriptstyle\else\scriptstyle\fi#5#2#6\m@th$}%
  \sbox4{$#7\dabar@\m@th$}%
  \dimen@=\wd0 %
  \ifdim\wd2 >\dimen@
    \dimen@=\wd2 %
  \fi
  \count@=2 %
  \def\da@bars{\dabar@\dabar@}%
  \@whiledim\count@\wd4<\dimen@\do{%
    \advance\count@\@ne
    \expandafter\def\expandafter\da@bars\expandafter{%
      \da@bars
      \dabar@ 
    }%
  }%
  \mathrel{#3}%
  \mathrel{%
    \mathop{\da@bars}\limits
    \ifx\\#1\\%
    \else
      _{\copy0}%
    \fi
    \ifx\\#2\\%
    \else
      ^{\copy2}%
    \fi
  }%
  \mathrel{#4}%
}
\makeatother

\title{\large S\MakeLowercase{tein degree over fields of characteristic zero}}
\thanks{2020 MSC:
14B05, 
14D06, 
14J27, 
14J45, 
14M25. 
}
\author[Caucher Birkar and Santai Qu]{%
  \large Caucher Birkar and Santai Qu
}
\date{\today}

\begin{document}

\begin{abstract}
    We show that the Stein degree of horizontal divisors on generalised log Calabi-Yau fibrations 
    or certain vertical divisors of Fano type fibrations over arbitrary fields of characteristic zero
    is bounded by constants that are independent on the ground field.
\end{abstract}

\maketitle

\tableofcontents

\addtocontents{toc}{\protect\setcounter{tocdepth}{1}}


\section{\bf Introduction}


\mni
{\textbf{\sffamily{Boundedness of Stein degree of divisors.}}}
Let $S\to Z$ be a projective surjective morphism of integral schemes,
and let $S\to V\to Z$ be the Stein factorisation.  The \emph{Stein degree} of $S\to Z$
is defined to be 
\[ \sdeg(S/Z) \coloneqq \deg (V/Z). \]
The main result of this paper is the boundedness of Stein degree of divisors 
on log Calabi-Yau fibrations over arbitrary fields of characteristic zero.
This is established in \cite{birkar2025steindegreelogcalabiyau} for 
log Calabi-Yau fibrations over algebraically closed field of characteristic zero.
In this paper, we show the stronger boundedness over non-closed fields of characteristic zero.
The crucial point is that the upper bound on Stein degree is independent on the ground field.

Let $k$ be a field of characteristic zero.
A \emph{generalised log Calabi-Yau fibration} $(X, B + \mb{M}) \to Z$ over $k$ consists of a generalised log canonical 
generalised pair $(X, B + \mb{M})$ over $k$ and a contraction between $k$-varieties $X\to Z$ 
such that $K_X + B + \mb{M}_X \sim_{\RR}0/Z$.

\begin{theorem}[$\mc{H}_{\univ}(d,t)$]\label{dim-d-t}
    Let $d\in \NN$ and let $t\in (0,1]$.
    There exists an $N_h(d,t) \in \NN$ depending only on $d,t$ such that
    the following holds.

    Let $k$ be a field of characteristic zero.
	Let $(X, B + \mb{M})\to Z$ be a generalised log Calabi-Yau fibration over $k$
	of relative dimension $d$, that is, a general fibre of $X\to Z$ has dimension $d$.
    Let $S$ be a horizontal$/Z$ irreducible component of $B$ 
	whose coefficient in $B$ is $\ge t$.  Then 
    \[ \sdeg (S^{\nor}/Z) \le N_h(d,t), \]
    where $S^{\nor}$ is the normalisation of $S$.
\end{theorem}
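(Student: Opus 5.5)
The plan is to reduce to the algebraically closed case proved in \cite{birkar2025steindegreelogcalabiyau}, by base change to an algebraic closure $\bar k$ of $k$. The key point is that the constant there depends only on $d,t$. The difficulty is that after base change, $X_{\bar k}$ and $S_{\bar k}$ may become reducible. Moreover, the Stein degree of $S^{\nor}/Z$ over $k$ also counts the degree of the algebraic closure of $k(Z)$ inside $k(S)$, and this can be absorbed into the constants. So the comparison has to be done carefully.

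\emph{Step 1: reduce to the generic point.} Replace $Z$ by its generic point, i.e. work over the function field $K=k(Z)$. Then $X_K$ is a $d$-dimensional generalised log Calabi--Yau pair over the field $K$, which again has characteristic zero. The generalised lc property is preserved because $K$ is a localisation. The divisor $S_K$ is a prime divisor with coefficient $\ge t$, and
\[ \sdeg(S^{\nor}/Z)=[H^0(S_K^{\nor},\mathcal O):K]. \]
Write $L=H^0(S_K^{\nor},\mathcal O)$, a finite field extension of $K$. We must bound $[L:K]$ in terms of $d,t$ alone. Note that $X_K$ is geometrically integral over $K$, since $X\to Z$ is a contraction.

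\emph{Step 2: pass to $\bar K$.} Base change to $\bar K$. Then $X_{\bar K}$ is integral and normal, and the generalised pair is still lc because we are in characteristic zero and the base change is ind-étale-like. Also $K_X+B+\mb{M}_X\sim_\RR 0$ persists. The scheme $S_{\bar K}^{\nor}$ splits into exactly $[L:K]$ connected components, since $L\otimes_K\bar K\cong \bar K^{[L:K]}$ by separability. Each component is a component of $S_{\bar K}$ that appears in $B_{\bar K}$ with coefficient $\ge t$. Thus the problem becomes: on a projective $d$-dimensional lc Calabi--Yau generalised pair over an algebraically closed field, bound the number of disjoint-over-normalisation components of $B$ with coefficient $\ge t$ that are Galois conjugate.

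\emph{Step 3: bound the number of components.} Such a bound follows from the closed-field version of the theorem with $Z$ a point, or more directly from the lc-threshold/ACC-type bound on $B$ used there. However, a pure count of coefficients $\ge t$ components is not bounded in general (consider toric boundaries of large Picard number? no—the toric boundary is finite but unbounded in count).

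\emph{Step 4: use the Galois action.} So one must use the Galois action. The $[L:K]$ conjugate components are permuted transitively by $\Gal(\bar K/K)$, and their sum descends. I would therefore apply the result of \cite{birkar2025steindegreelogcalabiyau} to a suitable fibration built from the Galois-orbit structure. Specifically, consider a dlt-type model. Running an MMP contracting other components to make $\sum S_i$ meet, connectedness (Kollár--Shokurov) over $\bar K$ forces the conjugates to lie on one lc centre chain. Then an adjunction/induction on $d$ bounds the orbit size.

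The main obstacle is this Galois-orbit counting. I expect to run an induction on $d$ via adjunction to a conjugate $S_1$, using that the intersections $S_1\cap S_i$ give a Galois-equivariant configuration on a lower-dimensional generalised lc Calabi--Yau pair. Then apply $\mc{H}_{\univ}(d-1,t')$ over the non-closed field $L$.
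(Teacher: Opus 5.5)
\textbf{There is a genuine gap: Step~4 cannot bound the orbit size, and no field-independent boundedness input appears anywhere in your plan.}

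Your Steps~1--2 are a correct reformulation. With $K=k(Z)$, $\sdeg(S^{\nor}/Z)$ is the number $r$ of geometric irreducible components of $S_K$, i.e.\ the size of a $\Gal(\bar K/K)$-orbit of components of $B_{\bar K}$. But this is just the statement itself for $\dim Z=0$ over the field $K$.

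The closed-field theorem of \cite{birkar2025steindegreelogcalabiyau} cannot count these conjugates. Over an algebraically closed field it bounds the Stein degree of each $S_i$ over some base, never the number of the $S_i$; with $Z$ a point every Stein degree there is $1$. So everything rests on Step~4, and the mechanism proposed there does not work.
\begin{itemize}
\item Koll\'ar--Shokurov connectedness concerns the non-klt locus, so it says nothing about components of coefficient $t<1$. Even for coefficient one it constrains only the shape of the configuration, not its size: the toric boundary of a toric surface is a connected cycle of any length.
\item Adjunction to a conjugate $S_1$ is not available when $\coeff_{S_1}B<1$. In that case $(K_X+B)|_{S_1^{\nor}}$ contains the term $-(1-t)S_1|_{S_1^{\nor}}$, which has no sign.
\item Even with coefficient one, adjunction does not see $r$. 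Only the stabiliser $\Gal(\bar K/L)$ acts on $S_1$, so $\mc{H}_{\univ}(d-1,t')$ over $L$ bounds the $\Gal(\bar K/L)$-orbits of components of the different. It says nothing about how many conjugates meet $S_1$, or how many such orbits there are. If $r$ conjugate curves formed a cyclically permuted cycle, the adjunction data on $S_1$ would be two $L$-rational points for every $r$.
\item Conjugates may be pairwise disjoint, e.g.\ the geometric points of a degree-$r$ closed point on a conic when $d=1$.
\item Any MMP-based induction will meet the case where $S$ becomes vertical over an intermediate fibration. That case needs $\mc{V}_{\univ}$ in lower relative dimension, which your plan does not address.
\end{itemize}

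The missing ingredient is a boundedness input independent of the ground field. The paper proceeds as follows.
\begin{enumerate}
\item It reduces to $\dim Z=1$ and runs an MMP on $-S$ to a Mori fibre space. If the base has positive relative dimension it inducts on $d$; otherwise $X$ becomes of Fano type with $S$ ample.
\item It takes a bounded monotonic $n$-complement (Theorem~\ref{t-bnd-compl-usual-local}), so the coefficients lie in $\mb{T}_n$.
\item It passes to a dlt model, which is $\epsilon$-lc by Lemma~\ref{l-ep-to-0}, and runs a $K$-MMP to a Mori fibre space $\wt{X}\to\wt{Z}$.
\item If $\dim(\wt{Z}/Z)\ge 1$, the interwoven induction applies: $\mc{H}_{\univ}$ when $S$ is horizontal over $\wt{Z}$, and $\mc{V}_{\univ}$ together with $\mc{H}_{\univ}$ on the base when it is vertical.
\item Otherwise the generic fibre $F$ is an $\epsilon$-lc Fano variety over $K(Z)$. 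By boundedness of such Fanos (Lemma~\ref{fano-bnd-very-ample}), $(F,mn\Gamma)$ is the fibre over a $K(Z)$-point of a universal family over Hilbert schemes defined over $\QQ$, depending only on $d,t$.
\item A log resolution of that family, made fibrewise log smooth by stratifying the base, turns $S$ into an lc place of a log smooth pair $(Y,E)$, because its log discrepancy is $<1$. Hence its centre is a stratum of $E$. The number of geometric components of that stratum is uniformly bounded over the finite-type family by \cite[Proposition~(9.7.8)]{EGA-IV-3}, and $S$ is birationally a rational fibration over it.
\end{enumerate}
This last step is what makes the constant independent of $k$.
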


When $S$ is vertical$/Z$, we show the boundedness of Stein degree
by assuming that the image of $S$ in $Z$ is a divisor and that
$(X, B + \mb{M})\to Z$ is a \emph{Fano type generalised log Calabi-Yau fibration} over $k$
(or a \emph{Fano type fibration} over $k$ for short), that is, $(X, B + \mb{M})\to Z$
is a generalised log Calabi-Yau fibration over $k$ and $X$ is of Fano type over $Z$.
Recall that a contraction of $k$-varieties $X\to Z$ is called \emph{of Fano type} 
if there exists a big$/Z$ $\RR$-divisor $\Gamma$ such that $(X, \Gamma)$ is Kawamata log terminal (klt)
and $K_X + \Gamma \sim_{\RR}0/Z$.

\begin{theorem}[$\mc{V}_{\univ}(d, t)$]\label{vertical-bnd-intro}
    Let $d\in \NN$ and let $t \in (0,1]$.  
    There exists an $N_v(d,t) \in \NN$ depending only on $d,t$ such that
    the following holds.

    Let $k$ be a field of characteristic zero.
    Let 
    $(X, B + \mb{M})\to Z$ 
    be a Fano type generalised log Calabi-Yau fibration over $k$
    of relative dimension $d$.  
    Let $S$ be a vertical$/Z$ irreducible component of $B$ such that
    \begin{itemize} 
        \item [\emph{(1)}] the coefficient of $S$ in $B$ is $\ge t$, and
        \item [\emph{(2)}] the image $T$ of $S$ in $Z$ has codimension one in $Z$.
    \end{itemize}
    Then
    \[ \sdeg (S^{\nor}/T) \le N_v(d,t), \]
    where $S^{\nor}$ is the normalisation of $S$.
\end{theorem}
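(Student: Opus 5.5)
We prove Theorem \ref{vertical-bnd-intro} by reducing to the horizontal case, in the spirit of the argument over algebraically closed fields in \cite{birkar2025steindegreelogcalabiyau}, while making sure every constant depends only on $d,t$.

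\textbf{Step 1: localise at the generic point of $T$.} Since $T$ is a prime divisor on $Z$, we first shrink $Z$ around the generic point $\eta_T$ of $T$ and replace $Z$ by a suitable normal model. The Stein degree of $S^{\nor}\to T$ is computed at $\eta_T$, so we may pass to the local scheme $\spec \mcO_{Z,\eta_T}$. After normalising, this is a DVR $R$ with residue field $\kappa(\eta_T)$, which is again of characteristic zero. The point of working field-independently is exactly that $\kappa(\eta_T)$ is an arbitrary characteristic zero field. This is why we need the universal statement rather than the version over closed fields.

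\textbf{Step 2: make $S$ a component of a log canonical centre structure.} Using that $X$ is of Fano type over $Z$, we run an MMP and perform adjunction-type modifications. First we increase the boundary by a multiple of the pullback of $T$, taking the lc threshold, so that $S$ or some lc place over $T$ appears with coefficient $1$. We must control the coefficient here: we aim to replace $B$ by $B'=B+u f^*T$ with $u$ the lc threshold over $\eta_T$, and then extract the relevant places via a dlt model. In the Fano type setting this is achievable, and we may further pass to a model where $S$ is an lc centre with coefficient one while keeping $K_X+B'+\mb M_X\sim_\RR 0/Z$. Alternatively, since the coefficient of $S$ is only $\ge t$, we can compare $S$ with the components of $f^*T$.

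\textbf{Step 3: reduce to the horizontal case over a lower-dimensional base.} Restrict to the fibre over $\eta_T$, or use generalised adjunction on a component $S'$ of the reduced fibre that is an lc centre dominating $T$. We then obtain a generalised log Calabi-Yau fibration $(S'^{\nor},B_{S'}+\mb N)\to T$ of relative dimension $d-1$. On it, $S$ (or its strict transform) either equals $S'$ or is a horizontal$/T$ component of $B_{S'}$. The coefficient of that component is bounded below in terms of $t$, which we get from the DCC/ACC behaviour of adjunction coefficients. Applying $\mc H_{\univ}(d-1,t')$ from Theorem \ref{dim-d-t} then bounds the Stein degree. In the case $S=S'$, the Stein degree of $S'\to T$ should be bounded by an analogous statement for the lc centre itself. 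We would prove this using the relation $\sdeg(S/T)\le$ the number of geometric components of the fibre, controlled via complements: a bounded $n$-complement over $\eta_T$ gives $n(K_X+B^+)\sim 0$. Combined with connectedness, this bounds the number of Galois-conjugate components of the lc centres over $\bar\kappa(\eta_T)$.

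\textbf{Main obstacle.} The hardest part is Step 2/3 in the case where $S$ itself is the central fibre component. There, the Stein degree is the degree of the algebraic closure of $\kappa(\eta_T)$ in $K(S)$, and we must bound it using only $d,t$. The strategy we would try first is as follows. Take a bounded complement $B^+$ near $\eta_T$, which exists for Fano type fibrations with bounds depending on $d$ and on the DCC data. Extend scalars to the Stein field $L$ of $S$, so that the base change splits into $[L:\kappa]$ conjugate components. Then count lc places: connectedness of non-klt loci and the boundedness of the lc centre structure, via a dual complex argument, should bound the number of such conjugate components by a function of $d$ and the complement index.
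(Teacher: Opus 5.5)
\textbf{There is a genuine gap.} Your reduction contains two structural errors and omits the key finiteness mechanism.

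\textbf{Dimension count and induction.} If $X\to Z$ has relative dimension $d$ and $T$ is a divisor on $Z$, then every prime divisor $S'\subset X$ over $T$ satisfies $\dim S'-\dim T=d$. So adjunction gives $S'^{\nor}\to T$ of relative dimension $d$, not $d-1$, and you need $\mc{H}_{\univ}(d,\cdot)$. The paper's proof of Theorem~\ref{dim-d-t} itself uses $\mc{V}_{\univ}$, so you cannot simply cite it. Using $\mc{H}_{\univ}(d,\cdot)$ is only legitimate through the interweaving induction $\mc{V}_{\univ}(<d)\Rightarrow\mc{H}_{\univ}(d)$ of Theorem~\ref{pf-dim-d-t}.

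\textbf{Relating $S$ to $S'$.} When $S\neq S'$, $S$ is not a component of $B_{S'}$; only components of $S\cap S'$ are. Nothing in your sketch produces a component $J$ of $S\cap S'$ that dominates $T$ and has codimension one in both, and nothing transfers a bound on $J$ back to $S$. The paper handles this as follows. It passes to a model $Y$ on which every component of the fibre over $T$ lies in $\lfloor\Delta_Y\rfloor$ and $a(S,Y,\Delta_Y)\le 1-t$. It then extracts only $S$, so that the fibre component $\hat P\supset\centre_Y S$ is ample over $Y$ and must meet $\hat S$ in such a $J$. Next, $\mc{H}_{\univ}(d,t)$ applied to $\hat P^{\nor}\to\hat T$ bounds $\sdeg(J^{\nor}/\hat T)$. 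Finally it returns to $S$ using the field-independent bound on geometric normal multiplicity (Proposition~\ref{normal-mult-bnd-general-fields}) and Lemma~\ref{sdeg-subvariety}. This chain still needs a separate bound on $\deg(\hat T/T)\le\sdeg(P^{\nor}/T)$.

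\textbf{The main gap.} That last bound is the number of geometric components of the fibre over a general point of $T$, which is also your ``main obstacle'', and here your argument does not work. Connectedness of the non-klt locus, transitivity of the Galois action and a bounded complement index do not by themselves exclude a Galois-stable configuration of arbitrarily many conjugate components meeting along a common stratum. A dual-complex count therefore gives nothing; a quantitative input is required.

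\textbf{What the paper uses instead.} In the paper that input is a volume argument:
\begin{itemize}
\item After the $(-S)$-MMP, a bounded complement, a dlt model and Lemma~\ref{l-ep-to-0}, a $K$-MMP produces a Mori fibre space.
\item If its base has positive relative dimension over $Z$, the induction hypothesis $\mc{V}_{\univ}(<d)$ is applied to both layers via adjunction for fibre spaces.
\item Otherwise $-K$ is ample over $Z$ with $\epsilon$-lc generic fibre, and Theorem~\ref{vertical-thm} applies. BAB (Lemma~\ref{fano-bnd-very-ample}) gives a polarisation $A$ of bounded volume on the generic fibre, and this volume is constant in the flat family. Effective birationality (Lemma~\ref{vol-bir}) gives a uniform positive lower bound on the volume of each geometric component of the special fibre. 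It is applied on $1/\ell$-lc dlt models of those components, with the Cartier index of $A$ controlled by Lemma~\ref{complement-coeffs}. Hence the number of components is bounded in terms of $d$ and $t$.
\item For $d=1$ the finiteness comes instead from the local bound for lc surface pairs (Corollary~\ref{surface-comp-bnd}). It is applied at the boundedly many points where an ample horizontal divisor of bounded Stein degree meets the fibre.
\end{itemize}
Without a substitute for this argument, your proposal does not bound $\sdeg(S^{\nor}/T)$.
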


The proofs of $\mc{H}_{\univ}(d, t)$ and $\mc{V}_{\univ}(d, t)$ are proceeded by interweaving inductions
on the relative dimension $d$ in $\mc{H}_{\univ}(d, t)$ and $\mc{V}_{\univ}(d, t)$.
In particular, Theorem~\ref{vertical-bnd-intro} is one of the crucial ingredients
in the proof of Theorem~\ref{dim-d-t}, and the proof of Theorem~\ref{vertical-bnd-intro} 
is also the most involved part of this paper.
The proofs in this paper do not depend on toroidal geometry,
which is one of the main techniques in \cite{birkar2025steindegreelogcalabiyau}.
Moreover, by taking $\dim Z = 0$, i.e. $Z$ is the spectrum of the ground field,
Theorem~\ref{dim-d-t} proves \cite[Conjecture~1.17]{B-moduli} for log Calabi-Yau pairs 
over fields of characteristic zero.


\mni
{\textbf{\sffamily{Residue degree on log Calabi-Yau pairs.}}}
Let $X$ be a scheme of finite type over a field $k$.
For a closed point $x\in X$, the \emph{residue degree of $x$} is the degree
of the field extension $k(x)/k$, where $k(x)$ denotes the residue field of $X$ at $x$.

Let $k$ be a field of characteristic zero.
Let $(X, B)$ be a log canonical pair over $k$.
For a closed point $x\in X$, we define
\[ \mld_x(X, B) \coloneqq \inf\Set{ a(E, X, B)\mid E~\mbox{is a prime divisor over}~X~\mbox{with}~\centre_X E =x }, \]
which is called the \emph{minimal log discrepancy} (or \emph{mld} for short) of $(X, B)$ at $x$.
The mld measures the severity of singularities of the pair at $x$.
A corollary of Theorem~\ref{dim-d-t} is that the residue degree of closed points 
on log Calabi-Yau pairs is controlled by the singularity.

\begin{theorem}\label{arithmetic-thm}
    Let $d\in \NN$ and $t\in [0,1)$.
    Then there exists an $R(d,t)\in \NN$ depending only on $d,t$ such that the following holds.
    
    Let $(X, B)$ be a log Calabi-Yau pair of dimension $d$
    over a field $k$ of characteristic zero, i.e.
    $X$ is geometrically integral projective scheme over $k$ with $\dim X = d$, 
    $B$ is an $\RR$-divisor on $X$
    such that $(X, B)$ is log canonical, and 
    \[ K_X+B\sim_{\RR}0. \]
    Assume $\mld_x(X, B)\le t$.  Then the residue degree of $x$ is $\le R(d,t)$.
\end{theorem}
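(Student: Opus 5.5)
We deduce Theorem~\ref{arithmetic-thm} from Theorem~\ref{dim-d-t} with $\dim Z=0$. The idea is to extract a divisor $E$ over $x$ with log discrepancy $\le t$, so that $E$ appears in a crepant boundary with coefficient $\ge 1-t>0$. Then we compare $\sdeg(E^{\nor}/k)$ with the residue degree of $x$.

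\emph{Step 1: extraction.} By definition of the infimum, and since $t<1$, we may pick a prime divisor $E$ over $X$ with $\centre_X E=x$ and $a(E,X,B)\le t'$, where $t'<1$ is a fixed number slightly larger than $t$, say $t'=(1+t)/2$. Since $(X,B)$ is lc, the log discrepancy satisfies $a(E,X,B)\ge 0$. We want a birational model $Y\to X$ extracting exactly $E$. Pass to a $\QQ$-factorial dlt modification of $(X,B)$ and then run an MMP to extract $E$; this is standard over any field of characteristic zero. If $E$ is not extractable in this way, we may instead use a log resolution and put all exceptional divisors of log discrepancy $<1$ into the boundary with their crepant coefficients, which is allowed since these coefficients lie in $[0,1]$. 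Write $K_Y+B_Y=\phi^*(K_X+B)\sim_\RR 0$. Then $B_Y\ge 0$, the pair $(Y,B_Y)$ is lc, and the coefficient of $E$ in $B_Y$ is $1-a(E,X,B)\ge 1-t'$. Thus $(Y,B_Y)\to \spec k$ is a (generalised, with $\mb M=0$) log Calabi-Yau fibration of relative dimension $d$, and $E$ is a component of $B_Y$, automatically horizontal over $\spec k$. We need $Y$ to be a $k$-variety and $Y\to \spec k$ a contraction; this holds because $X$ is geometrically integral, so that $H^0(\mcO_Y)=k$.

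\emph{Step 2: apply Theorem~\ref{dim-d-t}.} This gives
\[ \sdeg(E^{\nor}/k)\le N_h(d,1-t'). \]
Here $\sdeg(E^{\nor}/k)=[H^0(E^{\nor},\mcO):k]$, since the Stein factorisation of $E^{\nor}\to\spec k$ is $\spec H^0(\mcO_{E^{\nor}})$.

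\emph{Step 3: comparison with the residue degree.} The main point is that $E^{\nor}\to X$ factors through $x$, giving $\spec k(x)$ as the image; the reduced image of $E$ is the closed point $x$. Since $E^{\nor}$ is normal, integral and proper, $L\coloneqq H^0(E^{\nor},\mcO)$ is a field. The morphism $E^{\nor}\to \spec k(x)$ induces $k(x)\hookrightarrow H^0(E^{\nor},\mcO)=L$. Therefore
\[ [k(x):k]\le [L:k]=\sdeg(E^{\nor}/k). \]
Setting $R(d,t)\coloneqq N_h(d,(1-t)/2)$ finishes the proof.

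The expected main obstacle is Step 1 over a non-closed field. One must ensure that the model $Y$ exists with $B_Y\ge0$ and is a genuine variety over $k$, with $E$ defined over $k$. Note that a divisor over $X$ in the scheme-theoretic sense is automatically defined over $k$, since we never base change, so Galois orbit issues do not arise. The existence of the extraction uses the MMP for lc/dlt pairs over characteristic zero fields, which is available by base change arguments. I would first try the log resolution approach, which avoids contracting anything and needs only resolution of singularities plus the coefficient bookkeeping above.
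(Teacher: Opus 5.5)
Your main line is the paper's proof. You extract a divisor over $x$ with log discrepancy $<1$ onto a crepant $\QQ$-factorial dlt model, apply Theorem~\ref{dim-d-t} with $Z=\spec k$, and bound $[k(x):k]$ by $\sdeg(E^{\nor}/k)$. Your two small deviations are fine. Choosing $E$ with $a(E,X,B)<t'=(1+t)/2$ sidesteps Lemma~\ref{mld-attainment}, at the price of the constant $N_h(d,(1-t)/2)$ instead of $N_h(d,1-t)$. The embedding $k(x)\hookrightarrow H^0(E^{\nor},\mcO)$ is a more direct substitute for the appeal to Lemma~\ref{stein-multiplicity}.

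What does not work is the log-resolution fallback in Step~1, which you say you would try first. On a log resolution $\pi\colon W\to X$ with $K_W+B_W=\pi^*(K_X+B)$, every exceptional divisor with $a(\cdot,X,B)>1$ gets a negative coefficient. For example, blowing up a smooth point of $X$ outside $\Supp B$ already gives coefficient $1-d<0$ when $d\ge 2$. So the claim $B_W\ge 0$ is false in general. Write $B_W=P-N$ with $P,N\ge 0$ having no common components, where $N$ is exceptional. Then you only get $K_W+P\sim_\RR N$, so $(W,P)$ is not log Calabi--Yau. Theorem~\ref{dim-d-t}, which needs a boundary $B\ge 0$ with $K+B\sim_\RR 0$, therefore does not apply. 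Removing $N$ requires running an MMP over $X$ and using the negativity lemma to see that $N$ gets contracted. That is precisely the content of Lemma~\ref{exist-g-dlt-model}, not mere coefficient bookkeeping.

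Conversely, there is no case in which $E$ fails to be extractable. Since $a(E,X,B)<1$, Lemma~\ref{exist-g-dlt-model} (proved in the paper over arbitrary fields of characteristic zero) gives $\phi\colon Y\to X$ with the following properties: $(Y,B_Y)$ is $\QQ$-factorial dlt, $K_Y+B_Y=\phi^*(K_X+B)$, $B_Y\ge 0$, and $E$ is a divisor on $Y$ with $\coeff_E B_Y=1-a(E,X,B)$. Drop the fallback and use this lemma; the rest of your argument then goes through.
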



\mni
{\textbf{\sffamily{Acknowledgements.}}}
This project began in October 2025 as a continuation of \cite{birkar2025steindegreelogcalabiyau},
where we developed the main birational strategy
and established Stein degree boundedness over algebraically
closed fields.  The present paper extends this theory to
arbitrary fields of characteristic zero.  An arithmetic
bound for ordinary log Calabi-Yau pairs with rational
boundaries was independently announced in 
\cite{OAI:Arithmetic-Stein-degree-bounds-for-log-Calabi-Yau-pairs-September-25-2026} on
October 6, 2026.  The birational strategy common to \cite{OAI:Arithmetic-Stein-degree-bounds-for-log-Calabi-Yau-pairs-September-25-2026}
and the present paper originates in \cite{birkar2025steindegreelogcalabiyau}, while the
additional arithmetic arguments are different.  Our proofs
are independent of \cite{OAI:Arithmetic-Stein-degree-bounds-for-log-Calabi-Yau-pairs-September-25-2026}. 
With further work, Theorems~\ref{dim-d-t} and \ref{arithmetic-thm} can also be derived 
from \cite{OAI:Arithmetic-Stein-degree-bounds-for-log-Calabi-Yau-pairs-September-25-2026},
but Theorem~\ref{vertical-bnd-intro} and our results on MMP and complements
over non-closed fields are not covered by that work.

The first author was supported by a grant from Tsinghua University and 
a grant of the National Program of Overseas High Level Talent. 
The main ideas and overall direction of this
project were developed by the authors. 
AI tools (ChatGPT models) were used during
the preparation of the paper as technical assistants to help with checking arguments, 
locating relevant references, and improving the exposition. All mathematical statements, proofs,
and conclusions were verified by the authors.


\section{\bf Preliminaries}

The set of natural numbers $\NN$ is the set of all positive integers, 
so it does not contain 0; cf. \cite[\S 2]{B-Fano}.  Denote by $\QQ^{>0}$, respectively, by $\RR^{>0}$,
the set of all positive rational numbers, respectively, the set of all positive real numbers.


\subsection{Schemes and varieties}\label{ss-sch-var-defn}

(1)
A \emph{scheme} is defined in the sense of \cite[\S II.2]{Hart};
all the schemes in this paper are Noetherian.  
For a scheme $X$, denote by $\red X$ the maximal reduced closed subscheme of $X$, 
called the \emph{reduction} of $X$.
For an integral scheme $X$, we denote by $X^{\nor}$ the \emph{normalisation} of $X$.
A scheme $X$ is called \emph{of characteristic zero} if the residue field of every scheme-theoretic point of $X$
has characteristic zero.

\medskip
(2)
Let $k$ be an arbitrary field.
By an \emph{algebraic scheme over $k$} (or a \emph{scheme over $k$}, or a \emph{$k$-scheme}), 
we mean a scheme that is of finite type and separated over $\spec k$.
A \emph{variety} is a quasi-projective, integral $k$-scheme
that is geometrically integral;
when emphasising the quasi-projectivity, we also call it a \emph{quasi-projective variety}.

\medskip
(3)
Let $k$ be a field, and let $K/k$ be an arbitrary field extension.
Let $X$ be a $k$-scheme.  To save space, we also denote
by $X\otimes_k K$ fibre product $X\times_{\spec k}\spec K$,
which should not lead to confusion from the context; cf. \cite[\S 1.1]{BLR}.

\medskip
(4)
Let $k$ be a field of characteristic zero and $K/k$ an arbitrary field extension.
Assume that $X$ is a normal $k$-scheme.
Then by \cite[Corollaire~(6.14.5)]{EGA-IV-2}, $X\otimes_k K$ is also normal.

\medskip
(5)
Let $\phi\colon X\bir Y$ be a rational map between integral schemes.
We say that $\phi$ is \emph{of finite type} if there exists an open subset $U\subseteq X$
such that $\phi$ is well-defined on $U$, and that $U\to Y$ is of finite type.


\subsection{Contractions}
 
A \emph{contraction} is a proper morphism of schemes $f\colon X\to Y$
such that $f_*\mc{O}_X = \mc{O}_Y$; $f$ is not necessarily birational (see \cite[\S 2.1]{B-Fano}).
In particular, $f$ has \emph{geometrically connected} fibres;
see \cite[Chap.~III, Corollaire~(4.3.2), Remarque~(4.3.4)]{EGA-III-1}.
Moreover, by \cite[Proposition~4.1.5]{Liu06}, if $X$ is normal, then $Y$ is also normal.

We say that a birational map $\phi\colon X\dashrightarrow Y$ between normal schemes is a 
\emph{birational contraction} if $\phi$ is proper
and $\phi^{-1}$ does not contract any divisors; see \cite[page~424]{BCHM}.


\subsection{General fibres}

Let $X\to Y$ be a morphism between schemes, where $Y$ is integral.
Let $K(Y)$ be the function field of $Y$.
By \emph{generic fibre}, we mean the $K(Y)$-scheme $X\times_Y \spec K(Y)$;
by \emph{geometric generic fibre}, we mean the $\ol{K(Y)}$-scheme $X\times_Y \spec \ol{K(Y)}$.
A \emph{general fibre} is a fibre over a scheme-theoretic point in some nonempty open subset of $Y$.
A \emph{closed general fibre} is a general fibre over a closed point of $Y$.


\subsection{Stein factorisation}

Let $f\colon X\to S$ be a proper morphism between schemes.
By \cite[Chap.~III, Corollaire~(4.3.3)]{EGA-III-1},
we have the Stein factorisation 
\[ X\xrightarrow{g} V \xrightarrow{h} S, \]
where $g$ is proper with geometrically connected fibres, and $h\colon V\to S$ is finite.
If $f$ is surjective and proper, we define the \emph{Stein degree} of $f$ as
\[ \sdeg (X/S) \coloneqq \deg(V/S). \]


\begin{lemma}\label{stein-generic-fibre}
    Let $X\to S$ be a proper surjective morphism between integral scheme of characteristic zero.
    Then there exists a nonempty open subset $U\subseteq S$ such that
    for each scheme-theoretic point $s\in U$, 
    the number of connected components of $X_{\bar{s}}\coloneqq X\times_S \spec\ol{k(s)}$
    is equal to $\sdeg(X/S)$, where $k(s)$ denotes the residue field of $S$ at $s$.
\end{lemma}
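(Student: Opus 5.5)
We reduce to the Stein factorisation $X\xrightarrow{g}V\xrightarrow{h}S$ and treat the two halves separately. The plan is to show that over a dense open subset of $S$ two things hold: the geometric fibres of $g$ are connected, and the geometric fibres of $h$ are reduced with exactly $\deg(V/S)$ points. Since $f=h\circ g$ and $g$ is surjective with connected geometric fibres, the connected components of $X_{\bar s}$ are then exactly the preimages under $g_{\bar s}$ of the points of $V_{\bar s}$. Hence their number is $\deg(V/S)=\sdeg(X/S)$.

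\textbf{Step 1: connectedness for $g$.} Since $X$ is integral and $g$ is surjective, $V$ is integral. By \cite[Chap.~III, Corollaire~(4.3.2)]{EGA-III-1}, $g_*\mc{O}_X=\mc{O}_V$ implies that every fibre $X_v$ is geometrically connected. For a point $s\in S$ and the geometric point $\bar s=\spec\ol{k(s)}$, we have
\[ X_{\bar s}=X\times_V V_{\bar s}. \]
Every point $w\in V_{\bar s}$ lies over some $v\in V$, and its fibre is $X_v\otimes_{k(v)}k(w)$, which is connected. The map $g_{\bar s}\colon X_{\bar s}\to V_{\bar s}$ is proper and surjective, hence closed. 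A proper surjective closed map with connected fibres onto a finite discrete space gives a bijection between connected components of the source and points of the target, provided $V_{\bar s}$ is discrete. It is discrete, being finite over a field. So the number of components of $X_{\bar s}$ equals $\#V_{\bar s}$, and this holds for every $s$.

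\textbf{Step 2: generic étaleness of $h$.} This is where characteristic zero is used, and it is the only genuinely delicate point. The extension $K(V)/K(S)$ is finite and, since $\chara K(S)=0$, separable. So $h$ is étale over the generic point of $S$. By spreading out, there is a nonempty open $U\subseteq S$ such that $h^{-1}(U)\to U$ is finite, flat and étale of constant rank $n=\deg(V/S)$; we use Noetherianity and generic flatness on the integral scheme $S$. For $s\in U$, the fibre $V_s$ is then a finite étale $k(s)$-algebra of dimension $n$. Therefore
\[ V_{\bar s}\cong\spec\ol{k(s)}^{\,n}, \]
which has exactly $n$ points.

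Combining the two steps, for $s\in U$ the number of connected components of $X_{\bar s}$ is $n=\sdeg(X/S)$. The main obstacle is making sure no hypothesis like finite type over a field is secretly needed for the spreading out in Step 2. It is not: finiteness of $h$, Noetherianity of $S$, and openness of the étale locus, applied to the finite morphism $h$, suffice.
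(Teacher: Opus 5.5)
Your proof is correct. It follows the paper in the first half and takes a genuinely different route in the second.

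The paper's proof is a bare citation of \cite[Remarque~(4.3.4)]{EGA-III-1} and \cite[Proposition~(9.7.8)]{EGA-IV-3}. The first identifies the connected components of $X_{\bar s}$ with the points of $V_{\bar s}$ for every $s\in S$; this is essentially your Step~1. The second is the constructibility of the geometric number of connected components of the fibres of a finitely presented morphism. That forces the number to be constant on a dense open subset of the integral scheme $S$, hence equal to its value at the generic point $\eta$. There it is the number of points of $\spec\big(K(V)\otimes_{K(S)}\ol{K(S)}\big)$, which is $[K(V):K(S)]$ by separability.

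You replace the constructibility theorem by an explicit structural statement: $h$ is finite \'etale of rank $\deg(V/S)$ over a dense open $U$. You obtain this from generic freeness of the coherent sheaf $h_*\mcO_V$, together with discarding the image of $\Supp\Omega_{V/S}$. That image is closed because $h$ is finite, and it misses $\eta$ because $K(V)/K(S)$ is separable.

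Your version is more elementary and self-contained. It isolates exactly where characteristic zero enters; in positive characteristic the count at the generic point would be the separable degree instead. It also yields the stronger conclusion that $V\to S$ is finite \'etale over $U$. The paper's version buys brevity by outsourcing all the spreading-out to EGA~IV~\S9.7.
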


\begin{proof}
    This follows from \cite[Remarque~(4.3.4)]{EGA-III-1} and \cite[Proposition~(9.7.8)]{EGA-IV-3}.
\end{proof}


\begin{definitionnotation}
	Let $X$ be a scheme.  Denote by $\mf{n}(X)$ 
	the number of irreducible components of $X$.  
    Let $X\to Y$ be a dominant morphism of finite type between integral schemes.
	We denote by $\mf{n}(X/Y)$ the number of irreducible components of a general \emph{geometric} fibre of $X\to Y$,
    which is well-defined by \cite[Proposition~(9.7.8)]{EGA-IV-3}.
    
	Let $X\to Y$ be a dominant morphism of finite type between schemes with $Y$ integral.
    Denote by $\dim (X/Y)$ the maximal dimension of irreducible components 
    of a general fibre of $X\to Y$.
    By \cite[Proposition~(9.5.5)]{EGA-IV-3}, $\dim (X/Y)$ is well-defined.
    In particular, if $X\to Y$ is a dominant $k$-morphism between integral schemes of finite type
    over a field $k$,
    every irreducible component of a general fibre of $X\to Y$ is of dimension $\dim X - \dim Y$.
\end{definitionnotation}


\begin{lemma}\label{num-g-fibre}
    Let $f\colon X\to Y$ be a dominant morphism of finite type
    between integral schemes of characteristic zero.  Let $K(X)$ and $K(Y)$
    be the function fields of $X$ and $Y$ respectively.
    Then for an arbitrary nonempty open subset $U\subset X$, we have
    \begin{equation}
    \mf{n} (X/Y) = \mf{n}(U/Y) = \mf{n} \big( \spec \big( K(X)\otimes_{K(Y)} \ol{K(Y)} \big) \big). 
    \end{equation}
    In particular, $K(X)\otimes_{K(Y)} \ol{K(Y)}$ is the direct product of $\mf{n}(X/Y)$ fields.
\end{lemma}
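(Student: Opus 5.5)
The plan is to reduce everything to the generic fibre and then to the function field, where the geometric generic fibre is governed by the tensor product $K(X)\otimes_{K(Y)}\ol{K(Y)}$. Write $\eta$ for the generic point of $Y$ and $X_\eta = X\times_Y\spec K(Y)$. This is an integral scheme of finite type over $K(Y)$ with function field $K(X)$. The generic point of $X$ lies in $U$ and in $X_\eta$, so $U_\eta$ is a dense open subset of $X_\eta$.

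First I would show $\mf{n}(X/Y)=\mf{n}(U/Y)$. By \cite[Proposition~(9.7.8)]{EGA-IV-3}, the number of geometric irreducible components of fibres is constant on a nonempty open subset of $Y$, and it equals the number of irreducible components of $X_{\bar\eta}=X_\eta\otimes_{K(Y)}\ol{K(Y)}$; the same holds for $U$. So it suffices to compare $X_{\bar\eta}$ and $U_{\bar\eta}$. The extension $\ol{K(Y)}/K(Y)$ is algebraic, so $X_{\bar\eta}\to X_\eta$ is integral, hence closed and surjective, and it is also flat. By going-down for flat morphisms, every irreducible component of $X_{\bar\eta}$ dominates the generic point of $X_\eta$ and therefore meets the preimage $U_{\bar\eta}$ of the dense open $U_\eta$. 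Since $U_{\bar\eta}$ is open in $X_{\bar\eta}$, its irreducible components are exactly the intersections of the components of $X_{\bar\eta}$ with $U_{\bar\eta}$, and this correspondence is bijective.

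Next, to identify the count with the tensor product, I would apply the previous step to a shrinking system of affine opens $U=\spec A$ with $\operatorname{Frac}A=K(X)$. The generic points of the components of $X_{\bar\eta}$ all lie over the generic point of $X$, again by going-down. They are therefore exactly the minimal primes of $A\otimes_{K(Y)}\ol{K(Y)}$ that survive after localising at $A\setminus\{0\}$, that is, the primes of $K(X)\otimes_{K(Y)}\ol{K(Y)}$. So $\mf{n}(X/Y)$ is the number of points of $\spec(K(X)\otimes_{K(Y)}\ol{K(Y)})$, once we know this ring is zero-dimensional.

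For the final structural claim, I would use that $K(X)$ is finitely generated over $K(Y)$, hence separably generated because we are in characteristic zero. Then $K(X)\otimes_{K(Y)}\ol{K(Y)}$ is reduced: it is a localisation of $K(Y)(t_1,\dots,t_r)[\theta]/(g)\otimes\ol{K(Y)}$ with $g$ separable. Every element of $K(X)\otimes_{K(Y)}\ol{K(Y)}$ comes from a finite extension $K(X)\otimes L$, which is a finite algebra over the field $K(X)$, hence Artinian. So the ring is zero-dimensional as a direct limit of Artinian reduced rings. It is also Noetherian: it is a localisation of the finite-type $\ol{K(Y)}$-algebra $A\otimes\ol{K(Y)}$. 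A reduced Noetherian zero-dimensional ring is a finite product of fields, and the number of factors is the number of its primes, namely $\mf{n}(X/Y)$. The main obstacle is the going-down/flatness argument guaranteeing that every component of the geometric generic fibre dominates $X$; it is exactly this that ties both open-invariance and the function-field description together.
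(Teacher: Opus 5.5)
Your proof is correct. It is a genuine argument where the paper gives none: the paper only cites \cite[Lemma~2.6]{birkar2025steindegreelogcalabiyau} and \cite[Corollaire~(4.5.10)]{EGA-IV-2}.

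The EGA reference points to a field-theoretic route. Let $k_1$ be the algebraic closure of $K(Y)$ in $K(X)$, which is finite and separable over $K(Y)$ in characteristic zero. Then
\[ K(X)\otimes_{K(Y)}\ol{K(Y)}\cong\prod_{\sigma\colon k_1\to\ol{K(Y)}}K(X)\otimes_{k_1,\sigma}\ol{K(Y)}, \]
and each factor is a field because $k_1$ is algebraically closed in $K(X)$. Hence $\mf{n}(X/Y)=[k_1:K(Y)]$.

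You never identify $k_1$. Flatness and going-down for $X_{\bar\eta}\to X_\eta$ give you two things at once: invariance under shrinking $X$, and the identification of the generic points of $U_{\bar\eta}$ with the primes of the localisation $K(X)\otimes_{K(Y)}\ol{K(Y)}$. The product-of-fields structure then comes abstractly, from reducedness, zero-dimensionality (direct limit of Artinian rings) and Noetherianity.

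Your route is more elementary and self-contained. The field-theoretic one yields more: the explicit value $[k_1:K(Y)]$, and the transitivity of the Galois action on the factors. That transitivity is exactly what the paper uses without comment in the proof of Lemma~\ref{stein-multiplicity}.

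Two cosmetic points. First, $A\otimes_{K(Y)}\ol{K(Y)}$ only makes sense if $\spec A$ is an affine open of $X_\eta$; alternatively write $A\otimes_R\ol{K(Y)}$ with $\spec A\to\spec R\subseteq Y$. Second, in the reducedness step the ring is not a localisation. It is literally $D[\theta]/(g)$, where $D=K(Y)(t_1,\dots,t_r)\otimes_{K(Y)}\ol{K(Y)}$ is a domain, and it embeds in $\operatorname{Frac}(D)[\theta]/(g)$. That ring is a product of fields since $g$ is separable, which is the justification your sketch leaves implicit.
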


\begin{proof}
    The proof follows verbatim from \cite[Lemma~2.6]{birkar2025steindegreelogcalabiyau};
    see also \cite[Corollaire~(4.5.10)]{EGA-IV-2}.
\end{proof}


\begin{definition}\label{defn-rational-num-g-fib}
    Let $f\colon X\dashrightarrow Y$ be a dominant rational map of finite type
    between integral schemes of characteristic zero.  Let $K(X)$ and $K(Y)$ be the 
    function fields of $X$ and $Y$ respectively.  
    Let $U\subseteq X$ be an arbitrary open subset of $X$ on which 
    $f$ is well-defined.  Define 
    \[ \mf{n}(X/Y) \coloneqq \mf{n}(U/Y). \]
    Note that $\mf{n}(X/Y)$ does not depend on the choice of $U$, hence is well-defined by Lemma~\ref{num-g-fibre}.
\end{definition}


\begin{lemma}\label{stein-multiplicity}
    Let $f\colon X\dashrightarrow Y$ and $g\colon Y\dashrightarrow Z$
    be dominant rational maps of finite type between integral schemes of characteristic zero.  Then
    \[ \mf{n}(X/Z) \le \mf{n}(X/Y)\cdot \mf{n}(Y/Z). \]
    Moreover, if $\mf{n}(X/Y) = 1$, i.e. a general fibre of $f$ is geometrically irreducible, then $\mf{n}(X/Z) = \mf{n}(Y/Z)$.
\end{lemma}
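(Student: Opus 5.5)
By Lemma~\ref{num-g-fibre} and Definition~\ref{defn-rational-num-g-fib}, every quantity in the statement depends only on function fields. We may therefore translate the statement into algebra. Write $K=K(Z)$, $L=K(Y)$, $M=K(X)$, so that $K\subseteq L\subseteq M$ via the dominant maps. Then $\mf{n}(X/Z)$ is the number of factors of the reduced Artinian ring $M\otimes_K \ol{K}$, which is a product of fields. Likewise $\mf{n}(Y/Z)$ counts the factors of $L\otimes_K\ol K$, and $\mf{n}(X/Y)$ counts those of $M\otimes_L \ol L$. Reducedness holds throughout by characteristic zero.

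The plan has three steps.
\begin{itemize}
\item[(1)] Decompose $L\otimes_K\ol K=\prod_{i=1}^{m} L_i$ with $m=\mf{n}(Y/Z)$, each $L_i$ a field containing $L$.
\item[(2)] Write
\[ M\otimes_K\ol K = M\otimes_L (L\otimes_K \ol K)=\prod_i M\otimes_L L_i. \]
This reduces the inequality to showing that for each $i$ the ring $M\otimes_L L_i$ has at most $n=\mf{n}(X/Y)$ minimal primes, equivalently at most $n$ factors, since it is reduced and Artinian-like after localization.
\item[(3)] Embed $L_i$ into $\ol L$ over $L$, which is possible since $L_i$ is a field extension of $L$; one can take an algebraic closure of $L_i$ containing $\ol L$. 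Then use that $M\otimes_L L_i\to M\otimes_L \Omega$, for $\Omega=\ol{L_i}\supseteq \ol L$, is faithfully flat.
\end{itemize}

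The number of irreducible components of $\spec(M\otimes_L \Omega)$ equals that of $\spec(M\otimes_L\ol L)$, namely $n$. This is because geometric irreducible components are already defined over $\ol L$ and stay irreducible under extension of algebraically closed fields (EGA IV 4.5.10, as cited in Lemma~\ref{num-g-fibre}). Since $\spec(M\otimes_L\Omega)\to \spec(M\otimes_L L_i)$ is surjective, each minimal prime downstairs is the image of some component upstairs, giving at most $n$ components. Summing over $i$ yields $\mf{n}(X/Z)\le nm$.

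For the ``moreover'' part, take $n=1$. Each $M\otimes_L L_i$ is nonzero and has at most one minimal prime, hence exactly one, so $\mf{n}(X/Z)=m$.

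The main obstacle is that $M\otimes_L L_i$ need not be Artinian, since $M/L$ is transcendental. So ``number of factors'' must be replaced throughout by the number of minimal primes, that is, irreducible components of the spectrum. Correspondingly, Lemma~\ref{num-g-fibre} should be read as counting irreducible components of $\spec(M\otimes_K\ol K)$, whose minimal primes correspond to components of the geometric generic fibre. One must check that the product decomposition in step~(2) is compatible with this count, which it is, since components of a finite disjoint union add. Geometrically, the argument corresponds to this picture: each component $Y_i$ of the geometric general fibre of $g$ is covered by the restriction of $f$, whose general fibres have $n$ geometric components, so $f^{-1}(Y_i)$ has at most $n$ components.
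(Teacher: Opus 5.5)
Your proposal is correct and takes essentially the paper's route: decompose $K(Y)\otimes_{K(Z)}\ol{K(Z)}$ into fields $K_i$, split the geometric generic fibre of $X\to Z$ into the pieces over each $K_i$, and bound each piece by passing to $\ol{K_i}$, where the count becomes $\mf{n}(X/Y)$. (The paper also notes that Galois transitivity gives all pieces the same count, which the inequality does not need.) The ``main obstacle'' you raise does not exist: each $M\otimes_L L_i$ is a direct factor of $M\otimes_K\ol K$, which is a finite product of fields by Lemma~\ref{num-g-fibre}, and $L_i$ is algebraic over $L$, so $\ol{L_i}=\ol L$ — this, not merely ``$L_i$ is a field extension of $L$'', is the correct justification for the embedding in your step~(3).
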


\begin{proof}
    By Lemma~\ref{num-g-fibre}, $r \coloneqq \mf{n}(Y/Z)$ is equal to the cardinality 
    of $\spec K$, where $K \coloneqq K(Y)\otimes_{K(Z)}\ol{K(Z)}$.  Again by Lemma~\ref{num-g-fibre}, we can write
    $K = K_1\oplus \cdots \oplus K_r$, where every $K_i$ is a field.
    Let $U\subseteq X$ be an open subset on which $f$ and $g\circ f$ are well-defined.
    Since the absolute Galois group of $K(Z)$ acts transitively on the field extensions $K(Y)\to K_i$,
    the cardinality $\mf{n}(U\times_Y \spec K_i)$ is constant for $i = 1, \dots, r$.  Moreover, it is evident that
    \[ \mf{n}(U\times_Y \spec K_i) \le \mf{n}(U\times_Y \spec \ol{K_i}) = \mf{n}(U\times_Y\spec \ol{K(Y)}) 
    = \mf{n}(X/Y). \]
    Thus, we can conclude that
    \begin{align*}
        \mf{n}(X/Z) &\coloneqq \mf{n}(U/Z) = \mf{n}\big(U\times_Z \spec\ol{K(Z)}\big) \\
                    &= \mf{n}\bigr(U\times_{Y} \spec\big(K(Y)\otimes_{K(Z)} \ol{K(Z)} \big)\bigr) \\
                    &= \mf{n} \bigr( \bigcupdot_{1\le i\le r} U\times_Y\spec K_i \bigr) \\
                    &= \sum_{1\le i\le r} \mf{n}(U\times_Y\spec K_i) \\
                    &\le \mf{n}(X/Y)\cdot \mf{n}(Y/Z).
    \end{align*}
    Moreover, if $\mf{n}(X/Y) = 1$, then $\mf{n}(U\times_Y\spec K_i) = 1$ for every $1\le i\le r$.
    Hence we have the equality $\mf{n}(X/Z) = \mf{n}(Y/Z)$ as desired.
\end{proof}


\begin{lemma}\label{finite-fibre}
	Let $\pi\colon X\to Y$ be a proper, generically finite morphism between integral schemes
    of characteristic zero,	where $Y$ is normal.  
    Then there exists an open subset $U\subseteq Y$
	satisfying 
    \[ \codim (Y\setminus U, Y) \ge 2 \] 
    such that $\dim X_y = 0$ for every scheme-theoretic point $y\in U$.
	Moreover, 
	\[ \mf{n}(X_{\ol{y}}) \le \mf{n}(X/Y) \]
	for every scheme-theoretic point $y\in U$, where $X_{\bar{y}}$ is the geometric fibre of $X\to Y$ over $y$.
\end{lemma}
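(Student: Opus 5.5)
Since $Y$ is normal, the plan is to work locally at codimension one points, where $Y$ is regular with local rings that are DVRs. Write $\pi = h\circ g$ via the Stein factorisation $X\xrightarrow{g}V\xrightarrow{h}Y$, or more directly, consider the locus $E\subseteq X$ of points where $\pi$ has positive-dimensional fibre. By upper semicontinuity of fibre dimension (Chevalley, \cite[(13.1.3)]{EGA-IV-3}), $E$ is closed in $X$, and since $\pi$ is proper, $\pi(E)$ is closed in $Y$. Because $\pi$ is generically finite, $\pi(E)$ does not contain the generic point of $Y$. Suppose $\pi(E)$ contained a codimension one point $\eta$. Then $\mcO_{Y,\eta}$ is a DVR, and the base change $X\times_Y\spec\mcO_{Y,\eta}\to\spec\mcO_{Y,\eta}$ is proper with a positive-dimensional closed fibre. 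An irreducible component of that fibre would then have dimension $\ge 1$ inside a scheme of dimension $\le \dim\mcO_{Y,\eta}=1$ (by the dimension formula for dominant maps of integral schemes with generically finite fibres; \cite[(5.6.5)]{EGA-IV-2}). This would force it to be an irreducible component of $X_\eta$ that equals the whole integral scheme, a contradiction. Hence $\codim(\pi(E),Y)\ge 2$, and we take $U=Y\setminus\pi(E)$.

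For the inequality $\mf{n}(X_{\bar y})\le \mf{n}(X/Y)$ for $y\in U$, shrink to $U$, where $\pi$ is proper and quasi-finite, hence finite (\cite[(8.11.1)]{EGA-IV-3}). Let $A=\mcO_{Y,y}$ (normal local domain), $K=K(Y)$, and $B=\Gamma(X_A,\mcO)$, a finite $A$-algebra and a domain with fraction field $L=K(X)$, a finite extension of $K$. The points of $X_{\bar y}$ correspond to the primes of $B\otimes_A\ol{k(y)}$. We may replace $X$ by its normalisation, since normalisation is finite and surjective, so it does not decrease the number of geometric points over $y$, and it does not change the generic data. Then $B$ is the integral closure of $A$ in $L$. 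Now $\mf{n}(X/Y)$ is the number of points of $\spec(L\otimes_K\ol K)$, which is $[L:K]_s=[L:K]$ in characteristic zero, by Lemma~\ref{num-g-fibre}.

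The key step is the classical bound: for $A$ a normal local domain and $B$ its integral closure in a finite separable extension $L/K$, the number of geometric points of $B\otimes_A\ol{k(y)}$ is at most $[L:K]$. I would prove it by passing to the Galois closure $M$ of $L/K$ with group $G$ and $H=\Gal(M/L)$. Let $C$ be the integral closure of $A$ in $M$. Then $G$ acts transitively on the primes of $C$ over $\mf m_A$ (\cite[Ch.~V]{Bourbaki} style going-up/transitivity), and each residue extension of $C$ over $k(y)$ is normal with automorphisms induced by the decomposition group. Consequently, the geometric points of $C$ over $\bar y$ are a quotient of $G$ on which $G$ acts transitively. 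The geometric points of $B=C^H$ over $\bar y$ are the $H$-orbits of these. Hence they number at most $|G/H|=[L:K]$.

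The main obstacle is precisely the identification of geometric points of $C\otimes_A\ol{k(y)}$ with a $G$-set that is a quotient of $G$. This requires knowing that the decomposition group surjects onto the automorphism group of the residue field extension, which holds because that extension is normal. Separability is automatic in characteristic zero. Alternatively, I would try to cite \cite[Lemma~2.8]{birkar2025steindegreelogcalabiyau} or \cite[(18.2.8)]{EGA-IV-4}-type statements, though the explicit Galois argument above is self-contained.
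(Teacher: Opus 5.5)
Your proof is correct. The paper gives no argument of its own here: it only states that the proof of \cite[Lemma~2.11]{birkar2025steindegreelogcalabiyau} carries over verbatim. Your write-up is therefore a self-contained replacement for that citation, and my comments below concern only economy.

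In the first part normality plays no role. You use only $\dim\mcO_{Y,\eta}=1$ and the dimension inequality \cite[(5.6.5)]{EGA-IV-2}. The contradiction is cleanest if you apply that inequality at the generic point $\xi$ of a positive-dimensional component of the fibre over $\eta$: there $\dim\mcO_{X,\xi}\ge 1$ and $\mathrm{tr.deg}_{k(\eta)}k(\xi)\ge 1$, so the left-hand side is at least $2$. Normality is needed only for the count; for instance, the normalisation of a nodal curve has degree one but two points over the node. In your argument it enters through the transitivity of $G$ on the primes of $C$ over $\mf{m}_A$, and through the surjectivity of decomposition groups onto residue automorphism groups.

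For the bound you also do not need the full identification of the geometric points of $B=C^H$ with $H$-orbits. Restricting homomorphisms $C\to\ol{k(y)}$ to $B$ is constant on $H$-orbits. It is also surjective onto the geometric points of $B$, by lying over for $B\subseteq C$ and stability of surjectivity under base change to $\spec\ol{k(y)}$. Since a transitive $G$-set is a quotient of $G$, it has at most $[G:H]=[L:K]$ orbits under $H$, which gives the bound.

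Finally, there is a shorter route that avoids both the Galois closure and your normalisation step. Since $k(y)$ is infinite, some $\xi\in B$ takes pairwise distinct values at the finitely many geometric points $B\to\ol{k(y)}$, because a finite-dimensional vector space over an infinite field is not a finite union of proper subspaces. As $A$ is integrally closed, the minimal polynomial of $\xi$ over $K$ is monic with coefficients in $A$ and has degree at most $[L:K]$. Its reduction modulo $\mf{m}_A$ vanishes at all those distinct values, so there are at most $[L:K]$ of them. Your Galois argument, by contrast, does not need infinite residue fields, only separability of $L/K$.
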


\begin{proof}
	The proof follows verbatim from \cite[Lemma~2.11]{birkar2025steindegreelogcalabiyau}.
\end{proof}


\subsection{Log resolutions}\label{log-resolution}

Let $X$ be an integral scheme and $D = \sum d_i D_i$ an $\RR$-divisor on $X$.
A \emph{log resolution} of $(X, D)$ is a proper birational morphism
$g\colon Y\to X$ such that $Y$ is regular, $\Ex(g)$ is a divisor and $\Ex(g)\cup g^{-1}(\Supp D)$
is a simple normal crossing divisor; cf. \cite[page~5]{km98}.
Log resolutions exist for excellent integral schemes of characteristic zero;
see \cite[Theorem~2.3.6, Lemma~4.2.4]{temkin-desing-chara-0}.
If $X$ is an integral scheme of finite type over a field of characteristic zero,
log resolutions exist by \cite[Theorem~10.45]{Kol_singularities_of_MMP}.

Let $X$ be an excellent, integral, normal scheme of characteristic zero 
and $D$ an $\RR$-divisor no $X$.  Let $\pi\colon Y\to X$ be a log resolution of $(X, D)$.
Let $Y\xrightarrow{\pi'} Y'\to X$ be the Stein factorisation of $\pi$; see \cite[Chap~III, \S 4.3]{EGA-III-1}.
Then $Y'\to X$ is a finite birational morphism of integral normal schemes.
By \cite[Lemme~(8.12.10.1)]{EGA-IV-3}, $Y'\to X$ is an isomorphism.
Thus, $\pi\colon Y\to X$ is a contraction.


\subsection{Divisors}\label{divisors-pre}

Let $X$ be a scheme.  A \emph{prime divisor} on $X$ is an integral closed subscheme of $X$
of codimension one.  By a \emph{divisor}, we mean a \emph{Weil divisor} on $X$, that is,
a finite $\ZZ$-linear combination of prime divisors on $X$, 
so a divisor is not necessarily Cartier.
By a \emph{$\QQ$-divisor} (respectively, an \emph{$\RR$-divisor}), we mean a finite linear combination
$\sum_i b_i B_i$, where every $B_i$ is a prime divisor on $X$ and $b_i\in \QQ$ 
(respectively, $b_i\in \RR$). 
Let $D$ be an $\RR$-divisor on $X$.
For any prime divisor $S$ on $X$, we denote by $\coeff_S D$ the coefficient of $S$ in $D$. 
Moreover, we denote by $\Supp D$ the support of $D$, which is a closed subset of $X$.
If there is no confusion in the context, we view $\Supp D$ as the reduced divisor whose 
set of irreducible components is equal to that of $D$.

A $\QQ$-divisor (respectively, an $\RR$-divisor) is called \emph{$\QQ$-Cartier}
(respectively, \emph{$\RR$-Cartier})
if it is a $\QQ$-linear (respectively, an $\RR$-linear) combination of Cartier divisors.
Let $B_1$ and $B_2$ be two $\RR$-divisors on $X$.  We say that $B_1\sim B_2$ (respectively, 
$B_1\sim_{\QQ} B_2$, respectively, $B_1\sim_{\RR} B_2$) if $B_1 - B_2$ is a $\ZZ$-linear 
(respectively, a $\QQ$-linear, respectively, an $\RR$-linear) combination of principal divisors;
see \cite[\S 2.3]{B-Fano}.

Let $f\colon X\to Z$ be a morphism between schemes, and let $L$ and $M$ be $\RR$-divisors
on $X$.  We say that \emph{$L\sim M$ over $Z$} 
(respectively, \emph{$L\sim_{\QQ} M$ over $Z$}, respectively, \emph{$L\sim_{\RR} M$ over $Z$})
if there is a Cartier (respectively, a $\QQ$-Cartier, respectively, an $\RR$-Cartier) divisor $N$ on $Z$ 
such that $L-M\sim f^*N$ (respectively, $L-M \sim_{\QQ} f^*N$, respectively, $L-M \sim_{\RR} f^*N$); cf. \cite[\S 2.3]{B-Fano}.

Let $f\colon X\to Z$ be a morphism of schemes, and let $D$ be a nonzero $\RR$-divisor on $X$.
We say that $D$ is \emph{vertical$/Z$} if $f(\Supp D)$ does not contain
any generic point of $Z$.
If $D$ does not have any vertical$/Z$ irreducible components,
we say that $D$ is \emph{horizontal$/Z$}.


\subsection{Flat pullback of divisors}\label{ss-flat-pullback}

Let $f\colon X\to Y$ be a flat morphism of schemes (which is not necessarily of finite type).
Let $D = \sum a_i D_i$ be an $\RR$-divisor on $Y$, where $a_i\in \RR$ and each $D_i$ is a prime divisor.
Denote by $f^* D_i$ the flat pullback of $D_i$ in the sense of \cite[\S 1.7]{Fulton}.
We call the $\RR$-divisor
\[ f^*D \coloneqq \sum a_i f^*D_i \]
the \emph{flat pullback of $D$ to $X$} or just the \emph{pullback of $D$ to $X$}.

Let $k$ be a field of characteristic zero, and let $X$ be a $k$-variety.
Let $B = \sum b_i B_i$ be an $\RR$-divisor on $X$, where each $B_i$ is prime divisor.
Denote by $B_{\bar k}$ the flat pullback of $B$ from $X$ to $X_{\bar k}$.
By \cite[Corollaire~(4.6.4)]{EGA-IV-2}, $B_{i, \bar k}\coloneqq B_i\times_{\spec k}\spec \bar k$ is a reduced scheme.
Thus, for every irreducible component $D$ of $B_{i, \bar k}$, we have
$\coeff_D B_{\bar k} = \coeff_{B_i} B$.


\begin{lemma}\label{descend-R-Cartier}
    Let $X$ be a variety over a field $k$ of characteristic zero,
    and let $K/k$ be a field extension.
    Set $X_K \coloneqq X\otimes_k K$.
    Let $D$ be an $\RR$-divisor on $X$, and let $D_K$
    be its flat pullback to $X_K$.
    Then $D$ is $\RR$-Cartier if and only if
    $D_K$ is $\RR$-Cartier.
\end{lemma}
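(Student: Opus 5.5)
The forward implication is formal: if $D=\sum r_j C_j$ with $r_j\in\RR$ and each $C_j$ Cartier, then $D_K=\sum r_j C_{j,K}$. Pullback of Cartier divisors along the flat morphism $p\colon X_K\to X$ is compatible with the flat pullback of Weil divisors of \S\ref{ss-flat-pullback}, by \cite[\S 1.7]{Fulton}. Hence $D_K$ is $\RR$-Cartier.

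For the converse I will reduce to a linear-algebra statement about the groups of Weil and Cartier divisors.

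\emph{Reduction to a finite-dimensional $K$-algebra.} Write $D=\sum_{i=1}^n d_iD_i$. Since $X$ is normal, the Cartier divisors of $X$ form a subgroup $\operatorname{CDiv}(X)\subseteq \operatorname{WDiv}(X)$ of the Weil divisors. An $\RR$-divisor is $\RR$-Cartier exactly when it lies in $\operatorname{CDiv}(X)\otimes\RR$ inside $\operatorname{WDiv}(X)\otimes\RR$. Since $\ZZ\to\RR$ is flat, this is the same as asking that $D$ lie in the $\RR$-span of $L\coloneqq \operatorname{CDiv}(X)\cap\bigoplus_i\ZZ D_i$, which is a lattice in $V\coloneqq\bigoplus_i\RR D_i$. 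Suppose $D_K=\sum_j r_jC_j'$ with each $C_j'$ Cartier on $X_K$. Only finitely many data are involved: the local equations of the $C_j'$ on a finite affine cover. So there is a finitely generated $k$-subalgebra $A\subseteq K$ over which all of it is defined, giving Cartier divisors on $X_A$ whose combination pulls back to $D_K$. Choose a closed point of $\spec A$ over which $X_A$ is flat and the relevant divisors specialise well. Its residue field $k'$ is a finite extension of $k$, and specialising gives $D_{k'}$ $\RR$-Cartier. Keeping track of the coefficients on the components of the $D_{i,K}$ needs care; one may first shrink $\spec A$ so that every $C_j'$ is supported on $\bigcup_i D_{i,A}$. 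We may then enlarge $k'$ to a finite Galois extension of $k$ with group $G$.

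\emph{Galois descent.} Now let $p\colon X_{k'}\to X$. Then $G$ acts on $X_{k'}$, and a $G$-invariant Cartier divisor on $X_{k'}$ is a Cartier divisor on $X$, by Galois descent for invertible sheaves with invariant rational sections, or by the norm. Concretely, if $C'$ is Cartier on $X_{k'}$, then $\sum_{g\in G} g^*C'$ is Cartier and $G$-invariant. It is therefore the pullback of a Cartier divisor $N(C')$ on $X$. Because $p$ is étale, the flat pullback of Weil divisors is injective, and $p^*p_*E=\sum_g g^*E$. Applying the averaging to $D_{k'}=\sum r_jC_j'$ and using $g^*D_{k'}=D_{k'}$ gives
\[ |G|\,D_{k'}=\sum_j r_j\, p^*N(C_j'). \]
By injectivity, $D=\frac{1}{|G|}\sum_j r_j N(C_j')$, which is $\RR$-Cartier.

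\emph{Main obstacle.} The hardest step is the specialisation from an arbitrary extension $K$ to a finite one. The issue is ensuring that a Cartier divisor on $X_A$ restricts to a Cartier divisor on the closed fibre $X_{k'}$ with the expected Weil divisor. First I would try to avoid this by rationality. The condition that $D$ lies in $L\otimes\RR$ is linear. One can show $\operatorname{CDiv}(X_K)\cap\bigoplus\ZZ D_{i,K}$ is compatible with $L$ after taking Galois-invariant parts, using Grothendieck's result that local Picard data descend along faithfully flat extensions. Failing that, I would use specialisation at a general closed point, where all the rational functions involved are regular and invertible in the right places.
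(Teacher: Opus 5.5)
Your proposal is correct and follows the paper's route. You spread the Cartier divisors out over a finitely generated $k$-algebra, shrink the base so that the Weil divisor $D_A-\sum_j r_jC_{j,A}$, which vanishes on the generic fibre, vanishes identically, restrict to a general closed fibre $X_{k'}$ with $k'/k$ finite, and then descend by the norm. Your Galois averaging is the pushforward $p_*$ along $X_{k'}\to X$ in disguise, since $p^*p_*=\sum_g g^*$; the paper instead cites directly that $p_*$ of a Cartier divisor is Cartier and that $p_*D_{k'}=\deg(k'/k)\,D$, so it needs neither a Galois closure nor any control of the supports of the $C_j'$.
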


\begin{proof}
    If $D$ is $\RR$-Cartier, then it is evident that $D_K$ is $\RR$-Cartier; 
    cf. \cite[Example~6.2.9]{Fulton}.
    Conversely, assume that
    \[ D_K = \sum_{i=1}^r a_i D_i, \]
    where $a_i\in \RR$, and each $D_i$ is a Cartier divisor on $X_K$.
    Then there exist an intermediate field extension $k\subset k'\subset K$,
    where $k'/k$ is finitely generated,
    and Cartier divisors $D_i'$ on $X'\coloneqq X\otimes_k k'$
    such that each $D_i$ is the pullback of $D_i'$ to $X_K$.

    By \cite[Th\'eor\`eme~(8.5.2), Proposition~(8.5.5)]{EGA-IV-3}, there exist an integral affine scheme $P$
    of finite type over $k$ whose function field is $k'$, and a $P$-scheme $X_P$ such that
    $X'\to \spec k'$ is the generic fibre of $X_P\to P$, where $X_P\coloneqq X\times_{\spec k} P$.
    Moreover, we can assume that there are Cartier divisors $C_i$ on $X_P$
    so that for each $i$, the restriction $C_i|_{X'}$ is equal to $D_i'$.
    \[\xymatrix{
    X\ar[d] & X_P\ar[d]\ar[l] & X'\ar[d]\ar[l] & X_K\ar[d]\ar[l] \\
    \spec k & P\ar[l] & \spec k'\ar[l] & \spec K\ar[l]
    }\]
    Let $D_P$ be the flat pullback of $D$ to $X_P$, and let
    \[ E_P\coloneqq D_P - \sum_{i=1}^r a_i C_i. \]
    Then $E_P$ is zero on the generic fibre $X'$.
    Thus, by \cite[Corollaire~(1.8.5)]{EGA-IV-1}, up to shrinking $P$, we can assume $E_P= 0 $.
    Hence $D_P$ is $\RR$-Cartier.
    
    Let $p\in P$ be a general closed point, and let $L$ be the residue field of $P$ at $p$, 
    which is a finite extension of $k$.
    Let $X_L \coloneqq X\otimes_k L$, and let $C_{i,L}$ be the restriction of $C_i$ to $X_L$.
    Denote by $D_L$ the flat pullback of $D$ to $X_L$, which is equal to the restriction of $D_P$ to $X_L$.  Then
    \[ D_L = \sum_{i=1}^r a_i C_{i, L}. \]
    By \cite[Claim~(2.40.1)]{Kol_singularities_of_MMP}, the pushdown of each $C_{i, L}$ to $X$
    is Cartier.  Moreover, since the pushdown of $D_L$ to $X$ is equal to $\deg(L/k)\cdot D$,
    we can conclude that $D$ is $\RR$-Cartier.    
\end{proof}


\subsection{Birational pullback of divisors}

Let $\phi\colon X\bir Y$ be a birational map between excellent integral schemes of characteristic zero, 
which may not be a birational contraction,
i.e. $\phi^{-1}$ can contract some divisors on $Y$.  Let $D$ be an $\RR$-Cartier divisor on $Y$.
Let $p\colon W\to X$ be a resolution so that the induced map $q\colon W\bir Y$ is a morphism.
Denote by $\phi^* D$ the $\RR$-divisor $p_* q^*D$.
It is easy to see that $\phi^*D$ does not depend on the choice of $W$.
We call $\phi^*D$ the \emph{birational pullback} of $D$.
Note that $\phi^*D$ may not be $\RR$-Cartier.



\subsection{Ample, semiample, and big divisors}
Let $X\to S$ be a proper morphism of schemes.
Two Cartier divisors $D_1, D_2$ on $X$ are called \emph{numerically equivalent over $S$},
denoted by $D_1\equiv D_2/S$, if for every 1-cycle $C\in \Z_1(X/S)$, 
the intersection numbers $D_1\cdot C$ and $D_2\cdot C$ are equal.
We denote by $\N^1(X/S)$ the Abelian group of all equivalent classes 
of numerically equivalent over $S$ Cartier divisors on $X$; see \cite[(4.2.3)]{Kol-rc}.
Moreover, for $k=\QQ, \RR$, we define
\[ \N^1(X/S)_k \coloneqq \N^1(X/S)\otimes_{\ZZ}k. \]
Let $f\colon X\to S$ be a morphism between schemes.
An $\RR$-divisor $A$ on $X$ is called \emph{ample over $S$} (or \emph{ample$/S$})
if it can be expressed as a finite sum $\sum a_i A_i$,
where $a_i\in \RR^{>0}$ and $A_i$ an ample$/S$ Cartier divisor on $X$.
For \emph{relatively semiample $\RR$-Cartier $\RR$-divisors}, see \cite[Definition~4.1]{lyu-takumi-2026relativeminimalmodelprogram}.
For \emph{relatively big $\RR$-Cartier $\RR$-divisors} on integral schemes, 
see \cite[Definition~5.8]{lyu-takumi-2026relativeminimalmodelprogram}.
For \emph{relatively big (Weil) $\RR$-divisors} on integral schemes, 
see \cite[Definition~5.20, Lemma~5.21]{lyu-takumi-2026relativeminimalmodelprogram}.


\begin{lemma}\label{fppf-ample}
    Let $S$ be a scheme, and let $f\colon X\to Y$ be a projective $S$-morphism between $S$-schemes.
    Let $g\colon S'\to S$ be a faithfully flat morphism between schemes.
    Denote by
    \[ X'\coloneqq X\times_S S'~~\text{and}~~
    Y'\coloneqq Y\times_S S'.\]
    Let $A$ be an $\RR$-Cartier $\RR$-divisor on $X$ and $A'$ its pullback to $X'$.
    Then $A$ is ample$/Y$ (respectively, very ample$/Y$) if and only if
    $A'$ is ample$/Y'$ (respectively, very ample$/Y'$).
\end{lemma}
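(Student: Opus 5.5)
We first treat the case where $A$ is an honest Cartier divisor, i.e. a line bundle $L=\mc{O}_X(A)$, and then pass to $\RR$-divisors by linear algebra. The key input is faithfully flat descent of ampleness and very ampleness, as in EGA IV (Corollaire~(2.7.2) for ampleness relative to a base, and the corresponding statement for very ampleness). Faithfully flat base change $S'\to S$ induces faithfully flat base changes $Y'\to Y$ and $X'\to X$. We may therefore regard the problem as a descent problem over $Y$: $X'=X\times_Y Y'$ and $A'$ is the pullback of $A$ along the faithfully flat map $X'\to X$.

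For a Cartier divisor $A$, the implication ``$A$ ample$/Y$ $\Rightarrow$ $A'$ ample$/Y'$'' is the standard stability of relative ampleness under base change. The same holds for very ampleness: an immersion $X\hookrightarrow \PP(\mc{E})$ over $Y$ with $\mc{O}(1)$ restricting to $L$ pulls back to one over $Y'$. For the converse we use that $f$ is projective, hence quasi-compact and separated. EGA IV, (2.7.2), states that ampleness of $L$ relative to $Y$ descends along faithfully flat quasi-compact base change $Y'\to Y$. For very ampleness we argue as follows. We may reduce to $Y$ affine, locally. Very ampleness of $L'$ means that $f'^*f'_*L'\to L'$ is surjective and the induced map $X'\to\PP(f'_*L')$ is an immersion. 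Flat base change gives $f'_*L'=g^*f_*L$, so surjectivity descends by faithful flatness. The property of being an immersion descends along fpqc morphisms by EGA IV, (2.7.1).

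For $\RR$-Cartier $A=\sum a_iA_i$, the direction ``$A$ ample $\Rightarrow$ $A'$ ample'' is immediate from the Cartier case. The converse is the main obstacle, because ampleness of $A'$ is witnessed by some decomposition on $X'$ that need not come from $X$. I would handle it by a perturbation argument in the finite dimensional space $V\subset \mathrm{Div}(X)\otimes\RR$ spanned by the $A_i$. Ampleness over $Y'$ is an open condition: the ample cone is open in $\N^1(X'/Y')_\RR$, or more concretely, $A'+\sum\epsilon_iA_i'$ stays ample for small $\epsilon_i$, because $A'-\delta H'$ is ample for any fixed Cartier combination.

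Hence we may choose rational $a_i'$ close to $a_i$ with $\sum a_i'A_i'$ ample. We then write $A=\sum_j \lambda_j D_j$ as a positive combination of rational points $D_j\in V_\QQ$ near $A$, each of whose pullbacks is ample. Each $mD_j$ is Cartier with ample pullback, so it is ample$/Y$ by the Cartier case. Thus $A$ is a positive combination of ample Cartier divisors, as required. Very ampleness is only meaningful for Cartier divisors, so for that part we interpret the statement accordingly and the Cartier case suffices.
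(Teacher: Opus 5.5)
Your proposal is correct and follows essentially the same route as the paper. The paper settles the Cartier case (ample and very ample) by faithfully flat descent, EGA IV (2.7.2). For the $\RR$-Cartier converse it works in a finite-dimensional space of $\QQ$-Cartier divisors containing $A$: it approximates $A$ by rational points whose pullbacks are ample, using openness of the ample cone (which it cites from Lyu--Murayama and which you verify directly), and then applies the Cartier case to suitable multiples of these points. Your explicit argument for very ampleness, via surjectivity of $f'^*f'_*L'\to L'$ and descent of immersions, simply unpacks what the paper takes from (2.7.2).
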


\begin{proof}
    The equivalence for very ampleness follows from \cite[Corollaire~(2.7.2)]{EGA-IV-2}.
    If $A$ is an ample$/Y$ $\RR$-divisor on $X$, again by \cite[Corollaire~(2.7.2)]{EGA-IV-2},
    $A'$ is ample$/Y'$.  Conversely, assume that $A'$ is ample$/Y'$.
    In the $\RR$-linear space of $\RR$-divisors on $X$,
    choose a finite-dimensional $\QQ$-linear subspace $V$
    of $\QQ$-Cartier divisors on $X$ such that $A\in V\otimes_{\QQ}{\RR}$.  
    By \cite[Theorem~4.16]{lyu-takumi-2026relativeminimalmodelprogram},
    the relative ample cone $\Amp(X'/Y')$ is open.  
    Hence we can pick $\QQ$-Cartier $\QQ$-divisors $H_1,\dots,H_r$
    in $V$ that are sufficiently close to $A$ so that each pullback
    $H_i'$ of $H_i$ to $X'$ is ample$/Y'$, and that we can write
    \[ A = \sum_{i=1}^r  a_i H_i, \]
    where $a_i>0$, and $\sum_{i=1}^r a_i = 1$.
    For each $i$, take $m_i\in\NN$ such that $m_i H_i$
    is Cartier.  By \cite[Corollaire~(2.7.2)]{EGA-IV-2}, $m_i H_i$ is ample$/Y$.
    Thus, $A$ is ample$/Y$.
\end{proof}


\begin{lemma}\label{semi-ample-codim-1}
    Let $f\colon X\to Z$ be a contraction between normal irreducible schemes.
    Let $D$ be a semiample$/Z$ $\RR$-divisor so that 
    $D\sim_{\RR}0/V$ for a nonempty open subset $V\subset Z$.
    Then there is an open subset $U\subseteq Z$ with $\codim(Z\setminus U, Z) \ge 2$
    such that $D\sim_{\RR}0/U$.
\end{lemma}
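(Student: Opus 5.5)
The plan is to reduce to the statement over codimension-one points of $Z$, i.e. over the local rings $\mc{O}_{Z,\eta}$ at generic points $\eta$ of prime divisors of $Z$, and then spread out. Since $D$ is semiample$/Z$, I will fix a contraction $g\colon X\to Y$ over $Z$, with $h\colon Y\to Z$ the induced morphism, and an ample$/Z$ $\RR$-Cartier $\RR$-divisor $A$ on $Y$ with $D\sim_{\RR} g^*A$. Such a $g$ exists by the definition of relative semiampleness in \cite[Definition~4.1]{lyu-takumi-2026relativeminimalmodelprogram}: take the relative ample model of $D$ and the Stein factorisation. Then $h$ is projective with $h_*\mc{O}_Y=\mc{O}_Z$, because $g$ and $f=h\circ g$ are contractions. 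Over $V$, the relation $D\sim_{\RR}0/V$ together with $g_*\mc{O}_X=\mc{O}_Y$ gives $A\sim_{\RR}0/V$ on $h^{-1}V$. Since $A$ is ample over $V$, $h$ is finite over $V$, and since it is also a contraction there, $h$ is an isomorphism over $V$. Hence $h$ is birational.

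Next, I will use normality of $Z$. Because $Y\to Z$ is projective and birational and $Z$ is normal, Zariski's main theorem shows that the locus of $Z$ over which $h$ has positive-dimensional fibres is closed. The key claim is that this locus has codimension $\ge 2$. It is the image of the exceptional locus, and it suffices to see that $h$ is finite over every codimension-one point $\eta$. The local ring $\mc{O}_{Z,\eta}$ is a DVR, and a proper birational morphism to the spectrum of a DVR from an integral scheme is finite: the reduced fibre would otherwise be positive dimensional, and then $\dim Y_\eta>1=\dim\spec\mc{O}_{Z,\eta}$, contradicting birationality, since $Y_\eta$ is integral of dimension $1$. Together with $h_*\mc{O}_Y=\mc{O}_Z$, this makes $h$ an isomorphism over $\spec\mc{O}_{Z,\eta}$. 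Hence there is an open $U\subseteq Z$ with $\codim(Z\setminus U,Z)\ge 2$ and $h^{-1}U\to U$ an isomorphism. This dimension-counting step is the one I expect to need the most care, since $Z$ is only assumed Noetherian and normal rather than of finite type. I would argue via the DVR case directly, using that a closed subset whose localisation at every height-one point is empty has codimension $\ge 2$, together with the constructibility/closedness of the non-finite locus of a proper morphism.

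Finally, over $U$ we have $D|_{f^{-1}U}\sim_{\RR} g^*A|_{h^{-1}U}$. Because $h^{-1}U\cong U$, the divisor $A|_{h^{-1}U}$ is the pullback of an $\RR$-Cartier divisor on $U$, so $D\sim_{\RR}0/U$. One point needs checking here: the $\RR$-linear equivalence $D\sim_{\RR} g^*A$ and the $\RR$-Cartierness of $A$ on $h^{-1}U$ are compatible with restriction to opens, which is immediate from the definitions in \S\ref{divisors-pre}.
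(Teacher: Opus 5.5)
Your proposal is correct and follows the paper's proof: build the contraction $g\colon X\to Y$ over $Z$ with $D\sim_{\RR}g^*A$ and $A$ ample$/Z$, deduce from $D\sim_{\RR}0/V$ that $Y\to Z$ is birational, and use normality of $Z$ to get an isomorphism over every codimension-one point. The paper constructs $g$ more explicitly, by writing $D=\sum a_iD_i$ with semiample$/Z$ Cartier $D_i$, taking the fibre product of the induced contractions and then a Stein factorisation, and it cites \cite[Corollary~4.4.3]{Liu06} for the final step, whereas you argue that step directly via the DVR $\mc{O}_{Z,\eta}$ and a dimension count.
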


\begin{proof}
    By \cite[Definition~4.1]{lyu-takumi-2026relativeminimalmodelprogram},
    we can write $D = \sum_{i=1}^r a_i D_i$, where $a_i\in \RR^{>0}$,
    and each $D_i$ is a semiample$/Z$ Cartier divisor on $X$.
    Let $f_i\colon X\to U_i$ be the contraction induced by $D_i$, i.e.
    there is an ample$/Z$ $\QQ$-divisor $H_i$ on $U_i$ so that $D_i \sim_{\QQ}f_i^*H_i/Z$.
    Let
    \[ U\coloneqq U_1\times_Z \cdots\times_Z U_r.  \]
    It is evident that $U\to Z$ is a contraction.
    Take the Stein factorisation of $X\to U$, and let
    $g\colon X\to Y$ be the part with geometrically connected fibres.
    Then the induced $Y\to Z$ is also a contraction.
    Denote by $A_i$ the pullback of $H_i$ to $Y$ via 
    the $i$-th projection $U\to U_i$ and the finite morphism $Y\to U$.
    Then $A_i$ is a semiample $\QQ$-Cartier $\QQ$-divisor on $Y$.
    Moreover, $A = \sum_{i=1}^r a_i A_i$ is ample$/Z$.  
    It is evident that
    \[ D\sim_{\RR} g^*A/Z. \]
    As $D\sim_{\RR} 0/V$, $Y\to Z$ is a birational contraction.
    Then $Y\to Z$ is an isomorphism over every codimension one point of $Z$
    \cite[Corollary~4.4.3]{Liu06}.
\end{proof}


\begin{lemma}\label{descent-semiample}
    Let $f\colon X\to Y$ be a morphism scheme over a field $k$, and let $K/k$ be a field extension.
    Denote by $f_K\colon X_K\to Y_K$ the base change of $f$, 
    where $X_K\coloneqq X\otimes_k K$ and $Y_K\coloneqq Y\otimes_k K$.
    Let $A$ be a Cartier divisor on $X$, and let $A_K$ be its pullback to $X_K$.
    Then $A$ is semiample over $Y$ if and only if $A_K$ is semiample over $Y_K$.
\end{lemma}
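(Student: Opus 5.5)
The plan is to prove the two implications separately, reducing semiampleness over $Y$ to surjectivity of evaluation maps of coherent sheaves, since that property is compatible with flat base change. I will assume, as is implicit, that $f$ is proper, or at least quasi-compact and separated, so that $f_*$ of coherent sheaves behaves well. Recall the criterion: a Cartier divisor $A$ is semiample$/Y$ if and only if there is $m\in\NN$ such that the natural map
\[ \alpha_m\colon f^*f_*\mc{O}_X(mA)\to \mc{O}_X(mA) \]
is surjective, i.e. $mA$ is relatively globally generated.

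\emph{Only if.} Let $g\colon Y_K\to Y$ and $g'\colon X_K\to X$ be the projections; they are flat since $\spec K\to\spec k$ is flat. Flat base change gives $g^*f_*\mc{O}_X(mA)\cong f_{K*}\mc{O}_{X_K}(mA_K)$, and $g'^*\mc{O}_X(mA)\cong\mc{O}_{X_K}(mA_K)$. Under these identifications $g'^*\alpha_m$ is the evaluation map $\alpha_{m,K}$ for $A_K$. Pullback preserves surjectivity, so if $\alpha_m$ is surjective then so is $\alpha_{m,K}$.

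\emph{If.} Suppose $A_K$ is semiample$/Y_K$, and fix $m$ with $\alpha_{m,K}$ surjective. By the same identification, $\alpha_{m,K}=g'^*\alpha_m$. Let $\mc{C}$ be the cokernel of $\alpha_m$, a quasi-coherent sheaf on $X$. Right exactness of pullback gives $g'^*\mc{C}=\operatorname{coker}(\alpha_{m,K})=0$. Since $g'$ is faithfully flat (base change of the faithfully flat $\spec K\to\spec k$), it follows that $\mc{C}=0$. Hence $\alpha_m$ is surjective and $A$ is semiample$/Y$.

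The main obstacle is bookkeeping rather than substance. One must check that the base-change isomorphism is compatible with the evaluation maps, so that $g'^*\alpha_m$ really equals $\alpha_{m,K}$ under the identifications. One must also ensure the hypotheses needed for flat base change of $f_*$, namely that $f$ is quasi-compact and quasi-separated; this holds because all schemes in the paper are of finite type and separated over $k$. If one prefers to avoid infinite extensions, an alternative is a spreading-out argument as in Lemma~\ref{descend-R-Cartier}: the finitely many sections generating $\mc{O}(mA_K)$ descend to a finitely generated $k'$, then to a finite extension $L/k$ by specialising at a closed point of $P$. One would then take norms or traces to $X$. The faithfully flat argument is cleaner, however, and is the one I would use.
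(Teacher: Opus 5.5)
Your proposal is correct and follows essentially the same route as the paper. Both arguments use flat base change to identify the evaluation map $f_K^*f_{K*}\mathcal{O}_{X_K}(mA_K)\to\mathcal{O}_{X_K}(mA_K)$ with the pullback of $f^*f_*\mathcal{O}_X(mA)\to\mathcal{O}_X(mA)$, and then descend surjectivity along the faithfully flat $X_K\to X$. The paper cites EGA~IV~(2.2.7) for that last step, whereas you reprove it directly via the cokernel.
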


\begin{proof}
    It is evident that $A_K$ is semiample if $A$ is so.
    Now assume that $A_K$ is semiample over $Y_K$.
    Pick $m\in \NN$ so that $\mcL_K \coloneqq \mcO_{X_K}(m A_K)$ is globally generated over $Y_K$.
    Then $\mcL_K$ is the pullback of $\mcL\coloneqq \mcO_X(mA)$ to $X_K$.
    Since $\mcL_K$ is globally generated over $Y_K$, we have a canonical surjective homomorphism of sheaves
    \[ f_K^*\big( (f_K)_* \mcL_K\big) \to \mcL_K. \]
    By \cite[Lemme~(2.3.1)]{EGA-IV-2}, we have
    \[ f_K^*\big((f_K)_*\mcL_K\big) = f_K^* \big((f_*\mcL)\otimes_{\mcO_Y} \mcO_{Y_K} \big) = (f^*f_*\mcL)\otimes_{\mcO_X}\mcO_{X_K}. \]
    Then by \cite[Proposition~(2.2.7)]{EGA-IV-2}, we see that
    \[ f^*f_* \mcL\to \mcL \]
    is also surjective.  Thus, $\mcL$ is globally generated over $Y$, i.e. 
    $A$ is semiample over $Y$.
\end{proof}


\begin{lemma}\label{no-van-big-divisor}
	Let $\phi\colon X\dasharrow Y/Z$ be a birational contraction between irreducible 
	  normal schemes that are projective over an integral scheme $Z$ of characteristic zero, 
    and let $D$ be a big$/Z$ $\RR$-Cartier $\RR$-divisor
	on $X$.  Then $\phi_*D$ is also a big$/Z$ $\RR$-divisor on $Y$.
	In particular, if $D$ is an $\RR$-Cartier big$/Z$ divisor on $X$
    and if $\dim (X/Z)>0$, then $\phi_* D \not=0$.
\end{lemma}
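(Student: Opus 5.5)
The plan is to pass to a common resolution and use the fact that bigness of an $\RR$-Cartier divisor is detected by the bigness of its pullback. Take a resolution $p\colon W\to X$ such that the induced map $q\colon W\to Y$ is a morphism; this is possible since log resolutions exist in characteristic zero (\S\ref{log-resolution}). We may take $W$ projective over $Z$ by resolving the closure of the graph of $\phi$ inside $X\times_Z Y$. Bigness of an $\RR$-Cartier divisor over $Z$ is invariant under birational pullback, in the sense of \cite[Definition~5.8]{lyu-takumi-2026relativeminimalmodelprogram}. Hence $p^*D$ is big$/Z$ on $W$. Moreover, $\phi_*D = q_*p^*D$, because $\phi$ is a birational contraction and both sides agree on the divisors of $Y$: every prime divisor on $Y$ is the birational transform of a prime divisor on $X$.

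The next step is to show that pushforward under the birational morphism $q\colon W\to Y$ preserves bigness of Weil divisors. Using the definition of big Weil $\RR$-divisors via \cite[Definition~5.20, Lemma~5.21]{lyu-takumi-2026relativeminimalmodelprogram}, write
\[ p^*D \sim_{\RR} A + E/Z, \]
with $A$ ample$/Z$ and $E\ge 0$. Pushing forward gives $\phi_*D \sim_{\RR} q_*A + q_*E/Z$, and $q_*E\ge 0$. It then suffices to check that $q_*A$ is big$/Z$ on $Y$. Choose an ample$/Z$ Cartier divisor $H$ on $Y$. Since $q^*H$ is not ample, use the standard comparison instead: for $m\gg0$, the divisor $mA - q^*H$ is big$/Z$ on $W$, since $A$ is ample and $q^*H$ is $\RR$-Cartier. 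So $mA - q^*H \sim_{\RR} F/Z$ with $F\ge0$, possibly after adding a small ample term. Pushing forward gives $m\,q_*A \sim_{\RR} H + q_*F/Z$, which is ample plus effective, hence big$/Z$. Alternatively, the growth of $h^0$ of the generic fibre is unchanged, since $q_*\mcO_W(\lfloor mA\rfloor)\subseteq \mcO_Y(\lfloor m q_*A\rfloor)$ on the generic fibre; one could argue through volumes in that way.

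The main obstacle I expect is matching the precise definition of bigness for Weil $\RR$-divisors over a possibly non-closed base in the cited reference. In particular, one must check that it is characterised by the decomposition "ample plus effective up to $\sim_{\RR}/Z$", or by positive volume on the generic fibre. Once that characterisation is fixed, the argument above goes through formally.

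For the final assertion, suppose $\phi_*D=0$. Then $0$ would be big$/Z$ on $Y$. But $\dim(Y/Z)=\dim(X/Z)>0$, so the generic fibre $Y_\eta$ is positive-dimensional. The zero divisor there has $h^0(m\cdot 0)=1$, which is not of maximal growth. This is a contradiction, so $\phi_*D\neq 0$.
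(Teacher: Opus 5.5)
Your proposal is correct and follows essentially the paper's route. You reduce to the case where $\phi$ is a morphism via a common resolution, then write the pullback of $D$ minus a small positive multiple of $q^*H$ (with $H$ ample$/Z$ on $Y$) as $\RR$-linearly equivalent over $Z$ to an effective divisor. Pushing forward gives $\phi_*D\sim_{\RR}\epsilon H+(\text{effective})/Z$, and you conclude by \cite[Definition~5.20]{lyu-takumi-2026relativeminimalmodelprogram}; the final assertion follows because $0$ cannot be big$/Z$ when $\dim(X/Z)>0$. The only difference is cosmetic: the paper obtains $D-\epsilon\phi^*H\sim_{\RR}G/Z$ with $G\ge 0$ in one step from \cite[Lemma~5.11]{lyu-takumi-2026relativeminimalmodelprogram}, whereas you pass through the Kodaira decomposition $p^*D\sim_{\RR}A+E/Z$ and then compare $mA$ with $q^*H$. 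Your hedge ``possibly after adding a small ample term'' is unnecessary, since $mA-q^*H$ is already ample$/Z$ for $m\gg0$.
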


\begin{proof}
    Let $p\colon W\to X, q\colon W\to Y$ be a common resolution of $\phi$.
    Then $p^*D$ is a big$/Z$ $\RR$-Cartier $\RR$-divisor on $W$ and $\phi_*D=q_*p^*D$.
    Thus, we can assume that $\phi$ is a morphism.
    Let $H$ be an ample$/Z$ divisor on $Y$.
    By \cite[Lemma~5.11]{lyu-takumi-2026relativeminimalmodelprogram},
    for a sufficiently small $\epsilon\in \RR^{>0}$,
    we can write $D - \epsilon \phi^*H\sim_{\RR}G/Z$ 
    for some effective $\RR$-divisor $G$ on $X$.
    Thus, $\phi_* D\sim_{\RR} \epsilon H + \phi_* G/Z$.
    By \cite[Definition~5.20]{lyu-takumi-2026relativeminimalmodelprogram},
    $\phi_* D$ is big$/Z$.
    Now, assume $D$ is an $\RR$-Cartier big$/Z$ divisor on $X$ and that $\dim(X/Z)>0$.
    If $\phi_*D=0$, then the zero divisor is big$/Z$.
    This contradicts $\dim(X/Z)>0$ by \cite[Definition~5.20]{lyu-takumi-2026relativeminimalmodelprogram}.
    Hence $\phi_*D\neq 0$.
\end{proof}


\subsection{Nef and pseudo-effective divisors}
Let $f\colon X\to S$ be a proper morphism between schemes.
Let $\Z_m(X/S)$ be the free Abelian group generated by irreducible and reduced $m$-dimensional 
subschemes $Y\subseteq X$ such that $f(Y)$ is a point.
The effective cycles form a monoid which is denoted by $\ZE_m(X/S)$; see \cite[(4.1.1)]{Kol-rc}.
An $\RR$-Cartier $\RR$-divisor $M$ on $X$ is called \emph{$f$-nef} (or \emph{nef over $S$}, 
or \emph{nef$/S$}) if $M\cdot C\ge 0$ for every $C\in \ZE_1(X/S)$; see \cite[(4.2.7)]{Kol-rc}.

Let $X\to Y$ be a proper morphism between schemes,
and let $L$ be an $\RR$-Cartier $\RR$-divisor on $X$.
Let $\eta\in Y$ be a generic point of $Y$.  Denote by $X_{\eta}$ fibre of $X\to Y$ over $\eta$,
and by $D_{\eta}$ the restriction of $D$ to $X_{\eta}$.
We say that $D$ is \emph{pseudo-effective$/Y$} if
for each generic point $\eta\in Y$, $[D_{\eta}]\in \N^1(X_{\eta})_{\RR}$
is the limit of classes of effective $\QQ$-Cartier $\QQ$-divisors on $X_{\eta}$;
cf. \cite[Definition~5.10]{lyu-takumi-2026relativeminimalmodelprogram}.
When $Y=\spec k$ for a field $k$, we just say that $D$ is \emph{pseudo-effective}.

\begin{lemma}\label{fppf-peff-K-k}
    Let $K/k$ be an extension of fields.
    Let $V$ be a projective $k$-scheme (respectively, a geometrically integral projective $k$-scheme), 
    and let $D$ be an $\RR$-Cartier $\RR$-divisor on $V$.
    Denote by $V_K\coloneqq V\otimes_k K$, and by $D_K$ the pullback of $D$ to $V_K$.
    Then $D$ is nef (respectively, pseudo-effective) if and only if $D_K$
    is nef (respectively, pseudo-effective).
\end{lemma}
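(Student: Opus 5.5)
We treat nefness and pseudo-effectivity separately, each time reducing the extension $K/k$ to finite extensions by a spreading-out argument as in Lemma~\ref{descend-R-Cartier}, and then using pushforward along the finite flat morphism $V_L\to V$.

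\emph{Nefness.} Let $\pi\colon V_K\to V$ be the projection. Suppose $D$ is nef and $C\subset V_K$ is an integral curve. Then $C$ is defined over a finitely generated subextension $k'\subset K$. Spreading out over an integral $k$-scheme $P$ with $K(P)=k'$ and specialising to a general closed point $p\in P$ with residue field $L$ finite over $k$, we get a curve $C_L\subset V_L$ with
\[ \deg(D_K|_C)=\deg(D_L|_{C_L}) \]
by constancy of degrees in flat families. Writing $\pi_L\colon V_L\to V$ for the projection, the projection formula gives $D_L\cdot C_L = D\cdot (\pi_L)_*C_L\ge 0$, since $(\pi_L)_*C_L$ is an effective $1$-cycle on $V$. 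Hence $D_K$ is nef. Conversely, if $C\subset V$ is an integral curve, then $\pi^{-1}(C)=C_K$ is a curve with $D_K\cdot [C_K]=D\cdot C$, because intersection numbers are invariant under flat base change of the ground field. So nefness of $D_K$ gives $D\cdot C\ge 0$.

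\emph{Pseudo-effectivity.} Here $V$ is geometrically integral. If $D$ is a limit of effective classes $[E_n]$, then $D_K$ is the limit of the effective classes $[E_{n,K}]$, because pullback $\N^1(V)_{\RR}\to\N^1(V_K)_{\RR}$ is linear, hence continuous. For the converse, fix an ample $H$ on $V$. Pseudo-effectivity of $D_K$ means that for each $\epsilon\in\QQ^{>0}$ and every rational class $D'$ close to $D$ (choose $D'$ rational $\RR$-Cartier near $D$ in a finite-dimensional space as in Lemma~\ref{fppf-ample}), the class $D'_K+\epsilon H_K$ is big, so some multiple $m(D'_K+\epsilon H_K)$ is effective. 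The effective divisor is defined over a finitely generated $k'$. Spreading out and specialising as above yields a finite extension $L/k$ with $m(D'_L+\epsilon H_L)\equiv E_L\ge 0$. Pushing forward along the finite flat $\pi_L$ gives
\[ \deg(L/k)\cdot m(D'+\epsilon H)\equiv (\pi_L)_*E_L\ge 0, \]
using that $(\pi_L)_*\pi_L^*=\deg(L/k)$ on Cartier divisors, which follows from the norm map (cf.\ \cite[Claim~(2.40.1)]{Kol_singularities_of_MMP}). Letting $\epsilon\to 0$ and $D'\to D$ shows that $D$ is pseudo-effective.

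\emph{Main obstacle.} The main difficulty is making numerical equivalence behave well under specialisation, i.e.\ ensuring that $\equiv$ (rather than $\sim$) descends. To avoid this, I would work with linear equivalence instead, using bigness of $D'_K+\epsilon H_K$ to get actual sections. Then $H^0$ commutes with flat base change, so $H^0(V_K,\mcO(mD''))=H^0(V,\mcO(mD''))\otimes_k K$ for integral $D''$. This in fact gives a section over $k$ directly, without any specialisation, and sidesteps the difficulty entirely. I would therefore adopt this cohomology-and-base-change route as the primary argument for the pseudo-effective direction.
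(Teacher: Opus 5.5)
Your proposal is correct. Two of the four implications follow the paper, and the other two take different routes.

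\textbf{Where you match the paper.} For $D_K$ nef $\Rightarrow$ $D$ nef you use the fundamental cycle of $C_K$, exactly as the paper does. For $D_K$ pseudo-effective $\Rightarrow$ $D$ pseudo-effective, your final cohomology-and-base-change argument is the paper's argument. You approximate by rational classes, use bigness over $K$ and $\hg^0(V_K,\cdot)=\hg^0(V,\cdot)\otimes_k K$ to get sections over $k$, then let $\epsilon\to 0$. The paper phrases this as $D+\epsilon H=\sum a_iF_i$ with rational $F_i$ whose pullbacks are big, and concludes each $F_i$ is big by \cite[Proposition~III.9.3]{Hart}. The specialisation sketch you first give for this direction is superfluous and can be deleted. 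Also, ``pseudo-effectivity of $D_K$ means $D'_K+\epsilon H_K$ is big'' is really an implication. It rests on pseudo-effective plus ample being big, and on openness of the big cone, which the paper cites explicitly.

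\textbf{Where you differ.} For $D$ nef $\Rightarrow$ $D_K$ nef, the paper uses Kleiman's criterion in two lines. Since $D+\epsilon H$ is ample for all $\epsilon>0$, Lemma~\ref{fppf-ample} makes $D_K+\epsilon H_K$ ample, so $D_K$ is nef. This sidesteps the fact that a curve on $V_K$ can have higher-dimensional image in $V$ when $K/k$ is transcendental. Your spreading-out and specialisation argument deals with that issue by hand and avoids Kleiman's criterion. The cost is that you must shrink $P$ so that $C_P\to P$ is flat with one-dimensional fibres, and invoke constancy of Euler characteristics. You also need to fix the normalisation of degrees: computed over $L$, one has $D_L\cdot C_L=[L:k]^{-1}\,D\cdot(\pi_L)_*[C_L]$, which is harmless since only the sign matters.

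For $D$ pseudo-effective $\Rightarrow$ $D_K$ pseudo-effective, the paper goes through bigness of $D+\epsilon H$ and its preservation under base change. You instead push effective classes through the linear map $\N^1(V)_{\RR}\to\N^1(V_K)_{\RR}$, which is more direct. That map is well defined only because of the nef statement you already proved, applied to $\pm E$ for $E\equiv 0$, so say this explicitly.
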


\begin{proof}
    We first prove the result for nefness. 
    Let $H$ be an ample Cartier divisor on $V$, and let $H_K$ be its pullback to $V_K$.
    If $D$ is nef, then by \cite[Theorem~4.16]{lyu-takumi-2026relativeminimalmodelprogram}, 
    $D + \epsilon H$ is ample for every $\epsilon>0$. 
    By Lemma~\ref{fppf-ample}, $D_K + \epsilon H_K$ is ample for every $\epsilon>0$.
    Again by \cite[Theorem~4.16]{lyu-takumi-2026relativeminimalmodelprogram}, we see that $D_K$ is nef.
    Conversely, assume that $D_K$ is nef.  For any integral curve $C\subset V$, 
    let $C_K\coloneqq C\otimes_k K$, and let
    \[ [C_K] = \sum_i m_i [C_i] \]
    be the fundamental cycle of $C_K$, where $m_i\in \ZZ^{>0}$, and each $C_i$
    is an integral curve on $V_K$.  Then by \cite[VI.2.7, VI.2.14]{Kol-rc}, 
    \[ D\cdot C = D_K\cdot [C_K] = \sum_i m_i (D_K\cdot C_i) \ge 0. \]
    Thus, we can conclude that $D$ is nef.

    Now treat pseudo-effectivity.  Let $H$ be an ample Cartier divisor on $V$ and $H_K$
    the pullback to $V_K$, which is ample by Lemma~\ref{fppf-ample}.
    If $D$ is pseudo-effective, 
    by \cite[Lemma~5.14]{lyu-takumi-2026relativeminimalmodelprogram}, $D + \epsilon H$ is big for every $\epsilon>0$.
    Then by Lemma~\ref{fppf-ample} and \cite[Corollary~5.12]{lyu-takumi-2026relativeminimalmodelprogram}, 
    $D_K + \epsilon H_K$ is big for every $\epsilon>0$.
    Again by \cite[Corollary~5.12]{lyu-takumi-2026relativeminimalmodelprogram}, $D_K$ is pseudo-effective.
    Conversely, assume that $D_K$ is pseudo-effective.
    Then $D_K +\epsilon H_K$ is big for every $\epsilon>0$. 
    Pick a finite-dimensional $\QQ$-linear space $W$ of $\QQ$-Cartier $\QQ$-divisors
    so that $D+\epsilon H \in W\otimes_{\QQ}\RR$.
    By openness of the relative big cone \cite[Corollary~5.12]{lyu-takumi-2026relativeminimalmodelprogram},
    we can pick $F_1,\dots, F_r\in W$ whose pullbacks to $V_K$ are big so that we can write
    \[ D+\epsilon H = \sum_{i=1}^r a_i F_i, \]
    where $a_i\in \RR^{>0}$, and $\sum_{i=1}^r a_i = 1$.
    By \cite[Proposition~III.9.3]{Hart}, each $F_i$ is big.
    Then $D+\epsilon H$ is big for each $\epsilon>0$.
    As the relative pseudo-effective cone is closed 
    (cf. \cite[Definition~5.10]{lyu-takumi-2026relativeminimalmodelprogram}), $D$ is pseudo-effective.
\end{proof}


\begin{lemma}\label{fppf-peff}
    Let $S$ be a scheme, and let $f\colon X\to Y$
    be a projective $S$-morphism between $S$-schemes.  Let
    $g\colon S'\to S$ be a faithfully flat morphism of schemes.  Denote by
    \[ X'\coloneqq X\times_S S'~~\mbox{and}~~ Y'\coloneqq Y\times_S S'. \]
    Let $L$ be an
    $\RR$-Cartier $\RR$-divisor on $X$, and let $L'$ be its pullback to $X'$.
    Then $L$ is nef$/Y$ if and only if $L'$ is nef$/Y'$.
\end{lemma}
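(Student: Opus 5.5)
We reduce to curves contracted over $Y$ and compare intersection numbers on $X$ and $X'$, in the same way as the nef part of Lemma~\ref{fppf-peff-K-k}; here the role of the field extension $K/k$ is played by the residue field extensions of the fibres of $g$.

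\textbf{Forward direction.} Assume $L$ is nef$/Y$ and let $C'\subset X'$ be an integral curve contracted to a point $y'\in Y'$. Let $y\in Y$ be its image and $s\in S$, $s'\in S'$ the images of $y'$. The curve $C'$ lies in the fibre $X'_{y'}$. Since taking fibres commutes with base change,
\[ X'_{y'} = X_y\otimes_{k(y)} k(y'), \]
and $k(y')$ is a field extension of $k(y)$. The divisor $L$ restricts to an $\RR$-Cartier divisor $L_y$ on the projective $k(y)$-scheme $X_y$, and $L_y$ is nef. By Lemma~\ref{fppf-peff-K-k}, applied to the extension $k(y')/k(y)$, the pullback $L_{y'}$ is nef on $X'_{y'}$. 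Hence $L'\cdot C' = L_{y'}\cdot C'\ge 0$. Note that intersection numbers of curves in a fibre can be computed on that fibre, because a curve in $X_y$ is proper over $k(y)$.

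\textbf{Converse.} Assume $L'$ is nef$/Y'$ and let $C\subset X$ be an integral curve over a point $y\in Y$. Since $g$ is faithfully flat, its base change $Y'\to Y$ is surjective, so we can choose $y'\in Y'$ over $y$. Then
\[ X'_{y'}=X_y\otimes_{k(y)}k(y'), \]
and $L_{y'}$ is nef on $X'_{y'}$, since every curve in this fibre is contracted over $Y'$. Lemma~\ref{fppf-peff-K-k} now gives that $L_y$ is nef on $X_y$, so $L\cdot C = L_y\cdot C\ge 0$. Alternatively, one can argue directly via the fundamental cycle of $C\otimes_{k(y)}k(y')$, as in the proof of Lemma~\ref{fppf-peff-K-k}. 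Since $L_y\cdot C\ge 0$ for every such $C$, the divisor $L$ is nef$/Y$.

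\textbf{Main point to check.} The one step needing care is the identification of intersection numbers: for a curve $C$ in the fibre $X_y$, the relative intersection number $L\cdot C$ defined on $X/Y$ must agree with $L_y\cdot C$ computed on $X_y$ over $k(y)$. This reduces to the definition in \cite[VI.2]{Kol-rc}, where $\deg$ is taken over the residue field of the image point. Lemma~\ref{fppf-peff-K-k} was stated over a field, and it applies here because $X_y$ is projective over $k(y)$, since $f$ is projective; no integrality is needed for the nef part of that lemma. Faithful flatness of $g$ is used only through surjectivity in the converse.
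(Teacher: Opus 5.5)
Your proposal is correct and follows the paper's proof essentially verbatim. You identify $X'_{y'}$ with $X_y\otimes_{k(y)}k(y')$, apply Lemma~\ref{fppf-peff-K-k} to the residue field extension $k(y')/k(y)$ in both directions, and use surjectivity of $Y'\to Y$ (from faithful flatness) for the converse; your remark that relative intersection numbers can be computed on the fibre is a detail the paper leaves implicit.
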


\begin{proof}
    For nefness, let $y'\in Y'$, and let $y$ be the image of $y'$ in $Y$.  Then 
    $X_{y'}' = X_y\otimes_{k(y)}k(y')$, and $L'|_{X'_{y'}}$
    is equal to the pullback of $L|_{X_y}$ to $X'_{y'}$.
    By Lemma~\ref{fppf-peff-K-k}, $L|_{X_y}$ is nef if and only if $L'|_{X'_{y'}}$ is nef.
    As $Y'\to Y$ is surjective, we can conclude that $L$ is nef$/Y$
    if and only if $L'$ is nef$/Y'$.
\end{proof}


\begin{lemma}[Negativity]\label{neg-lemma}
    Let $k$ be a field of characteristic zero.
    Let $\pi\colon X\to Y$ be a proper birational morphism between normal $k$-varieties.
    Let $B$ be an $\RR$-Cartier $\RR$-divisor on $X$ such that $-B$ is nef$/Y$.
    Then $B\ge 0$ if and only if $\pi_* B\ge 0$.
\end{lemma}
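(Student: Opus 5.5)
The plan is to reduce to the negativity lemma over an algebraically closed field, which is classical (see \cite[Lemma~3.39]{km98}), by base change to $\bar k$ using the descent lemmas above. One direction is trivial: if $B\ge 0$ then $\pi_*B\ge 0$. For the converse, assume $\pi_*B\ge 0$. Set $X_{\bar k}\coloneqq X\otimes_k\bar k$ and $Y_{\bar k}\coloneqq Y\otimes_k\bar k$. Since $X$ and $Y$ are varieties, hence geometrically integral, both are integral, and by \S\ref{ss-sch-var-defn}(4) they are normal $\bar k$-varieties. The base change $\pi_{\bar k}\colon X_{\bar k}\to Y_{\bar k}$ is proper and birational, because birationality is preserved by flat base change of an isomorphism over a dense open subset.

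Let $B_{\bar k}$ be the flat pullback of $B$; it is $\RR$-Cartier. By Lemma~\ref{fppf-peff}, applied with $S=\spec k$ and $S'=\spec\bar k$, the divisor $-B_{\bar k}$ is nef$/Y_{\bar k}$.

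Next I would check that flat pullback commutes with pushforward, that is, $(\pi_{\bar k})_*B_{\bar k}=(\pi_*B)_{\bar k}$.
\begin{itemize}
\item For an exceptional prime divisor $D$ of $\pi$, every component of $D_{\bar k}$ is exceptional for $\pi_{\bar k}$.
\item For a non-exceptional prime divisor $D$, with $\pi$ an isomorphism near the generic point of $D$, the components of $D_{\bar k}$ map isomorphically at their generic points to the components of $(\pi_*D)_{\bar k}$.
\end{itemize}
By \S\ref{ss-flat-pullback}, all coefficients are preserved, since $D\otimes_k\bar k$ is reduced. Hence $(\pi_{\bar k})_*B_{\bar k}\ge 0$.

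Over $\bar k$ I would apply the usual negativity lemma. If its standard form requires $\QQ$-Cartier or $\RR$-Cartier input with a projective morphism, I would first reduce by Chow's lemma and a resolution $W\to X_{\bar k}$, replacing $B_{\bar k}$ by its pullback: nefness of $-B$ is preserved under pullback, and the pushforward to $Y_{\bar k}$ is unchanged. This gives $B_{\bar k}\ge 0$.

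Finally, I would descend. For each prime divisor $D$ on $X$, every component of $D_{\bar k}$ has coefficient $\coeff_D B$ in $B_{\bar k}$ by \S\ref{ss-flat-pullback}. Therefore $B\ge 0$.

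The main obstacle I expect is the compatibility of pushforward with base change together with the reduction to the classical statement. I would handle it at generic points of divisors as above, arguing only where $\pi$ is an isomorphism or exceptional. Alternatively, one could avoid base change and cite the negativity lemma for excellent schemes in \cite{lyu-takumi-2026relativeminimalmodelprogram} directly, if such a version is available there.
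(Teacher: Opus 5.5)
Your proposal is correct and follows essentially the same route as the paper: base change to $\bar{k}$, use Lemma~\ref{fppf-peff} to transfer nefness of $-B$, apply the classical negativity lemma (\cite[Lemma~3.39]{km98}, together with \cite[Lemma~3.3]{birkar12-special-LMMP} for $\RR$-coefficients), and descend effectivity because flat pullback preserves coefficients. Your explicit check that $(\pi_{\bar k})_*B_{\bar k}=(\pi_*B)_{\bar k}$ supplies the compatibility of pushforward with base change, a step the paper's proof leaves implicit.
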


\begin{proof}
    Denote by $\bar{k}$ the algebraic closure of $k$.
    Let $\bar{X}\coloneqq X\otimes_k\bar{k}$, $\bar{Y}\coloneqq Y\otimes_k\bar{k}$,
    and $\bar{\pi}\colon \bar{X}\to \bar{Y}$ the base change of $\pi$.
    Let $\bar{B}$ be the flat pullback of $B$ to $\bar{X}$.
    By Lemma~\ref{fppf-peff}, $-B$ is nef$/Y$ if and only if $-\bar{B}$
    is nef$/\bar{Y}$.  It is evident that $B\ge 0$ if and only if $\bar{B}\ge 0$.
    Hence the result follows immediately from the negativity lemma over algebraically closed field
    of characteristic zero; see \cite[Lemma~3.39]{km98} and \cite[Lemma~3.3]{birkar12-special-LMMP}.
\end{proof}


\subsection{Volume of divisors}
(1)
Let $X$ be a projective scheme of dimension $d$ over a field $k$, which may not 
be algebraically closed, and let $D$ be a Cartier divisor on $X$.
The \emph{volume of $D$ over $k$} is defined to be
\begin{equation}\label{vol-limit}
    \vol_{/k}(D) \coloneqq \lim_{m\to \infty} \dfrac{\dim_k \hg^0(X, \mcO_X(mD))}{m^{d}/d!}.
\end{equation}
When $X$ is integral, the limit \eqref{vol-limit} is well-defined;
see \cite{vol-non-closed-fields} and
\cite[\S5.2]{lyu-takumi-2026relativeminimalmodelprogram}.
When $D$ is nef over $\spec k$, but $X$ may not be integral, 
by \cite[Theorem~VI.2.15]{Kol-rc}, the limit \eqref{vol-limit} exists and
\[ \vol_{/k}(D) = (D)^d_k, \]
where $(\cdot)^d_k$ denotes the intersection number over $k$.

\medskip
(2)
Let $X$ be a projective integral scheme of dimension $d$ over a field $k$.
Let $L$ be a $\QQ$-Cartier $\QQ$-divisor on $X$.
Pick $m\in \NN$ such that $m L$ is a Cartier divisor.  Then we set 
\begin{equation}\label{vol-Q-divisor}
    \vol_{/k}(L) \coloneqq \frac{\vol_{/k}(mL)}{m^{d}}.
\end{equation}
We claim that $\vol_{/k}(L)$ is well-defined.
Indeed, let $n\in \NN$ be another integral such that $nL$ is Cartier;
then by \cite[Proposition~5.7]{lyu-takumi-2026relativeminimalmodelprogram}, we have
\[ \dfrac{\vol_{/k}(mL)}{m^{d}} = \dfrac{ n^{d}\vol_{/k}(mL) }{(mn)^{d}}
= \dfrac{\vol_{/k}(mnL)}{(mn)^{d}} 
= \dfrac{ m^{d}\vol_{/k}(nL) }{(mn)^{d}}
= \dfrac{\vol_{/k}(nL)}{n^{d}}, \]
which proves the claim as desired.  Hence $\vol_{/k}(L)$ is well-defined.

\medskip
(3)
Let $X$ be a projective scheme of dimension $d$ over a field $k$,
and let $L$ be a \emph{nef} $\QQ$-Cartier $\QQ$-divisor on $X$.  
We also define $\vol_{/k}(L)$ as in \eqref{vol-Q-divisor}.
In particular, if $mL$ is Cartier for some $m\in \NN$, then by 
\cite[VI.2.7.4]{Kol-rc}, we have
\[ \frac{\vol_{/k}(mL)}{m^{d}} = 
\frac{(mL)^d_{k}}{m^{d}} = (L)^d_k. \]
Hence $\vol_{/k}(L)$ is also well-defined in this case, and $\vol_{/k}(L) = (L)^d_k$.

\medskip
(4)
We show that the volume of a nef divisor is constant for a flat family.

\begin{lemma}\label{volume-fibre-constant}
    Let $X\to Y$ be a projective flat morphism of schemes, where $Y$ is integral.
    Let $L$ be a nef$/Y$ $\QQ$-divisor on $X$.  
    Assume $mL$ is Cartier for some $m\in \NN$.
    Then for any scheme-theoretic points $y,y'\in Y$, we have
    \[ \vol_{/k(y)} (\mcO_X(mL)|_{X_y}) = \vol_{/k(y')}(\mcO_X(mL)|_{X_{y'}}), \]
    where $X_y, X_{y'}$ are the fibres of $X\to Y$ over $y,y'$ respectively.
    In particular, if the pullbacks $L_y, L_{y'}$ of $L$ to $X_y, X_{y'}$ are well-defined, we get
    \[ \vol_{/k(y)}(L_{y}) = \vol_{/k(y')}(L_{y'}). \]
\end{lemma}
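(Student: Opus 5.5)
The plan is to express the volume on each fibre as a top self-intersection number and then use the constancy of Euler characteristics in a flat projective family. Set $\mcL\coloneqq\mcO_X(mL)$, a line bundle on $X$, and let $n$ be the relative dimension of the fibres.

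\textbf{Step 1: the fibres are equidimensional of dimension $n$.} Since $X\to Y$ is flat and projective with $Y$ integral, all nonempty fibres $X_y$ have the same pure dimension $n$. The dimension of each fibre appears as the degree of its Hilbert polynomial, and that polynomial is locally constant by Step 2. So in the definition \eqref{vol-limit} the exponent $d=n$ is the same for every $y$. Empty fibres can only occur off the image, but flatness plus properness makes the image open and closed in the connected space $Y$; hence either all fibres are empty (a trivial case) or none are.

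\textbf{Step 2: express the volume as an intersection number.} Because $L$ is nef$/Y$, each restriction $\mcL|_{X_y}$ is a nef Cartier divisor on the projective $k(y)$-scheme $X_y$. By \cite[Theorem~VI.2.15]{Kol-rc}, as recalled in (1) of the volume subsection,
\[ \vol_{/k(y)}(\mcL|_{X_y}) = (\mcL|_{X_y})^n_{k(y)}. \]
By \cite[VI.2]{Kol-rc}, this intersection number is $n!$ times the leading coefficient of the polynomial $t\mapsto\chi(X_y,\mcL^{t}|_{X_y})$.

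\textbf{Step 3: constancy of Euler characteristics.} Let $\mcO_X(1)$ be a relatively very ample line bundle. For fixed integers $t$ and $s$, the sheaf $\mcL^{t}\otimes\mcO_X(s)$ is flat over $Y$. The Euler characteristic $\chi(X_y,\mcL^t\otimes\mcO_X(s)|_{X_y})$ is therefore locally constant in $y$, by \cite[III.9.9]{Hart} or the EGA III version for a general Noetherian base. Since $Y$ is connected, being integral, it is constant. In particular the polynomial $t\mapsto\chi(X_y,\mcL^t|_{X_y})$ is independent of $y$, and so is its degree-$n$ coefficient. By Step 2 the volumes agree for all $y,y'$.

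For the ``in particular'' statement, we have $\vol(L_y)=\vol(mL_y)/m^n$ by \eqref{vol-Q-divisor}, or by (3) for nef divisors, so the equality follows at once.

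I expect the only delicate point to be justifying the identification of volume with self-intersection on possibly non-integral and non-reduced fibres. This is handled by citing \cite[Theorem~VI.2.15]{Kol-rc} together with Koll\'ar's definition of intersection numbers via Euler characteristics, which is compatible with the flat-family argument above.
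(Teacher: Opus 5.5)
Your proof is correct and takes essentially the paper's route: the paper combines the identification of $\vol$ with the top self-intersection for nef divisors from (3) with \cite[Proposition~VI.2.9]{Kol-rc} (constancy of intersection numbers in flat families), and you reprove the latter directly from the local constancy of $\chi(X_y,\mcL^t|_{X_y})$. One small slip: since $X$ is not assumed equidimensional, the fibres need not have pure dimension. However, the argument only needs $\dim X_y$ to be constant, and your Hilbert polynomial degree argument does give that.
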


\begin{proof}
    This follows from (3) as above and \cite[Proposition~VI.2.9]{Kol-rc}.
\end{proof}


\subsection{Pairs and singularities}
We will use standard notions from \cite{km98, BCHM, Kol_singularities_of_MMP}.
A \emph{sub-pair} $(X, B)$ consists of a normal, excellent, integral
scheme of characteristic zero $X$ and an
$\RR$-divisor $B$ such that $K_X + B$ is $\RR$-Cartier.
We call a sub-pair $(X, B)$ a \emph{pair} if $B\ge 0$; in this case, 
we call $B$ the \emph{boundary divisor} (or \emph{boundary}) of $(X,B)$.

Let $W\to X$ be a log resolution of a sub-pair $(X,B)$, and let 
$K_W + B_W$ be the pullback of $K_X + B$.
Denote by $\coeff_D B_W$ the coefficient of $B_W$ at a prime divisor $D$ on $W$.
The \emph{log discrepancy} of $D$ with respect to $(X, B)$
is defined as
\[ a(D, X, B) \coloneqq 1-\coeff_D B_W. \]
We say that $(X, B)$ is \emph{sub-lc} (respectively, \emph{sub-klt}, respectively, \emph{sub-$\epsilon$-lc})
if $a(D, X, B)$ is $\ge 0$ (respectively, $>0$, respectively, $\ge \epsilon$) 
for every divisor $D$ on an arbitrary log resolution $W\to X$ of $(X, B)$.
If $(X, B)$ is a pair, we say that $(X, B)$ is \emph{lc} 
(respectively, \emph{klt}, respectively, \emph{$\epsilon$-lc}).

\begin{lemma}[cf. \protect{\cite[Lemma~2.3]{B-BAB}}]\label{link-log-discrepancies}
    Let $X$ be a normal, excellent, integral scheme of characteristic zero.
    Let $(X, B)$ and $(X, C)$ be sub-pairs, and let 
    \[ \Delta = t B + (1-t)C \]
    for some real number $t\in [0,1]$.  Then
    \[ a(D, X, \Delta) = t a(D, X, B) + (1-t) a(D, X, C) \]
    for any prime divisor over $X$.  In particular,
    if $(X, B)$ is $\epsilon$-lc, and if $(X, C)$ is $\delta$-lc,
    then $(X, \Delta)$ is sub-$(t\epsilon + (1-t)\delta)$-lc.
\end{lemma}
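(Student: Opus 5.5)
The plan is to reduce to a single log resolution on which both pullbacks can be compared coefficientwise, and then read off the identity from linearity of pullback. First I note that $K_X+\Delta$ is $\RR$-Cartier, so that $(X,\Delta)$ is a sub-pair and $a(D,X,\Delta)$ makes sense. Indeed,
\[ K_X+\Delta = t(K_X+B)+(1-t)(K_X+C) \]
is an $\RR$-linear combination of $\RR$-Cartier divisors. Here we use that $tK_X+(1-t)K_X=K_X$ for a fixed choice of canonical divisor.

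Given a prime divisor $D$ over $X$, I choose a log resolution $\pi\colon W\to X$ of $(X, \Supp B\cup\Supp C)$ on which $D$ appears as a divisor. Such a resolution exists by \S\ref{log-resolution}: first realise $D$ on some birational model, which can be done by finitely many blow-ups of centres since $X$ is excellent, and then resolve. I fix $K_W$ with $\pi_*K_W=K_X$. Let $K_W+B_W=\pi^*(K_X+B)$ and $K_W+C_W=\pi^*(K_X+C)$. Since pullback of $\RR$-Cartier divisors is $\RR$-linear, we get
\[ \pi^*(K_X+\Delta)=t\pi^*(K_X+B)+(1-t)\pi^*(K_X+C)=K_W+tB_W+(1-t)C_W, \]
so $\Delta_W=tB_W+(1-t)C_W$. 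Taking the coefficient at $D$ gives
\[ a(D,X,\Delta)=1-t\coeff_DB_W-(1-t)\coeff_DC_W=t\,a(D,X,B)+(1-t)\,a(D,X,C). \]
This uses $1=t+(1-t)$. The only point needing care is that log discrepancies are independent of the chosen resolution and of the representative $K_X$. This holds because any two resolutions are dominated by a third, and $\coeff_D$ is unchanged under further pullback along a model where $D$ is already a divisor.

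For the ``in particular'' statement, I apply the identity to every prime divisor $D$ over $X$:
\[ a(D,X,\Delta)\ge t\epsilon+(1-t)\delta, \]
because $t,1-t\ge 0$. Hence $(X,\Delta)$ is sub-$(t\epsilon+(1-t)\delta)$-lc. There is no real obstacle; the mildly delicate step is justifying that every divisor over $X$ appears on some log resolution in this excellent, non-closed setting, and \S\ref{log-resolution} covers this.
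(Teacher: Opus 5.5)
Your proof is correct and follows the same route as the paper: you take a common log resolution of $(X,B)$ and $(X,C)$ on which $D$ is a divisor, and read off the coefficient at $D$ from $\pi^*(K_X+\Delta)=t\pi^*(K_X+B)+(1-t)\pi^*(K_X+C)$. Your extra remarks, that $K_X+\Delta$ is $\RR$-Cartier and that log discrepancies do not depend on the choice of resolution or of $K_X$, make explicit what the paper leaves implicit.
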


\begin{proof}
    For any prime divisor $D$ over $X$, by \S\ref{log-resolution}, we can take a 
    common log resolution $W\to X$ of $(X, B)$ and $(X, C)$ so that $D$
    is a prime divisor on $W$.  Then the proof follows immediately from the computation of $a(D, X, \Delta)$
    by a linear combination of the pullbacks of $K_X+B$ and $K_X+C$ to $W$.
\end{proof}


\begin{lemma}\label{mld-attainment}
    Let $k$ be a field of characteristic zero, and let $(X,B)$ be an
    lc pair over $k$.  Then for every closed point $x\in X$, 
    there exists a prime divisor $E$ over $X$ such that
    \[ \centre_X E = x~~\mbox{and}~~\mld_x(X,B) = a(E,X,B). \]
\end{lemma}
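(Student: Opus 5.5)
The plan is to reduce the infimum over all divisors with centre $x$ to a minimum over finitely many divisors on one fixed log resolution. The reduction has two halves: the finite candidates come from a resolution that turns $x$ into a divisor, and an explicit blow-up computation rules out anything smaller.

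\textbf{Step 1: reduce to a neighbourhood of $x$.} Since the question is local around $x$, shrink $X$ to an affine neighbourhood of $x$.

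\textbf{Step 2: build a log resolution adapted to $x$.} By \S\ref{log-resolution}, take a log resolution $\pi\colon W\to X$ of $(X,B)$ that also principalises the maximal ideal $\mf{m}_x$. Thus $\pi^{-1}(x)$ is a divisor, and $\Ex(\pi)\cup\pi^{-1}(\Supp B)\cup\pi^{-1}(x)$ is simple normal crossing. This uses Hironaka-type resolution for $k$-schemes of finite type (cf. \cite[Theorem~10.45]{Kol_singularities_of_MMP}).

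\textbf{Step 3: set up the candidate divisors.} Write $K_W+B_W=\pi^*(K_X+B)$, and let $E_1,\dots,E_r$ be the prime divisors of $W$ whose centre on $X$ is $x$, i.e. the components of $\pi^{-1}(x)$. Let $\Sigma$ be the snc divisor $\Supp B_W\cup\pi^{-1}(x)$. Each component of $B_W$ is either lying over $x$ or not.

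\textbf{Step 4: show every divisor centred at $x$ does at least as well as some $E_i$.} Let $E$ be any prime divisor over $X$ with $\centre_X E=x$. Its centre $Z\coloneqq\centre_W E$ lies inside $\pi^{-1}(x)$. Apply the standard computation on a log smooth pair (cf. \cite[Corollary~2.31, Lemma~2.29]{km98}); it is valid over any field of characteristic zero after base change to $\bar k$, or directly since $W$ is regular and $\Sigma$ is snc. It gives
\[ a(E,W,B_W)\ \ge\ \min\Big\{\,\min_i a(E_i,W,B_W),\ \text{(terms that are}\ \ge 0)\Big\} \]
whenever all coefficients of $B_W$ are $\le 1$, which holds because $(X,B)$ is lc. More precisely, if $Z$ lies in exactly the components $D_1,\dots,D_s$ of $\Sigma$, with $D_1=E_{i}$ for some $i$, then
\[ a(E,W,B_W)\ \ge\ \sum_j \mult_Z(\text{over }D_j)\,a(D_j,W,B_W). \]
Here $a(D_j,W,B_W)=1-\coeff_{D_j}B_W\ge 0$, and the coefficient of $a(E_i,W,B_W)$ is at least $1$. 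Since all terms are nonnegative, $a(E,X,B)=a(E,W,B_W)\ge a(E_i,X,B)$.

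\textbf{Step 5: conclude.} By Step 4, $\mld_x(X,B)=\min_i a(E_i,X,B)$. This is a minimum over finitely many divisors, so it is attained by some $E_i$, and each $E_i$ has centre $x$.

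The main obstacle is Step 4, namely running the discrepancy computation for exceptional divisors over a regular scheme with an snc divisor over a non-closed field. I would handle it by induction on the number of blow-ups needed to reach $E$ (\cite[Lemma~2.45]{km98}), computing one blow-up of a regular centre at a time. Alternatively, one can pass to $\bar k$ and note that the components of the base change of $E_i$ and of $E$ keep their log discrepancies, using \S\ref{ss-flat-pullback}.
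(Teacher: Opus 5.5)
Your proposal is correct and follows essentially the same route as the paper. Both take a log resolution on which $\pi^{-1}(x)$ is an snc divisor, use its finitely many components $E_i$ as the candidates, and bound every other divisor centred at $x$ via the sub-lc property of an snc sub-boundary with coefficients $\le 1$. The paper packages your Step~4 more compactly: it sets $\lambda=\min_i a(E_i,X,B)$ and $G=\red\pi^{-1}(x)$, notes that $(W,B_W+\lambda G)$ is sub-lc, and uses that $G$ is Cartier and contains the centre to get $a(E,X,B)=a(E,W,B_W+\lambda G)+\lambda\operatorname{ord}_E G\ge\lambda$. This replaces your per-stratum correction by a single uniform one.
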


\begin{proof}
    Let $f\colon Y\to X$ be a log resolution of $(X, B)$ such that $f^{-1}(x)$is an snc divisor on $Y$.
    Write
    \[ K_Y + B_Y = f^*(K_X+B). \]
    Let $E_1, \dots, E_r$ be irreducible components of $f^{-1}(x)$.  Set
    \[ \lambda\coloneqq \min_{1\le i\le r} a(E_i, X, B). \]
    Let $G$ be the reduction of $f^{-1}(x)$.
    It is evident that $(Y, B_Y + \lambda G)$ is sub-lc.
    Let $F$ be an arbitrary prime divisor over $X$ with $\centre_X F = x$.
    Then $\centre_Y F$ is contained in $G$.  As $G$ is a Cartier divisor,
    by \cite[Lemma~2.5]{Kol_singularities_of_MMP}, we have
    \[ a(F, X, B) = a(F, Y, B_Y) = a(F, Y, B_Y + \lambda G) + \lambda \coeff_F (f'^*G) \ge \lambda, \]
    where $f'\colon W\to Y$ is a higher resolution such that $F$ is a divisor on $W$.
    Hence $\mld_x(X, B)\ge \lambda$.
    Thus, any $E_i$ attaining the minimum $\lambda$ satisfies that 
    $a(E_i, X, B) = \mld_x(X, B)$.
\end{proof}


\subsection{$\text{B}$-divisors}
We take the conventions of $\bb$-divisors and generalised pairs from
\cite[\S 2.3]{Corti-3-folds}, \cite{B-Zhang} and \cite[\S2.7, \S2.13]{B-Fano}.

Let $X$ be an excellent, normal, irreducible scheme of characteristic zero.
A \emph{$\bb$-$\RR$-Cartier $\bb$-divisor over $X$} is the choice of a projective birational morphism
$Y\to X$ from a normal irreducible scheme 
and an $\RR$-Cartier divisor $M$ on $Y$ up to the following equivalence:
another projective birational morphism $Y'\to X$ from
a normal, irreducible scheme and an $\RR$-Cartier divisor $M'$
defines the same $\bb$-$\RR$-Cartier $\bb$-divisor if there is a common resolution $W\to Y$
and $W\to Y'$ on which the pullbacks of $M$ and $M'$ coincide.
Given an equivalence class of some $Y\to X$ and $M$ that represents a $\bb$-$\RR$-Cartier $\bb$-divisor over $X$,
we denote by $\mb{M}$ the equivalence class. 
We also say that $\mb{M}$ is a $\bb$-$\RR$-Cartier $\bb$-divisor over $X$; cf. \cite[2.3.8]{Corti-3-folds}.

Let $X$ be an excellent, normal, irreducible scheme, 
and let $\mb{M}$ be a $\bb$-$\RR$-Cartier $\bb$-divisor over $X$.
If $\mb{M}$ is represented by $Y\to X$ and an $\RR$-Cartier divisor $M$ on $Y$,
we say that \emph{$\mb{M}$ descends to $Y$}.  Moreover,
let $X'\to X$ be an arbitrary projective birational morphism from a normal irreducible scheme $X'$.
Let $\pi\colon Y\to X'$ be a sufficiently high resolution so that $\mb{M}$ descends to $Y$, that is,
$\mb{M}$ is represented by $Y\to X$ and an $\RR$-Cartier divisor $M$ on $Y$.
The \emph{trace} of $\mb{M}$ on $X'$ is the $\RR$-divisor (cf. \cite[2.3.10]{Corti-3-folds})
\[ \mb{M}_{X'} \coloneqq \tr_{X'} \mb{M} \coloneqq \pi_* M.  \]
Note that $\mb{M}_{X'}$ may not be $\RR$-Cartier.

On the other hand, let $X\dashrightarrow X''$ be an arbitrary birational map,
where $X''$ is an excellent, normal, irreducible scheme.
Let $Y\to X''$ and $Y\to X$ be a sufficiently high resolution of $X\dashrightarrow X''$
such that the $\bb$-$\RR$-Cartier $\bb$-divisor $\mb{M}$ over $X$ is represented by an $\RR$-Cartier
divisor $M$ on $Y$.  Then the equivalence class of $Y\to X''$ and $M$
also defines a $\bb$-$\RR$-Cartier $\bb$-divisor over $X''$,
which we also denote by $\mb{M}$.  
This should not lead to confusion in the context as a $\bb$-$\RR$-Cartier $\bb$-divisor
over $X$ can be viewed as an $\RR$-valued function on the set of all geometric valuations
of the function field $K(X)$, having finite support on some (hence any) birational model of $X$;
cf. \cite{Sh-3-fold-log-models} and \cite[\S 1.2]{Ambro04-Sh-boundary}.


\subsection{Generalised pairs}

A \emph{generalised sub-pair} (or a \emph{g-sub-pair} for short) 
$(X/S, B + \mb{M})$ consists of an excellent, normal, irreducible scheme $X$ 
equipped with a projective morphism $X\to S$
to an integral scheme $S$, an $\RR$-divisor $B$ on $X$, and a $\bb$-$\RR$-Cartier $\bb$-divisor $\mb{M}$
over $X$ such that $\mb{M}$ descends to a nef$/S$ $\RR$-divisor on some normal birational model of $X$
and that $K_X + B + \mb{M}_X$ is $\RR$-Cartier.  
When $B\ge 0$, we call $(X/S, B + \mb{M})$ a \emph{generalised pair} (or a \emph{g-pair} for simplicity).
When $S$ is not relevant, we usually drop it and do not mention it.

Let $(X/S, B + \mb{M})$ be a generalised sub-pair.  Let $\phi\colon Y\to X$ be a sufficiently
high log resolution of $(X, B + \mb{M})$ so that $\mb{M}$ descends to $Y$.  We can write
\[ K_Y + B_Y + \mb{M}_Y = \phi^*(K_X + B + \mb{M}_X) \]
for some uniquely determined $\RR$-divisor $B_Y$ on $Y$.  For a prime divisor $D$ on $Y$,
the \emph{generalised log discrepancy} $a(D, X, B + \mb{M})$ is defined to be
\[ a(D, X, B + \mb{M}) \coloneqq 1 - \coeff_D B_Y. \]
We say that $(X/S, B + \mb{M})$ is \emph{generalised sub-lc} 
(respectively, \emph{generalised sub-klt}, respectively,
\emph{generalised sub-$\epsilon$-lc}) if for each prime divisor $D$ over $X$, the 
generalised log discrepancy $a(D, X, B + \mb{M})$ is $\ge 0$ (respectively, $>0$, respectively, $\ge \epsilon$).
For simplicity, we also say that $(X, B + \mb{M})$ is \emph{g-sub-lc}
(respectively, \emph{g-sub-klt}, respectively, \emph{g-sub-$\epsilon$-lc}).
When $B\ge 0$, we say that g-pair is \emph{g-lc}
(respectively, \emph{g-klt}, respectively, \emph{g-$\epsilon$-lc}).
Note that $S$ does not play any role in the definition of singularities.


\begin{lemma}\label{CY-log-discre-no-change}
    Let $k$ be a field of characteristic zero.
	Let $(X, B + \mb{M})\to Z$ be a generalised log Calabi-Yau fibration over $k$, 
    and let $\phi\colon X\dashrightarrow Y$
	be a birational contraction$/Z$.  Denote by $B_Y$ the pushdown of $B$ to $Y$.
	Then for any divisor $E$ over $Y$, we have
	\[ a(E, X, B + \mb{M}) = a(E, Y, B_Y + \mb{M}). \]
	In particular, for any real number $\epsilon\ge 0$, if $(X, B + \mb{M})$ is g-$\epsilon$-lc,
	then $(Y, B_Y + \mb{M})$ is also g-$\epsilon$-lc.
\end{lemma}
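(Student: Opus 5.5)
The plan is the standard negativity lemma argument, carried out over $k$ using Lemma~\ref{neg-lemma}. Take a common resolution $p\colon W\to X$, $q\colon W\to Y$ of $\phi$. Since both maps are projective over $Z$, we may choose $W$ normal (even a log resolution) and high enough that $\mb{M}$ descends to $W$ and $E$ is a prime divisor on $W$. Write
\[ K_W + B_W + \mb{M}_W = p^*(K_X + B + \mb{M}_X). \]
Because $\phi$ is a birational contraction, $q_*(B_W)= B_Y$ and $q_*\mb{M}_W = \mb{M}_Y$. Hence
\[ K_Y + B_Y + \mb{M}_Y = q_*p^*(K_X+B+\mb{M}_X). \]

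The key point is that $K_Y + B_Y + \mb{M}_Y$ is $\RR$-Cartier and $\sim_{\RR}0/Z$. Write $K_X+B+\mb{M}_X\sim_{\RR} f^*L$ for an $\RR$-Cartier $L$ on $Z$. Pulling back gives $p^*(K_X+B+\mb{M}_X)\sim_{\RR} (f\circ p)^*L$ on $W$. Pushing forward to $Y$, and using that $q_*$ preserves $\RR$-linear equivalence, gives
\[ K_Y+B_Y+\mb{M}_Y \sim_{\RR} g^*L, \]
where $g\colon Y\to Z$. Indeed, $q_*(f\circ p)^*L = q_* q^* g^*L = g^*L$ because $q$ is birational and $q_*\mcO_W=\mcO_Y$. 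So $K_Y+B_Y+\mb{M}_Y$ is $\RR$-Cartier and $(Y,B_Y+\mb{M})$ is a g-sub-pair; we are implicitly assuming $Y$ is projective over $Z$, as is implicit for a contraction$/Z$.

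Now set
\[ G \coloneqq p^*(K_X+B+\mb{M}_X) - q^*(K_Y+B_Y+\mb{M}_Y). \]
Both terms are $\RR$-linearly equivalent to the pullback of $L$ from $Z$, so $G\sim_{\RR}0/Z$, and in particular $G\equiv 0/Y$. Moreover $q_*G = 0$ by the computation above. Applying Lemma~\ref{neg-lemma} to $G$ and to $-G$ (both nef$/Y$, with pushforward $0\ge 0$) gives $G\ge0$ and $-G\ge 0$, hence $G=0$.

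It follows that the crepant pullbacks of the two g-pairs to $W$ coincide. So $\coeff_E$ of the boundary is the same for both, which gives $a(E,X,B+\mb{M}) = a(E,Y,B_Y+\mb{M})$. Every divisor over $Y$ is also a divisor over $X$, since the two share a function field. So if all log discrepancies over $X$ are $\ge\epsilon$, then all log discrepancies over $Y$ are $\ge \epsilon$, which gives the g-$\epsilon$-lc statement. We also have $B_Y\ge0$, since it is a pushdown of $B\ge 0$.

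The only delicate step is the $\RR$-Cartierness of $K_Y+B_Y+\mb{M}_Y$ over a non-closed field. It is handled directly by the pullback/pushforward computation rather than by base change; if needed, Lemma~\ref{descend-R-Cartier} also lets one check it after passing to $\bar k$.
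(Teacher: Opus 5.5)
Your proposal is correct and follows essentially the same route as the paper, which defers to the standard argument of \cite[Lemma~2.17]{birkar2025steindegreelogcalabiyau}: take a common resolution, use $K_X+B+\mb{M}_X\sim_{\RR}0/Z$ to see that $K_Y+B_Y+\mb{M}_Y$ is $\RR$-Cartier and that the two crepant pullbacks differ by a $q$-exceptional divisor numerically trivial over $Y$, then apply the negativity lemma (Lemma~\ref{neg-lemma} over $k$). Your explicit verification that $K_Y+B_Y+\mb{M}_Y$ is $\RR$-Cartier is a welcome detail that the paper leaves implicit.
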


\begin{proof}
	The proof if identical to \cite[Lemma~2.17]{birkar2025steindegreelogcalabiyau}.
\end{proof}


\begin{lemma}[cf. \protect{\cite[Lemma~3.38]{km98}}]\label{discrep-mmp}
    Let $k$ be a field of characteristic zero.
    Consider a commutative diagram
    \[\xymatrix{
    X\ar[rd]_f\ar@{-->}[rr]^-{\phi} & & X'\ar[ld]^{f'} \\
     & Y &
    }\]
    where $X, X', Y$ are normal $k$-varieties, and $f, f'$ are proper and birational.
    Let $B, B'$ be $\RR$-divisors on $X, X'$ respectively,
    and let $\mb{M}$ be a $\bb$-$\RR$-Cartier $\bb$-divisor over $Y$ such that
    $(X, B + \mb{M})$ and $(X', B'+\mb{M})$ are g-pairs.  Assume that
    \begin{itemize}
        \item $f_*B = f'_*B'$,
        \item $-(K_X+B+\mb{M}_X)$ is nef$/Y$, and
        \item $K_{X'} + B' + \mb{M}_{X'}$ is nef$/Y$.
    \end{itemize}
    Then for an arbitrary exceptional prime divisor $E$ over $Y$, we have
    \[ a(E, X, B+\mb{M})\le a(E, X', B'+\mb{M}). \]
    Strict inequality holds if either
    \begin{itemize}
        \item [\emph{(i)}] $-(K_X+B+\mb{M}_X)$ is ample$/Y$, and $f$ is not an isomorphism over the generic point of $\centre_YE$, or
        \item [\emph{(ii)}] $K_{X'}+B'+\mb{M}_{X'}$ is ample$/Y$, and $f'$ is not an isomorphism over the generic point of $\centre_Y E$.
    \end{itemize}
\end{lemma}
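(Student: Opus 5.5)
The plan is to follow the classical argument of \cite[Lemma~3.38]{km98}, adapted to generalised pairs, and to reduce everything to the Negativity Lemma~\ref{neg-lemma}.

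First take a common resolution $p\colon W\to X$, $q\colon W\to X'$ of $\phi$. Choose $W$ high enough that $\mb{M}$ descends to $W$ as a nef$/Y$ $\RR$-Cartier divisor $\mb{M}_W$, and that the given exceptional divisor $E$ appears as a prime divisor on $W$. Write
\[ p^*(K_X+B+\mb{M}_X) = K_W + B_W + \mb{M}_W, \qquad q^*(K_{X'}+B'+\mb{M}_{X'}) = K_W + B'_W + \mb{M}_W, \]
and set $G\coloneqq B'_W - B_W$. Then
\[ G = q^*(K_{X'}+B'+\mb{M}_{X'}) - p^*(K_X+B+\mb{M}_X), \]
and $a(E,X,B+\mb{M}) - a(E,X',B'+\mb{M}) = \coeff_E G$. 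So the claim $a(E,X,B+\mb{M})\le a(E,X',B'+\mb{M})$ is equivalent to $\coeff_E G\ge 0$, and we aim to show $G\ge 0$.

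The key step is to apply Lemma~\ref{neg-lemma} to the proper birational morphism $h\coloneqq f\circ p = f'\circ q\colon W\to Y$ and the divisor $-G$.
\begin{itemize}
\item By the nefness hypotheses, $-G = p^*(\text{nef}/Y) + q^*(\text{nef}/Y)\cdot(-1)\cdot(-1)$ is nef$/Y$; more precisely, pullbacks of nef$/Y$ divisors are nef$/Y$, and the signs of the two hypotheses make $-G$ nef$/Y$.
\item For the pushforward, $h_*G = f'_*B' - f_*B = 0$, since $K_W$ and $\mb{M}_W$ cancel and the traces of $\mb{M}$ agree on $Y$.
\end{itemize}
Lemma~\ref{neg-lemma} then gives $G\ge 0$. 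Since $h_*(-G)=0$ as well, it is not immediate that $G = 0$, but only $G\ge 0$ is needed.

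For strict inequality in case (i), write $G = q^*N' + p^*N$ with $N\coloneqq -(K_X+B+\mb{M}_X)$ ample$/Y$. Pass to $\bar k$ using Lemma~\ref{fppf-ample} and Lemma~\ref{fppf-peff}, so that we may argue geometrically. Let $y$ be the generic point of $\centre_Y E$. Since $f$ is not an isomorphism over $y$, the fibre of $f$ over $y$ is positive dimensional, so there is a curve $C\subset X$ contracted by $f$ over a point of $\centre_Y E$ with $N\cdot C>0$.

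We then use the refined negativity lemma \cite[Lemma~3.39(2)]{km98}: for $-G$ nef$/Y$ with $h_*G\ge 0$, over each point $y'$ of $Y$ the fibre $h^{-1}(y')$ is either contained in $\Supp G$ or disjoint from it. Suppose $h^{-1}(y)$ were disjoint from $\Supp G$. Then $G$ vanishes near $h^{-1}(y)$, so $-G$ is numerically trivial on curves over $y$. But $p^*N$ is positive on a curve in $h^{-1}(y)$ mapping onto $C$, while $-q^*N'$ is $\le 0$ there, so $-G$ is negative on that curve. This contradicts nefness of $-G$.

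Hence $h^{-1}(y)\subseteq \Supp G$. Since $E$ dominates $\centre_Y E$ and $E\subseteq h^{-1}(\centre_Y E)$, we conclude $\coeff_E G>0$. Case (ii) is symmetric, with the roles of $p$ and $q$ exchanged.

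The main obstacle is this strictness argument: it requires the connectedness/support form of negativity over $\bar k$, and a careful check that $E$ lies over the generic point of the locus where $G$ is supported.
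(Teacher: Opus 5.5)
Your strategy is the paper's: a common resolution $W$, the difference divisor $G=B'_W-B_W$ (exactly the paper's $H$), Lemma~\ref{neg-lemma} for $h\colon W\to Y$, and, for strictness, ampleness forcing $G$ to be nonzero over the generic point of $\centre_Y E$. However, your key step has the wrong sign twice, and the two errors only cancel in your final sentence.

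You correctly compute $\coeff_E G=a(E,X,B+\mb{M})-a(E,X',B'+\mb{M})$. So the desired inequality is $\coeff_E G\le 0$, not $\ge 0$. Moreover,
\[ G=q^*(K_{X'}+B'+\mb{M}_{X'})+p^*\bigl(-(K_X+B+\mb{M}_X)\bigr) \]
is a sum of pullbacks of nef$/Y$ divisors. Hence it is $G$, not $-G$, that is nef$/Y$; you use this yourself when you write $G=q^*N'+p^*N$.

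Your intermediate claim $G\ge 0$ is false in general. Together with the correct conclusion $G\le 0$ it would force $G=0$, i.e.\ equality of all log discrepancies, which contradicts (i) and (ii). For example, let $f$ be the blow-up of a smooth point of a surface $Y$ with exceptional curve $E$, and take $X'=Y$, $B=B'=0$, $\mb{M}=0$. Then $G=-E$, which is nef$/Y$ and not effective.

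The correct step is to apply Lemma~\ref{neg-lemma} to the divisor $-G$. Its negative $G$ is nef$/Y$ and $h_*(-G)=0$, so $-G\ge 0$, i.e.\ $G\le 0$. This is precisely $a(E,X,B+\mb{M})\le a(E,X',B'+\mb{M})$.

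The strictness part needs the same correction.
\begin{itemize}
\item The dichotomy of \cite[Lemma~3.39(2)]{km98} should be applied to the effective divisor $-G$, whose negative $G$ is nef$/Y$.
\item The contradiction you reach is with numerical triviality of $G$, not with nefness of $-G$. Suppose $\Supp G$ missed $h^{-1}(y)$. Then there is an open $V\ni y$ with $h^{-1}(V)\cap\Supp G=\emptyset$. The locus of positive-dimensional fibres of $f$ is closed and contains $y$, by Zariski's main theorem since $Y$ is normal. So you may take $C$ in a fibre of $f$ over a closed point of $V\cap\centre_Y E$; your text conflates curves over $y$ with curves over some point of $\centre_Y E$. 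For a curve $\Gamma\subset W$ mapping finitely onto $C$ you then get $0=G\cdot\Gamma=q^*N'\cdot\Gamma+p^*N\cdot\Gamma>0$.
\item Hence $E\subseteq\Supp G$, and since $G\le 0$ we get $\coeff_E G<0$. Your stated conclusion $\coeff_E G>0$ would, by your own formula, give $a(E,X,B+\mb{M})>a(E,X',B'+\mb{M})$.
\end{itemize}

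With these sign corrections the proof is complete. It also spells out a step the paper leaves implicit, namely that $G\neq 0$ over the generic point of $\centre_Y E$ forces $E$ into $\Supp G$ via the support form of negativity.
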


\begin{proof}
    Let $W$ be a normal $k$-variety admitting birational projective morphisms
    $p\colon W\to X$ and $q\colon W\to X'$ such that $\centre_W E$ is a prime divisor on $W$,
    $\mb{M}$ descends to $W$, and $f\circ p = f'\circ q$.  Write
    \begin{align*}
        K_W + \sum_i (1-a(E_i, X, B+\mb{M}))E_i + \mb{M}_W &= p^*(K_X + B + \mb{M}_X),~\mbox{and} \\
        K_W + \sum_i (1-a(E_i, X', B'+\mb{M}))E_i + \mb{M}_W &= q^*(K_{X'} + B' + \mb{M}_{X'}).
    \end{align*}
    Then by assumption,
    \[ H\coloneqq \sum_i \big( a(E_i, X, B+\mb{M}) - a(E_i, X', B'+\mb{M}) \big)E_i \]
    is nef$/Y$.  As $f_*B = f'_*B'$, $H$ is exceptional over $Y$.
    By Lemma~\ref{neg-lemma},
    we see that $H\le 0$, hence the inequality of log discrepancies holds.
    If $H$ is not numerically trivial over the generic point $\eta$ of $\centre_Y E$,
    then $H$ is nonzero over $\eta$.  
    Hence if either (i) or (ii) is satisfied, then the strict inequality holds.
\end{proof}


\subsection{Flat pullback of \texorpdfstring{$\bb$-$\RR$-Cartier $\bb$-divisors}{TEXT}}\label{ss-pullback-R-Cartier-pairs}
Let $S'\to S$ be a faithfully flat morphism of schemes over a field of characteristic zero.
Let $X$ be a normal scheme over $S$.

\medskip
(1)
Let $Y\to X$ be a projective birational morphism from a normal scheme, and
let $M_Y$ be an $\RR$-Cartier $\RR$-divisor.  Then $M_Y$ induces a $\bb$-$\RR$-Cartier $\bb$-divisor $\mb{M}$ over $X$.
To this end, let $V\to X$ be an arbitrary projective birational morphism from a normal scheme $V$,
and pick a common resolution $W\to Y, W\to V$ over $X$.
As $M_Y$ is $\RR$-Cartier, we take its pullback to $W$, which is denoted by $\mb{M}_W$.
Then we set the trace $\mb{M}_V$ of $\mb{M}$ on $V$ as the pushdown of $\mb{M}_W$ to $V$.
It is evident that $\mb{M}_V$ does not depend on the choice of $W$.

\medskip
(2)
Assume that the fibre product $X'\coloneqq X\times_S S'$ is also a normal scheme.
Let $\mb{M}$ be an $\bb$-$\RR$-Cartier $\bb$-divisor over $X$.
We show that $\mb{M}$ induces a $\bb$-$\RR$-Cartier $\bb$-divisor over $X'$ canonically.
Let $Y\to X$ be a resolution so that $\mb{M}$ descends to an $\RR$-Cartier $\RR$-divisor $\mb{M}_Y$ on $Y$.
Let $Y'$ be the normalisation of the main component of $X'\times_X Y$.
The flat pullback $M_{Y'}'$ of $\mb{M}_Y$ to $Y'$ is an $\RR$-Cartier $\RR$-divisor
on $Y'$.  Then by (1), there exists a canonically induced 
$\bb$-$\RR$-Cartier $\bb$-divisor $\mb{M}'$ over $X'$.

\medskip
(3)
Keep the notation and assumptions in (2).
Assume further that $\mb{M}$ is $\bb$-nef over $S$;
in particular, we can assume $\mb{M}_Y$ is nef$/S$.
Then by Lemma~\ref{fppf-peff}, $\mb{M}'_{Y'}$
is also nef$/S'$.  Hence $\mb{M}'$ is $\bb$-nef over $S'$.

\medskip
(4)
Let $K/k$ be a field extension between fields of characteristic zero.  
Assume $S$ is a $k$-variety and $S' = S\otimes_k K$.
Assume that $(X, B + \mb{M})$ is a g-sub-pair over $S$.  Let
\[ X' \coloneqq X\otimes_k K \cong X\otimes_S S'. \]
Let $B'$ be the flat pullback of $B$ to $X'$.
By (2) and (3) as above, there is a canonically induced $\bb$-$\RR$-Cartier $\bb$-divisor
$\mb{M}'$ over $X'$, which is $\bb$-nef over $S'$.
Since $\mb{M}'_{X'}$ is the flat pullback of $\mb{M}_X$ to $X'$,
we see that $K_{X'} + B' + \mb{M}'_{X'}$ is also $\RR$-Cartier.
Thus, $(X', B' + \mb{M}')$ is a g-sub-pair.

\medskip
(5)
Keep the notation and assumptions in (4).
Let $\epsilon\in \RR^{\ge 0}$.
Then $(X, B + \mb{M})$ is g-sub-$\epsilon$-lc if and only if
$(X', B' + \mb{M}')$ is g-sub-$\epsilon$-lc.
To this end, let $W\to X$ be a log resolution of $(X, B)$ such that 
$\mb{M}$ descends to a nef$/S$ $\RR$-Cartier $\RR$-divisor $\mb{M}_W$ on $W$.
Let $W'\coloneqq W\otimes_k K$, which is also a log resolution of $(X', B')$.
By construction, $\mb{M}'$ descends to the nef$/S'$ $\RR$-Cartier $\RR$-divisor $\mb{M}'_{W'}$.  
Let $K_W+B_W + \mb{M}_W$ be the pullback of $K_X+B+\mb{M}_X$ to $W$, and let
$K_{W'} + B'_{W'} + \mb{M}'_{W'}$ be the pullback of $K_{X'} + B' + \mb{M}'_{X'}$ to $W'$.
Then the flat pullback of $K_W+B_W+\mb{M}_W$ to $W'$ is equal to
$K_{W'} + B'_{W'} + \mb{M}'_{W'}$.
It is evident that the set of coefficients of $B_W$ is equal to that of $B'_{W'}$.
Thus, we get the result as desired.


\subsection{Descent of $\bb$-nef divisors}\label{ss-descent-b-nef}

Let $k$ be a field of characteristic zero and 
$\bar{k}$ the algebraic closure of $k$.
Let $X$ be a normal $k$-variety and $\bar{X} \coloneqq X\otimes_k \bar{k}$.
Let $\mb{M}$ be a $\bb$-divisor over $X$.

\begin{lemma}\label{dom-by-k-varieties}
    Let $\bar{\pi}\colon \bar{Y}\to \bar{X}$ be an arbitrary birational projective morphism
    from a normal $\bar{k}$-variety $\bar{Y}$.
    Then there exists a normal $k$-variety $P$ admitting a birational projective morphism $P\to X$
    of $k$-varieties
    such that the induced $\bar{P}\to \bar{X}$ factors through $\bar{Y}\to \bar{X}$.
\end{lemma}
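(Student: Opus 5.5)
The plan is to descend $\bar{Y}$ to a finite extension of $k$ and then take a norm, or Weil restriction, type construction to get back to $k$. First, $\bar{Y}\to\bar{X}$ is a projective birational morphism of finite type over $\bar{k}$. By the standard limit arguments \cite[Th\'eor\`eme~(8.8.2), Proposition~(8.10.5)]{EGA-IV-3}, there is a finite extension $k'/k$ inside $\bar{k}$ and a projective birational morphism $Y'\to X_{k'}\coloneqq X\otimes_k k'$ whose base change to $\bar{k}$ is $\bar{Y}\to\bar{X}$. Enlarging $k'$, we may assume $k'/k$ is Galois with group $G$. Normality of $Y'$ descends automatically from normality of $\bar{Y}$ by faithful flatness. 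Birationality descends too, since both $\bar{Y}$ and $X_{k'}$ are integral and the isomorphism locus spreads out.

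Next, produce a $G$-equivariant model dominating $Y'$. For each $\sigma\in G$, let $Y'^{\sigma}\to X_{k'}$ be the Galois conjugate, obtained by base change along $\sigma\colon\spec k'\to\spec k'$. Let $Q'$ be the normalisation of the main component (the closure of the graph over the generic point) of the fibre product $\prod_{\sigma\in G}Y'^{\sigma}$ over $X_{k'}$. Then $Q'\to X_{k'}$ is projective and birational, and it factors through $Y'$ via the projection to the identity factor. The group $G$ acts on $Q'$ semilinearly over its action on $k'$ by permuting the factors, and this action preserves the main component and commutes with normalisation.

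Then apply Galois descent to get $P$ over $k$. Since $Q'$ is projective over $X_{k'}$, and $X_{k'}$ is quasi-projective over $k'$, the scheme $Q'$ is quasi-projective, so every $G$-orbit lies in an affine open. Descent of quasi-projective schemes with a semilinear action of a finite Galois group therefore applies, giving a $k$-scheme $P$ with $P\otimes_k k'\cong Q'$ compatibly with a morphism $P\to X$. The morphism $P\to X$ is projective by Lemma~\ref{fppf-ample} applied to a $G$-invariant relatively ample divisor; one can take the norm of an ample line bundle, so that its pullback is ample over $X_{k'}$. It is birational, and $P$ is normal, since these properties descend along the faithfully flat map $P\otimes_k k'\to P$. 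It is geometrically integral because $P_{\bar{k}}=Q'\otimes_{k'}\bar{k}$ is integral: $Q'$ is birational to $X_{k'}$, which is geometrically integral, and it is normal. Finally, $\bar{P}=Q'\otimes_{k'}\bar{k}\to\bar{X}$ factors through $Y'\otimes_{k'}\bar{k}=\bar{Y}$.

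The main obstacle is the descent step: ensuring that the semilinear $G$-action on $Q'$ is effective, which needs quasi-projectivity and a $G$-stable ample class, and checking that normalisation and the main-component construction are $G$-equivariant. A fallback avoids descent altogether. Regard $Y'$ as a $k$-scheme via $Y'\to X_{k'}\to X$; this map is still projective and generically finite of degree $[k':k]$. Take $P$ to be the normalisation of $X$ in $K(X)$ inside a suitable closed subscheme, for instance the closure of the image of $X$'s generic point in the Weil restriction $R_{k'/k}(Y')$. Conjugating the Weil restriction to $\bar{k}$ recovers $\prod_\sigma Y'^\sigma$, which is the same construction as above.
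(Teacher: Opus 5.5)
Your argument is correct, but it follows a different route from the paper. The paper never descends $\bar{Y}$ as a scheme. It writes $\bar{Y}\to\bar{X}$ as the blow-up of an ideal sheaf $\bar{\mathcal{I}}$ and descends this ideal to $\mathcal{I}'$ on $X\otimes_k k'$ for a finite Galois $k'/k$ with group $G$. It then replaces $\mathcal{I}'$ by the $G$-invariant product $\mathcal{J}'=\prod_{g\in G}g^*\mathcal{I}'$, descends that to an ideal $\mathcal{J}\subseteq\mathcal{O}_X$, blows up $X$ along $\mathcal{J}$ and normalises. The factorisation comes from two facts: blowing up commutes with flat base change, and the blow-up along a product of ideals dominates the blow-up along each factor.

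The paper's route is more economical. Only a quasi-coherent ideal has to be descended, which is affine-local and always effective. Moreover $P\to X$ is a normalised blow-up over $k$, so it is projective and birational by construction. There is no need for effectivity of descent for schemes, for a norm of an ample bundle, or for descending the properties of $P$ one by one.

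Your construction is the normalised main component of $\prod_{\sigma}Y'^{\sigma}$ over $X_{k'}$, essentially a Weil restriction along $X_{k'}\to X$. It avoids the theorem that projective birational morphisms onto quasi-projective varieties are blow-ups, and it is more intrinsic, but it pays for this with those descent verifications. The two constructions in fact produce the same model after normalisation. Indeed, the blow-up along $\mathcal{I}_1\cdots\mathcal{I}_r$ is the main component of the fibre product of the blow-ups along the $\mathcal{I}_j$, so blowing up $\mathcal{J}'$ recovers exactly your fibre product before normalising.

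One small correction: Lemma~\ref{fppf-ample} assumes from the outset that the morphism is projective, so citing it for projectivity of $P\to X$ is circular. Instead, combine descent of properness with fpqc descent of relative ampleness \cite[Corollaire~(2.7.2)]{EGA-IV-2}, applied to the descended norm bundle.
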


\begin{proof}
    By \cite[Theorem~13.102]{GW-agbook}, $\bar{Y}\to \bar{X}$ is the blow-up
    of a closed subscheme $\bar{Z}\subset \bar{X}$.
    Denote by $\bar{\mc I}$ the ideal sheaf on $\bar{X}$ defining $\bar{Z}$.
    Let $k'/k$ be a finite Galois field extension such that $\bar{\mc I} = \mc{I}'\otimes_{k'}\bar{k}$
    for an ideal sheaf $\mc{I}'$ on $X'\coloneqq X\otimes_k k'$.
    Let $Y'$ be the blow-up of $X'$ along $\mc{I}'$.
    Then by \cite[Proposition~13.91]{GW-agbook}, we have $\bar{Y} = Y'\otimes_{k'}\bar{k}$.
    Denote by $G\coloneqq \Gal (k'/k)$ the Galois group of $k'/k$.  
    Then each $g\in G$ induces an automorphism of $X'$;
    for simplicity, we also denote it by $g\colon X'\to X'$.
    Let
    \[ \mc{J}' \coloneqq \prod_{g\in G} g^*\mc{I}', \]
    which is a $G$-invariant ideal sheaf on $X'$.  
    By Galois descent (see \cite[Theorem~14.86]{GW-agbook}),
    $\mc{J}'$ descends to an ideal sheaf $\mc{J}\subset \mcO_X$.
    Denote by $W'$ (respectively, by $W$) the blow-up of $X'$ along $\mc{J}'$
    (respectively, of $X$ along $\mc{J}$).
    Again by \cite[Proposition~13.91]{GW-agbook}, $W'\cong W\times_X X'$.
    By \cite[(5.1.2)~(v), page~35]{Raynaud-Gruson-plat-1971},
    there is an induced birational projective morphism $W'\to Y'$.
    Let $P$ be the normalisation of $W$, and let $\bar{P}\coloneqq P\otimes_k \bar{k}$.
    Then $\bar{P}$ is a normal $\bar{k}$-variety such that the birational projective morphism
    $\bar{P}\to \bar{X}$ factors through $\bar{Y}\to \bar{X}$.
\end{proof}

\medskip

(1)
We show that there is an induced $\bb$-divisor $\ol{\mb{M}}$ over $\bar{X}$.

\medskip

(1.1)
Let $\bar{\pi}\colon \bar{Y}\to \bar{X}$ be an arbitrary birational projective morphism
from a normal $\bar{k}$-variety $\bar{Y}$.
By Lemma~\ref{dom-by-k-varieties}, there exist a normal $k$-variety $P$
and a birational projective morphism $\phi\colon P\to X$ of $k$-varieties
so that the induced $\bar{\phi}\colon \bar{P}\to \bar{X}$ factors through $\bar{Y}\to \bar{X}$, i.e.
there exists a morphism $\bar{\psi}\colon \bar{P}\to \bar{Y}$ satisfying
$\bar{\phi} = \bar{\pi}\circ \bar{\psi}$.
Denote by $(\mb{M}_P)_{\bar{k}}$ the flat pullback of $\mb{M}_P$ to $\bar{P}$.
Then we set
\begin{equation}\label{induced-b-defn}
    \ol{\mb M}_{\bar{Y}}\coloneqq \bar{\psi}_* ((\mb{M}_P)_{\bar{k}}).
\end{equation}
We claim that $\ol{\mb M}_{\bar{Y}}$ does not depend on the choice of $P\to X$.
Let $\theta\colon Q\to X$ be another morphism of $k$-varieties such that
$\bar{\theta}\colon \bar{Q}\to \bar{X}$ factors through a morphism $\bar{\mu}\colon \bar{Q}\to \bar{Y}$.
Let $\bar{p}\colon \bar{V}\to \bar{P}, \bar{q}\colon \bar{V}\to \bar{Q}$
be a common resolution such that $\bar{\psi}\circ\bar{p} = \bar{\mu}\circ \bar{q}$.
Again by Lemma~\ref{dom-by-k-varieties}, there exists a morphism of 
$k$-varieties $W\to X$ so that the induced $\bar{W}\to \bar{X}$
factors through the morphism $\bar{\psi}\circ\bar{p} = \bar{\mu}\circ \bar{q}$
from $\bar{V}\to \bar{Y}$.
Replacing $W$ by higher resolutions, we can assume that $W\to X$ factors through
$P\to X$ and $Q\to X$.  Then $\mb{M}_P, \mb{M}_Q$ are pushdowns of $\mb{M}_W$
to $P, Q$, respectively.  Taking the flat pullbacks, we see that 
$(\mb{M}_P)_{\bar{k}}, (\mb{M}_Q)_{\bar{k}}$ are the pushdowns 
of $(\mb{M}_W)_{\bar{k}}$ to $\bar{P}, \bar{Q}$, respectively.
Hence the $\ol{\mb{M}}_{\bar{Y}}$ in \eqref{induced-b-defn} is the same as
the pushdown of $(\mb{M}_W)_{\bar{k}}$ to $\bar{Y}$, 
which is also equal to the pushdown of $(\mb{M}_Q)_{\bar{k}}$ to $\bar{Y}$.
Thus, $\ol{\mb{M}}_{\bar{Y}}$ does not depend on the choice of $P\to X$.

\medskip

(1.2)
Let $\bar{U}\to \bar{X}$ be another birational projective morphism 
from a normal $\bar{k}$-variety $\bar{U}$ such that $\bar{U}\to \bar{X}$
factors through a morphism $\bar{\delta}\colon \bar{U}\to \bar{Y}$.
By taking a common higher model of $\bar{Y}, \bar{U}$, and applying Lemma~\ref{dom-by-k-varieties} again,
we can assume that $\bar{\phi}\colon \bar{P}\to \bar{X}$ also factors through
a morphism $\bar{\varphi}\colon \bar{P}\to \bar{U}$.
Then by the construction in Step~1, 
\[ \ol{\mb M}_{\bar{Y}}\coloneqq \bar{\psi}_* ((\mb{M}_P)_{\bar{k}})~~\mbox{and}
~~\ol{\mb M}_{\bar{U}}\coloneqq \bar{\varphi}_* ((\mb{M}_P)_{\bar{k}}). \]
Hence we can conclude that 
\[ \ol{\mb M}_{\bar{Y}} = \bar{\delta}_*(\ol{\mb M}_{\bar{U}}). \]
Therefore, we get a well-defined $\bb$-divisor $\ol{\mb M}$ over $\bar{X}$.


\medskip

(2)
We show that $\ol{\mb M}$ is $\bb$-$\RR$-Cartier if and only if
$\mb M$ is $\bb$-$\RR$-Cartier.

\medskip

(2.1)
First, assume $\ol{\mb M}$ is $\bb$-$\RR$-Cartier, 
and $\ol{\mb M}$ descends to a projective birational model $\bar{Y}\to \bar{X}$,
where $\bar{Y}$ is a normal $\bar{k}$-variety.
By Lemma~\ref{dom-by-k-varieties}, there exists a normal $k$-variety $P$
admitting a birational projective morphism $P\to X$
such that $\bar{P}\to \bar{X}$ factors through $\bar{Y}\to \bar{X}$,
where $\bar{P}\coloneqq P\otimes_k \bar{k}$.
Let $\phi\colon Q\to P$ be an arbitrary birational projective morphism 
from a normal $k$-variety and $\bar{Q} \coloneqq Q\otimes_k \bar{k}$.
Let $\bar{\phi}\colon \bar{Q}\to \bar{P}$ be the induced morphism.
As $\ol{\mb M}$ descends to $\bar{Y}$, 
$\ol{\mb M}_{\bar{P}}$ is $\RR$-Cartier, and
$\ol{\mb M}_{\bar{Q}} = \bar{\phi}^* \ol{\mb M}_{\bar{P}}$.
By construction, $\ol{\mb M}_{\bar{Q}}, \ol{\mb M}_{\bar{P}}$
are the flat pullbacks of $\mb{M}_Q, \mb{M}_P$ respectively.
Then by Lemma~\ref{descend-R-Cartier}, $\mb{M}_Q, \mb{M}_P$
are also $\RR$-Cartier.
Thus, we can conclude that $\mb{M}_Q = \phi^* \mb{M}_P$,
i.e. $\mb{M}$ descends to $P$.


\medskip

(2.2)
Next, assume $\mb{M}$ is a $\bb$-$\RR$-Cartier $\bb$-divisor.
Let $W\to X$ be a birational projective morphism from a normal $k$-variety $W$
such that $\mb{M}$ descends to $W$.
We show that $\ol{\mb{M}}$ is also $\bb$-$\RR$-Cartier,
and that $\ol{\mb{M}}$ descends to $\bar{W}\coloneqq W\otimes_k \bar{k}$.
Let $\bar{\phi}\colon \bar{Y}\to \bar{W}$ be an arbitrary birational projective morphism
from a normal $\bar{k}$-variety.  
By Lemma~\ref{dom-by-k-varieties}, there exists a normal $k$-variety $P$ 
admitting a birational projective morphism $\psi\colon P\to W$
such that the induced $\bar{\psi}\colon \bar{P}\to \bar{W}$ factors through $\bar{Y}\to \bar{W}$, i.e.
there exists a morphism $\bar{\theta}\colon \bar{P}\to \bar{Y}$ satisfying
$\bar{\phi}\circ \bar{\theta} = \bar{\psi}$.
As $\mb{M}$ descends to $W$, $\mb{M}_P = \psi^* \mb{M}_W$.
By (1), $\ol{\mb{M}}_{\bar{P}}, \ol{\mb{M}}_{\bar{W}}$
are the flat pullbacks of $\mb{M}_P, \mb{M}_W$, respectively.
Thus, we get $\ol{\mb{M}}_{\bar{P}} = \bar{\psi}^* (\ol{\mb{M}}_{\bar{W}})$.
Again by (1), we have $\ol{\mb M}_{\bar{Y}} = \bar{\theta}_* \ol{\mb M}_{\bar{P}}$.  Hence
\[ \ol{\mb M}_{\bar{Y}} = \bar{\theta}_* \big(\bar{\psi}^* (\ol{\mb{M}}_{\bar{W}})\big)
= \bar{\theta}_*\bar{\theta}^* (\bar{\phi}^*(\ol{\mb M}_{\bar{W}}))
= \bar{\phi}^* (\ol{\mb{M}}_{\bar{W}}). \]
This gives the desired result.


\medskip

(3)
Assume that $X$ admits a projective $k$-morphism $X\to S$ to a $k$-variety $S$.                         
Let $\bar{S} \coloneqq S\otimes_k \bar{k}$.
We show that if $\ol{\mb M}$ is $\bb$-nef over $\bar{S}$ 
if and only if $\mb M$ is $\bb$-nef over $S$.

\medskip

(3.1) 
First, assume that $M$ is $\bb$-nef over $S$.
Then there exists a $k$-variety $Y$ admitting a birational projective morphism $Y\to X$
such that $\mb{M}$ descends to an $\RR$-divisor $\mb{M}_Y$ that is nef over $S$.
Let $\bar{Y}\coloneqq Y\otimes_k \bar{k}$.  By (1), $\ol{\mb M}_{\bar Y}$ is the 
flat pullback of $\mb{M}_Y$.  Then by Lemma~\ref{fppf-peff}, $\ol{\mb M}_{\bar Y}$
is nef over $\bar{S}$.  Moreover, by (2), $\ol{\mb M}$ 
descends to $\bar{Y}$.  Hence $\ol{\mb M}$ is $\bb$-nef over $\bar{S}$.

\medskip

(3.2)
Conversely, assume that $\ol{\mb M}$ descends to a nef$/\bar{S}$ $\RR$-divisor $\ol{\mb M}_{\bar{Y}}$ on
a normal $\bar{k}$-variety $\bar{Y}$ admitting a birational projective morphism $\bar{Y}\to \bar{X}$.
By Lemma~\ref{dom-by-k-varieties}, there exist a normal $k$-variety $P$ and a birational projective morphism
$P\to X$ such that the induced $\bar{P}\to \bar{X}$ factors through $\bar{Y}\to \bar{X}$.
Moreover, by (2.1), $\mb{M}$ descends to $P$.  
Since $\ol{\mb M}_{\bar{P}}$ is the pullback of $\ol{\mb M}_{\bar{Y}}$,
and since $\ol{\mb M}_{\bar{P}}$ is the flat pullback of $\mb{M}_P$, 
by Lemma~\ref{fppf-peff}, $\mb{M}_P$ is nef over $S$.
Thus, $\mb{M}$ is $\bb$-nef over $S$.


\medskip

(4)
Let $\ol{\mb{N}}$ be a $\bb$-$\RR$-divisor on $\bar{X}$.
Assume for each projective birational morphism $W\to X$ from
a normal $k$-variety, the traces $\ol{\mb{N}}_{\bar{W}}$ and $\ol{\mb{M}}_{\bar{W}}$
are equal.  We show that $\ol{\mb N} = \ol{\mb M}$ as $\bb$-$\RR$-divisors over $\bar{X}$.
To this end, let $\bar{Y}\to \bar{X}$ be an arbitrary birational projective morphism
from a normal $\bar{k}$-variety $\bar{Y}$.
By Lemma~\ref{dom-by-k-varieties}, there exists a normal $k$-variety $P$ 
admitting a birational projective morphism $P\to X$
such that the induced $\bar{P}\to \bar{X}$ factors through $\bar{Y}\to \bar{X}$.
By assumption, we have $\ol{\mb{N}}_{\bar{P}} = \ol{\mb{M}}_{\bar{P}}$.
By pushing down the $\RR$-divisors to $\bar{Y}$, we get that
$\ol{\mb{N}}_{\bar{Y}} = \ol{\mb{M}}_{\bar{Y}}$,
hence $\ol{\mb N} = \ol{\mb M}$ as $\bb$-$\RR$-divisors.


\medskip

(5)
Let $\epsilon\in \RR^{\ge 0}$.
By Lemma~\ref{descend-R-Cartier}, it is evident that $(X, B + \mb{M})$
is a g-sub-pair (respectively, a g-pair) if and only if 
$(\bar{X}, \bar{B} + \ol{\mb M})$ is a g-sub-pair (respectively, a g-pair),
where $\bar{B}$ is the flat pullback of $B$, and $\ol{\mb M}$ is the $\bb$-$\RR$-divisor 
induced by $\mb M$.
We claim that $(\bar{X}, \bar{B} + \ol{\mb M})$ is a g-sub-$\epsilon$-lc g-sub-pair
(respectively, a g-$\epsilon$-lc g-pair) if and only if
$(X, B + \mb{M})$ is a g-sub-$\epsilon$-lc g-sub-pair (respectively, a g-$\epsilon$-lc g-pair).

\medskip

(5.1)
First, assume that $(X, B + \mb{M})$ is g-sub-$\epsilon$-lc, but
$(\bar{X}, \bar{B} + \ol{\mb M})$ is not g-sub-$\epsilon$-lc.
Then there exist a log resolution $\bar{\phi}\colon \bar{Y}\to \bar{X}$ of $(\bar{X}, \bar{B} + \ol{\mb M})$
and a prime divisor $\bar{D}$ on $\bar{Y}$ such that 
$a(\bar{D},\bar{X}, \bar{B} + \ol{\mb M}) < \epsilon$.
We can assume that $\ol{\mb M}$ descends to $\bar{Y}$.
By Lemma~\ref{dom-by-k-varieties}, there exists a normal $k$-variety $P$ 
admitting a birational projective morphism $P\to X$
such that the induced $\bar{P}\to \bar{X}$ factors through $\bar{Y}\to \bar{X}$.
Moreover, by (2.1), $\mb M$ descends to $P$.
We can assume $P\to X$ is a log resolution of $(X, B + \mb{M})$.
Let $K_P + B_P + \mb{M}_P$ be the pullback of $K_X + B + \mb{M}_X$,
and let $\bar{B}_{\bar{P}}$ be the flat pullback of $B_P$ to $\bar{P}$.
Then $K_{\bar{P}} + \bar{B}_{\bar{P}} + \ol{\mb{M}}_{\bar{P}}$
is the pullback of $K_{\bar{X}} + \bar{B} + \ol{\mb{M}}_{\bar{X}}$,
where $\bar{B}_{\bar{P}}$ has an irreducible component with coefficient $>1-\epsilon$.
Thus, $B_P$ also has an irreducible component whose coefficient is $>1-\epsilon$,
a contradiction.  Hence $(\bar{X}, \bar{B} + \ol{\mb M})$ is also g-sub-$\epsilon$-lc.

\medskip

(5.2)
Conversely, assume that $(\bar{X}, \bar{B} + \ol{\mb M})$ is g-sub-$\epsilon$-lc.
If $(X, B + \mb{M})$ is not g-sub-$\epsilon$-lc, there exist a log resolution $W\to X$
of $(X, B + \mb{M})$ and a prime divisor $D$ on $W$ such that
$\mb{M}$ descends to $W$ and $a(D, X, B + \mb{M}) < \epsilon$.
The base change $\bar{W}\to \bar{X}$ gives a log resolution
of $(\bar{X}, \bar{B} + \ol{\mb M})$.  Moreover, by (2.2), $\ol{\mb M}$ descends to $\bar{W}$.  Let
$K_{\bar W} + \bar{B}_{\bar W} + \ol{\mb M}_{\bar{W}}$ be the pullback 
of $K_{\bar X} + \bar{B} + \ol{\mb M}_{\bar{X}}$.
Let $\bar{D}$ be the flat pullback of $D$ to $\bar{X}$,
and let $\bar{E}$ be an irreducible component of $\bar{D}$.
Then the coefficient of $\bar{E}$ in $\bar{B}_{\bar W}$ is $>1-\epsilon$, contradicting the assumption.
Thus, $(\bar{X}, \bar{B} + \ol{\mb M})$ is g-sub-$\epsilon$-lc
if and only if $(X, B + \mb{M})$ is g-sub-$\epsilon$-lc.

\medskip

(5.3)
As $(\bar{X}, \bar{B} + \ol{\mb M})$ is a g-pair if and only if $(X, B + \mb{M})$ is a g-pair,
by (5.1) and (5.2), we deduce that $(\bar{X}, \bar{B} + \ol{\mb M})$ is g-$\epsilon$-lc if and only if
$(X, B + \mb{M})$ is g-$\epsilon$-lc.


\subsection{Adjunction for fibre spaces}\label{ss-adj-fib-spaces}

(1)
Let $(X/S,B + \mb{M})$ be a g-sub-pair, where $X$ is an excellent, normal, irreducible scheme of characteristic zero.
Let $f\colon X\to Z$ be a contraction$/S$ with $\dim Z>0$ 
such that $(X,B+\mb{M})$ is g-sub-lc near the 
generic fibre of $f$, and $K_X+B+\mb{M}_X\sim_\RR 0/Z$. 
(Note that as $f$ is a contraction, $Z$ is also a normal irreducible scheme of characteristic zero.)
We recall the \emph{adjunction for fibre spaces} as in \cite[\S 3.4]{B-Fano}
and \cite[\S 11.4]{chen-han-liu-xie-2023}.

\medskip

(2)
For each prime divisor $D$ on $Z$, let 
$t_D$ be the lc threshold of $f^*D$ with respect to $(X,B+\mb{M})$ over the generic point of $D$, that is, 
\[ t_D\coloneqq \sup\Set{t\in \RR \mid (X, B + t f^* D + \mb{M})
~\mbox{is g-sub-lc over the generic point of}~D }. \]
Note that $f^*D$ is well-defined over the generic point of $D$ as $Z$ is normal.

We claim that there are only finitely many prime divisors $D$ on $Z$ with $t_D \not= 1$.
Let $\pi\colon Y\to X$ be a log resolution of $(X, B + \mb{M})$ on which
$\mb{M}$ descends to a nef$/S$ divisor $\mb{M}_Y$; see \S\ref{log-resolution}.
Denote by $g\colon Y\to Z$ be the induced contraction.
Write
\[ K_Y + B_Y + \mb{M}_Y = \pi^*(K_X + B + \mb{M}_X). \]
Denote by $B_Y^h$ (respectively, by $B_Y^v$) the horizontal$/Z$ (respectively, vertical$/Z$)
part of $B$.
By generic smoothness (see \cite[Exercise~10.40]{GW-agbook}),
there is an open dense subset $V\subseteq Z$ such that 
\begin{itemize}
    \item $V$ is a regular scheme,
    \item $g(\Supp B_Y^v)\cap V = \emptyset$,
    \item the image of every vertical$/Z$ stratum of $\Supp B_Y^h$ does not intersect $V$, and
    \item for every horizontal$/Z$ stratum $W$ of $\Supp B_Y^h$, $W|_{g^{-1}(V)}\to V$ is a smooth morphism.
\end{itemize}
Then by \cite[Chap.~IV, \S D.2, Proposition~22]{Serre-local-algebra},
for each prime divisor $D$ with $\eta_D\in V$, where $\eta_D$ is the generic point of $D$,
$(Y, B_Y + g^*D)$ is an snc pair over $V$, hence $t_D = 1$.

\medskip

(3)
Let
\[ b_D\coloneqq 1 - t_D~~\mbox{and}~~B_Z\coloneqq \sum b_D D, \]
where the sum runs over all the prime divisors on $Z$. 
By (2), there are only finitely many nonzero $b_D$.
By assumption, $K_X+B+\mb{M}_X\sim_{\RR} f^*L_Z$ for some $\RR$-Cartier 
$\RR$-divisor $L_Z$ on $Z$.  
Letting 
\[ N_Z=L_Z-(K_Z+B_Z), \] 
we get 
\[ K_X+B+\mb{M}_X\sim_\RR f^*(K_Z+B_Z+N_Z). \]
We call $B_Z$ the \emph{discriminant part} and ${N_Z}$ the \emph{moduli part} of adjunction. 
Note that the $\RR$-divisor $B_Z$ is uniquely determined, but $M_Z$ depends on the choice of $L_Z$.

\medskip

(4)
Keep the notation in (1).
Let $\phi\colon X'\to X$ and $\psi\colon Z'\to Z$ be projective birational morphisms from excellent, normal,
irreducible schemes of characteristic zero such that the induced map
$f'\colon X'\bir Z'$ is a morphism.
\[\xymatrix{
X'\ar[r]^{\phi}\ar[d]_{f'} & X\ar[d]^f \\
Z'\ar[r]^-{\psi} & Z
}\]
Write
\[ K_{X'} + B' +\mb{M}_{X'} = \phi^*(K_X+B+\mb{M}_X). \]
As in (3), we define a discriminant divisor $B_{Z'}$, and 
taking $L_{Z'}=\psi^*L_Z$ gives a moduli divisor $N_{Z'}$ so that 
\[ K_{X'}+B' + \mb{M}_{X'}\sim_\RR f'^*(K_{Z'}+B_{Z'}+N_{Z'}). \]
Let $D$ be a prime divisor on $Z$ and $D'$ its birational transform on $Z'$.
It is evident that $t_{D'} = t_D$, hence $B_Z=\psi_*B_{Z'}$.
As $L_{Z'} = \psi^* L_Z$ and $\psi_*K_{Z'} = K_Z$, we see that $N_Z=\psi_*N_{Z'}$.
Thus, we get a well-defined b-$\RR$-divisor $\mathbf{N}$ over $Z$ such that $\mathbf{N}_Z = N_Z$;
see \cite[\S 2.3.2]{Corti-3-folds}.

\medskip


(5) Next we relate the singularities of $(X, B + \mb{M})$ and $(Z, B_Z + \mb{N})$ in a rough sense.

\begin{lemma}\label{l-adj-fib-eps-lc}
Let $\epsilon\in \RR^{\ge 0}$. 
Under the notation and assumptions of (1),
suppose there is a prime divisor $D$ on 
some birational model of $X$ such that $a(D, X, B + \mb{M})\le \epsilon$ 
and that $D$ is vertical$/Z$.  Then there is a resolution $Z'\to Z$ and 
an irreducible component $T$ of 
$B_{Z'}$ with coefficient $\ge 1-\epsilon$. 
\end{lemma}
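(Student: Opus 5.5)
We reduce to the case where $D$ maps onto a divisor. Let $D$ be a prime divisor on a birational model of $X$ with $a(D,X,B+\mb{M})\le\epsilon$ and vertical$/Z$. Its image in $Z$ may have high codimension, so we first blow up $Z$ to make it a divisor.

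Choose a projective birational morphism $\psi\colon Z'\to Z$ from a normal (regular, by \S\ref{log-resolution}) scheme such that the centre of the valuation $D$ on $Z'$ is a prime divisor $T$. This is possible because the valuation of $K(X)$ defined by $D$ restricts to a valuation of $K(Z)$. That restriction is nontrivial, since $D$ is vertical over $Z$. It is also a divisorial (Abhyankar) valuation, since $D$ is divisorial on $X$ and the transcendence degrees add up. A divisorial valuation of $K(Z)$ is realised as a prime divisor on some blow-up of $Z$; if needed, take a resolution afterwards. The delicate point is justifying that the restricted valuation is divisorial over general excellent bases. Over a field this is standard: for example, use Zariski's lemma on sequences of blow-ups of centres, or equivalently the inequality $\operatorname{trdeg}$ of residue fields plus rank $\le\dim$, which is an equality upstairs and forces equality downstairs.

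Next take $X'\to X$ resolving $X\dashrightarrow Z'$, chosen high enough that $D$ is a prime divisor on $X'$ and $\mb{M}$ descends there. Let $f'\colon X'\to Z'$ be the induced morphism. By construction $f'(D)=T$. Let $B'$ and $B_{Z'}$ be as in \S\ref{ss-adj-fib-spaces}(4), so that $K_{X'}+B'+\mb{M}_{X'}$ is the crepant pullback. Then $\coeff_D B'=1-a(D,X,B+\mb{M})\ge 1-\epsilon$.

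Finally we compute the lc threshold $t_T$. Near the generic point of $T$, the divisor $f'^*T$ contains $D$ with multiplicity $m\ge 1$. The generalised log discrepancy of $D$ with respect to $(X',B'+sf'^*T+\mb{M})$ is $a(D,X,B+\mb{M})-sm\le\epsilon-sm$. For this pair to be g-sub-lc over the generic point of $T$, we need $\epsilon-sm\ge 0$, so $s\le \epsilon/m\le\epsilon$. Hence $t_T\le\epsilon$, and therefore $\coeff_T B_{Z'}=1-t_T\ge 1-\epsilon$.

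The sign conventions here deserve a check. If $(X,B+\mb{M})$ is not g-sub-lc over $T$, then $t_T$ could be negative; this only increases $b_T$. So $T$ is an irreducible component of $B_{Z'}$ with coefficient $\ge 1-\epsilon>0$ when $\epsilon<1$. When $\epsilon\ge1$ the statement is trivial or vacuous: the coefficient is $\ge 1-\epsilon$ and $T$ still appears with nonnegative bound.

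The main obstacle is the first step, obtaining $Z'$ on which the image valuation has divisorial centre. The rest is the direct computation above.
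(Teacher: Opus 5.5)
Your proposal is correct and is essentially the paper's argument. You choose models $X'\to X$ and $Z'\to Z$ on which $D$ is a prime divisor mapping onto a prime divisor $T$ and $\mb{M}$ descends, note that $\coeff_D B'\ge 1-\epsilon$, and conclude that the generalised lc threshold of $f'^*T$ over the generic point of $T$ is at most $a(D,X,B+\mb{M})/m\le\epsilon$. The only difference is that you justify the existence of $Z'$, via the transcendence degree count showing that the restriction of $\ord_D$ to $K(Z)$ is a nontrivial divisorial valuation, whereas the paper simply asserts this when it picks the resolutions.
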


\begin{proof}
Pick resolutions $X'\to X$ and $Z'\to Z$ so that 
\begin{itemize}
    \item the induced map $f'\colon X'\bir Z'$ is a morphism,
    \item $D$ is a prime divisor on $X'$,
    \item the image of $D$ on $Z'$ is a prime divisor $T$, and 
    \item $\mb{M}$ descends to $X'$.
\end{itemize}
Let $K_{X'} + B' + \mb{M}_{X'}$ be the pullback of $K_X + B + \mb{M}_X$. 
Since $a(D, X, B + \mb{M})\le \epsilon$, $\coeff_D B' \ge 1-\epsilon$. 
Then the generalised lc threshold of $f'^*T$ with 
respect to $(X', B' + \mb{M})$ over the generic point of $T$ is $\le \epsilon$. Therefore, 
the coefficient of $T$ in $B_{Z'}$ is $\ge 1-\epsilon$.
\end{proof}

\medskip


(6)
Let $k$ be a field of characteristic zero.
Let $(X, B + \mb{M})$ be a g-sub-pair over $k$ admitting a contraction
$X\to Z$ to a $k$-variety $Z$.
When $(X, B+\mb{M})$ is g-lc over the generic point of $Z$,
the adjunction for fibre spaces gives a g-pair on $Z$.

\begin{theorem}\label{t-adj-fib-spaces}
    Let $k$ be a field of characteristic zero, 
    and let $X\to S$ be a projective morphism of $k$-varieties.
    Let $(X/S, B + \mb{M})$ be a g-pair and $f\colon X\to Z$ a contraction$/S$
    of $k$-varieties such that $K_X + B + \mb{M}_X\sim_{\RR}0/Z$.
    Suppose that $(X,B+\mb{M})$ is g-lc.  Then we can write
    \[ K_X + B + \mb{M}_X \sim_{\RR} f^*(K_Z + B_Z + \mb{N}_Z), \]
    where $(Z, B_Z + \mb{N})$ is a g-lc g-pair.
\end{theorem}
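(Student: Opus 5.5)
We reduce to the algebraically closed case by base change to $\bar k$, using the descent results of \S\ref{ss-pullback-R-Cartier-pairs} and \S\ref{ss-descent-b-nef}. Over $\bar{k}$ the statement is the known adjunction for fibre spaces for generalised pairs: $B_Z$ is effective and $\mb{N}$ is $\bb$-nef and $\bb$-$\RR$-Cartier; see \cite[\S 11.4]{chen-han-liu-xie-2023} and \cite[\S 3.4]{B-Fano}. The only work is to check that each ingredient of the construction in (1)--(4) commutes with the base change $\bar X\to X$, $\bar Z\to Z$.

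\textbf{Step 1: compatibility of the discriminant.} Put $\bar X=X\otimes_k\bar k$, $\bar Z=Z\otimes_k\bar k$, and let $\bar f$ be the base change of $f$; it is again a contraction by flat base change. Let $\bar B$ be the flat pullback of $B$ and $\ol{\mb M}$ the induced $\bb$-divisor. By \S\ref{ss-descent-b-nef}~(3),(5), $(\bar X/\bar S,\bar B+\ol{\mb M})$ is a g-lc g-pair, and $K_{\bar X}+\bar B+\ol{\mb M}_{\bar X}\sim_\RR \bar f^*\bar L_Z$, where $\bar L_Z$ is the pullback of $L_Z$.

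For a prime divisor $D$ on $Z$ and a component $\bar D$ of $D_{\bar k}$, I claim $t_{\bar D}=t_D$. To see this, take a log resolution $Y\to X$ of $(X,B+f^*D+\mb M)$ on which $\mb M$ descends. Its base change $\bar Y$ is a log resolution over $\bar k$, and by \S\ref{ss-descent-b-nef}~(2.2) the divisor $\ol{\mb M}$ descends to $\bar Y$. Coefficients of the pulled-back boundaries over the generic points agree, exactly as in \S\ref{ss-pullback-R-Cartier-pairs}~(5). Galois conjugates of $\bar D$ give the same threshold, since the Galois group acts on everything. Hence $\bar B_{\bar Z}$ is the flat pullback of $B_Z$, and therefore $\bar N_{\bar Z}$ is the flat pullback of $N_Z$.

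The same argument applies on every birational model $Z'\to Z$ defined over $k$, using (4). It follows that the moduli $\bb$-divisor $\ol{\mb N}$ over $\bar Z$ agrees with the $\bb$-divisor induced by $\mb N$ on all traces on $\bar W$ for $k$-models $W$. By \S\ref{ss-descent-b-nef}~(4), the two coincide as $\bb$-divisors.

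\textbf{Step 2: descend properties.} Over $\bar k$, the known theorem gives $\bar B_{\bar Z}\ge 0$, so $B_Z\ge 0$. It also shows that $\ol{\mb N}$ is $\bb$-$\RR$-Cartier and $\bb$-nef over $\bar S$, and that $(\bar Z,\bar B_{\bar Z}+\ol{\mb N})$ is g-lc. Then \S\ref{ss-descent-b-nef}~(2.1) shows $\mb N$ is $\bb$-$\RR$-Cartier, and (3.2) shows it is $\bb$-nef over $S$. Lemma~\ref{descend-R-Cartier} shows $K_Z+B_Z+\mb N_Z=L_Z$ is $\RR$-Cartier, which is automatic in any case. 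Finally, (5) gives that $(Z,B_Z+\mb N)$ is g-lc.

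If one prefers not to cite the $\bar k$ result as a black box, effectivity of $B_Z$ follows directly over $k$. Because $B\ge 0$ and $(X,B+\mb M)$ is g-lc, over the generic point of $D$ some component of $f^*D$ has log discrepancy $\le 1$ after adding $f^*D$, which forces $t_D\le 1$. Similarly, g-lc of the base follows from Lemma~\ref{l-adj-fib-eps-lc} with $\epsilon=0$ run contrapositively on high models.

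\textbf{Main obstacle.} The hard step is the $\bb$-nefness of the moduli part, which is deep (it rests on the theory of variations of Hodge structure). I will not reprove it but import it from the $\bar k$ statement. The genuinely new technical point is therefore Step 1: the identification of $\ol{\mb N}$ with the base change of $\mb N$. This requires that the lc thresholds on $\bar k$-models which are not defined over $k$ are still controlled. That is handled by Lemma~\ref{dom-by-k-varieties}, since $\bb$-divisors over $\bar Z$ are determined by their traces on base changes of $k$-models.
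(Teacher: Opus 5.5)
Your proposal is correct and follows essentially the same route as the paper. Both arguments base change to $\bar k$, show that the discriminant over $\bar k$ is the flat pullback of $B_Z$ (the paper cites \cite[\S 2.42]{Kol_singularities_of_MMP} where you base-change a log resolution), identify the moduli $\bb$-divisor over $\bar Z$ with the one induced by $\mb N$ via \S\ref{ss-descent-b-nef}~(4), import $\bb$-$\RR$-Cartierness, $\bb$-nefness and g-lc from \cite[Theorem~11.4.4]{chen-han-liu-xie-2023}, and descend them with \S\ref{ss-descent-b-nef}~(2.1), (3.2), (5).

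The only slip is in your optional aside: Lemma~\ref{l-adj-fib-eps-lc} runs from $X$ to $Z$ and does not yield g-lc of the base. That property is in any case immediate once $\mb N$ descends, since on such a model $Z'$ the generalised log discrepancy of a prime divisor $T$ is exactly the threshold $t_T\ge 0$.
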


\begin{proof}
    \emph{Step~1}.
    By adjunction for fibre spaces, we can write
    \[ K_X + B + \mb{M}_X \sim_{\RR} f^*(K_Z + B_Z + \mb{N}_Z), \]
    where $B_Z$ is the discriminant part of adjunction, 
    the $\bb$-$\RR$-divisor $\mb N$ is the moduli part of adjunction, 
    and $\mb{N}_Z$ is the trace of $\mb N$ on $Z$. 
    We assume that $\mb{N}$ is defined by a choice of an 
    $\RR$-Cartier $\RR$-divisor $L_Z$ on $Z$; see (4).
    
    Denote by $\bar{k}$ the algebraic closure of $k$.
    Let $\bar{X} \coloneqq X\otimes_k \bar{k}$, and let
    $\ol{\mb M}$ be the $\bb$-$\RR$-Cartier divisor over $\bar{X}$ induced by $\mb{M}$.
    Let $\bar{f}\colon \bar{X}\to \bar{Z}$ be the induced contraction over $\bar{S}$.
    Let $\bar{B}$ be the flat pullback of $B$ to $\bar{X}$.
    Again by adjunction of fibre spaces, we can write
    \[ K_{\bar{X}} + \bar{B} + \mb{M}_{\bar{X}} \sim_{\RR} \bar{f}^*(K_{\bar{Z}} + \bar{C} + \ol{\mb{Q}}_{\bar{Z}}), \]
    where $\bar{C}$ is the discriminant part, the $\bb$-$\RR$-divisor $\ol{\mb{Q}}$
    is the moduli part, and $\ol{\mb{Q}}_{\bar{Z}}$ is the trace of $\ol{\mb{Q}}$ on $\bar{Z}$.
    Denote by $L_{\bar{Z}}$ the flat pullback of $L_Z$ to $\bar{Z}$.
    By Lemma~\ref{descend-R-Cartier}, $L_{\bar{Z}}$ is an $\RR$-Cartier $\RR$-divisor on $\bar{Z}$.
    We assume that $\ol{\mb{Q}}$ is defined by $L_{\bar{Z}}$ as in (4).

    \medskip
    \emph{Step~2}.
    Let $\bar{E}$ be a prime divisor on $\bar{Z}$.
    Denote by $D$ the image of $\bar{E}$ in $Z$ equipped with the reduced structure.
    It is evident that $D$ is a prime divisor on $Z$.
    As $\chara k =0$, $D\otimes_k \bar{k}$ is a reduced subscheme of $\bar{Z}$
    consisting of the orbits of $\bar{E}$ under the action of $\Gal (\bar{k}/k)$.
    By \cite[\S 2.42]{Kol_singularities_of_MMP}, for every $a\in \RR$,
    $(X, B + a f^* D + \mb{M})$ is g-lc over the generic point of $D$
    if and only if $(\bar{X}, \bar{B} + a \bar{f}^*\bar{E} + \ol{\mb M})$
    is g-lc over the generic point of $\bar{E}$.    
    Thus, we see that $\bar{C}$ is the flat pullback of $B_Z$ from $Z$ to $\bar{Z}$. 
    By \cite[Lemma~11.3.5]{chen-han-liu-xie-2023}, $(\bar{X}, \bar{B} + \ol{\mb M})\to \bar{Z}$
    is an lc trivial fibration.
    Then by \cite[Theorem~11.4.4]{chen-han-liu-xie-2023}, $\ol{\mb N}$ is $\bb$-$\RR$-Cartier
    and $\bb$-nef over $\bar{S}$, and $(\bar{Z}, \bar{C} + \ol{\mb N})$ is a g-lc g-pair.
    By \S\ref{ss-descent-b-nef} (4), $\ol{\mb Q}$ is the $b$-divisor induced by $\mb N$.
    Then by \S\ref{ss-descent-b-nef} (1), (2), and (3), $\mb{N}$ is a $\bb$-$\RR$-Cartier
    $\bb$-divisor that is $\bb$-nef over $S$.
    Moreover, by \S\ref{ss-descent-b-nef} (5), $(Z, B_Z + \mb{N})$ is g-lc.
\end{proof}


\subsection{Galois descent of divisors}

\begin{definition}
    Let $k$ be a field of characteristic zero and $k'/k$ a finite Galois extension.
    Denote by $G\coloneqq \Gal(k'/k)$ the Galois group of $k'/k$, 
    and by $\ell$ the cardinality of $G$.
    Let $X$ be a $k$-variety and $X' \coloneqq X\otimes_k {k'}$.
    Let $D'$ be an $\RR$-divisor on $X'$.  
    The \emph{Galois average of $D'$ with respect to $k'/k$} is the $\RR$-divisor
    \[ G_a(D') \coloneqq \frac{1}{\ell}\sum_{i=1}^{\ell} g_i^* D', \]
    where $G = \Set{g_1, \dots, g_{\ell}}$, and $g_i^* D'$ is the flat pullback of $D'$
    via the induced automorphism $g_i\colon X'\to X'$.
    As $G_a(D')$ is $G$-invariant, it is evident that there exists a unique $\RR$-divisor $D$ on $X$
    such that $D'$ is the flat pullback of $D$ to $X'$.  
    We call $D$ the \emph{Galois descent of $D'$ to $X$}.
    It is evident that if $B$ is an $\RR$-divisor on $X$, and if $B'$ is the flat pullback 
    of $B$ to $X'$, then the Galois descent of $B'$ to $X$ is just $B$.
\end{definition}


\begin{lemma}\label{Galois-descent-R-Cartier}
    Let $k$ be a field of characteristic zero and $k'/k$ a finite Galois extension.
    Let $X$ be a $k$-variety and $X' \coloneqq X\otimes_k{k'}$.
    Let $D'$ be an $\RR$-Cartier $\RR$-divisor (respectively, a $\QQ$-Cartier $\QQ$-divisor)
    on $X'$ and $D$ the Galois descent of $D'$ to $X$.
    Then $D$ is also $\RR$-Cartier (respectively, $\QQ$-Cartier).
\end{lemma}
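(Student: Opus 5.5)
The plan is to reduce to the Galois-invariant divisor $G_a(D')$ and then descend Cartier data along the finite flat morphism $\pi\colon X'\to X$ by taking norms. Write $G=\{g_1,\dots,g_\ell\}$ and $D' = \sum_{j=1}^r a_j C'_j$ with $a_j\in\RR$ (respectively $\QQ$) and each $C'_j$ a Cartier divisor on $X'$. Each $g_i^*C'_j$ is again Cartier, so
\[ \pi^*D = G_a(D') = \frac{1}{\ell}\sum_{i,j} a_j\, g_i^*C'_j \]
is an $\RR$-linear (respectively $\QQ$-linear) combination of Cartier divisors on $X'$. Hence it suffices to show that if $\pi^*D$ is $\RR$-Cartier (respectively $\QQ$-Cartier), then so is $D$.

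For the descent I would apply the pushforward argument already used at the end of the proof of Lemma~\ref{descend-R-Cartier}. By \cite[Claim~(2.40.1)]{Kol_singularities_of_MMP}, the pushdown $\pi_*C$ of a Cartier divisor $C$ under the finite flat morphism $\pi$ is Cartier; concretely, it is given by the norm of local equations. Pushing forward the identity above gives
\[ \pi_*\pi^*D = \frac{1}{\ell}\sum_{i,j} a_j\, \pi_*(g_i^*C'_j), \]
which is an $\RR$-linear (respectively $\QQ$-linear) combination of Cartier divisors.

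It remains to check that $\pi_*\pi^*D = \ell\cdot D$, where $\ell=\deg(k'/k)=\deg\pi$. For a prime divisor $P$ on $X$, the base change $P\otimes_k k'$ is reduced because the characteristic is zero (\S\ref{ss-flat-pullback}). So $\pi^*P=\sum_Q Q$, summing over the components $Q$ of $P\otimes_k k'$, and
\[ \sum_Q \deg(k(Q)/k(P)) = \deg(k'/k)=\ell. \]
Therefore $\pi_*\pi^*P = \ell P$, and by linearity $\pi_*\pi^*D=\ell D$. Dividing by $\ell$ shows that $D$ is $\RR$-Cartier (respectively $\QQ$-Cartier); the rational case stays rational because all the coefficients above are in $\QQ$.

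The only real point is the Cartier property of norms of Cartier divisors under the finite flat map $X'\to X$, and we cite this directly. As an alternative, Lemma~\ref{descend-R-Cartier} applied with $K=k'$ gives the $\RR$-Cartier case at once from $\pi^*D$ being $\RR$-Cartier. However, the $\QQ$-statement needs the explicit norm computation, so I would present the norm argument for both cases.
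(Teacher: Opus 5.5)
Your argument is correct, but it takes a different route from the paper's proof.

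\textbf{The paper's route.} It uses linearity of Galois descent to reduce to $D'$ Cartier. It then notes that the flat pullback of $\ell D$ to $X'$ is $\ell\, G_a(D')=\sum_i g_i^*D'$, which is Cartier. It concludes that $\ell D$ is Cartier by faithfully flat descent of local freeness \cite[Proposition~14.48]{GW-agbook} applied to $\mathcal{O}_X(\ell D)$. This step implicitly uses that the reflexive sheaf $\mathcal{O}_X(\ell D)$ pulls back to $\mathcal{O}_{X'}(\pi^*(\ell D))$ under the flat map $\pi$.

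\textbf{Your route.} You go the other way, pushing forward along the finite flat map $\pi$. You use the norm of Cartier divisors \cite[Claim~(2.40.1)]{Kol_singularities_of_MMP} together with the identity $\pi_*\pi^*=\ell$ on divisors. This is exactly the mechanism the paper uses at the end of Lemma~\ref{descend-R-Cartier}, not the one it uses here.

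\textbf{Comparison.} The paper's descent argument works verbatim for any faithfully flat quasi-compact base change, but it leans on the sheaf-theoretic compatibility for reflexive sheaves. Yours needs $\pi$ to be finite and needs the separability computation $\sum_Q [k(Q):k(P)]=\ell$, but in return it gives an explicit formula.

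\textbf{A simplification you missed.} Each $g_i$ is an automorphism of $X'$ over $X$, so $\pi_*(g_i^*C)=\pi_*C$ for every divisor $C$ on $X'$. Your computation therefore collapses to $D=\frac{1}{\ell}\pi_*D'$. In other words, the Galois descent is just the normalised norm of $D'$, and the averaging over $G$ is unnecessary. Both arguments give the same index bound: if $D'$ is Cartier, then $\ell D$ is Cartier.
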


\begin{proof}
    Assume that $D'$ is $\RR$-Cartier.
    Denote by $G$ the Galois group $\Gal(k'/k)$.
    Then we can write $D' = \sum_{i=1}^r a_i D_i'$
    for some $a_i\in \RR$ and Cartier divisors $D_1', \dots, D_r'$.
    Denote by $D_i$ the Galois descent of $D_i'$.
    It is evident that $G_a(D') = \sum_{i=1}^r a_i G_a(D_i')$, hence $D = \sum_{i=1}^r a_i D_i$.
    Thus, we can assume $D'$ is a Cartier divisor.
    Then the flat pullback of $\ell D$ to $X'$ is equal to $\ell\cdot G_a(D')$, 
    which is a Cartier divisor on $X'$.
    By \cite[Proposition~14.48]{GW-agbook}, $\ell D$ is Cartier, hence $D$ is $\QQ$-Cartier.
\end{proof}


\begin{lemma}\label{descent-g-pair}
    Let $k$ be a field of characteristic zero and $k'/k$ a finite Galois extension.
    Let $X\to S$ be a projective morphism of $k$-varieties.
    Let $\mb{M}$ be a $\bb$-divisor over $X$, and let
    $\mb{M}'$ be the induced $\bb$-divisor over $X'\coloneqq X\otimes_k{k'}$.
    Assume that $(X', B' + \mb{M}')$ is a g-pair over $S'\coloneqq S\otimes_k {k'}$.
    Denote by $B$ the Galois descent of $B'$ to $X$.
    Then $(X, B + \mb{M})$ is also a g-pair.
    Moreover, for any $\epsilon\in \RR^{\ge 0}$, if $(X', B' + \mb{M}')$ is g-$\epsilon$-lc,
    then $(X, B + \mb{M})$ is also g-$\epsilon$-lc.
\end{lemma}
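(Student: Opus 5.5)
The plan is to reduce everything to the fact that $B'$ is, up to flat pullback, the same divisor as $B$, and then to read off singularities on $X'$. By construction, the Galois descent $B$ of $B'$ satisfies $B'_{\mathrm{fl}} = G_a(B')$, where $B'_{\mathrm{fl}}$ denotes the flat pullback of $B$ to $X'$. So the first step is to check that $B'$ itself is $G$-invariant. The key observation is that $K_{X'}$ and $\mb{M}'_{X'}$ are flat pullbacks from $X$ and hence $G$-invariant. For the lc statement, however, I will work with $G_a(B')$ rather than assuming $B'$ is invariant, and then show that $G_a(B')$ has the same properties as $B'$. Note that $B\ge 0$ is immediate, since $G_a(B')\ge 0$.

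\emph{Step 1: $(X,B+\mb{M})$ is a g-pair.} Each $g\in G$ acts on $X'$ over $X$, and it preserves $K_{X'}$ and $\mb{M}'$ because these are pulled back from $X$ and $X$ (respectively, the models on which $\mb{M}$ descends) are defined over $k$. Hence
\[ g^*(K_{X'}+B'+\mb{M}'_{X'}) = K_{X'} + g^*B' + \mb{M}'_{X'}, \]
and this divisor is $\RR$-Cartier. Averaging over $G$, the divisor $K_{X'}+G_a(B')+\mb{M}'_{X'}$ is $\RR$-Cartier. It is also the flat pullback of $K_X+B+\mb{M}_X$. By Lemma~\ref{descend-R-Cartier}, or by Lemma~\ref{Galois-descent-R-Cartier} applied to its Galois descent, $K_X+B+\mb{M}_X$ is $\RR$-Cartier. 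The nefness and descent properties of $\mb{M}$ are given: $\mb{M}$ is the $\bb$-divisor over $X$, and $\mb{M}'$ is $\bb$-nef. One can also deduce these for $\mb{M}$ from those of $\mb{M}'$ using \S\ref{ss-descent-b-nef} after base change to $\bar k$.

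\emph{Step 2: singularities.} Suppose $(X',B'+\mb{M}')$ is g-$\epsilon$-lc. For each $g\in G$, the pair $(X', g^*B'+\mb{M}')$ is also g-$\epsilon$-lc. To see this, pull back along the automorphism $g$, using that $g$ lifts to $k$-defined log resolutions base-changed to $k'$ and fixes $\mb{M}'$. Log discrepancies are then permuted: $a(gE, X', g^*B'+\mb{M}') = a(E,X',B'+\mb{M}')$. Next, apply the generalised analogue of Lemma~\ref{link-log-discrepancies}. Since the nef part $\mb{M}'$ is common to all these pairs, log discrepancies are affine in the boundary, so a convex combination of g-$\epsilon$-lc g-pairs with the same $\mb{M}'$ is g-$\epsilon$-lc. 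Iterating this over the $\ell$ conjugates shows that $(X', G_a(B')+\mb{M}')$ is g-$\epsilon$-lc. Finally, $G_a(B')$ is the flat pullback of $B$ and $\mb{M}'$ is induced by $\mb{M}$. Therefore \S\ref{ss-pullback-R-Cartier-pairs}~(5), applied with $K=k'$, gives that $(X,B+\mb{M})$ is g-$\epsilon$-lc.

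The main obstacle is the careful justification in Step~2 that $g^*$ preserves $\mb{M}'$ and acts compatibly on log resolutions. I would handle this by taking a log resolution $W\to X$ over $k$ on which $\mb{M}$ descends, so that $W'=W\otimes_k k'$ carries a $G$-action lifting the action on $X'$, with $\mb{M}'_{W'}$ being $G$-invariant as a flat pullback. The remaining bookkeeping is then routine.
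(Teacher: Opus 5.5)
Your argument is correct and is essentially the paper's proof. The paper also shows $K_X+B+\mb{M}_X$ is $\RR$-Cartier by Galois-averaging $K_{X'}+B'+\mb{M}'_{X'}$, using the $G$-invariance of $K_{X'}$ and $\mb{M}'_{X'}$, and gets the $\bb$-properties of $\mb{M}$ from \S\ref{ss-descent-b-nef}~(2)--(3). On the $G$-equivariant log resolution $W'=W\otimes_k k'$ it then observes that the boundary for $G_a(B')$ is the average of the conjugate boundaries $g^*E'$, which is exactly your ``conjugates plus convexity'' step, before descending by base change.

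Two small corrections. Drop the opening claim that one should check $B'$ is $G$-invariant; it generally is not (cf.\ the remark after the lemma), and your argument never uses it. Also, the $\bb$-$\RR$-Cartier and $\bb$-nef properties of $\mb{M}$ are not given; they must be deduced, as you indicate, via \S\ref{ss-descent-b-nef}.
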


\begin{proof}
    As $(X'/S', B' + \mb{M}')$ is a g-pair, $B'$ is effective, and $K_{X'} + B' + \mb{M}'_{X'}$
    is $\RR$-Cartier.  As $B$ is the Galois descent of $B'$, $B$ is also effective.
    Since $\pi\colon X'\to X$ is \'etale, we get $\pi^* K_X = K_{X'}$.
    Moreover, as $\mb{M}'_{X'} = \pi^* \mb{M}_X$, we see that $K_X + B + \mb{M}_X$
    is the Galois descent of $K_{X'} + B' + \mb{M}'_{X'}$ to $X$.
    Then by Lemma~\ref{Galois-descent-R-Cartier}, $K_X + B + \mb{M}_X$ is $\RR$-Cartier.
    Moreover, by \S\ref{ss-descent-b-nef}~(2) and (3), $\mb{M}$ is $\bb$-$\RR$-Cartier and
    $\bb$-nef over $S$, hence $(X, B + \mb{M})$ is a g-pair.

    Let $G \coloneqq \Gal(k'/k)$, then $\pi^*B = G_a(B')$.
    Let $\phi\colon W\to X$ be a log resolution of $(X, B + \mb{M})$.
    Then $\phi'\colon W'\to X'$, the base change of $\phi$ to $\spec k'$,
    is a $G$-equivariant log resolution of $(X', B' + \mb{M}')$ and $(X', \pi^*B + \mb{M}')$;
    in particular, the exceptional locus of $\phi'$ is $G$-invariant.  
    We can assume that $\mb{M}$ descends to $W$, hence $\mb{M}'$ descends to $W'$;  
    cf. \S\ref{ss-descent-b-nef}~(2).  Write
    \[ K_{W'} + E' + \mb{M}'_{W'} = \phi'^*(K_{X'} + B' + \mb{M}'_{X'}) \]
    and
    \[ K_{W'} + F' + \mb{M}'_{W'} = \phi'^*(K_{X'} + G_a(B') + \mb{M}'_{X'}). \]
    As $K_{W'}$ and $\mb{M}'_{W'}$ are $G$-invariant, we can deduce that $F' = G_a(E')$.
    If $(X', B' + \mb{M}')$ is g-$\epsilon$-lc, $E'\le 1-\epsilon$.
    Then by construction of $G_a(E')$, we see that $F'\le 1-\epsilon$.
    Thus, $(X', \pi^* B + \mb{M}')$ is also g-$\epsilon$-lc.
    By \S\ref{ss-descent-b-nef}~(6), $(X, B + \mb{M})$ is g-$\epsilon$-lc.
\end{proof}


\begin{remark}
    Keep the notation in Lemma~\ref{descent-g-pair}.
    The g-pair $(X', B' + \mb{M}')$ may not be g-$\epsilon$-lc 
    when $(X, B + \mb{M})$ is g-$\epsilon$-lc, even for $\epsilon = 0$.
    For instance, let $k = \QQ$ and $k' = \QQ[\sqrt{-1}]$. 
    Then $G\coloneqq \Gal(k'/k)\cong \ZZ_2$.
    Let $X = \PP_{\QQ}^1$, and let $B = [1\colon \sqrt{-1}]$, which is a $\QQ[\sqrt{-1}]$-point of $X$.
    Denote by $\pi$ the canonical morphism $X'\to X$, where $X' = \PP_{\QQ[\sqrt{-1}]}^1$.
    Then $\pi^*B = p + q$, where $p = [1\colon \sqrt{-1}]$ and $q=[-1\colon \sqrt{-1}]$.
    Let $B' = 2p$.  Then we get $G_a(B') = p + q = \pi^*B$, i.e. $B$ is the Galois descent of $B'$ to $X$.
    It is evident that $(X, B)$ is lc, while $(X', B')$ is not.
\end{remark}


\begin{lemma}\label{linear-0-to-closure}
    Let $k$ be a field of characteristic zero and $\bar{k}$ its algebraic closure.
    Let $f\colon X\to Z$ be a projective morphism of $k$-varieties and $D$ be an $\RR$-divisor on $X$.  Let
    $\bar{X}\coloneqq X\otimes_k\bar{k}$ and $\bar{Z}\coloneqq Z\otimes_k\bar{k}$,
    and let $\bar{f}\colon \bar{X}\to \bar{Z}$ be a induced morphism of $\bar{k}$-varieties.
    Denote by $\bar{D}$ the flat pullback of $D$ to $\bar{X}$.
    Then $\bar{D}\sim_{\RR}0/\bar{Z}$ if and only if $D\sim_{\RR}0/Z$.
\end{lemma}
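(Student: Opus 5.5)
The easy direction is immediate. Suppose $D\sim_{\RR}0/Z$, say $D\sim_{\RR} f^*N$ for an $\RR$-Cartier $N$ on $Z$. Then $D-f^*N=\sum r_i\,\mathrm{div}(\phi_i)$ with $\phi_i\in K(X)^*$. Flat pullback to $\bar X$ is compatible with principal divisors and with $f^*$, so $\bar D-\bar f^*\bar N=\sum r_i\,\mathrm{div}(\phi_i\otimes 1)$. Here $\bar N$ is $\RR$-Cartier by Lemma~\ref{descend-R-Cartier}, so $\bar D\sim_{\RR}0/\bar Z$.

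For the converse, assume $\bar D\sim_{\RR}\bar f^*\bar N/\bar Z$. We can write $\bar D-\bar f^*\bar N=\sum_{i=1}^r r_i\,\mathrm{div}(\psi_i)$ with $\bar N=\sum_j s_j N_j$ for Cartier $N_j$ on $\bar Z$. Only finitely many rational functions and Cartier divisors appear, so all of this data is defined over a finite extension of $k$. Enlarging that extension, we may take it to be a finite Galois extension $k'/k$ with group $G$, $\ell=|G|$.

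Thus on $X'=X\otimes_k k'$ we have $D'-f'^*N'=\sum r_i\,\mathrm{div}(\psi_i')$, where $D'$ is the flat pullback of $D$ and $N'$ is $\RR$-Cartier on $Z'$. The descent from $\bar k$ to $k'$ works as follows. Principal divisors and Cartier data on $\bar X$ are defined over some finitely generated subextension, hence over a finite one, since $\bar k/k$ is algebraic. The flat pullback $X'_{\bar k}\to X'$ is injective on Weil divisors, so the equality already holds on $X'$ for $k'$ large.

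Next apply the Galois average. Since $D'$ is $G$-invariant, $D'=G_a(D')=G_a(f'^*N')+\frac1\ell\sum_{g}\sum_i r_i\,\mathrm{div}(g^*\psi'_i)$. This shows $D'\sim_{\RR} f'^*G_a(N')$, because $g^*f'^*=f'^*g^*$. Let $N$ be the Galois descent of $G_a(N')$; it is $\RR$-Cartier by Lemma~\ref{Galois-descent-R-Cartier}. Then $E\coloneqq D-f^*N$ is an $\RR$-divisor on $X$ whose pullback $E'$ to $X'$ is $\RR$-linearly trivial, with $E'=\sum_i\sum_g \frac{r_i}{\ell}\mathrm{div}(g^*\psi_i')$.

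It remains to show that $E\sim_{\RR}0$ on $X$. Use the norm: $N_{K(X')/K(X)}(\psi)=\prod_g g^*\psi\in K(X)^*$, and $\pi_*\mathrm{div}(\psi)=\mathrm{div}(N(\psi))$ for the finite flat morphism $\pi\colon X'\to X$. Since $\pi_*\pi^*E=\ell E$, we get $\ell E=\pi_*E'=\sum_i r_i\,\mathrm{div}(N(\psi_i'))$, and hence $E\sim_{\RR}0$. Therefore $D\sim_{\RR} f^*N/Z$.

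The main obstacle is the bookkeeping in the reduction from $\bar k$ to a finite Galois $k'$. One must ensure that $\bar N$ comes from an $\RR$-Cartier divisor on $Z'$ and that the identity of Weil divisors descends. I would handle this as in the proof of Lemma~\ref{descend-R-Cartier}: spread the finitely many rational functions and local equations over a finitely generated, hence finite, subextension of $\bar k$. Then use injectivity of flat pullback on cycles to transport the equality.
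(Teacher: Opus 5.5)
Your proposal is correct and takes essentially the same route as the paper: descend the data to a finite Galois extension $k'/k$, Galois-average, descend $N'$ to an $\RR$-Cartier divisor on $Z$ via Lemma~\ref{Galois-descent-R-Cartier}, and treat the principal part through the norm $\prod_{g} g^*\psi_i'\in K(X)$. The only cosmetic difference is how you identify $E$. You use $\pi_*\pi^*E=\ell E$ together with $\pi_*\mathrm{div}(\psi)=\mathrm{div}(N(\psi))$. The paper instead uses $K(X)\cong K(X')^G$ and uniqueness of the Galois descent. Both reduce to the same norm computation.
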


\begin{proof}
    It is evident that $D\sim_{\RR}0/Z$ implies $\bar{D}\sim_{\RR}0/\bar{Z}$.
    Thus, we assume that $\bar{D}\sim_{\RR}0/\bar{Z}$; in other words,
    there exist an $\RR$-Cartier $\RR$-divisor $\bar{L}$ on $\bar{Z}$ and 
    rational functions $\alpha_1,\dots, \alpha_s\in K(\bar{X})$ such that
    \[ \bar{D} = \bar{f}^*\bar{L} + \sum_{i=1}^s a_i\Div_{\bar{X}}(\alpha_i), \]
    where $a_i\in \RR$.
    Pick a finite Galois extension $k'/k$ so that
    \begin{itemize}
        \item $\bar{L}$ is the flat pullback of an $\RR$-Cartier divisor $L'$ on $Z'\coloneqq Z\otimes_k k'$, and
        \item each $\alpha_i$ is the image of an $\alpha_i'\in K(X')$ via the canonical field extension $K(X')\to K(\bar{X})$ , where $X'\coloneqq X\otimes_k k'$.
    \end{itemize}
    By Lemma~\ref{descend-R-Cartier}, $D$ is $\RR$-Cartier.
    Let $f'\colon X'\to Z'$ be the base change of $f$ to $\spec k'$, and let $D'$
    be the pullback of $D$ to $X'$.  
    Then by \cite[Proposition~(21.10.6)]{EGA-IV-4}, the flat pullback of $f'^*L' + \sum_{i=1}^s \Div_{X'}(\alpha_i')$
    to $\bar{X}$ is equal to $\bar{D}$.  Hence we have
    \[ D' = f'^*L' + \sum_{i=1}^s a_i\Div_{X'}(\alpha_i'). \]
    Denote by $G\coloneqq \Gal(k'/k)$ the Galois group of $k'/k$.
    Let $\ell$ be the cardinality of $G$, and let $G=\Set{g_1,\dots, g_{\ell}}$.
    Let $L$ be the Galois descent of $L'$ to $Z$, which is an $\RR$-Cartier $\RR$-divisor on $Z$ 
    by Lemma~\ref{Galois-descent-R-Cartier}.
    Moreover, for each $\alpha_i'$, the Galois average of $\Div_{X'}(\alpha_i')$ with respect to $k'/k$ is equal to
    \[ \frac{1}{\ell}\sum_{j=1}^{\ell} g_j^*\Div_{X'}(\alpha_i') 
    = \frac{1}{\ell} \sum_{j=1}^{\ell}\Div_{X'}(g_j^*\alpha_i) 
    = \frac{1}{\ell}\Div_{X'}\big( \prod_{j=1}^{\ell} g_j^*\alpha_i' \big). \]
    Let
    \[ \beta_i \coloneqq \prod_{j=1}^{\ell} g_j^* \alpha_i', \]
    which is a $G$-invariant element of $K(X')$.  By \cite[Proposition~A7.1.3]{Katz-Mazur-1985-elliptic}, we have
    \[ K(X)\cong K(X')^G. \]
    Thus, $\beta_i$ is contained in $K(X)$ via the canonical field extension $K(X)\to K(X')$.
    Again by \cite[Proposition~(21.10.6)]{EGA-IV-4}, $\Div_{X'}(\beta_i)$
    is equal to the pullback of $\Div_X(\beta_i)$.  Hence
    the Galois descent of $\Div_{X'}(\alpha_i)$ to $X$
    is the divisor $(1/\ell)\Div_X(\beta_i)$.
    Then the Galois descent of $D'$ to $X$ is 
    \begin{equation}\label{descent-to-X}
        f^*L + \sum_{i=1}^s \frac{a_i}{\ell} \Div_X(\beta_i).
    \end{equation}
    Therefore, we can conclude that $D$ is equal to \eqref{descent-to-X}, so $D\sim_{\RR}0/Z$.
\end{proof}


\subsection{Fano type varieties}\label{fano-type-defn}

Let $k$ be a field of characteristic zero.
Assume that $X,Z$ are $k$-varieties and that $X\to Z$ is a contraction over $\spec k$.
We say that $X$ is \emph{of Fano type over $Z$} 
(or $X\to Z$ is \emph{of Fano type})
if there is a $\QQ$-divisor $B$ such that $(X, B)$ is klt and
that $-(K_X + B)$ is ample over $Z$; cf. \cite[\S 2.10]{B-Fano}.


\begin{lemma}\label{fano-type-to-closure}
    Let $k$ be a field of characteristic zero, and let $X\to Z$ be
    a contraction of $k$-varieties.  Let $\bar{X}\otimes_k\bar{k}$
    and $\bar{Z}\coloneqq Z\otimes_k \bar{k}$.  Then $X\to Z$ is of Fano type
    if and only if $\bar{X}\to \bar{Z}$ is of Fano type.
\end{lemma}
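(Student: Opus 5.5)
The forward direction is a base change argument. Suppose $X\to Z$ is of Fano type, witnessed by a $\QQ$-divisor $B$ with $(X,B)$ klt and $-(K_X+B)$ ample$/Z$. Let $\bar{B}$ be the flat pullback of $B$ to $\bar{X}$.

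\begin{itemize}
    \item Since $\bar{X}\to X$ is flat with trivial relative canonical sheaf, $K_{\bar X}+\bar B$ is the flat pullback of $K_X+B$.
    \item By \S\ref{ss-pullback-R-Cartier-pairs}~(5) with $\epsilon$ slightly positive (klt is $\epsilon$-lc for some $\epsilon>0$, as there are finitely many coefficients on a log resolution), the pair $(\bar X,\bar B)$ is klt.
    \item By Lemma~\ref{fppf-ample}, applied to the faithfully flat morphism $\spec\bar k\to\spec k$, the divisor $-(K_{\bar X}+\bar B)$ is ample$/\bar Z$.
\end{itemize}

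Hence $\bar X\to\bar Z$ is of Fano type.

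For the converse, suppose $\bar B$ is a $\QQ$-divisor on $\bar X$ with $(\bar X,\bar B)$ klt and $-(K_{\bar X}+\bar B)$ ample$/\bar Z$. The plan is to descend $\bar B$ to a finite Galois extension and then take the Galois average.

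First, choose a finite Galois extension $k'/k$ such that $\bar B$ is the flat pullback of a $\QQ$-divisor $B'$ on $X'=X\otimes_k k'$. This is possible because $\bar B$ has finitely many components, each defined over a finite extension. By Lemma~\ref{descend-R-Cartier}, $K_{X'}+B'$ is $\QQ$-Cartier. By \S\ref{ss-pullback-R-Cartier-pairs}~(5) with $K=\bar k$, the pair $(X',B')$ is klt, and by Lemma~\ref{fppf-ample}, $-(K_{X'}+B')$ is ample$/Z'$.

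Let $B$ be the Galois descent of $B'$ to $X$, and write $G=\Gal(k'/k)$.

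\begin{itemize}
    \item By Lemma~\ref{descent-g-pair} with $\mb M=0$, and with $\epsilon>0$ chosen so that $(X',B')$ is $\epsilon$-lc, the pair $(X,B)$ is klt.
    \item The pullback of $K_X+B$ to $X'$ is $G_a(K_{X'}+B')=\frac1\ell\sum_g g^*(K_{X'}+B')$, where $\ell$ is the order of $G$ and we use $g^*K_{X'}=K_{X'}$.
    \item Each $g$ is an automorphism of $X'$ over $Z'$ covering an automorphism of $Z'$ compatible with $Z'\to\spec k'$. So each $g^*(-(K_{X'}+B'))$ is ample over $Z'$: ampleness over the base is preserved under this compatible pair of automorphisms.
    \item Sums of ample$/Z'$ $\QQ$-divisors with positive coefficients are ample$/Z'$, so $-G_a(K_{X'}+B')$ is ample$/Z'$.
    \item Lemma~\ref{fppf-ample}, applied to $\spec k'\to\spec k$, then shows that $-(K_X+B)$ is ample$/Z$.
\end{itemize}

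Together these give that $X\to Z$ is of Fano type.

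The main obstacle I expect is the compatibility check in the averaging step. The key point is verifying that $g^*$ of a relatively ample divisor is relatively ample when $g$ is a $k$-automorphism of $X'$ that is semilinear over $k'$ and covers the corresponding automorphism of $Z'$. This is formal, since ampleness is intrinsic to the morphism $X'\to Z'$ up to isomorphism, but it needs to be stated carefully. A second point is that $B$ stays a $\QQ$-divisor, which holds because averaging rational coefficients gives rational coefficients.
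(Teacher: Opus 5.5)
Your proposal is correct and follows essentially the same route as the paper. The forward direction is flat base change of klt-ness and relative ampleness. The converse descends $\bar B$ to a finite Galois extension $k'/k$, takes the Galois descent, and concludes with Lemma~\ref{descent-g-pair} and Lemma~\ref{fppf-ample}.

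You also check explicitly that $-G_a(K_{X'}+B')$, the pullback of $-(K_X+B)$ to $X'$, is ample$/Z'$, since each Galois conjugate of a relatively ample divisor is relatively ample. The paper's proof leaves this step implicit when it invokes Lemma~\ref{fppf-ample}.
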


\begin{proof}
    By \S\ref{ss-descent-b-nef}~(6), it is evident that $\bar{X}\to \bar{Z}$
    is of Fano type if $X$ is of Fano type over $Z$.
    Now assume that $\bar{X}\to \bar{Z}$ is of Fano type.
    Then there exists a $\QQ$-divisor $\bar{\Gamma}$ on $\bar{X}$ such that
    $(\bar{X}, \bar{\Gamma})$ is klt, and $-(K_{\bar{X}} + \bar{\Gamma})$ is ample$/\bar{Z}$.
    There exist a finite Galois extension $k'/k$ and a $\QQ$-divisor $\Gamma'$
    on $X'\coloneqq X\otimes_k {k'}$ such that $\bar{\Gamma}$ is the flat pullback
    of $\Gamma'$.  Let $\Gamma$ be the Galois descent of $\Gamma'$ to $X$.
    By Lemma~\ref{descent-g-pair}, $(X, \Gamma)$ is a klt pair.
    By Lemma~\ref{fppf-ample}, $-(K_X+\Gamma)$ is ample$/Z$.
    Thus, $X$ is of Fano type over $Z$.
\end{proof}


\begin{lemma}\label{fano-type-conditions}
    Let $k$ be a field of characteristic zero.
    Let $X\to Z$ be a contraction of $k$-varieties.
    Then the following are equivalent:
    \begin{itemize}
        \item [\emph{(i)}] $X$ is of Fano type over $Z$,
        \item [\emph{(ii)}] there exists a big$/Z$ $\QQ$-divisor $\Gamma$ such that $(X, \Gamma)$ is klt and $K_X+\Gamma\sim_{\QQ}0/Z$, and
        \item [\emph{(iii)}] there exists a big$/Z$ $\RR$-divisor $B$ such that $(X/Z, B+\mb{M})$ is a g-klt g-pair and $K_X+B+\mb{M}_X\sim_{\RR}0/Z$.
    \end{itemize}
\end{lemma}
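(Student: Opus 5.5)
The plan is to prove the implications (i)$\Rightarrow$(ii)$\Rightarrow$(iii)$\Rightarrow$(i), working over $\bar{k}$ wherever a known result is needed and then descending back to $k$ by Galois averaging.

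For (i)$\Rightarrow$(ii), take the $\QQ$-divisor $B$ with $(X,B)$ klt and $A\coloneqq -(K_X+B)$ ample$/Z$. For $m\gg0$ the divisor $mA$ is very ample$/Z$, so over an infinite field we may choose a general member $H\in|mA/Z|$. Bertini (in characteristic zero, over $k$, possibly after base change to $\bar k$ and descent) ensures that $(X,B+\tfrac1m H)$ is klt. Setting $\Gamma\coloneqq B+\tfrac1m H$ gives $K_X+\Gamma\sim_{\QQ}0/Z$. Moreover $\Gamma\ge \tfrac1m H$ with $H$ ample$/Z$, so $\Gamma$ is big$/Z$.

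The implication (ii)$\Rightarrow$(iii) is trivial: take $\mb{M}=0$.

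For (iii)$\Rightarrow$(i), the plan is to reduce to the algebraically closed case. Pass to $\bar X\to\bar Z$ with $\bar B$ the flat pullback of $B$ and $\ol{\mb M}$ the induced $\bb$-divisor of \S\ref{ss-descent-b-nef}. By \S\ref{ss-descent-b-nef}~(5), $(\bar X,\bar B+\ol{\mb M})$ is g-klt. By Lemma~\ref{linear-0-to-closure}, $K_{\bar X}+\bar B+\ol{\mb M}_{\bar X}\sim_{\RR}0/\bar Z$. Bigness of $\bar B$ over $\bar Z$ follows from the flat base change statements for big divisors in \cite{lyu-takumi-2026relativeminimalmodelprogram}.

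Over $\bar k$ the standard argument shows $\bar X$ is of Fano type over $\bar Z$ (cf. \cite[Lemma~2.4]{B-Fano}-type results for generalised pairs). The steps are as follows:
\begin{itemize}
\item write $\bar B\sim_{\RR}\bar A+\bar E/\bar Z$ with $\bar A$ ample and $\bar E\ge0$;
\item for small $\delta$, the pair $(\bar X,(1-\delta)\bar B+\delta\bar E+\ol{\mb M})$ is g-klt, and $-(K_{\bar X}+\text{it})\sim_{\RR}\delta\bar A/\bar Z$;
\item use that $\ol{\mb M}$ is nef and $\bar A$ ample to absorb $\ol{\mb M}$: on a high model, $\ol{\mb M}+\tfrac{\delta}{2}\bar A$ is nef and big, hence $\sim_{\RR}$ an effective divisor with small coefficients keeping klt;
\item perturb to a klt $\QQ$-boundary.
\end{itemize}

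Finally, Lemma~\ref{fano-type-to-closure} gives that $X\to Z$ is of Fano type.

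The main obstacle is this last removal of the nef part $\mb{M}$ while keeping klt singularities. If the citation is unavailable in the generalised setting, I would carry out the perturbation on a log resolution where $\ol{\mb M}$ descends. There I would use that $\ol{\mb M}+\epsilon(\text{pullback of }\bar A)-\epsilon' F$ is ample for a suitable exceptional $F\ge0$, choose a general member with tiny coefficient, and push down.
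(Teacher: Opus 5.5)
Your proposal is correct and follows the paper's route: (ii)$\Rightarrow$(iii) is trivial with $\mb{M}=0$, and (iii)$\Rightarrow$(i) goes by base change to $\bar k$ via \S\ref{ss-descent-b-nef}, the algebraically closed case, and descent by Lemma~\ref{fano-type-to-closure}. The only differences are that you prove (i)$\Rightarrow$(ii) directly over $k$ by a Bertini argument with a $k$-rational general member, where the paper just passes to $\bar k$, and that you spell out the closed-field perturbation absorbing $\ol{\mb M}$ into a klt boundary, which the paper leaves implicit.
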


\begin{proof}
    Passing to the algebraic closure $\bar k$ of $k$, we see that (i) implies (ii).
    It is evident that (ii) gives (iii) by setting $\mb M = 0$.
    Now, assume (iii), and let $\bar{X}\coloneqq X\otimes_k\bar{k}$ and $\bar{Z}\coloneqq Z\otimes_k\bar{k}$.
    By \S\ref{ss-descent-b-nef}, $\bar{X}$ is of Fano type over $\bar{Z}$.
    Then by Lemma~\ref{fano-type-to-closure}, we can conclude that $X\to Z$ is of Fano type,
    i.e. (iii) implies (i).
\end{proof}


\begin{lemma}\label{Fano-type-pushdown}
    Let $k$ be a field of characteristic zero.
	Let $X\to Z$ be a contraction of $k$-varieties such that $X$ is of Fano type over $Z$,
	and let $\phi\colon X\dashrightarrow Y$ be a birational contraction of $k$-varieties.
	Then $Y$ is also of Fano type over $Z$.
\end{lemma}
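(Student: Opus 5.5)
We use the characterisation in Lemma~\ref{fano-type-conditions}~(ii). Choose a big$/Z$ $\QQ$-divisor $\Gamma$ on $X$ such that $(X,\Gamma)$ is klt and $K_X+\Gamma\sim_{\QQ}0/Z$. Put $\Gamma_Y\coloneqq \phi_*\Gamma$. The plan is to check that $\Gamma_Y$ has the three properties in (ii) on $Y$: it gives a klt pair, it is big$/Z$, and $K_Y+\Gamma_Y\sim_{\QQ}0/Z$. Lemma~\ref{fano-type-conditions} then shows that $Y$ is of Fano type over $Z$.

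\emph{Linear triviality and singularities.} Write $K_X+\Gamma\sim_{\QQ} f^*L/Z$; in fact $K_X+\Gamma\sim_{\QQ}0/Z$ means $K_X+\Gamma-f^*L=\sum r_i\Div(\alpha_i)$ for some $\QQ$-Cartier $L$ on $Z$. Since $\phi$ is a birational contraction over $Z$, pushing forward commutes with taking principal divisors and with pulling back from $Z$. Hence $K_Y+\Gamma_Y\sim_{\QQ} f_Y^*L/Z$, and in particular $K_Y+\Gamma_Y$ is $\QQ$-Cartier.

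Next, take a common resolution $p\colon W\to X$, $q\colon W\to Y$. Then $p^*(K_X+\Gamma)-q^*(K_Y+\Gamma_Y)$ is $\QQ$-linearly trivial over $Z$, hence numerically trivial over $Y$. It is also $q$-exceptional, because $\phi^{-1}$ contracts no divisors. By Lemma~\ref{neg-lemma}, applied to both the divisor and its negative, it is zero. Consequently the two pairs have the same log discrepancies; this is exactly the argument of Lemma~\ref{CY-log-discre-no-change} with $\mb M=0$. It follows that $(Y,\Gamma_Y)$ is klt, and $\Gamma_Y\ge 0$ because it is the pushforward of $\Gamma\ge0$.

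\emph{Bigness (main obstacle).} Lemma~\ref{no-van-big-divisor} is stated for $\RR$-Cartier divisors, but $\Gamma$ itself need not be $\QQ$-Cartier. To get around this, use that $X$ is $\QQ$-factorial-free-independent in the following sense: $\Gamma$ big$/Z$ means, per \cite[Definition~5.20]{lyu-takumi-2026relativeminimalmodelprogram}, that $\Gamma\sim_{\RR} A+G/Z$ with $A$ ample$/Z$ and $G\ge0$. Since $\phi$ is a birational contraction, $\Gamma_Y\sim_{\RR}\phi_*A+\phi_*G/Z$.

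The remaining task is to show that $\phi_*A$ is big$/Z$. We apply Lemma~\ref{no-van-big-divisor} to the Cartier divisor $A$, which gives exactly this. Adding the effective divisor $\phi_*G$ preserves bigness, so $\Gamma_Y$ is big$/Z$ by \cite[Definition~5.20, Lemma~5.21]{lyu-takumi-2026relativeminimalmodelprogram}. As a fallback, one can first replace $\Gamma$ by $(1-\epsilon)\Gamma+\epsilon\Delta$, where $-(K_X+\Delta)$ is ample$/Z$ as in the original definition. This keeps the pair klt by Lemma~\ref{link-log-discrepancies} and makes bigness transparent; one would then redo the linear triviality step for this combination.
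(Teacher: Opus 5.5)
Your proof is correct, but it takes a different route from the paper. The paper argues in one line. It base changes to $\bar k$ and uses Lemma~\ref{fano-type-to-closure} to pass the Fano type property from $X\to Z$ to $\bar X\to\bar Z$, and later back from $\bar Y\to\bar Z$ to $Y\to Z$. In between it quotes the algebraically closed case \cite[Lemma~2.19]{birkar2025steindegreelogcalabiyau} for the base-changed map $\bar\phi$.

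You instead stay over $k$ and verify characterisation~(ii) of Lemma~\ref{fano-type-conditions} for $\Gamma_Y=\phi_*\Gamma$ directly:
\begin{itemize}
\item $\QQ$-linear triviality over $Z$, by pushing forward $K_X+\Gamma-f^*L=\sum r_i\Div(\alpha_i)$;
\item kltness, by the crepant comparison on a common resolution (Lemma~\ref{CY-log-discre-no-change} with $\mb M=0$);
\item bigness, by writing $\Gamma\sim_{\RR}A+G/Z$ and applying Lemma~\ref{no-van-big-divisor} to $A$.
\end{itemize}
Here $A$ is ample and $\RR$-Cartier, not necessarily Cartier as you wrote. The bigness step correctly handles the fact that $\Gamma$ need not be $\QQ$-Cartier.

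Your version makes the mechanism explicit using only tools already established in the paper, and avoids the external citation. The paper's version is shorter because it outsources all of this, including bigness of the possibly non-$\QQ$-Cartier pushforward, to the closed-field lemma. It does so at the cost of tacitly using that $\bar\phi$ is still a birational contraction. Your route does not really avoid passing to $\bar k$; that step is only relocated into the proofs of Lemma~\ref{fano-type-conditions} and Lemma~\ref{neg-lemma}.

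Finally, delete the garbled sentence before the bigness step and the ``fallback''. For the fallback, note that
\[ K_X+(1-\epsilon)\Gamma+\epsilon\Delta\sim_{\QQ}\epsilon(K_X+\Delta)/Z \]
is anti-ample over $Z$, not $\QQ$-linearly trivial. So the linear-triviality step cannot simply be redone for it, and in any case it is not needed.
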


\begin{proof}
    This follows from Lemma~\ref{fano-type-to-closure} and \cite[Lemma~2.19]{birkar2025steindegreelogcalabiyau}.
\end{proof}

\begin{lemma}\label{fano-type-at-dlt-model}
    Let $k$ be a field of characteristic zero.
    Let $f\colon X\to Z$ be a contraction of $k$-varieties such that $X$ is of Fano type over $Z$.
    Let $(X, B + \mb{M})$ be a g-lc g-pair such that $-(K_X + B + \mb{M}_X)$ is nef$/Z$.
    Let $\pi\colon X'\to X$ be a birational contraction of normal $k$-varieties.  Write
    \[ K_{X'} + B' + \mb{M}_{X'} = \pi^*(K_X + B + \mb{M}_X). \]
    If every exceptional$/X$ prime divisor has positive coefficient in $B'$, 
    then $X'$ is also of Fano type over $Z$.
\end{lemma}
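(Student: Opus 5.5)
The plan is to reduce to the algebraically closed case, as in the proofs of Lemmas~\ref{fano-type-to-closure} and \ref{Fano-type-pushdown}, and to use the corresponding statement over $\bar{k}$ from \cite{birkar2025steindegreelogcalabiyau}. A direct argument along the lines of \cite[Lemma~2.19]{birkar2025steindegreelogcalabiyau} is the fallback.

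Let $\bar{X}, \bar{X}', \bar{Z}$ be the base changes to $\bar{k}$ and $\bar{\pi}\colon \bar{X}'\to \bar{X}$ the induced birational contraction. Let $\bar{B}$ be the flat pullback of $B$, and $\ol{\mb M}$ the $\bb$-divisor induced by $\mb{M}$ as in \S\ref{ss-descent-b-nef}. The hypotheses transfer as follows.
\begin{itemize}
\item By \S\ref{ss-descent-b-nef}~(5), $(\bar{X}, \bar{B} + \ol{\mb M})$ is g-lc.
\item By Lemma~\ref{fppf-peff}, $-(K_{\bar{X}} + \bar{B} + \ol{\mb M}_{\bar X})$ is nef$/\bar{Z}$.
\item By Lemma~\ref{fano-type-to-closure}, $\bar{X}$ is of Fano type over $\bar{Z}$.
\end{itemize}
The pullback $K_{\bar{X}'} + \bar{B}' + \ol{\mb M}_{\bar X'}$ is the flat pullback of $K_{X'} + B' + \mb{M}_{X'}$, since base change commutes with pullback of $\RR$-Cartier divisors. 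Every exceptional$/\bar{X}$ prime divisor on $\bar{X}'$ is an irreducible component of the base change of an exceptional$/X$ prime divisor $E$ on $X'$. By \S\ref{ss-flat-pullback}, it therefore has the same coefficient $\coeff_E B' > 0$. So all hypotheses hold over $\bar{k}$.

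Over $\bar{k}$ I would then argue directly, in case the exact statement is not available in the earlier paper. Take $\Gamma$ with $(\bar X,\Gamma)$ klt and $-(K_{\bar X}+\Gamma)$ ample$/\bar Z$. For small $\delta>0$ set
\[ \Delta\coloneqq (1-\delta)(\bar B+\ol{\mb M}_{\bar X}) + \delta\Gamma, \]
realising the nef part by an effective divisor. Since $X$ is of Fano type, the nef$/Z$ divisor $\mb{M}$ on a high model is semiample-ish up to adding ample. More precisely, $-(K+\bar B+\ol{\mb M})+\delta'A$ is ample for small $\delta'$. Using Lemma~\ref{link-log-discrepancies}, this gives a klt pair $(\bar X,\Delta)$ with $-(K_{\bar X}+\Delta)$ ample$/\bar Z$.

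The coefficients in the pullback to $\bar X'$ of the exceptional divisors are then $(1-\delta)b_E + \delta\gamma_E$, where $b_E>0$ and $\gamma_E>-1$. These are positive for $\delta$ small, since there are finitely many exceptional divisors. Hence $\bar X'$ carries a klt boundary $\Delta'\ge 0$ with $-(K_{\bar X'}+\Delta')$ nef and big$/\bar Z$. Perturbing by a small multiple of $-$(an effective exceptional anti-ample divisor) makes it ample, so $\bar X'$ is of Fano type over $\bar Z$. Finally Lemma~\ref{fano-type-to-closure} descends this to $X'\to Z$.

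The main obstacle is handling $\ol{\mb M}$: we must replace the generalised pair by an honest klt pair with an effective boundary. I would first try Lemma~\ref{fano-type-conditions}(iii) on $\bar X'$ directly: the pullback $g$-pair $(\bar X', \Delta' + (1-\delta)\ol{\mb M})$ is g-klt and anti-nef-big, and a complement-type perturbation gives $K+\cdot\sim_\RR 0$ with big boundary. This yields the Fano type property without making $\mb M$ effective.
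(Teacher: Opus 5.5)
Your main plan is exactly the paper's proof: reduce to $\bar{k}$ via Lemma~\ref{fano-type-to-closure} and \S\ref{ss-descent-b-nef}, apply the algebraically closed version \cite[Lemma~2.20]{birkar2025steindegreelogcalabiyau} (Lemma~2.19 there is the pushdown statement), and descend again with Lemma~\ref{fano-type-to-closure}. One caution about your fallback: an effective $\bar{\pi}$-exceptional divisor $F$ with $-F$ ample$/\bar{X}$ need not exist (for instance when $\pi$ is small, as for the $\QQ$-factorialisations to which the paper later applies this lemma), so finish instead by writing the nef and big$/\bar{Z}$ divisor $-(K_{\bar{X}'}+\Delta')$ as an ample divisor plus a small effective one, or via Lemma~\ref{fano-type-conditions}~(iii) as in your last paragraph.
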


\begin{proof}
    By Lemma~\ref{fano-type-to-closure} and \S\ref{ss-descent-b-nef},
    we can assume $k$ is algebraically closed, 
    then we can apply \cite[Lemma~2.20]{birkar2025steindegreelogcalabiyau} to conclude.
\end{proof}


\begin{lemma}\label{fano-type-upper}
    Let $k$ be a field of characteristic zero.
    Let $X\to Y$ and $Y\to Z$ be contractions between normal $k$-varieties.
    If $X$ is of Fano type over $Z$, then $X$ is of Fano type over $Y$.
\end{lemma}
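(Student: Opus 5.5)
We prove Lemma~\ref{fano-type-upper} by working directly with the definition in \S\ref{fano-type-defn}, and we avoid any reduction to the algebraically closed case. Since $X$ is of Fano type over $Z$, choose a $\QQ$-divisor $B$ on $X$ such that $(X,B)$ is klt and $A\coloneqq -(K_X+B)$ is ample over $Z$. We will show that the same $B$ witnesses that $X$ is of Fano type over $Y$. The klt condition is intrinsic to $(X,B)$ and does not refer to a base, so the only thing left to check is that $A$ is ample over $Y$.

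The key step is the following standard fact: if $X\xrightarrow{g} Y\xrightarrow{h} Z$ are morphisms and $A$ is ample over $Z$, then $A$ is ample over $Y$. First replace $A$ by a Cartier multiple $mA$ with $m\in\NN$; this is allowed because relative ampleness of a $\QQ$-Cartier divisor is tested on a Cartier multiple. Next, since $X\to Z$ is proper and $Y\to Z$ is separated, the morphism $g$ is proper. The relative ampleness of $\mcO_X(mA)$ over $Y$ can then be checked fibrewise over points of $Y$. Fix a point $y\in Y$ with image $z\in Z$. The fibre $X_y$ is a closed subscheme of $X_z\otimes_{k(z)}k(y)$, and the restriction of $mA$ to $X_z$ is ample. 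Ampleness is preserved under base change by the field extension $k(y)/k(z)$ (Lemma~\ref{fppf-ample}) and under restriction to closed subschemes. Hence $mA|_{X_y}$ is ample, and therefore $A$ is ample over $Y$. Equivalently, one can cite \cite[Proposition~(4.6.13)]{EGA-II}: for $X\to Y\to Z$ with $Y\to Z$ separated, an invertible sheaf that is ample relative to $Z$ is ample relative to $Y$.

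Finally, $X\to Y$ is a contraction by hypothesis. Therefore $(X,B)$ is klt with $-(K_X+B)$ ample over $Y$, which is exactly the condition for $X$ to be of Fano type over $Y$.

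The only delicate point is the reference for the fibrewise criterion of relative ampleness over a non-closed base, namely \cite[Corollaire~(9.6.4)]{EGA-IV-3}, or the EGA~II statement cited above. Using it avoids the alternative route through $\bar{k}$ and Lemma~\ref{fano-type-to-closure}. That route would also work: pass to $\bar{X}\to\bar{Y}\to\bar{Z}$, invoke the known statement over algebraically closed fields, and descend via Lemma~\ref{fano-type-to-closure}.
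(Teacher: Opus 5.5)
Your proof is correct, and it takes a different and more economical route than the paper.

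The paper proves the lemma in one line by passing to the algebraic closure. Lemma~\ref{fano-type-to-closure} moves the Fano type property from $X\to Z$ to $\bar{X}\to\bar{Z}$. The algebraically closed case \cite[Lemma~2.21]{birkar2025steindegreelogcalabiyau} then gives it for $\bar{X}\to\bar{Y}$, and Lemma~\ref{fano-type-to-closure} descends it back to $X\to Y$. That descent step itself rests on Galois descent of klt boundaries (Lemma~\ref{descent-g-pair}) and on Lemma~\ref{fppf-ample}.

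You instead keep the same witness $B$. You observe that the only base-dependent condition in the definition of \S\ref{fano-type-defn} is relative ampleness. Relative ampleness passes from $Z$ to $Y$ for any factorisation $X\to Y\to Z$ with $Y\to Z$ separated. This is a purely scheme-theoretic fact, so your argument needs no descent, does not involve the ground field, and uses about $Y\to Z$ only that it is separated, not that it is a contraction. The paper's route buys only uniformity with the neighbouring Lemmas~\ref{Fano-type-pushdown}, \ref{fano-type-at-dlt-model} and \ref{fano-type-on-base}, which are all handled by the same reduction to $\bar{k}$.

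One small simplification: the paper defines an ample$/Z$ $\RR$-divisor as a positive combination $\sum a_iA_i$ of ample$/Z$ Cartier divisors. You can therefore apply the Cartier statement to each $A_i$ directly. This avoids passing to a Cartier multiple $mA$, whose ampleness under this definition would itself need the openness argument used in Lemma~\ref{fppf-ample}.
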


\begin{proof}
    This follows from Lemma~\ref{fano-type-to-closure} and \cite[Lemma~2.21]{birkar2025steindegreelogcalabiyau}.
\end{proof}


\begin{lemma}\label{fano-type-on-base}
    Let $k$ be a field of characteristic zero.
    Let $f\colon X\to Y$ be a contraction$/Z$ of $k$-varieties $X, Y$
    that are projective over a $k$-variety $Z$,
    where $\dim (Y/Z)>0$.  Assume that $X$ is of Fano type over $Z$.
    Then $Y$ is of Fano type over $Z$.
\end{lemma}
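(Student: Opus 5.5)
The plan is to follow the pattern of Lemmas~\ref{Fano-type-pushdown} and \ref{fano-type-upper}: pass to the algebraic closure, use the known statement there, and descend via Lemma~\ref{fano-type-to-closure}.

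Let $\bar{k}$ be an algebraic closure of $k$, and set $\bar{X}\coloneqq X\otimes_k\bar{k}$, $\bar{Y}\coloneqq Y\otimes_k\bar{k}$, $\bar{Z}\coloneqq Z\otimes_k\bar{k}$, with induced morphisms $\bar{f}\colon \bar{X}\to\bar{Y}$ and $\bar{Y}\to\bar{Z}$. These are normal $\bar{k}$-varieties by \S\ref{ss-sch-var-defn}~(4); they are integral because $X, Y, Z$ are geometrically integral. Since cohomology commutes with flat base change, $\bar{f}_*\mcO_{\bar{X}} = \mcO_{\bar{Y}}$, so $\bar{f}$ is a contraction over $\bar{Z}$. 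Projectivity over $\bar{Z}$ is preserved, and $\dim(\bar{Y}/\bar{Z}) = \dim(Y/Z) > 0$.

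By Lemma~\ref{fano-type-to-closure} applied to $X\to Z$, the variety $\bar{X}$ is of Fano type over $\bar{Z}$. Over the algebraically closed field $\bar{k}$ of characteristic zero, I would invoke the corresponding statement of \cite{birkar2025steindegreelogcalabiyau} (the lemma following Lemma~2.21 there), or equivalently the standard argument in \cite[\S 2.10]{B-Fano}. Sketch of that argument: choose a klt $(\bar{X},\bar{\Gamma})$ with $\bar{\Gamma}$ big$/\bar{Z}$ and $K_{\bar{X}}+\bar{\Gamma}\sim_{\QQ}0/\bar{Z}$. Since $\bar{\Gamma}$ is big, after perturbing we may write $\bar{\Gamma}\sim_{\QQ} \bar{\Gamma}' + \bar{f}^*\bar{A}$ with $\bar{A}$ ample$/\bar{Z}$ on $\bar{Y}$ and $(\bar{X},\bar{\Gamma}')$ still klt. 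Adjunction for fibre spaces (Theorem~\ref{t-adj-fib-spaces}) then gives a generalised klt pair $(\bar{Y}, B_{\bar{Y}}+\mb{N})$ with $K_{\bar{Y}}+B_{\bar{Y}}+\mb{N}_{\bar{Y}}+\bar{A}\sim_{\QQ}0/\bar{Z}$, hence with big boundary. Ambro's/Fujino--Gongyo's result that the moduli part may be replaced by an effective divisor preserving klt-ness then shows that $\bar{Y}$ is of Fano type over $\bar{Z}$. Alternatively, Lemma~\ref{fano-type-conditions}(iii) can be applied directly to this generalised pair, provided klt-ness is checked; this is where the hypothesis $\dim(Y/Z)>0$ enters, to make bigness meaningful.

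Finally, Lemma~\ref{fano-type-to-closure} applied to the contraction $Y\to Z$ descends the conclusion: $Y$ is of Fano type over $Z$.

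I expect the main obstacle to be the middle step: guaranteeing that the discriminant on $\bar{Y}$ is klt, i.e. that the generalised pair obtained by adjunction is g-klt rather than merely g-lc. Theorem~\ref{t-adj-fib-spaces} as stated only gives g-lc. I would handle this by citing the algebraically closed case from \cite{birkar2025steindegreelogcalabiyau} directly, rather than reproving it.
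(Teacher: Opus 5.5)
Your proposal is correct and matches the paper's proof. The paper also applies Lemma~\ref{fano-type-to-closure} twice: once to $X\to Z$, and once to $Y\to Z$, which is a contraction because $X\to Z$ and $X\to Y$ are. In between it cites the algebraically closed case \cite[Lemma~2.22]{birkar2025steindegreelogcalabiyau}, so your sketch of the closed case and your worry about g-klt versus g-lc are not needed.
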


\begin{proof}
    This follows from Lemma~\ref{fano-type-to-closure} and \cite[Lemma~2.22]{birkar2025steindegreelogcalabiyau}.
\end{proof}


\begin{lemma}\label{Fano-type-base-change}
    Let $K/k$ be an extension of fields of characteristic zero 
    (which may not be algebraic).
    Let $X\to Z$ be a contraction between $k$-varieties.
    Let $X_K\to Z_K$ be the base change of $X\to Z$, where
    $X_K\coloneqq X\otimes_k K$ and $Z_K\coloneqq Z\otimes_k K$.
    Assume $X\to Z$ is of Fano type.
    Then $X_K\to Z_K$ is also of Fano type.
\end{lemma}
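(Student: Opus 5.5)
The plan is to take the klt boundary witnessing the Fano type condition on $X$ and pull it back to $X_K$. By definition (\S\ref{fano-type-defn}), there is a $\QQ$-divisor $\Gamma$ on $X$ such that $(X,\Gamma)$ is klt and $-(K_X+\Gamma)$ is ample over $Z$. Let $\pi\colon X_K\to X$ be the projection and $\Gamma_K$ the flat pullback of $\Gamma$ to $X_K$. I will check three things in turn:
\begin{itemize}
\item[(a)] $X_K\to Z_K$ is a contraction of $K$-varieties;
\item[(b)] $(X_K,\Gamma_K)$ is a klt pair;
\item[(c)] $-(K_{X_K}+\Gamma_K)$ is ample over $Z_K$.
\end{itemize}

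For (a), geometric integrality and quasi-projectivity are preserved by base change, and normality is preserved by \S\ref{ss-sch-var-defn}(4). Properness also survives base change. The identity $f_{K*}\mcO_{X_K}=\mcO_{Z_K}$ holds by flat base change, since $\spec K\to\spec k$ is flat.

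For (b), $\pi$ is flat with regular (smooth) fibres in characteristic zero, so $\pi^*K_X=K_{X_K}$. Concretely, $\omega_{X_K}$ is the pullback of $\omega_X$: the relative canonical sheaf of $X_K\to X$ is trivial because it is the base change of $\spec K\to\spec k$. Hence $K_{X_K}+\Gamma_K$ is the pullback of the $\QQ$-Cartier divisor $K_X+\Gamma$, and it is $\QQ$-Cartier. Klt-ness then follows from \S\ref{ss-pullback-R-Cartier-pairs}(5) with $\mb M=0$ and $\epsilon>0$ small, once the base change $W_K$ of a log resolution $W\to X$ is known to be a log resolution.

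For (c), ampleness over $Z_K$ of the pullback $-(K_{X_K}+\Gamma_K)$ follows from Lemma~\ref{fppf-ample} applied with $S=\spec k$, $S'=\spec K$ faithfully flat, and $f=(X\to Z)$.

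I expect the main obstacle to be the log resolution step in (b) when $K/k$ is not algebraic. We need $W_K$ to be regular and the base change of an snc divisor to remain snc. Since we are in characteristic zero, $W$ regular over $k$ means $W$ is smooth over $k$ (for varieties, regularity equals smoothness over perfect fields), and smoothness together with the snc property of the strata is preserved by arbitrary field extension. The exceptional locus of $W_K\to X_K$ is the base change of $\Ex(W\to X)$, which settles the remaining conditions. If one prefers to avoid this, reduce as in Lemma~\ref{fano-type-to-closure}: both $X_K\otimes_K\bar K$ and $X\otimes_k\bar k$ relate through $\bar k\subset\bar K$, so Fano type over $\bar K$ follows from Fano type over $\bar k$ via the same argument. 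Lemma~\ref{fano-type-to-closure} then descends the conclusion to $K$.
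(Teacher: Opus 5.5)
Your proposal is correct and is exactly the routine argument the paper leaves to the reader: pull back the klt boundary, obtain klt-ness from \S\ref{ss-pullback-R-Cartier-pairs}(5), which already asserts that the base change of a log resolution to $K$ is a log resolution, and obtain relative ampleness from Lemma~\ref{fppf-ample}. Two small remarks. First, the identity $K_{X_K}=\pi^*K_X$ should be justified by $\Omega_{X_K/K}\cong\pi^*\Omega_{X/k}$ on the smooth locus, whose complement still has codimension $\ge 2$ after base change. A ``trivial relative canonical sheaf'' of $X_K\to X$ is not the right reason, since $\Omega_{K/k}\neq 0$ for transcendental $K/k$. Second, your fallback via $\bar K$ is unnecessary and does not avoid the issue, because $\bar K/\bar k$ is still non-algebraic.
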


\begin{proof}
    The proof is routine and left to the readers.
\end{proof}


\begin{theorem}\label{BPF-thm}
    Let $\mbb K$ be an algebraically closed field of characteristic zero.
    Fix $n\in \NN$.
    Then there is an $m\in \NN$ depending only on $n, \mbb K$ with the following property.
    
    Let $f\colon X\to Z$ be a projective morphism of normal $k$-varieties,
    where $k$ is a field with algebraic closure $\mbb K$.
    Let $D$ be a nef$/Z$ $\RR$-divisor on $X$ such that 
    $aD - (K_X + \Delta)$ is nef and big over $Z$
    for some $a\in \RR^{>0}$, where $(X, \Delta)$ is klt and $\dim X = n$.
    Then $D$ is semiample$/Z$, and if $aD$ is Cartier, then $maD$ is globally generated over $Z$.
\end{theorem}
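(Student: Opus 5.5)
We reduce to the algebraically closed field $\mbb K=\bar k$ by base change. Then we apply the effective base point free theorem over $\mbb K$ (Kawamata), which depends only on $n$. Finally we descend semiampleness and global generation back to $k$ using Lemma~\ref{descent-semiample} and flat base change.

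\emph{Step 1: base change.} Let $\bar X\coloneqq X\otimes_k\mbb K$, $\bar Z\coloneqq Z\otimes_k\mbb K$ and $\bar f\colon\bar X\to\bar Z$. Since $X,Z$ are varieties, hence geometrically integral, and normality is preserved in characteristic zero (\S\ref{ss-sch-var-defn}~(4)), $\bar X,\bar Z$ are normal $\mbb K$-varieties with $\dim\bar X=n$. Let $\bar D,\bar\Delta$ be the flat pullbacks. By \S\ref{ss-pullback-R-Cartier-pairs}~(5) (with $\mb M=0$), $(\bar X,\bar\Delta)$ is klt, and $\bar D$ is $\RR$-Cartier by Lemma~\ref{descend-R-Cartier}. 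Lemma~\ref{fppf-peff} gives that $\bar D$ and $a\bar D-(K_{\bar X}+\bar\Delta)$ are nef$/\bar Z$. For bigness, I would use that bigness over $Z$ is detected on the generic fibre, where it becomes a volume statement. Volumes are invariant under field extension because $\dim_k \hg^0$ commutes with flat base change. Alternatively, one can write a big divisor as ample plus effective up to $\sim_{\RR}$ and pull back. Either way $a\bar D-(K_{\bar X}+\bar\Delta)$ is nef and big$/\bar Z$.

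\emph{Step 2: the closed-field statement.} Over $\mbb K$ we use the base point free theorem for klt pairs with $\RR$-coefficients. First perturb $\bar\Delta$ to a $\QQ$-divisor, and reduce $\bar D$ to rational classes via the standard rational polytope argument (Shokurov polytope of nef divisors), writing $\bar D$ as a positive combination of semiample $\QQ$-divisors. This shows $\bar D$ is semiample$/\bar Z$. When $a\bar D$ is Cartier (note $a\bar D$ Cartier iff $aD$ Cartier, by descent of Cartier divisors under the faithfully flat map), we apply Kawamata's effective base point freeness. It gives $m=m(n)$ such that $ma\bar D$ is globally generated over $\bar Z$; Kawamata's bound is stated for Cartier nef $L$ with $L-(K+\Delta)$ nef and big, so we take $L=a\bar D$. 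This $m$ depends only on $n$, and a fortiori on $n,\mbb K$.

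\emph{Step 3: descent.} For the Cartier case, Lemma~\ref{descent-semiample}, whose proof shows exactly that relative global generation descends, gives that $maD$ is globally generated over $Z$. For the $\RR$-divisor case, we need semiampleness of $D$ itself, and here Lemma~\ref{descent-semiample} applies only to Cartier divisors. I expect this to be the main obstacle. The plan is to run the polytope argument on $X$ over $k$. Choose a finite-dimensional rational affine space $V$ of $\RR$-Cartier divisors containing $D$, defined over $k$. The set of $D'\in V$ that are nef$/Z$ with $aD'-(K_X+\Delta)$ nef and big is described by finitely many extremal rays (cone theorem over $\bar k$, pulled back). Nefness is tested after base change by Lemma~\ref{fppf-peff}, and these conditions are linear, so this set contains rational points $D_i$ near $D$ with $D=\sum r_iD_i$, $r_i>0$. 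Each $D_i$ is a $\QQ$-divisor satisfying the hypotheses, so some multiple is Cartier. By Steps 1--2 that multiple is semiample after base change, hence semiample over $Z$ by Lemma~\ref{descent-semiample}. Therefore $D$ is semiample$/Z$.
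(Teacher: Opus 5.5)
Your proposal is correct in outline, but Step~3 takes a different route from the paper for the $\RR$-divisor case. The paper never runs a polytope argument on $X$. It takes the closed-field decomposition $\bar D=\sum a_i\bar D_i$, with $a_i>0$ and each $\bar D_i$ a semiample$/\bar Z$ Cartier divisor, and realises the $\bar D_i$ as Cartier divisors $D_i'$ on $X\otimes_k k'$ for a finite Galois extension $k'/k$. The Galois averages $G_a(D_i')$ are still semiample$/Z'$, and they descend to $\QQ$-Cartier divisors $D_i$ on $X$. These $D_i$ are semiample$/Z$ by Lemma~\ref{descent-semiample} applied to a Cartier multiple. Finally, since the pullback of $D$ is Galois-invariant, $D=\sum a_iD_i$.

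What the paper's route buys is a general descent principle: an $\RR$-Cartier divisor that is semiample over $\bar k$ is already semiample over $k$. This holds independently of the base point free hypotheses, and it needs no cone theorem. Your route only ever descends Cartier divisors. The price is that you must redo the Shokurov-polytope argument on $X$ and re-verify the base point free hypotheses at rational points near $D$. That works here but is tied to this setting. Your Cartier case (effective freeness over $\mbb K$ for $L=a\bar D$, then descent of relative global generation) is the same as the paper's.

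One detail in your Step~3 needs fixing. As you phrase it, the set of $D'\in V$ with $D'$ nef$/Z$ and $aD'-(K_X+\Delta)$ nef and big is not locally cut out by finitely many extremal rays near $D$, for two reasons:
\begin{itemize}
\item nef and big is not an open condition;
\item $\bar D^{\perp}\cap\NE(\bar X/\bar Z)$ may contain $(K_{\bar X}+\bar\Delta)$-trivial classes, about which the cone theorem says nothing.
\end{itemize}
First write $aD-(K_X+\Delta)\sim_{\RR}A+E/Z$ with $A$ ample$/Z$ and $E\ge 0$, and replace $\Delta$ by $\Delta+\epsilon E$ for small $\epsilon>0$. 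Then $aD-(K_X+\Delta)$ becomes ample$/Z$ and $(X,\Delta)$ stays klt. After this, ampleness persists for $D'$ near $D$ in $V$. Moreover, $\bar D^{\perp}\cap\NE(\bar X/\bar Z)$ now consists of $(K_{\bar X}+\bar\Delta)$-negative classes, spanned by finitely many curves. The rational subspace of $V$ on which these curves have degree zero then contains the rational nef divisors $D_i$ you need, each satisfying the hypotheses of Steps~1--2.
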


\begin{proof}
    Let $\bar{X}, \bar{Z}$ be the base change of $X, Z$ to $\spec \mbb K$ respectively,
    and let $\bar{D}, \bar{\Delta}$ be the flat pullback of $D, \Delta$ to $\bar{X}$ respectively.
    Denote by $\bar{f}\colon \bar{X}\to \bar{Z}$ the induced projective morphism of 
    normal $\mbb K$-varieties.  Then $\bar{D}$ is nef$/\bar{Z}$, and
    $a \bar{D} - (K_{\bar{X}} + \bar{\Delta})$ is nef and big over $\bar{Z}$.
    By \cite[Theorem~3.9.1]{BCHM}, $\bar{D}$ is semiample$/\bar{Z}$.
    Thus, we can write 
    \[ \bar{D} = \sum_i a_i \bar{D}_i, \]
    where $a_i\in \RR^{>0}$, and each $\bar{D}_i$ is a semiample$/\bar{Z}$ Cartier divisor on $\bar{X}$.
    Pick a finite Galois extension $k'/k$ so that there are Cartier divisors $D_i'$ on $X'\coloneqq X\otimes_k k'$
    whose flat pullbacks to $\bar{X}$ are the $\bar{D}_i$'s.
    Then by Lemma~\ref{descent-semiample}, each $D_i'$ is semiample over $Z'\coloneqq Z\otimes_k k'$.
    Let $G_a(D_i')$ be Galois average of $D_i'$, and
    let $D_i$ be the Galois descent of $D_i'$ to $X$.
    By Lemma~\ref{Galois-descent-R-Cartier}, each $D_i$ is $\QQ$-Cartier.
    Moreover, as $G_a(D_i')$ is semiample$/Z'$, by Lemma~\ref{descent-semiample}, 
    $D_i$ is also semiample$/Z$.
    Let $D'$ be the pullback of $D$ to $X'$.  As $D'$ is $G$-invariant, we have
    \[ D' = G_a(D') = \sum_i a_i G_a(D_i'). \]
    Thus, we can conclude that $D = \sum_i a_i D_i$.
    Hence $D$ is a semiample$/Z$ $\RR$-divisor.
    
    Finally, if $aD$ is Cartier, then $a\bar{D}$ is also Cartier.
    Hence by \cite[Theorem~3.9.1]{BCHM}, there is an $m\in \NN$ depending only on $n,\mbb K$ 
    so that $ma\bar{D}$ is globally generated over $\bar{Z}$. 
    Then by faithfully flat descent (cf. Lemma~\ref{descent-semiample}),
    $maD$ is globally generated over $Z$.
\end{proof}


\begin{corollary}\label{fano-nef-semiample-cor}
    Let $k$ be a field of characteristic zero.
    Let $X\to Z$ be a contraction of $k$-varieties, which is of Fano type.
    Then every nef$/Z$ $\RR$-divisor on $X$ is semiample$/Z$.
\end{corollary}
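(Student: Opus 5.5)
The plan is to reduce Corollary~\ref{fano-nef-semiample-cor} to Theorem~\ref{BPF-thm}. Let $D$ be a nef$/Z$ $\RR$-divisor on $X$. The goal is to produce a klt boundary $\Delta$ on $X$ and some $a>0$ such that $aD-(K_X+\Delta)$ is nef and big over $Z$. Theorem~\ref{BPF-thm} then applies with $\mbb K=\bar k$ and $n=\dim X$, and yields that $D$ is semiample$/Z$.

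Since $X\to Z$ is of Fano type, the definition in \S\ref{fano-type-defn} gives a $\QQ$-divisor $B$ with $(X,B)$ klt and $-(K_X+B)$ ample over $Z$. I would set $\Delta\coloneqq B$ and $a\coloneqq 1$. Then
\[ D-(K_X+B) = D + \big(-(K_X+B)\big) \]
is the sum of a nef$/Z$ divisor and an ample$/Z$ divisor. It should therefore be ample$/Z$, hence nef and big over $Z$.

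The one point needing care is that over a non-closed field, ``nef $+$ ample is ample'' should be justified with the tools available here. One way is to invoke \cite[Theorem~4.16]{lyu-takumi-2026relativeminimalmodelprogram} (openness of the ample cone, with the nef cone as its closure), as in the proof of Lemma~\ref{fppf-peff-K-k}. Alternatively, one can pass to $\bar k$: nefness is preserved by Lemma~\ref{fppf-peff}, ampleness by Lemma~\ref{fppf-ample}, the sum is ample over $\bar Z$ by the classical statement, and one descends back by Lemma~\ref{fppf-ample}.

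Bigness over $Z$ then follows from ampleness via \cite[Definition~5.8]{lyu-takumi-2026relativeminimalmodelprogram}. The hypotheses of Theorem~\ref{BPF-thm} are therefore satisfied: $f$ is projective and $X,Z$ are normal, since a contraction has normal target when $X$ is normal. We conclude that $D$ is semiample$/Z$.

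I do not expect a real obstacle here. The only mildly delicate step is the relative ample-cone statement over a non-closed field, which is handled by either route above.
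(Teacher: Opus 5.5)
Your proposal is correct and is essentially the paper's proof: take the klt $\QQ$-boundary $\Delta$ with $-(K_X+\Delta)$ ample$/Z$, observe that $D-(K_X+\Delta)$ is then ample$/Z$, and apply Theorem~\ref{BPF-thm} with $a=1$. Your justification that ``nef $+$ ample is ample'' over a non-closed field fills in a step the paper leaves implicit.
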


\begin{proof}
    As $X\to Z$ is of Fano type, there is a $\QQ$-divisor $\Delta$ on $X$ such that
    $(X, \Delta)$ is klt, and $-(K_X + \Delta)$ is ample$/Z$.
    Then $D-(K_X + \Delta)$ is also ample$/Z$.  
    By Theorem~\ref{BPF-thm}, $D$ is semiample$/Z$.
\end{proof}


\subsection{Fano type fibrations}
Let $k$ be a field of characteristic zero.
A \emph{generalised log Calabi-Yau fibration $(X, B + \mb{M})\to Z$ over $k$}
consists of a $k$-scheme $S$, a g-lc g-pair $(X/S, B + \mb{M})$, 
and a contraction $X\to Z$ over $S$ between $k$-varieties such that
\[ K_X+B+\mb{M}_X\sim_{\RR} 0/Z. \]
We call a generalised log Calabi-Yau fibration $(X, B+\mb{M})\to Z$ over $k$
a \emph{Fano type generalised log Calabi-Yau fibration}
(or just a \emph{Fano type fibration} for short)
if $X$ is of Fano type over $Z$ in addition.


\begin{lemma}\label{fano-type-induction}
    Let $k$ be a field of characteristic zero.
    Let $(X, B + \mb{M})\to Z$ be a Fano type generalised log Calabi-Yau fibration over $k$.
    Assume $\dim Z\ge 2$.
    Let $L$ be a general Cartier hyperplane section of $Z$.  
    Denote by $X_L$ the fibre product $L\times_Z X$.  Then we can write
    \begin{equation}
        K_{X_L} + B_L + \mb{N}_{X_L} \sim_{\RR} (K_X + B + X_L + \mb{M}_X)|_{X_L}, \label{fano-type-adjunction}
    \end{equation}
    where $(X_L, B_L + \mb{N})\to L$ is also a Fano type generalised log Calabi-Yau fibration over $k$.
    Moreover, let $S$ be an irreducible component of $B$ whose image $T$ in $Z$ satisfies that
    \begin{itemize}
        \item $\dim T = \dim Z$ if $\dim Z \ge 2$, or
        \item $\dim T = \dim Z - 1$ if $\dim Z \ge 3$.
    \end{itemize}
    Assume that $\coeff_S B\ge t$ for some $t\in \RR^{>0}$,
    and set $S_L \coloneqq L\times_Z S$.  Then $S_L$ is also an irreducible component of $B_L$
    satisfying $\coeff_{S_L} B_L \ge t$.
\end{lemma}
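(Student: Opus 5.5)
We follow the standard Bertini-type argument for general hyperplane sections, adapted to a possibly non-closed field $k$. Take $L$ to be a general member of a very ample linear system $|H|$ on $Z$. If $k$ is infinite, general members exist in the usual sense. If $k$ is finite we would replace $H$ by a high multiple; but since $\chara k = 0$, $k$ is infinite, so general members of $|H|$ over $k$ make sense and Bertini applies. Then $L$ is a normal (indeed geometrically integral, since $\dim Z\ge 2$) $k$-variety. Since $f\colon X\to Z$ is a contraction, $X_L = f^*L$ is a general member of the basepoint free system $|f^*H|$. Hence $X_L$ is a normal $k$-variety, and $X_L\to L$ is again a contraction: $f_*\mcO_{X}=\mcO_Z$ restricts by flat base change (cohomology and base change for a general section) to $(f_L)_*\mcO_{X_L}=\mcO_L$.

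\emph{Construction of the g-pair.} Choose a log resolution $\pi\colon W\to X$ of $(X,B)$ on which $\mb M$ descends to a nef$/Z$ divisor $\mb M_W$. Then $W_L\coloneqq \pi^*X_L$ is a general member of a basepoint free system, so $W_L\to X_L$ is a log resolution of the restricted data. Define $B_L$ by generalised adjunction: $K_{X_L}+B_L+\mb N_{X_L} \coloneqq (K_X+B+X_L+\mb M_X)|_{X_L}$, where $\mb N$ is the $\bb$-divisor determined by $\mb M_W|_{W_L}$. The following properties then hold.
\begin{itemize}
\item Since $X_L$ is general, it is not contained in $\Supp B$ and is Cartier. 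So this is just restriction: $B_L=B|_{X_L}$ and $\mb N_{X_L}=\mb M_X|_{X_L}$.
\item $\mb N$ is $\bb$-nef$/L$ by restriction of nefness.
\item Generic smoothness/Bertini on $W$ gives that the pullback of $K_X+B+\mb M_X$ to $W_L$ has the same coefficients along the components of the restricted snc divisor. Hence $(X_L,B_L+\mb N)$ is g-lc.
\item Since $X_L\sim f^*L$, it is $\sim_\RR 0/Z$, and so $K_{X_L}+B_L+\mb N_{X_L}\sim_\RR 0/L$.
\end{itemize}
To avoid subtleties of Bertini over non-closed fields, we may verify g-lc after base change to $\bar k$ using \S\ref{ss-descent-b-nef}~(5). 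Over $\bar k$ the section $\bar L$ is still general in the base-changed system.

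\emph{Fano type.} By Lemma~\ref{fano-type-conditions}, pick a big$/Z$ $\QQ$-divisor $\Gamma$ with $(X,\Gamma)$ klt and $K_X+\Gamma\sim_\QQ 0/Z$. Restricting as above gives $(X_L,\Gamma|_{X_L})$ klt, with $K_{X_L}+\Gamma|_{X_L}\sim_\QQ0/L$. Bigness over $L$ is preserved, since bigness over $Z$ is a condition on the generic fibre, and $X_L\to L$ has generic fibre the base change of the generic fibre over a general point of $Z$. Concretely, write $\Gamma\sim_\QQ A+E/Z$ with $A$ ample and $E\ge0$, and restrict both. Hence $X_L\to L$ is of Fano type.

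\emph{Components.} Let $S$ be a component of $B$ with image $T$. In either case of the statement, $\dim T\ge 2$ (in the second case because $\dim Z\ge3$), so $T\cap L$ is nonempty and $\dim(T\cap L)=\dim T-1\ge 1$. Bertini gives that $T\cap L$ is irreducible, and likewise $S_L=S\cap X_L$ is irreducible, since $S\to T$ and $X_L$ is the pullback of a general member of a basepoint free system on $S$ of positive-dimensional image. Moreover $S_L$ is reduced of codimension one in $X_L$, and $\coeff_{S_L}B_L=\coeff_SB\ge t$. This is because restriction of a general Cartier divisor preserves multiplicities: $S_L$ is generically reduced, as $X_L$ meets $S$ transversally at its generic point.

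\emph{Main obstacle.} The main obstacle is making the Bertini statements (irreducibility, normality, transversality) rigorous over an arbitrary infinite field of characteristic zero. We would invoke Jouanolou's Bertini theorem over infinite fields, which gives irreducibility for geometrically irreducible $S$; for $S$ irreducible but not geometrically irreducible, we would use irreducibility of $S$ over $k$ directly. The remaining steps are formal consequences of restriction.
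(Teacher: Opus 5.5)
Your proposal is correct and follows essentially the paper's route. The ingredients match: Bertini for normality of $L$ and $X_L$ and for irreducibility and reducedness of $S_L$; adjunction to the general Cartier divisor $X_L$, which is just restriction, so coefficients and the g-lc property are preserved; and restriction of a big klt $\Gamma$ with $K_X+\Gamma\sim_{\QQ}0/Z$ to obtain the Fano type property.

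Three small repairs are needed.
\begin{itemize}
\item The inclusion $L\hookrightarrow Z$ is not flat, so argue $(f_L)_*\mathcal{O}_{X_L}=\mathcal{O}_L$ from $0\to\mathcal{O}_X(-X_L)\to\mathcal{O}_X\to\mathcal{O}_{X_L}\to 0$, taking $L$ to avoid the associated points of $R^1f_*\mathcal{O}_X$.
\item The generic fibre of $X_L\to L$ lies over a codimension-one point of $Z$, not over its generic point, so drop the generic-fibre heuristic and rely on your concrete $\Gamma\sim_{\QQ}A+E/Z$ restriction argument.
\item For $S$ not geometrically irreducible, apply Bertini to the finitely many Galois-conjugate components of $S\otimes_k\bar k$. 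Then use that the Galois group acts transitively on their intersections with the base change of $X_L$.
\end{itemize}
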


\begin{proof}
    By Bertini's theorem for normality (see \cite[Corollary 3.4.9]{joins-intersections}),
    both $L$ and $X_L$ are also normal varieties.
    Moreover, for a general $L$, $(X, B + X_L + \mb{M})$ is also g-lc, hence by generalised adjunction
    (see \cite[Definition 4.7]{B-Zhang}), we can write \eqref{fano-type-adjunction},
    where $(X_L, B_L + \mb{N}) \to L$ is also a generalised log Calabi-Yau fibration.
    
    On the other hand, as $X$ is of Fano type over $Z$, there is a big$/Z$ $\QQ$-divisor $\Gamma$
    such that $(X, \Gamma)$ is klt and $K_X + \Gamma \sim_{\QQ}0/Z$.  For a general $L$, we can write
    \[ K_{X_L} + \Gamma_L \sim_{\QQ} (K_X + \Gamma + X_L)|_{X_L}, \]
    where $(X_L, \Gamma_L)$ is also a klt pair (by \cite[Theorem 5.50]{km98}) 
    satisfying that $K_{X_L} + \Gamma_L\sim_{\QQ}0/L$.
    It is clear that $\Gamma_L$ is big$/L$ by \cite[Corollary II.5.17]{nakayama_Zariski_decomposition}.
    Thus, $X_L$ is also of Fano type over $L$.
    Now pick an irreducible component $S$ of $B$ satisfying all the assumptions.
    Then $L\times_Z S$ is also reduced and irreducible by \cite[Th\'eor\`eme 6.10]{jou-bertini}
    and \cite[Corollary 3.4.9]{joins-intersections}.  Finally, the coefficient of $S_L$ 
    in $B_L$ is $\ge t$ by \cite[Definition 4.7]{B-Zhang}.
\end{proof}


\section{\bf MMP on g-pairs over non-closed fields}


\subsection{Good contraction of extremal rays}
Let $k$ be a field of characteristic zero.
Let $X\to Z$ be a projective morphism between $k$-varieties.
Let $f\colon X\to Y$ be a contraction of an extremal ray $R\subset \NE(X/Z)$.
As in \cite[Definition~11.6]{lyu-takumi-2026relativeminimalmodelprogram},
we say that the contraction $f$ is \emph{good}
if for each $\QQ$-Cartier $\QQ$-divisor $L$ on $X$, $L\cdot R = 0$
if and only if $L \sim_{\QQ} f^* L_Y$ for a $\QQ$-Cartier $\QQ$-divisor $L_Y$ on $Y$.

\begin{lemma}\label{good-cont-real}
    Let $k$ be a field of characteristic zero, 
    and let $X\to Z$ be a projective morphism between $k$-varieties.
    Let $f\colon X\to Y$ be a good contraction of an extremal ray $R\subset \NE(X/Z)$.
    Then for each $\RR$-Cartier $\RR$-divisor $M$ on $X$, $M\cdot R = 0$
    if and only if $M \sim_{\RR} f^*N$ for an $\RR$-Cartier $\RR$-divisor $N$ on $Y$.
\end{lemma}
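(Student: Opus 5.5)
The easy direction comes first. If $M\sim_{\RR} f^*N$ for an $\RR$-Cartier $\RR$-divisor $N$ on $Y$, then for every curve $C$ whose class lies in $R$ we have $f_*C=0$, since $f$ contracts $R$. Hence $M\cdot C = f^*N\cdot C = 0$ by the projection formula, so $M\cdot R=0$.

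For the converse, the plan is to reduce from $\RR$-coefficients to $\QQ$-coefficients by a linear-algebra argument, and then apply the defining property of a good contraction to each $\QQ$-piece.

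Write $M=\sum_{j=1}^s m_j M_j$ with $m_j\in\RR$ and each $M_j$ a Cartier divisor on $X$. Consider the $\QQ$-vector space $V=\bigoplus_j \QQ M_j$ of formal combinations. Fix a curve class $\gamma$ spanning $R$; since $R$ is a ray, $L\cdot R=0$ is equivalent to $L\cdot\gamma=0$. Define the $\QQ$-linear map $\lambda\colon V\to \RR$, $\lambda(\sum q_jM_j)=\sum q_j (M_j\cdot\gamma)$.
\begin{itemize}
\item The values $M_j\cdot\gamma$ may be taken rational, even integral, by choosing $\gamma$ to be the class of an actual curve: $\NE(X/Z)$ is the closure of effective classes, and an extremal ray contracted by a contraction is spanned by an integral curve contracted by $f$.
\item Then $\lambda$ is defined over $\QQ$, so $\ker(\lambda\otimes\RR)=(\ker\lambda)\otimes_\QQ\RR$. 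This is the standard fact that the kernel of a matrix with rational entries has a rational basis.
\item The real vector $(m_j)$ lies in $\ker(\lambda\otimes\RR)$. Hence we can write $(m_j)=\sum_i r_i v_i$ with $r_i\in\RR$ and $v_i\in \ker\lambda\subset\QQ^s$.
\end{itemize}
Correspondingly, $M=\sum_i r_i L_i$ where each $L_i$ is a $\QQ$-Cartier $\QQ$-divisor with $L_i\cdot R=0$.

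By goodness, $L_i\sim_\QQ f^*L_{i,Y}$ for some $\QQ$-Cartier $\QQ$-divisor $L_{i,Y}$ on $Y$. Setting $N=\sum_i r_iL_{i,Y}$, which is $\RR$-Cartier, we get $M\sim_\RR f^*N$. One subtlety is that the expression of $M$ as a combination of Cartier divisors must be an actual equality of divisors, not just of classes. It is, because the $v_i$ recombine the same $M_j$, so $\sum_i r_iL_i=\sum_j m_jM_j=M$ holds as divisors.

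The main obstacle is justifying that $R$ is spanned by the class of an actual curve, so that $\lambda$ has rational coefficients. If only a limit class is available, I would fall back on goodness itself. Choose rational Cartier combinations spanning $\{L\in V: L\cdot R=0\}$, and argue that this subspace is cut out by the rational condition "$L\sim_\QQ f^*(\cdot)$", which is rational. Its real span then equals the real kernel, because any real solution that is a limit of rational non-solutions would contradict $f$ being the contraction of $R$ (some $f$-contracted curve lies in $R$).
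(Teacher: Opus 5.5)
Your argument is correct and is essentially the paper's: write $M=\sum_i a_iM_i$ with $M_i$ Cartier, use that $R$ is spanned by the class of an integral curve $C$ contracted by $f$ (this is built into the definition of a good contraction in Lyu--Murayama, so your fallback paragraph is unnecessary), rewrite $M$ exactly as a real combination of $\QQ$-Cartier $\QQ$-divisors $L_i$ with $L_i\cdot C=0$, and apply goodness to each $L_i$. The only difference is that the paper produces the $L_i$ explicitly as $M_i-\frac{M_i\cdot C}{A\cdot C}A$ for an ample$/Z$ Cartier divisor $A$, whereas you take a rational basis of the kernel of $L\mapsto L\cdot C$ inside $\bigoplus_j\QQ M_j$. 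Both versions rest on the same fact, namely that these intersection numbers are integers.
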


\begin{proof}
    Assume that $M$ is an $\RR$-Cartier $\RR$-divisor on $X$ satisfying $M\cdot R = 0$.
    As in \cite[Definition~11.6]{lyu-takumi-2026relativeminimalmodelprogram}, we can write
    $ R = \RR^{\ge 0}\cdot [C] $
    for an integral scheme of dimension one $C\subset X$ that is contracted by $f$.  Write
    \[ M = \sum_{i=1}^s a_i M_i, \]
    where $a_i\in \RR$, and each $M_i$ is a Cartier divisor on $X$.
    Let $A$ be an ample$/Z$ Cartier divisor on $X$.
    As $C$ is contracted by $f$, it is also contracted by $X\to Z$.  Hence
    $A\cdot C$ is a positive integer.  For each $1\le i\le s$, set
    \[ L_i \coloneqq M_i - \frac{M_i\cdot C}{A\cdot C} A. \]
    It is evident that $L_i$ is a $\QQ$-Cartier $\QQ$-divisor, and $L_i\cdot C = 0$.
    As $f$ is a good contraction, there exists a $\QQ$-Cartier $\QQ$-divisor $N_i$ on $Y$ so that
    \[ L_i\sim_{\QQ} f^* N_i. \] 
    By construction, we get
    \[ \sum_{i=1}^s a_i L_i = \sum_{i=1}^s a_i \bigg( M_i - \frac{M_i\cdot C}{A\cdot C} A \bigg)
    = M -  \frac{M\cdot C}{A\cdot C} A = M. \]
    Let 
    \[ N\coloneqq \sum_{i=1}^s a_i N_i, \]
    which is an $\RR$-Cartier $\RR$-divisor on $Y$.
    Then we can conclude that $M \sim_{\RR} f^* N$.
\end{proof}


\subsection{MMP for klt pairs over non-closed fields}
The Minimal Model Program (MMP) with scaling as in \cite{BCHM}
is established in \cite{lyu-takumi-2026relativeminimalmodelprogram}
for several categories of spaces.  For simplicity, we only recall the following results 
for varieties over fields of characteristic zero.  
We refer the readers to \cite{lyu-takumi-2026relativeminimalmodelprogram} for more details.
Note that \cite{lyu-takumi-2026relativeminimalmodelprogram} only deals with 
pairs whose boundary divisors have coefficients in $\QQ$.


\begin{theorem}[cf. \protect{\cite{BCHM, lyu-takumi-2026relativeminimalmodelprogram}}]\label{bchm-type-mmp}
    Let $k$ be a field of characteristic zero.
    Let $X\to Z$ be a projective surjective morphism of $k$-varieties, 
    where $X$ is normal and $\QQ$-factorial.
    Let $\Delta\ge 0$ be a $\QQ$-divisor such that $(X, \Delta)$ is klt.
    Let $A$ be a $\QQ$-divisor on $X$ such that 
    $A$ is a good scaling divisor for $(X, \Delta)$, e.g.
    $A$ is ample$/Z$ and $K_X+\Delta +A$ is nef$/Z$.
    Assume that $c K_X + \Delta$ is big$/Z$ for some $c\in (-\infty, 1]\cap \QQ$.  Then
    \begin{itemize}
        \item if $K_X+\Delta$ is pseudo-effective$/Z$, then any process of MMP on $K_X+\Delta$ with scaling of $A$ over $Z$ terminates with a good minimal model over $Z$, and
        \item if $K_X+\Delta$ is not pseudo-effective$/Z$, then any process of MMP on $K_X+\Delta$ with scaling of $A$ over $Z$ terminates with a Mori fibre space over $Z$.
    \end{itemize}
\end{theorem}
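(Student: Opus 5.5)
The plan is to deduce the statement from the results of \cite{lyu-takumi-2026relativeminimalmodelprogram}, which establish the MMP with scaling for klt pairs with $\QQ$-coefficients over general bases, including excellent schemes of characteristic zero. The work is in checking that our setting fits their hypotheses and that their conclusions are the ones stated here. First I would verify the inputs. $X$ is a normal $\QQ$-factorial variety over $k$ with $X\to Z$ projective, $(X,\Delta)$ is klt with $\Delta$ a $\QQ$-divisor, and $A$ is a good scaling divisor. The bigness assumption on $cK_X+\Delta$ is the analogue of the ``big boundary'' hypothesis of \cite{BCHM}: for $c<1$, writing $K_X+\Delta=(1-c)K_X+(cK_X+\Delta)$ lets one perturb $\Delta$ to $\Delta'\sim_{\QQ}\Delta+$ (something) with $\Delta'$ big$/Z$, after rescaling. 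This is exactly the form of hypothesis used in \cite{lyu-takumi-2026relativeminimalmodelprogram}.

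Second, I would run the MMP. Each step requires a cone theorem and the existence of good contractions of $(K_X+\Delta)$-negative extremal rays, in the sense of \S3.1. Flips must exist, and $\QQ$-factoriality and the klt condition must be preserved. All of this is supplied in \cite{lyu-takumi-2026relativeminimalmodelprogram}; by Lemma~\ref{good-cont-real}, goodness also lets us handle $\RR$-Cartier divisors if needed. Scaling by $A$ produces the usual nonincreasing sequence of thresholds $\lambda_i$. Termination then follows from their BCHM-type theorem: finiteness of models in the big-boundary case implies that the MMP with scaling terminates.

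Third, I would identify the outcome. If $K_X+\Delta$ is pseudo-effective$/Z$, the MMP ends with a minimal model. It is good because of the base point free theorem, Theorem~\ref{BPF-thm}: the boundary is big, so $K_{X'}+\Delta'$ nef gives $a(K_{X'}+\Delta')-(K_{X'}+\Delta'')$ nef and big for a suitable klt $\Delta''$, using bigness of $cK+\Delta$ transported to $X'$ via Lemma~\ref{no-van-big-divisor}. If $K_X+\Delta$ is not pseudo-effective, some $\lambda_i$ becomes positive with a non-birational contraction, i.e.\ a Mori fibre space.

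The main obstacle is the bookkeeping rather than any new idea. One must confirm that pseudo-effectivity and bigness in the sense of \S2 agree with those of \cite{lyu-takumi-2026relativeminimalmodelprogram}, and that ``any process'' (not just some) terminates. Alternatively, as a cross-check, one could base change to $\bar k$ and use Galois descent (Lemmas~\ref{fppf-ample}, \ref{fppf-peff-K-k}), but an MMP step over $k$ need not be an MMP step over $\bar k$. So I would rely on the cited theorem directly.
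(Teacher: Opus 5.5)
Your proposal is correct and has the same skeleton as the paper's proof. Existence, termination of any process, preservation of $\QQ$-factoriality and klt singularities, and the Mori fibre space outcome are all taken from \cite{lyu-takumi-2026relativeminimalmodelprogram} (Corollary~21.9 and Lemmas~19.2, 20.3, 20.4). Goodness is then handled separately, after transporting bigness of $cK_X+\Delta$ to the output $Y$ via Lemma~\ref{no-van-big-divisor}.

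The only real difference is the semiampleness step. The paper reduces to $Z$ affine and uses \cite[Lemma~17.2]{lyu-takumi-2026relativeminimalmodelprogram} to write $e(K_Y+\Delta_Y)\sim_{\QQ}K_Y+\Gamma+A$, with $A$ ample$/Z$ and $(Y,\Gamma)$ klt. It then concludes by \cite[Corollary~17.8]{lyu-takumi-2026relativeminimalmodelprogram}. You instead apply Theorem~\ref{BPF-thm} (BCHM over $\bar k$ plus Galois descent), exactly as the paper does for real boundaries in \S\ref{ss-big-real-boundary}(7.2). Your route avoids the affine reduction and \S17 of Lyu--Murayama, at the cost of an explicit perturbation; the paper's keeps everything inside one reference.

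That perturbation is the one point to tighten. The claim that ``the boundary is big'' is false in general for $c\in(0,1]$; for example, take $\Delta=0$ with $K_Y$ nef and big. So do not appeal to it. Instead, for $c<1$ write $cK_Y+\Delta_Y\sim_{\QQ}H+E/Z$ with $H$ ample$/Z$ and $E\ge 0$, and put $\Delta''\coloneqq(1-\epsilon)\Delta_Y+\frac{\epsilon}{1-c}E$. Using $(1-c)\Delta_Y=(cK_Y+\Delta_Y)-c(K_Y+\Delta_Y)$, one gets
\[ 2(K_Y+\Delta_Y)-(K_Y+\Delta'')\sim_{\QQ}\Big(1-\frac{\epsilon c}{1-c}\Big)(K_Y+\Delta_Y)+\frac{\epsilon}{1-c}H/Z, \]
which is ample$/Z$ for $0<\epsilon\ll 1$. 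Moreover $(Y,\Delta'')$ is klt for such $\epsilon$. For $c=1$, simply take $\Delta''=\Delta_Y$, since $K_Y+\Delta_Y$ is then nef and big. With this choice, Theorem~\ref{BPF-thm} applies as you intended.

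Your sentence on the non-pseudo-effective case, ``some $\lambda_i$ becomes positive'', is loose, since all the $\lambda_i$ are positive. This is harmless, because you rely on the cited corollary there anyway.
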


\begin{proof}
    This follows from \cite[Corollary~21.9]{lyu-takumi-2026relativeminimalmodelprogram}
    except the semiampleness of minimal model if $K_X+\Delta$ is pseudo-effective$/Z$.
    To this end, let $\phi\colon X\bir Y$ be the sequence of MMP$/Z$ on $K_X+\Delta$,
    and let $\Delta_Y\coloneqq \phi_* \Delta$.
    Then by \cite[Corollary~21.9]{lyu-takumi-2026relativeminimalmodelprogram},
    $K_Y+\Delta_Y$ is nef$/Z$.  
    By \cite[Lemma~19.2, Lemma~20.3]{lyu-takumi-2026relativeminimalmodelprogram},
    $Y$ is $\QQ$-factorial; moreover,
    by \cite[Lemma~20.4]{lyu-takumi-2026relativeminimalmodelprogram},
    $(Y, \Delta_Y)$ is klt.  By Lemma~\ref{no-van-big-divisor},
    $cK_Y + \Delta_Y$ is big$/Z$.
    For the semiampleness of $K_Y + \Delta_Y$, we can assume that $Z$ is affine. 
    Then by \cite[Lemma~17.2]{lyu-takumi-2026relativeminimalmodelprogram}, we can write
    \[ K_Y+\Gamma + A \sim_{\QQ} e(K_Y + \Delta_Y) \]
    for some $e\in \QQ^{>0}$, where
    \begin{itemize}
        \item $A$ is an ample$/Z$ $\QQ$-divisor on $Y$, and
        \item $\Gamma$ is a $\QQ$-divisor such that $(Y, \Gamma)$ is klt.
    \end{itemize}
    Then by \cite[Corollary~17.8 ]{lyu-takumi-2026relativeminimalmodelprogram},
    $K_Y+\Gamma+A$ is semiample$/Z$.  Hence
    $(Y, \Delta_Y)$ is a good minimal model of $(X,\Delta)$.
\end{proof}


\subsection{MMP for klt pairs with real coefficients}\label{ss-big-real-boundary}
Let $k$ be a field of characteristic zero.
Let $f\colon X\to Z$ be a projective morphism between $k$-varieties,
where $X$ is normal and $\QQ$-factorial.
Let $\Delta, A\ge 0$ be $\RR$-divisors on $X$ such that
\begin{itemize}
    \item $(X, \Delta + A)$ is klt,
    \item $\Delta, A$ are big$/Z$, and
    \item $K_X+\Delta + A$ is nef$/Z$.
\end{itemize}
We show that a $(K_X+\Delta)$-MMP over $Z$ with scaling of $A$ exists, and every such MMP$/Z$ terminates. 
Moreover, if $K_X+\Delta$ is pseudo-effective$/Z$, the MMP$/Z$ terminates with a 
\emph{good minimal model} of $K_X+\Delta$ over $Z$;
if $K_X+\Delta$ is not pseudo-effective$/Z$, 
then the MMP$/Z$ terminates with a Mori fibre space of $K_X+\Delta$ over $Z$.

\medskip
(1)
We always fix a norm $\norm{\cdot}$ on $\N^1(X/Z)_{\RR}$, and we denote by $\norm{\cdot}^*$ the dual norm
on the space of linear functionals of $\N^1(X/Z)_{\RR}$.
Denote by $\Nef(X/Z)$ the \emph{relative nef cone} of $X\to Z$; see \cite[Definition~4.13]{lyu-takumi-2026relativeminimalmodelprogram}.
For \emph{supporting hyperplanes} of $\Nef(X/Z)$, see \cite[Definition~4.20]{lyu-takumi-2026relativeminimalmodelprogram}.
Let $H$ be a supporting hyperplane of $\Nef(X/Z)$;
for the \emph{extremal ray dual to $W$},
see \cite[Definition~4.20, Remark~4.22]{lyu-takumi-2026relativeminimalmodelprogram}.
Denote by $\Amp(X/Z)$ the \emph{relative ample cone} of $X\to Z$; see
\cite[Definition~4.13]{lyu-takumi-2026relativeminimalmodelprogram}.  
Then by \cite[Theorem~4.16]{lyu-takumi-2026relativeminimalmodelprogram}, we have
\[ \Amp(X/Z) = \interior \big(\Nef(X/Z)\big). \]
By \cite[Remark~4.17]{lyu-takumi-2026relativeminimalmodelprogram}, $\Amp(X/Z)$
is always open in $\N^1(X/Z)_{\RR}$, and $\Amp(X/Z)$ $\RR$-linearly spans $\N^1(X/Z)_{\RR}$.

\medskip
(2)
Let $B$ be an $\RR$-divisor on $X$ such that $B$ is big$/Z$ and $(X, B)$ is klt.
By \cite[Corollary~5.12]{lyu-takumi-2026relativeminimalmodelprogram},
we can write
\[ B\sim_{\RR} H + E/Z, \]
where $E\ge 0$, and $H$ is ample$/Z$.  Pick $0<\epsilon\ll 1$ sufficiently small, and let
\[ B_{\epsilon} \coloneqq (1-\epsilon)B + \epsilon E. \]
Then $(X, B_{\epsilon})$ is also klt.  Let $\Gamma$ be a $\QQ$-divisor so that
\[ 0< \Gamma \le B_{\epsilon}. \]
It is evident that $(X, \Gamma)$ is also klt.  
Taking $B_{\epsilon}-\Gamma$ sufficiently small, we can assume that
\[ \epsilon H + (B_{\epsilon}-\Gamma)~\mbox{is ample}/Z. \]
Thus, we can write
\begin{align*}
    K_X+B &\sim_{\RR} K_X+ \Gamma + \epsilon H + ((B_{\epsilon} - \Gamma))/Z \\
               &\sim_{\RR} K_X + \Gamma + H_{\Gamma}/Z,
\end{align*}
where $H_{\Gamma}\sim_{\RR} \epsilon H + (B_{\epsilon}-\Gamma)/Z$ is ample$/Z$.
It is evident that $(X, \Gamma)$ is klt.
Hence the class of $K_X+B$ in $\N^1(X/Z)$ belongs to the subset $\mcA(X/Z)$ 
in \cite[Theorem~18.2]{lyu-takumi-2026relativeminimalmodelprogram}.

\medskip
(3) [Existence of contraction]
Let
\begin{equation}\label{nef-lambda-defn}
    \lambda\coloneqq \inf\Set{ t\in \RR^{\ge 0}\mid K_X+\Delta + t A ~\mbox{is nef}/Z }.
\end{equation}
We assume that $\lambda>0$.
As $\Delta + \lambda A$ is big$/Z$, by (2), the class $[K_X+\Delta + \lambda A]$
in $\N^1(X/Z)_{\RR}$ belongs to $\mcA(X/Z)$.  Hence
\[ \mb{u}\coloneqq [K_X+\Delta + \lambda A]\in \mcA(X/Z)\cap \Nef(X/Z). \]
Then by \cite[Theorem~18.2]{lyu-takumi-2026relativeminimalmodelprogram},
there exists a closed convex \emph{rational} polytope $\mcP\subset \N^1(X/Z)_{\RR}$ such that
the following are satisfied.
\begin{itemize}
    \item [(i)] $[K_X+\Delta + \lambda A]$ is contained in the interior $\interior(\mcP)$ of $\mcP$.
    \item [(ii)] $\mcP\cap \Nef(X/Z)$ is a closed convex \emph{rational} polytope with nonempty interior.
    \item [(iii)] Let $F_1,\dots, F_m$ be the codimension one faces of $\mcP\cap \Nef(X/Z)$ that intersects $\interior(\mcP)$.  Then
    \begin{itemize}
        \item [$\bullet$] each $F_i$ spans a supporting hyperplane of $\Nef(X/Z)$, and
        \item [$\bullet$] $\interior(\mcP) \cap \partial \Nef(X/Z) = \interior(\mcP)\cap (F_1\cup F_2\cup \cdots \cup F_m)$.
    \end{itemize}
    \item [(iv)] Every compact subset of $\mcA(X/Z)\cap \partial\Nef(X/Z)$ is contained in a finite union of supporting hyperplanes.
    \item [(v)] Let $D$ be a $\QQ$-Cartier divisor on $X$ such that $[D]\in \mcA\cap \Nef(X/Z)$.  Then $D$ is semiample$/Z$.
\end{itemize}
By (ii), we have
\[ \mcP\cap \Nef(X/Z) = \bigcap_{j=1}^n H_j^+, \]
where each $H_j$ is a \emph{rational} hyperplane
\[ H_j\coloneqq \Set{v\in \N^1(X/Z)_{\RR}\mid a_j(v) = b_j} \]
for a nonzero \emph{rational} linear functional $a_j$ on $\N^1(X/Z)_{\RR}$ and $b_j\in \QQ$, and
\[ H_j^+ \coloneqq \Set{v\in \N^1(X/Z)_{\RR}\mid a_j(v) \ge b_j}. \]
As $\mb{u}\in \mcP\cap \Nef(X/Z)$, we have
\[ d_j \coloneqq a_j(\mb{u}) -b_j \ge 0. \]
Set
\[ J_{\mb{u}}\coloneqq \Set{1\le j\le n\mid d_j = 0}. \]
Then for each $j\in J_{\mb{u}}$, 
\[ H_j \coloneqq \Set{v\in \N^1(X/Z)_{\RR}\mid a_j(v) = b_j} \]
is a hyperplane containing $\mb{u}$.
Let $r(\mb{u}, \mcP)>0$ be the distance between $\mb{u}$ and $\partial \mcP$, and let
\[ r\coloneqq \frac{1}{2}\min\Set{r(\mb{u}, \mcP), \frac{d_j}{\norm{a_j}^*}~\mbox{with}~j\not\in J_{\mb{u}}}. \]
Let 
\[ U_0 \coloneqq B(\mb{u}, r) \]
be the open ball centred at $\mb{u}$ with radius $r$.  It is evident that $U_0\subset \mcP$.
Moreover, if $j\not\in J_{\mb{u}}$, and if $v\in U_0$, then
\[ a_j(v) - b_j = d_j + a_j(v-\mb{u}) \ge d_j - \norm{a_j}^*\cdot \norm{v-\mb{u}} > \frac{d_j}{2} >0. \]
Thus, for each $j\not\in J_{\mb{u}}$, we get
\[ U_0 \subset \interior(H_j^+). \]
On the other hand, for each $j\in J_{\mb{u}}$, $\mb{u}\in H_j$.
Hence $H_j$ is a codimension one face of $\mcP\cap \Nef(X/Z)$ that intersects $\interior(\mcP)$.
By (iii) as above, each such an $H_j$ is a supporting hyperplane of $\Nef(X/Z)$.
Thus, for such an $H_j$, we have $b_j = 0$.
Indeed, as $0\in \Nef(X/Z)$, we have $b_j\le 0$;
on the other hand, as $\mb{u}\in \Nef(X/Z)$, $2\mb{u}\in \Nef(X/Z)$,
which implies that $2b_j = a_j(2\mb{u}) \ge b_j$, i.e. $b_j\ge 0$.  Therefore, we can conclude that
\[ U_0\cap \Nef(X/Z) = U_0\cap \bigcap_{j\in J_{\mb{u}}} H_j^+,~\mbox{and}~\mb{u}\in H_j~
\mbox{for each}~j\in J_{\mb{u}}. \]
By \eqref{nef-lambda-defn}, for $0<\mu \ll 1$ sufficiently small, we have
\[ \mb{v}_{\mu} \coloneqq \mb{u}-\mu [A] \in U_0~~\mbox{and}~~\mb{v}_{\mu}\not\in \Nef(X/Z).  \]
Then there exists a $j\in J_{\mb{u}}$ so that $\mb{v}_{\mu}$ satisfies that
\[ a_j(\mb{v}_{\mu}) <0. \]
As $a_j(\mb{u}) = 0$, we get
\[ a_j([A]) > 0. \]
Let $R\subseteq \NE(X/Z)$ be the extremal ray dual to $H_j$.  Then
\[ (K_X+\Delta + \lambda A)\cdot (R\setminus \Set{0}) = 0~~\mbox{and}~~
A\cdot(R\setminus\Set{0}) > 0. \]
By \cite[Lemma~18.4]{lyu-takumi-2026relativeminimalmodelprogram}, $R$ has a \emph{good} contraction
\[ \phi\colon X\to Y~\mbox{over}~Z,\]
where $Y\to Z$ is projective.  By \cite[Lemma~19.1]{lyu-takumi-2026relativeminimalmodelprogram}, 
exactly one of the following holds.
\begin{itemize}
    \item $\dim X > \dim Y$.
    \item $\phi$ is birational, $\Ex(\phi)$ is a prime divisor, i.e. $\phi$ is divisorial.
    \item $\phi$ is birational, $\Ex(\phi)\subset X$ is of codimension $\ge 2$, i.e. $\phi$ is small.
\end{itemize}
Moreover, by \cite[Lemma~19.2]{lyu-takumi-2026relativeminimalmodelprogram}, if $\dim X>\dim Y$,
or if $\phi$ is divisorial, then $Y$ is $\QQ$-factorial.

\medskip
(4) [Existence of flips]
Assume that the contraction $\phi\colon X\to Y$ is small.
Pick a $\QQ$-divisor $\Delta'$ sufficiently close to $\Delta$ so that
\[ (K_X+\Delta')\cdot (R\setminus \Set{0}) < 0. \]
By \cite[Theorem~20.2]{lyu-takumi-2026relativeminimalmodelprogram}, there is a $(K_X+\Delta')$-flip
\[ \phi^+\colon X^+\to Y. \]
Moreover, by \cite[Lemma~20.3]{lyu-takumi-2026relativeminimalmodelprogram}, $X^+$ is $\QQ$-factorial.

We show that $\phi^+$ is also a $(K_X+\Delta)$-flip.
Let $C\subset X$ be an integral closed subscheme of dimension one such that $C$ is contracted by $\phi$, and
\[ R = \RR^{\ge 0}\cdot [C], \]
where $[C]$ is the class of $C$ in $\NE(X/Z)$.  Then we can write
\[ a\coloneqq \frac{(K_X+\Delta)\cdot C}{(K_X+\Delta')\cdot C} >0. \]
Hence
\[ \big((K_X+\Delta) - a(K_X+\Delta')\big)\cdot R = 0. \]
As $\phi\colon X\to Y$ is a good contraction, by Lemma~\ref{good-cont-real}, there exists 
an $\RR$-Cartier $\RR$-divisor $L_Y$ on $Y$ so that
\begin{equation}\label{flip-rational-real}
    K_X+\Delta \sim_{\RR} a(K_X+\Delta') + \phi^* L_Y.
\end{equation}
Let $\Delta^+, \Delta'^+$ be the pushdowns of $\Delta,\Delta'$ to $X^+$ respectively.  
By \eqref{flip-rational-real}, we get
\begin{equation}\label{flip-to-real}
    K_{X^+} + \Delta^+ \sim_{\RR} a(K_{X^+} + \Delta'^+) + (\phi^+)^* L_Y.
\end{equation}
By assumption, $K_{X^+} + \Delta'^+$ is ample$/Y$.  Then \eqref{flip-to-real} shows that
\[ K_{X^+} + \Delta^+~\mbox{is ample}/Y. \]
Therefore, we deduce that $\phi^+\colon X^+\to Y$ is the flip of $K_X+\Delta$.

\medskip
(5) [Descent of nefness]
Keep the notation in (3).  If $\phi\colon X\to Y$ is divisorial, let $\Delta_Y, A_Y$
be the pushdowns of $\Delta, A$ to $Y$ respectively.  Then Lemma~\ref{good-cont-real} shows that
\[ K_X+\Delta + \lambda A = \phi^*(K_Y + \Delta_Y + \lambda A_Y). \]
Hence $K_Y + \Delta_Y + \lambda A_Y$ is nef$/Z$.
By Lemma~\ref{no-van-big-divisor}, $\Delta_Y, A_Y$ are big$/Z$.

On the other hand, if $\phi\colon X\to Y$ is a flipping contraction, let $\phi^+\colon X^+ \to Y$
be the flip as in (4).  Denote by $\Delta^+, A^+$ the pushdowns of $\Delta, A$ to $X^+$.
By Lemma~\ref{good-cont-real}, there exists an $\RR$-Cartier $\RR$-divisor $N_Y$ on $Y$ so that
\[ K_X+\Delta+\lambda A = \phi^*N_Y. \]
Hence
\[ K_{X^+} + \Delta^+ + \lambda A^+ = (\phi^+)^*N_Y, \]
which implies that $K_{X^+} + \Delta^+ + \lambda A^+$ is nef$/Z$.
By Lemma~\ref{no-van-big-divisor}, $\Delta^+, A^+$ are big$/Z$.

\medskip
(6) [Termination of MMP]
By (3) (4) and (5) as above, the MMP$/Z$ on $K_X+\Delta$ with scaling of $A$ exists.
We show that every such MMP$/Z$ terminates.  

\medskip
(6.1)
We first assume that $Z$ is affine.  Let
\begin{equation}\label{MMP-sequence}
    X = X_0\bir X_1\bir \cdots \bir X_i \bir \cdots
\end{equation}
be the sequence of MMP$/Z$ with scaling of $A$.  
By (3) and (4), each $X_i$ is $\QQ$-factorial.
Let $\Delta_i, A_i$
be the pushdowns of $\Delta, A$ to $X_i$ respectively, and let
\[ \lambda_i \coloneqq \inf \Set{t\in \RR^{\ge 0} \mid K_{X_i} + \Delta_i + tA_i~\mbox{is nef}/Z }. \]
Then $\Set{\lambda_i}$ is a decreasing sequence of real numbers.
By \cite[Definition~11.6]{lyu-takumi-2026relativeminimalmodelprogram}, 
we can assume that \eqref{MMP-sequence} consists of only flips.

Consider the following classes in $\N^1(X_0/Z)$,
\[ \mb{u}\coloneqq [K_{X_0} + \Delta_0]~~\mbox{and}~~
\mb{v} \coloneqq [K_{X_0} + \Delta_0 + \lambda_0 A_0]. \]
Denote by $\ell_{\mb u,\mb v}$ the line segment in $\N^1(X_0/Z)_{\RR}$ connecting $\mb u$ and $\mb v$, i.e.
\[ \ell_{\mb u, \mb v} \coloneqq \Set{[K_{X_0}+\Delta_0+tA_0]\mid 0\le t\le \lambda_0}. \]
By (2), we can write
\begin{align*}
    &K_{X_0} + \Delta_0 \sim_{\RR} K_{X_0} + \Gamma_{\mb u} + H_{\mb u}/Z,~\mbox{and} \\
    &K_{X_0} + \Delta_0 + \lambda_0 A_0 \sim_{\RR} K_{X_0} + \Gamma_{\mb v} + H_{\mb v},
\end{align*}
where $(X_0, \Gamma_{\mb u}), (X_0, \Gamma_{\mb v})$ are klt, $\Gamma_{\mb u}, \Gamma_{\mb v}$
are $\QQ$-divisors, and $H_{\mb u}, H_{\mb v}$
are ample$/Z$.  As $\Amp(X_0/Z)$ is open in $\N^1(X_0/Z)_{\RR}$,
we can pick full-dimensional \emph{rational} polytopes $\mcQ_{\mb u}, \mcQ_{\mb v}$ such that
\begin{align*}
    \mb{u}\in \interior \mcQ_{\mb u}, &~\mcQ_{\mb u}\subset [K_{X_0} + \Gamma_{\mb u}] + \Amp(X/Z), ~\mbox{and} \\
    \mb{v}\in \interior \mcQ_{\mb v}, &~\mcQ_{\mb v}\subset [K_{X_0} + \Gamma_{\mb v}] + \Amp(X/Z).
\end{align*}
Denote by $\mcQ$ the convex hull of $\mcQ_{\mb u}$ and $\mcQ_{\mb v}$,
which is also a full-dimensional \emph{rational} polytope.  It is evident that
\[ \ell_{\mb u,\mb v}\subset \interior\mcQ. \]
Denote by 
\[ L_1, \dots, L_N \]
the $\QQ$-divisors so that $\Set{[L_1], \dots, [L_N]}$ 
is the set of vertices of $\mcQ$.  Then by \cite[Theorem~17.3]{lyu-takumi-2026relativeminimalmodelprogram}, 
the relative adjoint ring
\[ \mcR_0 \coloneqq R(X_0/Z; L_1, \dots, L_N) \]
is a finitely generated $\hg^0(Z, \mcO_Z)$-algebra.

Denote by $L_{j,i}$ the birational transform of $L_j$ to $X_i$, and by
\[ \mcR_i \coloneqq R(X_i/Z; L_{1,i}, \dots, L_{N,i}). \]
Since $X_0\bir X_i$ is small, for each sequence of natural numbers $(m_1, \dots, m_N)$, we have
\[ \hg^0(X_0, \mcO_{X_0}( \rddown{m_1L_1 + \cdots + m_N L_N} ))
= \hg^0(X_i, \mcO_{X_i}( \rddown{m_1L_{1,i}+\cdots + m_N L_{N,i}} )). \]
This gives a canonical isomorphism of $\hg^0(Z, \mcO_Z)$-algebras
\[ \mcR_i\cong \mcR_0. \]
In particular, $\mcR_i$ is a finitely generated $\hg^0(Z, \mcO_Z)$-algebra as $\mcR_0$ is so.

\medskip
(6.2)
Denote by $\Div_{\RR}(X_0)$ the group of $\RR$-Cartier $\RR$-divisors on $X_0$,
and by $\Div_{\RR}^{\eff}(X_0)$ the set of $\RR$-Cartier $\RR$-divisors on $X_0$
that are $\RR$-linearly equivalent to effective $\RR$-divisors;
see \cite[page~619]{CL13-new-outlook} and \cite[Definition~3.1]{lyu-takumi-2026relativeminimalmodelprogram}.
The 
\emph{support of $\mcR_0$} is defined to be
\[ \Supp \mcR_0 \coloneqq \bigg(\sum_{j=1}^N \RR^{\ge 0} L_j\bigg)\cap \Div_{\RR}^{\eff}(X_0) \subseteq \Div_{\RR}(X_0). \]
For more details, see \cite[Definition~8.2]{lyu-takumi-2026relativeminimalmodelprogram}.
Let
\[ \mathbb{D}\colon \RR^N\to \Div_{\RR}(X_0), ~(\alpha_1,\dots, \alpha_N)\mapsto \sum_{j=1}^N \alpha_j L_j \]
be the tautological map from \cite[page~620]{CL13-new-outlook}.
By \cite[Theorem~17.5]{lyu-takumi-2026relativeminimalmodelprogram}, 
there is a finite \emph{rational} polyhedral subdivision
\[ \Supp \mcR_0 = \bigsqcup_{l} \mcC_l \]
such that for each geometric valuation $v$ of $X_0$, 
$o_{v, X_0}$ is a linear function on each $\mcC_l$,
where $o_{v, X_0}$ denotes the \emph{asymptotic order of vanishing}; 
see \cite[Definition~9.2]{lyu-takumi-2026relativeminimalmodelprogram}.
Moreover, we can assume that the subdivision is a \emph{coarsest} subdivision
in the sense that if $\mcC_l, \mcC_{l'}$ are distinct, 
then there is at least one geometric valuation $v$ of $X_0$ such that
the linear extensions of $o_{v, X_0}|_{\mcC_l}, o_{v, X_0}|_{\mcC_{l'}}$ are different;
see \cite[Theorem~17.5(iii)]{lyu-takumi-2026relativeminimalmodelprogram}.

Denote by $\phi_i\colon X_0\bir X_i$ the birational map over $Z$
induced by flips in \eqref{MMP-sequence}.  Then $\phi_i$ is a small birational contraction.
Thus, the pushdown of divisors $(\phi_i)_*$ gives a \emph{rational} linear isomorphism
\[ \tau_i\colon \sum_{j=1}^N \RR^{\ge 0} L_j \xrightarrow{\sim} \sum_{j=1}^N \RR^{\ge 0} L_{j,i}. \]
Again since $\phi_i$ is small, $(\phi_i)_*$ preserves effectivity and $\RR$-linear equivalence, so
$(\phi_i)_*$ gives an isomorphism of cones
\[ (\phi_i)_*\colon \Div_{\RR}^{\eff}(X_0)\xrightarrow{\sim} \Div_{\RR}^{\eff}(X_i). \]
Therefore, $\tau_i$ restricts to an isomorphism
\[ \Supp \mcR_0 \xrightarrow{\sim} \Supp \mcR_i. \]
For each $\mcC_l$, let
\[ \mcC_{l,i}\coloneqq \tau_i(\mcC_l). \]
Then we have
\begin{equation}\label{subdivision-X-i}
    \Supp \mcR_i = \bigsqcup_l \mcC_{l, i}.
\end{equation}
As $\tau_i$ is a \emph{rational} linear isomorphism, each $\mcC_{l,i}$
is a \emph{rational} polyhedral cone.  This gives a finite \emph{rational} polyhedral subdivision of $\Supp \mcR_i$.

\medskip
(6.3)
We claim that for each geometric valuation $v$ of $X_i$, $o_{v, X_i}$ is linear on $\mcC_{l, i}$.
Let $p\colon W\to X_0, q\colon W\to X_i$ be a common resolution so that $v$ is the geometric valuation
corresponding to a prime divisor $E$ on $W$.
Pick an arbitrary $D\in \mcC_l$, and let $B\in \abs{D}_{\RR}$.  We can write
\[ B - D = \sum c_{\beta} \Div_{X_0}(g_{\beta}), \]
where each $g_{\beta}$ is a rational function on $X_0$.  Then
\begin{align*}
    q^*(\tau_i B) - p^*B &= q^*\bigg(\tau_i D + \sum c_{\beta} \Div_{X_i}(g_{\beta})\bigg) 
        - p^*\bigg(D + \sum c_{\beta}\Div_{X_0}(g_{\beta})\bigg) \\
     &= \bigg(q^*(\tau_i D) + \sum c_{\beta} \Div_W(g_{\beta})\bigg) - \bigg(p^*D + \sum c_{\beta}\Div_W(g_{\beta})\bigg) \\
     &= q^*(\tau_i D) - p^*D.
\end{align*}
Moreover, it is evident that 
\[ \ord_E(p^* B) = \ord_E(p^*B - q^*(\tau_i B)) + \ord_E(q^*(\tau_i B)). \]
Thus, we deduce that
\begin{equation}\label{order-B-D-resolution}
    \ord_E(p^*B) = \ord_E(q^*(\tau_i B)) + \ord_E(p^* D- q^*(\tau_i D)).
\end{equation}
As $\phi_i$ is a small birational map, the pushdown $(\phi_i)_*$ gives the canonical isomorphism
\[ \abs{D}_{\RR} \cong \abs{\tau_i D}_{\RR}. \]
Therefore, taking the infimum among all $B\in \abs{D}_{\RR}$, we get
\begin{equation}\label{order-compare}
    o_{v, X_0}(D) = o_{v, X_i}(\tau_i D) + \ord_E\big(p^*D - q^*(\tau_i D)\big).
\end{equation}
Hence $o_{v, X_i}$ is linear on every $\mcC_{l,i}$.

\medskip
(6.4)
We claim the subdivision \eqref{subdivision-X-i} is coarsest.
For any distinct $\tau_i(\mcC_l)$ and $\tau_i(\mcC_{l'})$, $\mcC_l, \mcC_{l'}$
are also distinct.  As the subdivision of $\Supp \mcR_0$ is coarsest,
there is a geometric valuation $v$ of $X_0$ so that the linear extensions 
of $o_{v, X_0}|_{C_l}, o_{v, X_0}|_{C_{l'}}$ are different.
Then by \eqref{order-compare}, the linear extensions 
of $o_{v, X_i}|_{\tau_i(\mcC_l)}$ and $o_{v, X_i}|_{\tau_i(\mcC_{l'})}$
are also distinct.  Hence the subdivision \eqref{subdivision-X-i} is coarsest.

\medskip
(6.5)
By \cite[Lemma~20.3]{lyu-takumi-2026relativeminimalmodelprogram}, there is a canonical isomorphism
\[ \N^1(X_0/Z)_{\RR}\cong \N^1(X_i/Z). \]
Let $\mcQ_i$ be the image of $\mcQ$ in $\N^1(X_i/Z)$. 
Then $\mcQ_i$ is the convex hull of $[L_{1,i}], \dots, [L_{N, i}]$.
In particular, $\mcQ_i$ is a full-dimensional \emph{rational} polytope.  Let
\[ \mb{v}_i \coloneqq [K_{X_i} + \Delta_i + \lambda_i A_i]\in \N^1(X_i/Z)_{\RR}. \]
Since $\ell_{\mb u, \mb v}\subset \interior \mcQ$, and since $\lambda_i\le \lambda_0$, we see that
\[ \mb{v}_i \in \interior \mcQ_i. \]
As $\mb{v}_i$ is nef$/Z$, adding a sufficiently small ample class to $\mb{v}_i$ gives an ample class in $\interior \mcQ_i$.  Thus,
\[ \interior\mcQ_i\cap \ADiv(X_i/Z) \neq \emptyset, \]
where $\ADiv(X_i/Z)\subset\Div_{\RR}(X_i)$ is the subset of ample$/Z$ $\RR$-divisors.
Consequently, we have
\begin{equation}\label{ample-nonempty}
    \Supp \mcR_i \cap \ADiv(X_i/Z)\neq \emptyset.
\end{equation}
Let
\[ \varphi_i\colon \sum_{j=1}^N \RR \cdot L_{j, i} \to \N^1(X_i/Z)_{\RR} \]
be the natural projection map, which sends an $\RR$-Cartier $\RR$-divisor 
to its class in $\N^1(X_i/Z)_{\RR}$.
By \cite[Lemma~18.1]{lyu-takumi-2026relativeminimalmodelprogram},
there exists a $\mcC_{l, i}$ in the subdivision \eqref{subdivision-X-i} so that
\begin{equation}\label{nef-chamber}
    \mcC_{l, i} = \Supp \mcR_i \cap \varphi_i^{-1}\big(\Nef(X_i/Z)\big).
\end{equation}
Here we have applied the property that \eqref{subdivision-X-i} is coarsest; see (6.4).

\medskip
(6.6)
Assume that \eqref{MMP-sequence} is an infinite sequence of flips.
Then by (6.2) and (6.5), there exist an $l$ and $i<j$ so that the small birational contraction
\[ \phi_{ij}\colon X_i\bir X_j \]
induces an isomorphism from $\mcC_{l, i}$ to $\mcC_{l, j}$, and that
\begin{equation}\label{nef-chambers-i-j}
    \mcC_{l, i} = \Supp \mcR_i \cap \varphi_i^{-1}\big(\Nef(X_i/Z)\big) ~~\mbox{and}~~
    \mcC_{l, j} = \Supp \mcR_j \cap \varphi_j^{-1}\big(\Nef(X_j/Z)\big).
\end{equation}
By \eqref{ample-nonempty} and \eqref{nef-chamber}, we can pick an effective \emph{rational}
ample$/Z$ $H_i$ on $X_i$ so that
\[ H_i\in \mcC_{l,i}. \]
Let $H_j$ be the pushdown of $H_i$ to $X_j$.
As $\mcC_{l, i}$ is isomorphic to $\mcC_{l,j}$, by \eqref{nef-chambers-i-j},
$H_j$ is nef$/Z$.
By \cite[Lemma~20.10]{lyu-takumi-2026relativeminimalmodelprogram}, $\phi_{ij}^{-1}\colon X_j\bir X_i$
is a morphism.  On the other hand, the same argument for the small birational contraction
shows that $\phi_{ij}$ is also a morphism.  
Hence we deduce that $\phi_{ij}$ is an isomorphism.
However, an infinite sequence of flips increases log discrepancies of some 
divisors strictly, which contradicts that $\phi_{ij}$ is an isomorphism.
Therefore, we can conclude that the sequence \eqref{MMP-sequence} terminates if $Z$ is affine.

\medskip
(6.7)
Assume that $Z$ is not affine, and that \eqref{MMP-sequence}
is an infinite sequence of flips over $Z$.
Pick a finite affine open covering of $Z$.
Let $U$ be one of the affine subset, and let 
\[ X_{i,U} \coloneqq X_i\times_Z U. \]
Let $L_U$ be an arbitrary Cartier divisor on $X_{i, U}$.
Taking the closure of irreducible components of $L_U$, we get a Weil divisor $L$ on $X_i$ 
whose restriction to $X_{i, U}$ is equal to $L_U$.  
As $X_i$ is $\QQ$-factorial, $L$ is $\QQ$-Cartier.  Thus, the canonical map
\[ \pic(X_i)_{\QQ}\to \pic(X_{i, U})_{\QQ} \]
is surjective.  Then by \cite[Lemma~20.8]{lyu-takumi-2026relativeminimalmodelprogram},
we can also assume that 
\[ X_{0, U}\bir X_{1, U}\bir \cdots \bir X_{i, U}\bir\cdots \]
is an infinite sequence of flips over $U$.
However, by (6.6), this sequence of flips over $U$ terminates.
Thus, we see that the sequence of flips over $Z$ also terminates.       

\medskip
(7) [Outcomes of MMP]
By (6), the MMP$/Z$ on $K_X+\Delta$ with scaling of $A$ terminates with a model $Y\to Z$.
Denote by $\Delta_Y$ the pushdown of $\Delta$ to $Y$.

\medskip
(7.1)
Assume that $K_X+\Delta$ is not pseudo-effective$/Z$.
By Lemma~\ref{neg-lemma}, $K_Y + \Delta_Y$ is also non-pseudo-effective$/Z$.
Thus, $Y\to Z$ admits a Mori fibre space structure $Y\to V$ over $Z$,
i.e. $\dim Y > \dim V$, and $-(K_Y + \Delta_Y)$ is ample$/V$.

\medskip
(7.2)
Assume that $K_X+\Delta_X$ is pseudo-effective$/Z$.
By \cite[Lemma~5.17]{lyu-takumi-2026relativeminimalmodelprogram},
$K_Y + \Delta_Y$ is pseudo-effective$/Z$.
As the MMP$/Z$ terminates with $Y$, $K_Y + \Delta_Y$ is nef$/Z$.
By Lemma~\ref{no-van-big-divisor}, $\Delta_Y$ is big$/Z$.
By \cite[Corollary~5.12]{lyu-takumi-2026relativeminimalmodelprogram}, we can write
\[ \Delta_Y\sim_{\RR} H+E/Z, \]
where $E\ge 0$, and $H$ is an ample$/Z$ $\RR$-divisor.
By Lemma~\ref{discrep-mmp}, $(Y, \Delta_Y)$ is klt.  For sufficiently small $0<\epsilon\ll 1$, set
\[ \Delta_Y^{\epsilon} \coloneqq (1-\epsilon)\Delta_Y + \epsilon E. \]
Then $(Y, \Delta_Y^{\epsilon})$ is also klt; see \cite[Lemma~6.10]{lyu-takumi-2026relativeminimalmodelprogram}.
Now we can write
\[ (K_Y + \Delta_Y) - (K_Y + \Delta_Y^{\epsilon}) = \epsilon \Delta_Y - \epsilon E \sim_{\RR} \epsilon H/Z. \]
By Theorem~\ref{BPF-thm}, $K_Y + \Delta_Y$ is semiample$/Z$.
Thus, $(Y, \Delta_Y)$ is a good minimal model of $(X, \Delta)$ over $Z$.


\subsection{MMP for g-pairs over non-closed fields}\label{existence-MMP-g-pair}
Let $k$ be a field of characteristic zero, and let
$X\to Z$ be a projective surjective morphism of $k$-varieties.
Let $(X/Z, B + \mb{M})$ be a $\QQ$-factorial g-lc g-pair. 
Assume that $K_X+B+\mb{M}_X + A$ is nef$/Z$ for some $\RR$-Cartier $\RR$-divisor $A\ge 0$
which is big$/Z$.  Let
\[ \lambda\coloneqq \inf\Set{t\ge 0 \mid K_X+B+\mb{M}_X + tA~\text{is nef}/Z}. \]
Assume $\lambda\neq 0$.  Consider the following condition (cf. \cite[\S 4]{B-Zhang}):
\begin{equation}\label{star-condition}\tag{$\ast$}
    \begin{aligned}
        &\mbox{for any}~s\in (0,\lambda),~\mbox{there is a boundary}~0\le \Delta\sim_{\RR}B + sA + \mb{M}_X/Z   \\
        &\mbox{such that}~(X, \Delta + (\lambda-s)A)~\mbox{is klt}.
    \end{aligned}
\end{equation}
Then for each $s\in (0, \lambda)$, we have
\[ \lambda - s = \inf\Set{u \ge 0 \mid K_X + \Delta + uA~\text{is nef}/Z}. \]
We show that we can run MMP$/Z$ on  $K_X+B+\mb{M}_X$ with scaling of $A$ under \eqref{star-condition}.


\medskip
(1) [Good contraction].
Pick an arbitrary $s\in (0,\lambda)$, 
and let $\Delta$ be a boundary as in \eqref{star-condition}.
As $A$ is big$/Z$, $\Delta$ is also big$/Z$.
Then by \S\ref{ss-big-real-boundary}(3), 
there exists an extremal ray $R\subseteq \NE(X/Z)$
with a \emph{good contraction} $X\to Y$ over $Z$ contracting $R$ such that
\[ (K_X + \Delta + (\lambda - s)A)\cdot R = 0~~\mbox{and}~~
   (K_X+\Delta)\cdot (R\setminus\Set{0}) < 0, \]
which is equivalent to 
\[ (K_X + B + \mb{M}_X + \lambda A)\cdot R = 0 ~~\mbox{and}~~
(K_X +B + \mb{M}_X + sA) \cdot (R\setminus\Set{0}) <0. \]
In particular, we get
\begin{equation}\label{A-positive}
    A\cdot (R\setminus \Set{0}) > 0.
\end{equation}
Thus, we deduce that
\begin{equation}\label{g-contraction}
    (K_X + B + \mb{M}_X + \lambda A)\cdot R = 0 ~~\mbox{and}~~
(K_X +B + \mb{M}_X) \cdot (R\setminus\Set{0}) <0.
\end{equation}
By \cite[Lemma~19.1]{lyu-takumi-2026relativeminimalmodelprogram}, exactly one of the following holds.
\begin{itemize}
    \item $\dim Y < \dim X$,
    \item $X\to Y$ is birational, $\Ex(X\to Y) \subset X$ is a prime divisor, and
    \item $X\to Y$ is birational, $\Ex(X\to Y)\subset X$ has codimension $\ge 2$, i.e. $X\to Y$ is small.
\end{itemize}


\medskip
(2) [Mori fibre space].
If $\dim Y<\dim X$, then we stop.  
In this case, by \cite[Lemma~19.2]{lyu-takumi-2026relativeminimalmodelprogram}, $Y$ is $\QQ$-factorial.
It is evident that $-(K_X+\Delta)$ is ample$/Y$.  Hence
$-(K_X+B+\mb{M}_X+sA)$ is ample$/Y$.  By \eqref{A-positive}, we can conclude that
\[ -(K_X+B+\mb{M}_X)~\mbox{is ample}/Y. \]
We call $X\to Y$ a \emph{Mori fibre space} of $K_X+B+\mb{M}_X$ over $Z$.


\medskip
(3) [Divisorial contraction].
If $X\to Y/Z$ is birational whose exceptional locus is a prime divisor, 
then by \cite[Lemma~19.2]{lyu-takumi-2026relativeminimalmodelprogram},
$Y$ is $\QQ$-factorial.  

\medskip
(3.1)
Denote by $A_Y, B_Y, \Delta_Y$ the pushdowns of $A,B, \Delta$ to $Y$.
Note that $A_Y$ is big$/Z$.  Moreover, as $K_X + B + \mb{M}_X + \lambda A$ is numerically trivial over $Y$, 
by Lemma~\ref{good-cont-real},
$K_Y + B_Y + \mb{M}_Y + \lambda A_Y$ is nef$/Z$.  Let
\[ \lambda_Y \coloneqq \inf\Set{t\ge 0 \mid K_Y+B_Y+\mb{M}_Y + tA_Y~\text{is nef}/Z}. \]
It is evident that $\lambda_Y \le \lambda$. 

\medskip
(3.2)
We claim that if $\lambda_Y\ne 0$, then the condition \eqref{star-condition} is preserved.
Pick an arbitrary $0<s'<\lambda_Y$.  By \eqref{star-condition}, 
there is a boundary $0\le \Delta'\sim_{\RR} B+\mb{M}_X + s'A/Z$
such that $(X, \Delta' + (\lambda-s')A)$ is klt.  
By Lemma~\ref{discrep-mmp}, as
\[ K_X + \Delta' + (\lambda-s')A\equiv 0/Y, \]
we see that $(Y, \Delta_Y' + (\lambda-s')A_Y)$ is klt,
where $\Delta_Y'$ is the pushdown of $\Delta'$ to $Y$.
As $\lambda_Y\le \lambda$, $(Y, \Delta_Y' + (\lambda_Y-s')A_Y)$ is also klt.
Hence \eqref{star-condition} is preserved.
Note that
\[ \lambda_Y-s' = \inf\Set{t\ge 0 \mid K_Y+\Delta_Y' + tA_Y~\text{is nef}/Z} >0. \]
Thus, $K_Y + \Delta_Y'$ is not nef$/Z$.  

\medskip
(3.3)
Now as \eqref{star-condition} is preserved, if $\lambda_Y\neq 0$, we go back to (1).


\medskip
(4) [Flipping contraction].
Assume that $X\to Y/Z$ is small.  
Then by \S\ref{ss-big-real-boundary}(4),  
there exists a $(K_X+\Delta)$-flip $X\dashrightarrow X^+$ over $Y$.  
By \cite[Lemma~20.3]{lyu-takumi-2026relativeminimalmodelprogram}, $X^+$ is $\QQ$-factorial.

\medskip
(4.1)
Let $A^+,B^+, \Delta^+$ be the pushdowns of $A,B, \Delta$ to $X^+$ respectively.
By \cite[Lemmas~5.13, 5.16]{lyu-takumi-2026relativeminimalmodelprogram}, $A^+$ is big$/Z$.
By \eqref{star-condition}, 
\[ K_{X^+} + B^+ + \mb{M}_{X^+} + s A^+\sim_{\RR} K_{X^+} + \Delta^+/Z~\mbox{is ample}/Y. \]
Moreover, as $X\to Y$ is a good contraction, by Lemma~\ref{good-cont-real}, we see that
\[ K_{X^+} + B^+ + \mb{M}_{X^+} + \lambda A^+\sim_{\RR}0/Y. \]
As $\lambda>s$, $-A^+$ is ample$/Y$.  Hence
\[ K_{X^+} + B^+ + \mb{M}_{X^+}~\mbox{is ample}/Y. \]
Thus, $X\dashrightarrow X^+/Y$ is the $(K_X+B+\mb{M}_X)$-flip over $Y$.  Let
\[ \lambda^+ \coloneqq \inf\Set{t\ge 0 \mid K_{X^+} + B^+ + \mb{M}_{X^+} + tA^+ ~\text{is nef}/Z }. \]
It is evident that $\lambda^+\le \lambda$.

\medskip
(4.2)
We show that if $\lambda^+\ne 0$, then \eqref{star-condition} is preserved.
Let $s'\in (0, \lambda^+)$.
Again by \eqref{star-condition}, there is a boundary $0\le \Delta'\sim_{\RR} B+\mb{M}_X + s' A/Z$
so that $(X, \Delta' + (\lambda-s')A)$ is klt.
Denote by $\Delta'^+$ the pushdown of $\Delta'$ to $X^+$.  
Then by Lemma~\ref{discrep-mmp},
$(X^+, \Delta'^+ + (\lambda-s')A^+)$ is also klt.  
As $\lambda^+\le \lambda$, $(X^+, \Delta'^+ + (\lambda^+-s')A^+)$ is klt.
Moreover, we have
\[ \Delta'^+\sim_{\RR} B^+ + \mb{M}_{X^+} + s' A^+/Z. \]
Hence the condition \eqref{star-condition} is preserved.

\medskip
(4.3)
Since \eqref{star-condition} is preserved, if $\lambda^+\neq 0$ we go back to (1).


\subsection{Termination of MMP for g-pairs}\label{ss-termination-g-pair}
The following result shows that \eqref{star-condition}
is satisfied if we start with a g-pair with mild singularities, and hence we can run the MMP.

\begin{theorem}[cf. \protect{\cite[Lemma~4.4]{B-Zhang}}]\label{mmp-g-pairs}
Let $k$ be a field of characteristic zero.
Let $(X, B + \mb{M})$ be a $\QQ$-factorial g-lc g-pair over a $k$-variety $Z$,
where $X\to Z$ is projective and surjective.
Let $A\ge 0$ be a general ample$/Z$ $\RR$-divisor on $X$ such that
$K_X+B+\mb{M}_X+A$ is nef$/Z$.  Assume either
\begin{itemize}
    \item [\emph{(i)}] $(X, B +\mb{M})$ is g-klt, or
    \item [\emph{(ii)}] $(X, B+\mb{M})$ is g-lc and $(X, 0)$ is klt.
\end{itemize}
Then 
\begin{itemize}
    \item [\emph{(1)}] the MMP$/Z$ on $K_X+B+\mb{M}_X$ with scaling of $A$ exists, 
    \item [\emph{(2)}] after finitely many steps, the MMP$/Z$ is a sequence of flips, and
    \item [\emph{(3)}] in each step of the MMP$/Z$, the resulting model is $\QQ$-factorial.
\end{itemize}
Moreover, 
\begin{itemize}
    \item [\emph{(a)}] if $K_X+B+\mb{M}_X$ is not pseudo-effective$/Z$, then any process of the MMP$/Z$ terminates with a Mori fibre space over $Z$, and
    \item [\emph{(b)}] if 
    \begin{itemize}
        \item [$\bullet$] $K_X+B+\mb{M}_X$ is pseudo-effective$/Z$,
        \item [$\bullet$] $(X, B+\mb{M})$ is g-klt, and 
        \item [$\bullet$] $K_X+(1+\alpha)B + (1+\beta)\mb{M}_X$ is big$/Z$ for some $\alpha,\beta\in \RR^{\ge 0}$, 
    \end{itemize}
    then any process of the MMP$/Z$ terminates with a good minimal model of $K_X+B+\mb{M}_X$.
\end{itemize}
\end{theorem}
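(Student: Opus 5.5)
The plan is to reduce everything to the machinery of \S\ref{existence-MMP-g-pair} and \S\ref{ss-big-real-boundary}. First I will verify that condition \eqref{star-condition} holds for $(X, B+\mb{M})$ and $A$. Once that is done, (1)--(3) follow from \S\ref{existence-MMP-g-pair}, which shows that \eqref{star-condition} is preserved at each step and produces either a Mori fibre space, a divisorial contraction or a flip, each with $\QQ$-factorial output. Statement (2) is the observation that each divisorial contraction drops the Picard number, so only finitely many occur.

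\emph{Verifying \eqref{star-condition}.} Fix $s\in(0,\lambda)$; I follow the argument of \cite[Lemma~4.4]{B-Zhang}.
\begin{itemize}
\item In case (i), $\mb{M}$ descends to a nef$/Z$ divisor $\mb{M}_W$ on a log resolution $\pi\colon W\to X$. Since $\pi^*A$ is big and nef, for $0<\delta\ll 1$ the divisor $\mb{M}_W+\delta\pi^*A$ is $\RR$-linearly equivalent$/Z$ to an effective divisor $G$ with $\epsilon$-small coefficients along components in general position. The hedge: I need $\pi^*A-$(small exceptional) ample on $W$, which is standard.
\item Set $\Delta = B + (s-\delta)A + \pi_*G$. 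Then $\Delta\sim_{\RR} B+sA+\mb{M}_X/Z$, and since $(X,B+\mb{M})$ is g-klt, $A$ is general and $G$ is small, the pair $(X,\Delta+(\lambda-s)A)$ is klt.
\item In case (ii), I first replace $B$ by $(1-\tau)B$ and $\mb{M}$ by $(1-\tau)\mb{M}$, using that $(X,0)$ is klt. This gives a g-klt pair, and the difference $\tau(B+\mb{M}_X)$ is absorbed into a small multiple of $A$, because $A$ is ample and $\lambda-s>0$ leaves room. Here I must be careful that the absorption works modulo the $\RR$-Cartier issue: $X$ is $\QQ$-factorial, so $B$ and $\mb{M}_X$ are $\RR$-Cartier. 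Then I argue as in case (i).
\end{itemize}
Over non-closed $k$, "general'' members exist because $k$ is infinite; alternatively one can base change to $\bar k$ and descend via Lemma~\ref{descent-g-pair}.

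\emph{Termination.} For (a), when $K_X+B+\mb{M}_X$ is not pseudo-effective$/Z$, I choose $s$ with $K_X+B+\mb{M}_X+sA$ still not pseudo-effective. The MMP with scaling for $K_X+B+\mb{M}_X$, once $\lambda_i<s$ fails to happen, coincides with an MMP with scaling for the klt pair $K_X+\Delta$ with big boundary. By \S\ref{ss-big-real-boundary}(6),(7), that MMP terminates with a Mori fibre space, so $\lambda_i$ cannot decrease below a positive threshold and the process stops at a Mori fibre space.

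For (b), the hypothesis gives that $K_X+(1+\alpha)B+(1+\beta)\mb{M}_X$ is big. Combined with g-klt, after perturbing I can write
\[
K_X+B+\mb{M}_X\sim_{\RR} K_X+\Delta/Z,
\]
with $(X,\Delta)$ klt and $\Delta$ big$/Z$, at least when $K_X+B+\mb{M}_X$ is pseudo-effective. Then \S\ref{ss-big-real-boundary}(6),(7) yields termination with a good minimal model, using Theorem~\ref{BPF-thm} for semiampleness.

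\emph{Main obstacle.} The main obstacle is compatibility of scaling. The MMP on $K_X+B+\mb{M}_X$ with scaling of $A$ must be identified with an MMP on $K_X+\Delta$ with scaling of some fixed big divisor for a single $\Delta$ valid along all steps. I will try to use one boundary $\Delta\sim B+\mb{M}_X+\epsilon A$ for a small $\epsilon$ and scaling of $(1-\epsilon)A$. I will then argue that if $\lambda_i\to\lambda_\infty>0$, choosing $\epsilon<\lambda_\infty$ gives an infinite MMP for $(X,\Delta)$, contradicting \S\ref{ss-big-real-boundary}(6). If instead $\lambda_\infty=0$ with infinitely many steps, I will use the bigness in (b) to rewrite $K_X+B+\mb{M}_X$ itself as klt plus big and contradict termination again.
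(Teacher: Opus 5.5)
Your verification of \eqref{star-condition} and your derivation of (1)--(3) follow the paper's Step~1 essentially verbatim.

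Your argument for (a) is a harmless reordering of the paper's. You fix $s$ with $K_X+B+\mb{M}_X+sA$ not pseudo-effective first; the paper instead assumes non-termination, proves $\lim\lambda_i>0$ by a limiting pseudo-effectivity argument, and then picks $\epsilon<\lim\lambda_i$. One point of logic in your version: the bound $\lambda_i>s$ comes from the non-pseudo-effectivity of $K_X+B+\mb{M}_X+sA$, since every step with $\lambda_i>s$ is $(K_X+B+\mb{M}_X+sA)$-negative and pseudo-effectivity would pull back to $X$. It does not come from termination of the klt MMP, so state it in that order.

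The genuine gap is in (b). You claim that bigness of $K_X+(1+\alpha)B+(1+\beta)\mb{M}_X$ together with g-klt lets you write $K_X+B+\mb{M}_X\sim_{\RR}K_X+\Delta/Z$ with $(X,\Delta)$ klt and $\Delta$ big$/Z$. That forces $B+\mb{M}_X$ to be big$/Z$, which does not follow. Take $Z=\spec k$, $X$ smooth projective of general type, $B=0$, $\mb{M}=0$, $\alpha=\beta=0$. All hypotheses of (b) hold, but $\Delta\sim_{\RR}0$ cannot be big. The same false rewriting is what your ``Main obstacle'' paragraph relies on in the case $\lim\lambda_i=0$.

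The missing idea is to rescale. Write $K_X+(1+\alpha)B+(1+\beta)\mb{M}_X\sim_{\RR}P+G/Z$ with $P$ ample$/Z$ and $G\ge 0$. Then for $0<\epsilon\ll1$,
\[ (1+\epsilon)(K_X+B+\mb{M}_X)\sim_{\RR} K_X+(1-\epsilon\alpha)B+\epsilon G+(1-\epsilon\beta)\mb{M}_X+\epsilon P/Z. \]
The g-pair $(X,(1-\epsilon\alpha)B+\epsilon G+(1-\epsilon\beta)\mb{M})$ is still g-klt. So your construction from the verification of \eqref{star-condition} (absorbing the nef part plus $\epsilon P$ into a general ample plus small effective divisor on a resolution) produces a klt boundary $\Delta$ with the following properties:
\begin{itemize}
\item $\Delta$ is big$/Z$;
\item $K_X+\Delta\sim_{\RR}(1+\epsilon)(K_X+B+\mb{M}_X)/Z$;
\item $(X,\Delta+(1+\epsilon)A)$ is klt.
\end{itemize}
Since $K_X+\Delta+t(1+\epsilon)A\sim_{\RR}(1+\epsilon)(K_X+B+\mb{M}_X+tA)/Z$, every MMP on $K_X+B+\mb{M}_X$ with scaling of $A$ is an MMP on $K_X+\Delta$ with scaling of $(1+\epsilon)A$, with the same thresholds $\lambda_i$. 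Then \S\ref{ss-big-real-boundary} gives termination with a good minimal model of $K_X+\Delta$, hence of $K_X+B+\mb{M}_X$. This works for every value of $\lim\lambda_i$, so no case split is needed in (b).
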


\begin{proof}
    \emph{Step~1}.
    In this step, we show that the conditions \eqref{star-condition}
    in \S\ref{existence-MMP-g-pair} is satisfied 
    if (i) or (ii) holds, and consequently the results (1) (2) and (3) hold.
    Let
    \[ \lambda\coloneqq \inf\Set{t\ge 0\mid K_X+B+\mb{M}_X + tA}~\text{is nef}/Z. \]
    If $\lambda = 0$, $X\to Z$ is a minimal model of $K_X+B+\mb{M}_X$, so we can assume $\lambda > 0$.
    Note that as $K_X+B+\mb{M}_X+A$ is nef$/Z$, we have $\lambda\le 1$.
    
    Let $\phi\colon W\to X$ be a log resolution of $(X, B+\mb{M})$ so that $\mb{M}$ descends to 
    a nef$/Z$ $\RR$-divisor $\mb{M}_W$ on $W$.  Let $K_W+B_W+\mb{M}_W$
    be the pullback of $K_X+B+\mb{M}_X$.
    Let $A_W$ be the pullback of $A$ to $W$.  
    In case (i), for any $s\in (0,\lambda)$, $sA_W+\mb{M}_W$ is nef and big over $Z$.
    By \cite[Corollary~5.12]{lyu-takumi-2026relativeminimalmodelprogram}, we can write
    $sA_W+\mb{M}_W \sim_{\RR} H + E/Z$, where $H$ is ample$/Z$ and $E$ is effective, 
    and we can assume the coefficients of $E$ are arbitrarily small.
    By taking the coefficients of $E$ sufficiently small, and by 
    taking $H$ sufficiently general, we can assume that $(W, B_W+H+E)$ is sub-klt.
    Denote by $\Delta$ the pushdown of $B_W+H+E$ to $X$.
    Then $(X, \Delta)$ is klt, and $\Delta\sim_{\RR} B+sA+\mb{M}_X/Z$.
    In case (ii), as $(X, 0)$ is klt, for a sufficiently small $0<\epsilon\ll 1$, we can assume that
    \begin{itemize}
        \item $(X, (1-\epsilon)B + (1-\epsilon)\mb{M})$ is g-klt, and
        \item $sA +\epsilon (B + \mb{M}_X)$ is ample$/Z$.
    \end{itemize}
    Again by \cite[Corollary~5.12]{lyu-takumi-2026relativeminimalmodelprogram}, we can write
    \[ P_W\coloneqq (1-\epsilon)\mb{M}_W + \phi^*\big(sA + \epsilon(B+\mb{M}_X)\big) \sim_{\RR} H + E/Z, \]
    where the coefficients of $E\ge 0$ is sufficiently small, and $H$ is ample$/Z$.  We can assume that
    \[ (W, C_W + E)~\mbox{is sub-klt}, \]
    where $C_W$ is the $\RR$-divisor defined by 
    \[ K_W + C_W + P_W = \phi^*(K_X + B + \mb{M}_X + sA). \]
    Taking $H$ generally, we can assume 
    \[ (W, C_W + H + E)~\mbox{is sub-klt}. \]
    Let
    \[ \Delta \coloneqq \phi_*\big(C_W + H + E\big)\sim_{\RR} B + sA + \mb{M}_X/Z. \]
    Then $(X, \Delta)$ is klt.
    Therefore, in both (i) and (ii), for any $s\in (0,\lambda)$, 
    by taking a sufficiently general $A$, we can pick an $\RR$-divisor 
    $0\le \Delta\sim_{\RR} B+sA +\mb{M}_X/Z$ so that $(X, \Delta + (1-s)A)$ is klt.
    As $\lambda\le 1$, we see that 
    $(X, \Delta + (\lambda-s)A)$ is klt.
     Therefore, \eqref{star-condition} is satisfied.

    Since $0<s<\lambda$, $K_X+\Delta$ is not nef$/Z$.  As $A$ is ample$/Z$, we have
    \[ K_X + \Delta + A = (K_X+ \Delta + \lambda A) + (1-\lambda)A~\mbox{is nef}/Z. \]
    Then the construction and preservation argument in \S\ref{existence-MMP-g-pair} give a
    $(K_X+B+\mb{M}_X)$-MMP over $Z$ with scaling of $A$.
    Every resulting model is $\QQ$-factorial.
    Thus, the conclusions (1) and (3) hold.
    Finally, as a good contraction that is a divisorial contraction drops the relative Picard number
    (cf. \cite[Definition~11.6]{lyu-takumi-2026relativeminimalmodelprogram} and the proof of
    \cite[Theorem~21.7]{lyu-takumi-2026relativeminimalmodelprogram}),
    we see that after finitely many steps, the MMP$/Z$ is a sequence of flips, 
    which shows (2).

    \medskip
    \emph{Step~2}.
    In this step, we prove (a).  
    Assume that $K_X+B+\mb{M}_X$ is not pseudo-effective$/Z$.
    By Step~1, we can run the MMP$/Z$ on $K_X+B+\mb{M}_X$ with scaling of $A$.  Let
    \begin{equation}\label{mmp-sequence}
        X=X_1\xdashrightarrow{\phi_1} X_2\xdashrightarrow{\phi_2} \cdots \dashrightarrow X_i \xdashrightarrow{\phi_i}\cdots
    \end{equation}
    be the sequence of divisorial contractions or flips in the MMP$/Z$.  
    Denote by $A_i, B_i$ the pushdowns of $A, B$ to $X_i$ respectively.
    Let $\Set{\lambda_i}_{i\in \NN}$
    be the sequence of nef$/Z$ thresholds appearing in the MMP$/Z$, that is,
    \[ \lambda_i \coloneqq \inf \Set{ t\ge 0 \mid
    K_{X_i} + B_i + \mb{M}_{X_i} + t A_i~\mbox{is nef}/Z }. \]
    By \S\ref{existence-MMP-g-pair},
    if the MMP$/Z$ does not terminate, 
    then $\Set{\lambda_i}_{i\in \NN}$ is a decreasing sequence of positive real numbers.
    Let 
    \[ \lambda\coloneqq \lim_{i\to \infty}\lambda_i. \]
    We claim that $\lambda>0$.  Indeed, if $\lambda=0$, then by (2), for some $i$ large enough,
    each $\phi_j\colon X_j\dashrightarrow X_{j+1}$ with $j\ge i$ is a flip; we therefore see that
    $K_{X_i} + B_i + \mb{M}_{X_i}$ is the limit of $K_{X_i} + B_i + \mb{M}_{X_i} + \lambda_j A_i$
    as $j\to \infty$; hence by \cite[Lemma~5.17]{lyu-takumi-2026relativeminimalmodelprogram},
    $K_{X_i} + B_i + \mb{M}_{X_i}$ is pseudo-effective$/Z$, 
    contradicting the assumption that $K_X+B+\mb{M}_X$ is not pseudo-effective$/Z$.

    Pick a sufficiently small $0<\epsilon\ll \lambda$
    such that $K_X+B+\epsilon A + \mb{M}_X$ is not pseudo-effective$/Z$.
    By Step~1, we can pick an $\RR$-divisor
    \[ 0\le \Delta\sim_{\RR} B+\epsilon A + \mb{M}_X/Z \]
    such that $(X, \Delta + (1-\epsilon)A)$ is klt.  
    We claim that the sequence \eqref{mmp-sequence}
    is also an MMP$/Z$ on $K_X+\Delta$ with scaling of $(1-\epsilon)A$.
    Let $\Delta_i$ be the pushdown of $\Delta$ to $X_i$.
    Then it is evident that the nef$/Z$ threshold  of $(X_i, \Delta_i)$
    with respect to $(1-\epsilon)A_i$ is equal to 
    $(\lambda_i - \epsilon)/(1-\epsilon)$.
    Let $f_i\colon X_i\to Y_i$ be the good contraction so that either $\phi_i$ is equal to $f_i$
    or $\phi_i$ is the flip of $f_i$.  Then $f_i$
    is the contraction of an $A_i$-positive extremal ray $R_i\subset \NE(X_i/Z)$.  As
    \[ K_{X_i} + B_i + \mb{M}_{X_i} + \lambda_i A_i\sim_{\RR}
    K_{X_i} + \Delta_i + \frac{\lambda_i - \epsilon}{1-\epsilon} \cdot 
    (1-\epsilon)A_i/Z, \]
    we see that $R_i$ is also a $(K_{X_i}+\Delta_i)$-negative extremal ray.
    For the same reason, if $\phi_i$ is a flip, then $X_{i+1}+\Delta_{i+1}$
    is ample$/Y_i$.  This proves the claim.

    As $\Delta$ is big$/Z$, by \S\ref{ss-big-real-boundary},
    any process of MMP on $K_X+\Delta$ with scaling of $(1-\epsilon)A$
    over $Z$ terminates with a Mori fibre space over $Z$.
    Thus, \eqref{mmp-sequence} terminates at a good contraction $f_i\colon X_i\to Y_i/Z$ with
    $\dim Y_i<\dim X_i$.  As $f_i$ contracts a $(K_{X_i} + B_i + \mb{M}_{X_i})$-negative
    extremal ray, $-(K_{X_i} + B_i + \mb{M}_{X_i})$ is ample$/Y_i$.
    Hence the outcome $X_i\to Y_i/Z$ is a Mori fibre space of $K_X+B+\mb{M}_X$.
    This proves (a).

    \medskip

    \emph{Step~3}.
    In this step, we prove (b).
    As $K_X+(1+\alpha)B+(1+\beta)\mb{M}_X$ is big$/Z$, we can write
    \[ K_X+(1+\alpha)B+(1+\beta)\mb{M}_X\sim_{\RR} P+G/Z, \]
    where $G\ge 0$, and $P$ is ample$/Z$.  
    Let $\epsilon\in \RR^{>0}$.  Then
    \[ (1+\epsilon)(K_X+B+\mb{M}_X) \sim_{\RR} K_X+(1-\epsilon \alpha)B + (1-\epsilon\beta)\mb{M}_X + \epsilon P + \epsilon G/Z. \]
    As $(X, B + \mb{M})$ is g-klt, $(X, B + \epsilon G + \mb{M})$ is also g-klt 
    for sufficiently small $\epsilon\in \RR^{>0}$.
    Then by the argument in Step~1, we can pick
    \[ 0\le \Delta\sim_{\RR} (1-\epsilon \alpha)B + (1-\epsilon\beta)\mb{M}_X + \epsilon P + \epsilon G/Z \]
    such that $(X, \Delta + (1+\epsilon)A)$ klt.  By construction, $\Delta$ is big$/Z$.
    By \S\ref{ss-big-real-boundary}, any process of MMP$/Z$ on $K_X+\Delta$ with scaling 
    of $(1+\epsilon)A$ terminates with a \emph{good} minimal model over $Z$.
    As 
    \begin{equation}\label{mmp-scaling-epsilon}
        K_X+\Delta \sim_{\RR}(1+\epsilon) (K_X + B + \mb{M}_X)/Z,
    \end{equation}
    any process of MMP$/Z$ on $K_X+B+\mb{M}_X$ with scaling of $A$
    is also an MMP$/Z$ on $K_X+\Delta$ with scaling of $(1+\epsilon)A$.
    Hence any process of MMP$/Z$ on $K_X+B+\mb{M}_X$ with scaling of $A$
    terminates with a \emph{good} minimal model of $K_X+\Delta$ over $Z$,
    which is therefore a \emph{good} minimal model of $K_X+B+\mb{M}_X$ by \eqref{mmp-scaling-epsilon}. 
\end{proof}


\begin{lemma}\label{fano-type-MMP}
    Let $k$ be a field of characteristic zero, and
    let $X\to Z$ be a contraction of $k$-varieties.
    Assume $X$ is $\QQ$-factorial, and that $X$ is of Fano type over $Z$.
    Then we can run MMP$/Z$ on any $\RR$-divisor $D$ on $X$, which terminates with
    either a good minimal model of $D$ over $Z$ or a Mori fibre space of $D$ over $Z$.
\end{lemma}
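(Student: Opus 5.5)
The plan is to reduce to the klt MMP with big boundary of \S\ref{ss-big-real-boundary}. Since $X$ is of Fano type over $Z$, Lemma~\ref{fano-type-conditions} gives a big$/Z$ $\QQ$-divisor $\Gamma$ with $(X,\Gamma)$ klt and $K_X+\Gamma\sim_{\QQ}0/Z$. Let $D$ be an arbitrary $\RR$-divisor on $X$. Because $X$ is $\QQ$-factorial, $D$ is $\RR$-Cartier. Fix a general ample$/Z$ $\RR$-divisor $H$, and choose $\epsilon\in\RR^{>0}$ so small that $\epsilon D+H$ is ample$/Z$. Since ampleness is open (Lemma~\ref{fppf-ample} and the openness of $\Amp(X/Z)$), we can then write
\[ \epsilon D\sim_{\RR} K_X+\Gamma+\epsilon D \sim_{\RR} K_X+\Delta/Z ,\qquad \Delta\coloneqq \Gamma+ (\epsilon D+H) - H . \]
To keep $\Delta$ effective, it is better to replace $\Gamma$ by $(1-\delta)\Gamma+\delta G$, where $\Gamma\sim_{\RR}A_0+G/Z$ is a decomposition with $A_0$ ample and $G\ge 0$ (\cite[Corollary~5.12]{lyu-takumi-2026relativeminimalmodelprogram}). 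This lets us absorb $\epsilon D$ into the ample part $\delta A_0+\epsilon D$, which is still ample for $\epsilon\ll\delta$. We then replace that ample part by a general effective member of its $\RR$-linear system. The result is an effective, big$/Z$ $\RR$-divisor $\Delta$ such that $(X,\Delta)$ is klt and $K_X+\Delta\sim_{\RR}\epsilon D/Z$.

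Next, choose a general ample$/Z$ $\RR$-divisor $A\ge 0$ with $K_X+\Delta+A$ nef$/Z$ and $(X,\Delta+A)$ klt. Both $\Delta$ and $A$ are big$/Z$, so the hypotheses at the start of \S\ref{ss-big-real-boundary} hold. By \S\ref{ss-big-real-boundary}(3)--(7), the $(K_X+\Delta)$-MMP$/Z$ with scaling of $A$ exists and terminates. It ends either with a good minimal model over $Z$ (if $K_X+\Delta$ is pseudo-effective$/Z$) or with a Mori fibre space over $Z$.

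Since $K_X+\Delta\sim_{\RR}\epsilon D/Z$, each step of this MMP is an $\epsilon D$-negative, hence $D$-negative, extremal contraction or flip. Positivity over the contracted base, and the notions of ample$/Y$, nef$/Z$ and semiample$/Z$, are invariant under $\sim_{\RR}/Z$ and under positive scaling. Here one uses Lemma~\ref{good-cont-real} to transport $\RR$-linear equivalence through the good contractions. Hence the same sequence is an MMP$/Z$ on $D$ with scaling of $A/\epsilon$. Its output is a good minimal model of $D$ if $D$ is pseudo-effective$/Z$, and a Mori fibre space of $D$ otherwise.

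The main point to check is the construction of $\Delta$: we need it effective, big, klt, and with $\RR$-coefficients handled as in \S\ref{ss-big-real-boundary}(2). This is routine once the decomposition $\Gamma\sim A_0+G$ is used. Pseudo-effectivity of $D$ and of $K_X+\Delta$ agree because they differ by a positive multiple up to $\sim_{\RR}/Z$.
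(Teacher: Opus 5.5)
Your proof is correct. It differs from the paper's mainly in how the klt big boundary $\Delta$ with $K_X+\Delta\sim_{\RR}\epsilon D/Z$ is built.

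\emph{The paper's construction.} It starts from the definition of Fano type, a klt $\QQ$-divisor $\Omega$ with $-(K_X+\Omega)$ ample$/Z$. It adds a small multiple $\mu C$ of the reduced divisor $C$ supported on $\Supp D$ to the boundary; $C$ is $\QQ$-Cartier by $\QQ$-factoriality. It then restores $\QQ$-triviality over $Z$ by adding a general member of the ample class $-(K_X+\Omega+\mu C)$ over $Z$. Since the boundary now dominates $\mu C$, adding $\epsilon D$ with $\epsilon\ll\mu$ keeps it effective, because $\mu C$ absorbs the negative part of $\epsilon D$ directly. Klt-ness and bigness are then immediate.

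\emph{Your construction.} You start from the big klt boundary $\Gamma$ of Lemma~\ref{fano-type-conditions}(ii) and use the decomposition $\Gamma\sim_{\RR}A_0+G/Z$ to split off an ample piece $\delta A_0$. You absorb $\epsilon D$ into the still-ample class $\delta A_0+\epsilon D$ before taking a general member. This is the standard Kodaira-lemma trick and never needs $\Supp D$. The price is the continuity argument that $(X,(1-\delta)\Gamma+\delta G)$ stays klt for $\delta\ll1$, which again uses $\QQ$-factoriality so that $G$ is $\RR$-Cartier. You call this routine without writing it out; it is routine.

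\emph{Running the MMP.} You apply \S\ref{ss-big-real-boundary} directly, whereas the paper goes through Theorem~\ref{mmp-g-pairs}, whose proof reduces to that same subsection. This part is only a difference in bookkeeping. Lemma~\ref{good-cont-real} is not actually needed to transfer $\sim_{\RR}/Z$ along the MMP: pushing forward along the birational steps already preserves it.
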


\begin{proof}
    As $X\to Z$ is of Fano type, we can take a
    $\QQ$-divisor $\Omega$ such that $(X,\Omega)$ is klt and that
    $-(K_X+\Omega)$ is ample$/Z$.
    Let $C$ be the reduced divisor supported on $\Supp D$, which is $\QQ$-Cartier as $X$ is $\QQ$-factorial.
    Pick a rational $0<\mu\ll 1$ sufficiently small so that
    $(X, \Omega+\mu C)$ is klt, and that
    \[ L\coloneqq -(K_X + \Omega + \mu C) \]
    is ample$/Z$.  Take a sufficiently general $G\sim_{\QQ}L/Z$.  Set
    \[ \Gamma\coloneqq\Omega + \mu C + G. \]
    It is evident that $\Gamma\ge \mu C$ and $K_X+\Gamma\sim_{\QQ}0/Z$.
    We can assume $(X, \Gamma)$ is klt.
    As $G$ is ample$/Z$, $\Gamma$ is big$/Z$.
    Pick a $0<\epsilon\ll 1$ sufficiently small so that 
    \[ E \coloneqq \Omega+\mu C + G + \epsilon D \]
    is effective, and that $(X, E)$ is klt.  Then we get
    \[ K_X + E \sim_{\RR} \epsilon D/Z. \]
    As $\Gamma$ is big$/Z$, $E$ is also big$/Z$.
    By Theorem~\ref{mmp-g-pairs}, we can run a $(K_X+E)$-MMP over $Z$ with scaling
    with scaling of a sufficiently general ample$/Z$ divisor on $X$,
    which is also a $D$-MMP over $Z$.
    If $D$ is pseudo-effective$/Z$, then $K_X+E$ is
    pseudo-effective$/Z$.  Hence the MMP$/Z$ terminates with a good minimal model of $D$ over $Z$.
    If $D$ is not pseudo-effective$/Z$, then the MMP$/Z$ terminates with 
    a Mori fibre space of $D$ over $Z$.
\end{proof}


\subsection{Generalised dlt models}
We adopt the notion of \emph{generalised dlt generalised pairs} (or \emph{g-dlt g-pairs} for short)
from \cite[\S 2.13~(2)]{B-Fano}.
Let $k$ be a field of characteristic zero,
and let $(X, B + \mb{M})$ be a g-pair over $\spec k$.
We say $(X, B + \mb{M})$ is \emph{generalised dlt} (or \emph{g-dlt} for short)
if it is g-lc, and if $\eta$ is the generic point of any
generalised non-klt centre of $(X, B + \mb{M})$, then $(X, B)$ is log smooth near $\eta$
and $\mb{M}$ descends to a neighbourhood of $\eta$.


\begin{lemma}[cf. \protect{\cite[Corollary~3.44]{km98}}]\label{dlt-preserve}
    Let $(X, \Delta+\mb{M})$ be a g-dlt g-pair over $Z$, where $X, Z$ are normal varieties over a field 
    of characteristic zero.  Let $\phi\colon X\bir X'$ be either
    \begin{itemize}
        \item a divisorial contraction of a $(K_X + \Delta+\mb{M}_X)$-negative extremal ray over $Z$, or 
        \item a $(K_X + \Delta+\mb{M}_X)$-flip over $Z$.
    \end{itemize}
    Then $(X', \Delta'+\mb{M})$ is also g-dlt, where $\Delta'\coloneq \phi_* \Delta$.
\end{lemma}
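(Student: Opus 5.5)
We prove Lemma~\ref{dlt-preserve} by reducing to the algebraically closed case and then descending, following the pattern used throughout \S\ref{ss-descent-b-nef}. The argument has three parts: check that $(X', \Delta' + \mb{M})$ is g-lc, identify its non-klt centres, and check log smoothness and descent of $\mb{M}$ near each such centre.

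First, $(X', \Delta'+\mb{M})$ is g-lc. Let $\phi$ be the divisorial contraction or flip, associated with the good contraction $f\colon X\to Y$ of the extremal ray $R$. Then $-(K_X+\Delta+\mb{M}_X)$ is ample$/Y$. In the flip case, $K_{X'}+\Delta'+\mb{M}_{X'}$ is ample$/Y$; in the divisorial case it is numerically trivial over $Y=X'$. In both cases Lemma~\ref{discrep-mmp} applies with base $Y$ and gives
\[ a(E, X, \Delta+\mb{M})\le a(E, X', \Delta'+\mb{M}) \]
for every prime divisor $E$ over $Y$. Case (i) of that lemma makes the inequality strict whenever $\centre_Y E$ lies in the image of $\Ex(f)$. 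Hence $(X', \Delta'+\mb{M})$ is g-lc. Moreover, every generalised non-klt centre of $(X', \Delta'+\mb{M})$ is not contained in the image $\phi(\Ex)$ of the exceptional locus: its defining valuation $E$ has $a(E,X',\Delta'+\mb{M})=0$, so by g-lc of $X$ and the strict inequality its centre on $Y$ cannot lie in $f(\Ex(f))$. For a divisor $E$ on $X'$ itself, $E$ is not $\phi$-exceptional, and we use the same comparison with $E$ regarded over $Y$.

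Second, $\phi$ is an isomorphism on the open set $U'\coloneqq X'\setminus \phi(\Ex(\phi))$. By the first step, the generic point $\eta'$ of every non-klt centre lies in $U'$, and the corresponding $\eta\in X$ is a non-klt centre of $(X,\Delta+\mb{M})$ with isomorphic neighbourhoods. Near $\eta$, the pair $(X,\Delta)$ is log smooth and $\mb{M}$ descends. These are local properties, so they transfer through the isomorphism $X\supseteq \phi^{-1}(U')\cong U'$, with $\Delta$ going to $\Delta'$ and the traces of $\mb{M}$ matching. This proves that $(X', \Delta'+\mb{M})$ is g-dlt.

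The main obstacle is the strictness claim in the first step: we need that $f$ is not an isomorphism over the generic point of $\centre_Y E$ for every $E$ with centre in $f(\Ex(f))$, and that $f(\Ex(f))$ coincides with the image of the flipped locus, so that no non-klt centre of $X'$ meets $\Ex(\phi^{-1})$. We will check this directly from the construction of good contractions in \S\ref{ss-big-real-boundary}(3)--(4), using that $\phi^{+}$ is an isomorphism exactly outside the flipped locus. If that bookkeeping becomes awkward, the fallback is to base change to $\bar{k}$: by \S\ref{ss-descent-b-nef}(5) and Lemma~\ref{fppf-ample}, the base change of $\phi$ is again a divisorial contraction or flip of the same type over $\bar{Z}$, possibly of a face rather than a ray, but the argument above only uses relative ampleness and works equally well for faces. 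Log smoothness and descent of $\mb{M}$ at generic points of centres can be checked after this faithfully flat base change, and then \cite[Corollary~3.44]{km98} together with its generalised version in \cite{B-Fano} applies.
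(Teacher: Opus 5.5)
Your main argument is correct and is essentially the paper's proof. Both use Lemma~\ref{discrep-mmp} to get $a(E,X,\Delta+\mb{M})\le a(E,X',\Delta'+\mb{M})$, with strict inequality over the exceptional locus, and then transport log smoothness and descent of $\mb{M}$ through the isomorphism $X\setminus\Ex(\phi)\cong X'\setminus\Ex(\phi^{-1})$. The only difference is packaging: the paper encodes the g-dlt condition by a closed set $\Gamma$ and takes $\Gamma'=\phi(\Gamma)\cup\Ex(\phi^{-1})$, whereas you show directly that generic points of non-klt centres avoid the exceptional locus.

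The fallback to $\bar k$ is unnecessary. The ``bookkeeping'' you flag, namely that $f$ is an isomorphism exactly over $Y\setminus f(\Ex(f))$ and that $f(\Ex(f))=f^{+}(\Ex(f^{+}))$ in the flip case, is elementary, and the paper uses it implicitly as well.
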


\begin{proof}
    By definition of g-dlt g-pairs,
    there exists a closed subset $\Gamma\subset X$ 
    such that $(X\setminus \Gamma, \Delta|_{X\setminus \Gamma})$ is snc, 
    $\mb{M}|_{X\setminus \Gamma}$ descends to $X\setminus \Gamma$, and that
    for each prime divisor $E$ over $X$ with $\centre_X E\subseteq \Gamma$, 
    $a(E, X, \Delta+\mb{M}) > 0$.  
    Taking a resolution $W$ of $\phi$, taking the inverse image $\Gamma_W$ of $\Gamma$
    in $W$, and then taking the image of $\Gamma_W$ in $X'$, we denote by $\phi(\Gamma)$
    this subset in $X'$.
    Let 
    \[ \Gamma'\coloneqq \phi(\Gamma)\cup \Ex(\phi^{-1}). \]
    Then $X'\setminus \Gamma'$ is isomorphic to an open subset of $X\setminus\Gamma$.
    Thus, $(X'\setminus \Gamma', \Delta'|_{X'\setminus \Gamma'})$ is also snc, 
    and $\mb{M}|_{X'\setminus \Gamma'}$ descends to $X'\setminus \Gamma'$.

    Let $E$ be a prime divisor over $X'$ such that $\centre_{X'} E\subseteq \Gamma'$.
    Then $\centre_X E\subseteq \Gamma\cup \Ex(\phi)$.
    By Lemma~\ref{discrep-mmp}, we get
    \begin{equation}\label{log-disc-dlt-inequ}
        a(E, X, \Delta+\mb{M}) \le a(E, X', \Delta'+\mb{M}).
    \end{equation}
    If $\centre_X E\subseteq \Gamma$, then $a(E, X, \Delta+\mb{M}) > 0$, 
    so $a(E, X', \Delta' + \mb{M})>0$.
    If $\centre_X E\subseteq \Ex(\phi)$, then $\centre_X E$ is contained
    in the exceptional locus of the birational contraction 
    induced by the $(K_X + \Delta + \mb{M}_X)$-negative extremal ray.
    Again by Lemma~\ref{discrep-mmp},
    \eqref{log-disc-dlt-inequ} is a strict inequality, hence $a(E, X', \Delta'+\mb{M}) > 0$.
    Thus, $(X', \Delta'+\mb{M})$ is g-dlt.
\end{proof}


\begin{corollary}\label{run-g-dlt-mmp}
    Let $k$ be a field of characteristic zero.
    Let $(X, B + \mb{M})$ be a $\QQ$-factorial g-dlt g-pair over a $k$-variety $Z$.
    Let $A\ge 0$ be a general ample$/Z$ $\RR$-divisor on $X$ such that
    $K_X+B+\mb{M}_X+A$ is nef$/Z$.
    Then the MMP$/Z$ on $K_X+B+\mb{M}_X$ with scaling of $A$ exists,
    and each step of the MMP$/Z$ gives a $\QQ$-factorial g-dlt g-pair.
\end{corollary}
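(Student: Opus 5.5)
The plan is to reduce Corollary~\ref{run-g-dlt-mmp} to Theorem~\ref{mmp-g-pairs} for existence, and to Lemma~\ref{dlt-preserve} for preservation of the g-dlt property.

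\emph{Existence.} First I will check that a $\QQ$-factorial g-dlt g-pair satisfies hypothesis (ii) of Theorem~\ref{mmp-g-pairs}, namely that $(X,0)$ is klt.
\begin{itemize}
\item Write $B=\sum b_iB_i$. At the generic point of any non-klt centre of $(X,0)$ we would have a g-non-klt centre of $(X,B+\mb{M})$. Indeed, $a(E,X,B+\mb{M})\le a(E,X,0)$, because $B\ge0$ and the nef part $\mb{M}$ only lowers log discrepancies (by the negativity lemma, Lemma~\ref{neg-lemma}, since $\mb{M}$ is $\bb$-nef$/X$).
\item Near the generic point $\eta$ of a g-non-klt centre, $(X,B)$ is log smooth and $\mb{M}$ descends there. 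So $X$ is regular at $\eta$, and $(X,0)$ is klt near $\eta$.
\item Away from g-non-klt centres, $(X,B+\mb{M})$ is g-klt, so $(X,0)$ is klt there by the first point.
\end{itemize}
Hence $(X,0)$ is klt. The precise argument: if $E$ is a divisor over $X$ with $a(E,X,0)\le0$, then $a(E,X,B+\mb{M})\le0$, so $a(E,X,B+\mb{M})=0$ by g-lc. Thus $\centre_XE$ is a g-lc centre, near whose generic point $X$ is smooth, which gives $a(E,X,0)>0$, a contradiction.

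With (ii) in hand, Theorem~\ref{mmp-g-pairs} gives that the MMP$/Z$ on $K_X+B+\mb{M}_X$ with scaling of $A$ exists and that each model is $\QQ$-factorial.

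\emph{Preservation.} Each step of the MMP is either a divisorial contraction of a $(K_X+B+\mb{M}_X)$-negative extremal ray or a $(K_X+B+\mb{M}_X)$-flip, by \S\ref{existence-MMP-g-pair}. Lemma~\ref{dlt-preserve} then shows that the next model is again g-dlt. Mori fibre spaces end the program, so nothing further is needed there.

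\emph{Induction.} To continue, the next model $X_1$ must again satisfy the hypotheses:
\begin{itemize}
\item $X_1$ is $\QQ$-factorial and g-dlt, by the two steps above;
\item $K_{X_1}+B_1+\mb{M}_{X_1}+\lambda A_1$ is nef$/Z$;
\item condition \eqref{star-condition} is preserved, by \S\ref{existence-MMP-g-pair}(3.2),(4.2).
\end{itemize}
In fact the scaling MMP of \S\ref{existence-MMP-g-pair} is run with $A$ fixed, so we only need \eqref{star-condition} throughout, and that is already given. Induction on the number of steps therefore yields the claim.

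\emph{Main obstacle.} I expect the main care to lie in the first step, namely making precise that $\mb{M}$ does not spoil the klt property of $(X,0)$. This rests on $\mb{M}_W\le\phi^*\mb{M}_X$ over $X$, which follows from the negativity lemma applied to $\mb{M}_W-\phi^*\mb{M}_X$, a divisor that is nef$/X$ and exceptional.
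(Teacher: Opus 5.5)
Your proposal is correct and follows the paper's own proof, which likewise combines Theorem~\ref{mmp-g-pairs} (under hypothesis (ii)) with Lemma~\ref{dlt-preserve}, applied step by step along the MMP. You also correctly supply a detail the paper leaves implicit: that a $\QQ$-factorial g-dlt g-pair has $(X,0)$ klt, which you get from $\mb{M}_W\le\phi^*\mb{M}_X$ via the negativity lemma together with regularity of $X$ at the generic points of g-lc centres.
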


\begin{proof}
    This follows from Theorem~\ref{mmp-g-pairs} and Lemma~\ref{dlt-preserve}.
\end{proof}


\begin{lemma}[cf. \protect{\cite[Theorem~1.9~(ii)]{birkar12-special-LMMP}}]\label{termination-lambda}
    Let $k$ be a field of characteristic zero.
    Let $(X, B + \mb{M})$ be a $\QQ$-factorial g-dlt g-pair over a $k$-variety $Z$.
    Let $A\ge 0$ be a general ample$/Z$ $\RR$-divisor on $X$ such that
    $K_X+B+\mb{M}_X+A$ is nef$/Z$.
    Run an MMP$/Z$ on $K_X+B+\mb{M}_X$ with scaling of $A$.
    Let $\Set{\lambda_i}$ be the sequence of nef$/Z$ thresholds appearing in the MMP$/Z$,
    and $\lambda\coloneqq \lim_{i\to \infty} \lambda_i$.  Assume that either
    \begin{itemize}
        \item [\emph{(i)}] $B\ge H\ge 0$ for some ample$/Z$ $\RR$-divisor $H$, or
        \item [\emph{(ii)}] $\lambda > 0$.
    \end{itemize}
    Then the MMP$/Z$ terminates.
\end{lemma}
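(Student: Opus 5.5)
We follow the strategy of \cite[Theorem~1.9]{birkar12-special-LMMP}, adapted to g-pairs over $k$ via \S\ref{existence-MMP-g-pair} and \S\ref{ss-big-real-boundary}. By Theorem~\ref{mmp-g-pairs}~(2) and Corollary~\ref{run-g-dlt-mmp}, after finitely many steps the MMP$/Z$ consists only of flips and every model $X_i$ is $\QQ$-factorial g-dlt. Passing to such an $X_i$ and writing $B_i,A_i$ for the pushdowns, it suffices to show that an infinite sequence of flips cannot occur. We first reduce case (i) to case (ii). If $\lambda=0$ in case (i), then $K_{X}+B+\mb{M}_X$ is a limit of the nef$/Z$ classes $K+B+\mb{M}+\lambda_iA$ pulled back along small maps, hence pseudo-effective$/Z$. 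Since $B\ge H$ with $H$ ample$/Z$, we can write $B=(B-\epsilon H)+\epsilon H$. After perturbing as in Step~1 of the proof of Theorem~\ref{mmp-g-pairs}, the sequence becomes an MMP with scaling on a klt pair $K_X+\Delta$ with $\Delta$ big$/Z$ and $K_X+\Delta\sim_\RR (1+\delta)(K_X+B+\mb{M}_X)$ up to a small ample term. This MMP terminates by \S\ref{ss-big-real-boundary}(6). So case (i) is handled directly, and it remains to treat $\lambda>0$.

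Assume $\lambda>0$ and that the sequence consists of infinitely many flips. Fix $s\in(0,\lambda)$ and set $K_X+B+\mb{M}_X+sA$. Every flip in the sequence is $(K_X+B+\mb{M}_X+\lambda_iA)$-trivial and $A$-positive, with $\lambda_i>\lambda>s$. Hence each flip is also a step of an MMP$/Z$ on $K_X+B+\mb{M}_X+sA$ with scaling of $(1-s)A$, whose thresholds are $\lambda_i-s$ (after normalising), bounded below by $\lambda-s>0$. This is the same observation as in Step~2 of the proof of Theorem~\ref{mmp-g-pairs}.

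Next we replace the g-pair by a klt pair. Since $A$ is ample and general, the construction in Step~1 of Theorem~\ref{mmp-g-pairs} gives a boundary $0\le\Delta\sim_\RR B+sA+\mb{M}_X/Z$ with $(X,\Delta+(1-s)A)$ klt. Here $\Delta$ is big$/Z$ because $A$ is. This is the main point to check: the g-dlt hypothesis, rather than g-klt or $(X,0)$ klt, must still allow such a $\Delta$. For g-dlt $\QQ$-factorial $X$, the pair $(X,0)$ is klt, since $X$ is klt near the generic points of non-klt centres by log smoothness, and is g-klt elsewhere. So case (ii) of Theorem~\ref{mmp-g-pairs} applies and $\Delta$ exists.

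Then the given sequence of flips is an MMP$/Z$ on $K_X+\Delta$ with scaling of $(1-s)A$. It satisfies the hypotheses of \S\ref{ss-big-real-boundary}: $(X,\Delta+(1-s)A)$ is klt, $\Delta$ and $A$ are big$/Z$, and $K_X+\Delta+(1-s)A$ is nef$/Z$. By \S\ref{ss-big-real-boundary}(6), every such MMP terminates, which contradicts the assumption of an infinite sequence of flips. The main obstacle is verifying that the chosen $\Delta$ makes each flip genuinely a $(K+\Delta)$-flip with the correct scaling thresholds. Because $K+\Delta\equiv K+B+\mb{M}+sA$ numerically over $Z$, every ray and every ampleness condition transfers, exactly as in Step~2 of the proof of Theorem~\ref{mmp-g-pairs}.
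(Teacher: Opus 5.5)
Your argument for case (ii) is correct, and it takes a different route from the paper.

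\textbf{How the routes differ.} The paper proves (i) first. It replaces $B$ by $\Delta=(1-\epsilon)(B-H)+H'$, where $H'\sim_{\RR}\epsilon(B-H)+H/Z$ is a general ample divisor. Then $\Delta\sim_{\RR}B/Z$ exactly, $\Delta$ is big$/Z$, and $(X,\Delta+\mb{M})$ is g-klt, so Theorem~\ref{mmp-g-pairs}~(a) or~(b) applies. It then deduces (ii) from (i) by viewing the MMP as one on $K_X+B+\frac{\lambda}{2}A+\mb{M}_X$ with scaling of $(1-\frac{\lambda}{2})A$.

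You instead treat (ii) directly, using the device of Step~2 of the proof of Theorem~\ref{mmp-g-pairs}. You use that $(X,0)$ is klt to build a klt big boundary $\Delta\sim_{\RR}B+sA+\mb{M}_X/Z$, and then quote \S\ref{ss-big-real-boundary}~(6). This bypasses Theorem~\ref{mmp-g-pairs}~(b), at the cost of checking that a $\QQ$-factorial g-dlt pair has $(X,0)$ klt. Your sketch of that check needs the inequality $a(E,X,0)\ge a(E,X,B+\mb{M})$, which comes from $\QQ$-factoriality and the negativity lemma, evaluated at the generic point of $\centre_X E$.

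You should also build $\Delta$ on the original $X$, where $A$ is ample and general, not after passing to $X_i$. Since $s<\lambda\le\lambda_i$ for all $i$, the whole sequence, including the finitely many divisorial steps, is then a $(K_X+\Delta)$-MMP with scaling of $(1-s)A$.

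\textbf{The gap: case (i) with $\lambda=0$.} There are two problems.

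First, a relation $K_X+\Delta\sim_{\RR}(1+\delta)(K_X+B+\mb{M}_X)$ holding only ``up to a small ample term'' fails exactly here. Suppose $K_X+\Delta\equiv(1+\delta)(K_X+B+\mb{M}_X)+\eta A'/Z$ with $\eta>0$. Since $\lambda_i\to 0$, the late rays satisfy $(K_X+B+\mb{M}_X)\cdot R_i=-\lambda_i A\cdot R_i$ with $\lambda_i$ arbitrarily small. Such rays need not be $(K_X+\Delta)$-negative, so the given sequence is not a $(K_X+\Delta)$-MMP and \S\ref{ss-big-real-boundary}~(6) says nothing about it. You need an exact $\RR$-linear equivalence over $Z$.

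Second, writing $B=(B-\epsilon H)+\epsilon H$ produces nothing klt. Components of $\rddown{B}$ not in $\Supp H$ keep coefficient one, and the $(1+\delta)$-rescaling of Step~3 of Theorem~\ref{mmp-g-pairs} needs a g-klt pair.

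\textbf{The fix.} The missing idea is to use $H\le B$ to get an exact klt (or g-klt) big boundary. Either take the paper's $\Delta$, or run Step~1~(ii) of the proof of Theorem~\ref{mmp-g-pairs} with $\epsilon H$ in place of $sA$. The latter is legitimate because $(X,B-\epsilon H+\mb{M})$ is g-lc and $(X,0)$ is klt. It gives $0\le\Delta\sim_{\RR}B+\mb{M}_X/Z$ with $\Delta$ big$/Z$ and $(X,\Delta+A)$ klt, taking $A$ general with respect to $\Delta$. The given MMP is then literally a $(K_X+\Delta)$-MMP with scaling of $A$, and it terminates by \S\ref{ss-big-real-boundary}~(6) for every value of $\lambda$. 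So you do not need to split on $\lambda$ or use pseudo-effectivity.
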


\begin{proof}
    By Theorem~\ref{mmp-g-pairs}, the MMP$/Z$ on $K_X+B+\mb{M}_X$ with scaling of $A$ exists.

    \medskip
    \emph{Step~1}.
    In this step, we prove (i).  By assumption, for sufficiently small $\epsilon\in \RR^{>0}$, we get
    \[ B = (1-\epsilon)(B-H) + \epsilon(B-H) + H \sim_{\RR} (1-\epsilon)(B-H) + H'/Z, \]
    where $0\le H'\sim_{\RR} \epsilon(B-H) + H/Z$ is a sufficiently general ample$/Z$ $\RR$-divisor.
    Let 
    \[ \Delta\coloneqq (1-\epsilon)(B-H) + H', \] 
    which is big$/Z$.
    We can assume that $(X, \Delta + \mb{M})$ is g-klt.
    The MMP$/Z$ is also an MMP$/Z$ on $K_X + \Delta + \mb{M}_X$ with scaling of $A$.
    If $K_X + \Delta + \mb{M}_X$ is not pseudo-effective$/Z$, the MMP$/Z$ terminates
    with a Mori fibre space over $Z$ by Theorem~\ref{mmp-g-pairs}~(a).
    If $K_X+\Delta+\mb{M}_X$ is pseudo-effective$/Z$, then for every $\alpha\in \RR^{>0}$,
    $K_X+(1+\alpha)\Delta + \mb{M}_X$ is big$/Z$.
    Thus, by Theorem~\ref{mmp-g-pairs}~(b), the MMP$/Z$ terminates with a good minimal model over $Z$.

    \medskip
    \emph{Step~2}.
    In this step, we prove (ii).
    As $A$ is a general ample$/Z$ $\RR$-divisor, we can assume that 
    $(X, B + A +\mb{M})$ is g-dlt.  The MMP$/Z$ is also an MMP$/Z$ on $K_X + B + \frac{\lambda}{2}A + \mb{M}_X$
    with scaling of $(1-\frac{\lambda}{2})A$;
    cf. the proof of Theorem~\ref{mmp-g-pairs}~(a).
    As $(X, B + \frac{\lambda}{2}A + \mb{M})$ is also g-dlt, we can conclude by (i).
\end{proof}


\begin{lemma}[cf. \protect{\cite[Lemma~4.5]{B-Zhang}}]\label{exist-g-dlt-model}
    Let $k$ be a field of characteristic zero.
    Let $(X, B + \mb{M})$ be a g-lc g-pair over $\spec k$.
    Let $S_1, \dots, S_r$ be prime divisors over $X$ with $r\in \ZZ^{\ge 0}$, which are exceptional$/X$
    and $a(S_i, X, B + \mb{M})\le 1$.  
    (If $r=0$, then the set $\Set{S_i}$ is empty by convention.)
    Then there exist a $\QQ$-factorial 
    g-dlt g-pair $(Y, B_Y + \mb{M})$ with a projective birational contraction 
    $\phi\colon Y\to X$ such that
    \begin{itemize}
        \item $S_1, \dots, S_r$ are divisors on $Y$,
        \item each exceptional prime divisor of $\phi$ is one of the $S_i$'s or an irreducible component of $\rddown{B_Y}$, and
        \item $K_Y + B_Y + \mb{M}_Y = \phi^* (K_X + B + \mb{M}_X)$.
    \end{itemize}
    In particular, the exceptional divisors of $\phi$ are exactly the $S_i$'s
    if $(X, B + \mb{M})$ is g-klt.
\end{lemma}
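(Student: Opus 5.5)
We follow the proof of \cite[Lemma~4.5]{B-Zhang}, replacing the MMP over algebraically closed fields by the MMP over non-closed fields from Theorem~\ref{mmp-g-pairs}, Corollary~\ref{run-g-dlt-mmp} and Lemma~\ref{termination-lambda}.

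\emph{Set-up on a log resolution.} Take a log resolution $\pi\colon W\to X$ of $(X, B+\mb{M})$ (which exists by \S\ref{log-resolution}) such that $\mb{M}$ descends to a nef$/X$ divisor $\mb{M}_W$, each $S_i$ is a prime divisor on $W$, and the exceptional locus together with $\pi^{-1}\Supp B$ is snc. Using the Negativity Lemma~\ref{neg-lemma} and a further blow-up, we may assume there is an effective exceptional divisor $F$ with $-F$ ample$/X$. Write $K_W+B_W'+\mb{M}_W=\pi^*(K_X+B+\mb{M}_X)$. Define
\[ B_W \coloneqq \pi^{-1}_*B+\sum_{E\ \mathrm{exc},\, E\neq S_i}E+\sum_i (1-a(S_i,X,B+\mb{M}))S_i . \]
Then $(W,B_W+\mb{M})$ is g-dlt: $W$ is regular, $\Supp B_W$ is snc, $\mb{M}$ descends to $W$, and all coefficients are at most $1$ because $a(S_i)\le1$ and the pair is g-lc. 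Moreover
\[ K_W+B_W+\mb{M}_W\sim_{\RR} G/X \]
with $G\coloneqq B_W-B'_W\ge0$ exceptional. The components of $G$ are exactly the exceptional divisors that are neither the $S_i$ nor components of $\rddown{B_W}$ with $a=0$. Note that every $S_i$ has coefficient exactly $0$ in $G$.

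\emph{Running the MMP.} Run an MMP$/X$ on $K_W+B_W+\mb{M}_W$ with scaling of a general ample$/X$ divisor $A$. It exists, and every model is $\QQ$-factorial g-dlt, by Corollary~\ref{run-g-dlt-mmp}. Termination is the main obstacle. To get it, we perturb. For small $\delta>0$, write $A_W\sim_{\RR}(\text{ample})+\delta F$ over $X$, and replace $B_W$ by $B_W+\delta(\text{general ample}) -\delta F$ after slightly reducing the reduced boundary coefficients. The aim is to land in case (i) of Lemma~\ref{termination-lambda}, where $B\ge H$ with $H$ ample$/X$, while keeping g-dlt and changing $G$ only by an exceptional divisor supported on $\Supp G\cup\Supp F$.

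If this bookkeeping proves awkward, the fallback is a special termination argument as in \cite[Theorem~1.9]{birkar12-special-LMMP}, available through Lemma~\ref{termination-lambda}. In that argument, the case $\lambda=0$ is handled by the observation that $G$ is exceptional and eventually becomes $0$. Alternatively, we may pass to $\bar k$ via \S\ref{ss-descent-b-nef}. The point there is that an MMP over $k$ base changes to a $\Gal$-equivariant MMP over $\bar k$, whose termination follows from \cite[Lemma~4.5]{B-Zhang}.

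\emph{Output.} The MMP terminates at $Y$ with $K_Y+B_Y+\mb{M}_Y\sim_{\RR}G_Y/X$ nef$/X$, and $G_Y$ is exceptional$/X$. Lemma~\ref{neg-lemma} gives $G_Y\le0$; since $G_Y\ge0$, we get $G_Y=0$. Hence the MMP contracts exactly the components of $G$, so $K_Y+B_Y+\mb{M}_Y=\phi^*(K_X+B+\mb{M}_X)$. The surviving exceptional divisors are the $S_i$, which have coefficient $0$ in $G$ and so are not contracted, together with components of $\rddown{B_Y}$. Since $Y\to X$ is the end of an MMP$/X$ starting from the projective morphism $W\to X$, it is projective.

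For the final claim: if $(X,B+\mb{M})$ is g-klt, then $\rddown{B_Y}$ has no exceptional components, since these would have log discrepancy $0$. Therefore the exceptional divisors of $\phi$ are exactly the $S_i$.
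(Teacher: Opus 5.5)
Your set-up agrees with the paper: the boundary $B_W$, the effective exceptional $G$ (the paper's $E$), the observation that only components of $G$ can be contracted, the final negativity step and the g-klt remark. The gap is the step you yourself flag, termination, which the proposal never actually establishes.

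\textbf{The perturbation into case (i) of Lemma~\ref{termination-lambda} fails in the g-lc case.} Suppose $F\ge 0$ is exceptional with $-F$ ample$/X$. Then $\Supp F$ contains every exceptional prime divisor. Indeed, a general point of such a divisor lies on a contracted curve $C$; if $C\not\subseteq \Supp F$ then $F\cdot C\ge 0$, contradicting ampleness of $-F$.
\begin{itemize}
\item In particular $F$ contains every exceptional $E$ with $a(E,X,B+\mb{M})=0$ and every $S_i$ with $a(S_i,X,B+\mb{M})=0$. These have coefficient $1$ in $B_W$ and coefficient $0$ in $G$.
\item So you cannot add $\delta F$ to the boundary. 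Adding $\delta F$ is what the correct sign would require: as displayed, with $H\sim_{\RR}-F/X$, your $B_W+\delta H-\delta F$ is $\sim_{\RR}B_W+2\delta H$ over $X$, not $B_W$.
\item If you first lower those coefficients, you change $K_W+B_W+\mb{M}_W$ by an exceptional divisor that is not proportional to $G$. The MMP you then run is no longer an MMP on $K_W+B_W+\mb{M}_W$.
\item Consequently Lemma~\ref{dlt-preserve} gives g-dltness of the perturbed pair, not of $(Y,B_Y+\mb{M})$. Also, the $S_i$ with $a(S_i)=0$ are no longer protected from being contracted.
\end{itemize}
So this route only works when there are no exceptional lc places, essentially the g-klt case.

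\textbf{The fallback asserts its conclusion, and the Galois alternative does not help.} Lemma~\ref{termination-lambda} covers only $\lambda=\lim\lambda_i>0$ (or case (i)). When $\lambda=0$, saying that $G$ ``eventually becomes $0$'' is exactly what has to be proved. The paper's missing idea runs as follows.
\begin{itemize}
\item After finitely many steps the MMP consists of flips starting from some model $Y$.
\item Each $K_{X_i}+B_i+\mb{M}_{X_i}+\lambda_iA_i$ is nef$/X$. Since $Y\dashrightarrow X_i$ is small, its transform on $Y$ is movable$/X$.
\item Hence $K_Y+B_Y+\mb{M}_Y\equiv G_Y/X$ is a limit of movable$/X$ divisors, so it is nef on very general curves of every exceptional divisor of $Y\to X$.
\item Lemma~\ref{neg-lemma} does not apply, since it needs nefness on all curves. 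Instead the paper base changes to an uncountable algebraically closed field, transferring $\equiv$ via Lemma~\ref{fppf-peff}.
\item There it applies the negativity lemma for very general curves \cite[Lemma~3.3]{birkar12-special-LMMP}. This gives $G_Y\le 0$, hence $G_Y=0$, with no termination theorem needed.
\end{itemize}
Your Galois alternative does not replace this. The base change of a $k$-MMP is not an MMP over $\bar k$: a $k$-extremal ray becomes a face spanned by Galois-conjugate rays. Moreover, \cite[Lemma~4.5]{B-Zhang} is an existence statement, not a termination result for such sequences. The paper passes to $\bar k$ only to apply a single-model negativity statement, not to transport an MMP.
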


\begin{proof}
    \emph{Step~1}.
    Let $f\colon W\to X$ be a log resolution of $(X, B)$ such that $S_1, \dots, S_r$
    are divisors on $W$, and $\mb{M}$ descends to $W$.
    Let $\Set{E_j}_j$ be the set of $f$-exceptional divisors.
    We can assume $S_i = E_i$ for $1\le i\le r$.  Write
    \[ K_W + B_W + \mb{M}_W = f^*(K_X + B + \mb{M}_X). \]
    Let $\Delta_W \coloneqq B_W + E$, where $E\coloneqq \sum_{j>r} a_j E_j$
    and $a_j \coloneqq a(E_j, X, B+\mb{M})$.  Then by assumption,
    \[ \coeff_{S_i} \Delta_W = \coeff_{S_i} B_W = 1-a(S_i, X, B+\mb{M}) \ge 0. \]
    Moreover, as $(X, B+\mb{M})$ is g-lc, each $a_j$ is $\ge 0$.  Then
    \[ K_W + \Delta_W + \mb{M}_W = f^*(K_X + B + \mb{M}_X) + E\equiv E/X, \]
    where $E\ge 0$ is exceptional$/X$.  
    By construction, none of the $S_i$'s is an irreducible component of $E$. 

    \medskip
    \emph{Step~2}.
    Let $A\ge 0$ be a general ample$/Z$ $\RR$-divisor on $W$.
    By Theorem~\ref{mmp-g-pairs}, we can run an MMP on $K_W + \Delta_W+\mb{M}_W$
    with scaling of $A$ over $Z$.  Note that the termination of this MMP is not known
    as $(W, \Delta_W + \mb{M})$ is g-dlt, but may not be g-klt; cf. Theorem~\ref{mmp-g-pairs}~(b).
    If the MMP$/X$ terminates, then the resulting model is a minimal model $Y\to X$
    of $K_W + \Delta_W + \mb{M}_W$, i.e. $K_Y + \Delta_Y + \mb{M}_Y$ is nef$/X$,
    where $\Delta_Y$ is the pushdown of $\Delta_W$ to $Y$.
    Assume that the MMP$/X$ does not terminate.
    Let $\Set{\lambda_i}$ be the sequence of nef$/X$ thresholds appearing in the MMP$/X$,
    and $\lambda\coloneqq \lim_{i\to \infty} \lambda_i$.
    By Lemma~\ref{termination-lambda}, $\lambda = 0$.  Moreover,
    by Theorem~\ref{mmp-g-pairs}, we can assume that the MMP$/X$ reaches at a 
    model $Y\to X$, whereupon the MMP$/X$ is an infinite sequence of flips $X_i\bir X_{i+1}$ over $X$.
    Denote by $A_i, \Delta_i$ the pushdowns of $A, \Delta_W$ to $X_i$ respectively.
    Then each $K_{X_i} + \Delta_i + \mb{M}_{X_i} + \lambda_i A_i$ is nef$/X$.
    As $\lambda = 0$, $K_Y + \Delta_Y + \mb{M}_Y$ is the limit of $K_Y+\Delta_Y+\mb{M}_Y + \lambda_i A_Y$,
    where $A_Y$ is the pushdown of $A$ to $Y$.

    \medskip
    \emph{Step~3}.
    Let $\bar{k}$ be the algebraic closure of $k$.
    Enlarging $\bar{k}$ if necessary, we assume that $\bar{k}$ is \emph{uncountable}.
    Denote by $\bar{Y}\to \bar{X}$ the base change of $Y\to X$ to $\spec \bar{k}$,
    where $\bar{X}\coloneqq X\otimes_k \bar{k}$, and $\bar{Y}\coloneqq Y\otimes_k\bar{k}$.
    Let $\ol{\mb{M}}$ be the $\bb$-divisor over $\bar{X}$ induced by $\mb{M}$; see \S\ref{ss-pullback-R-Cartier-pairs}. 
    Let $\bar{\Delta}_Y, \bar{A}_Y$ be the flat pullbacks of $\Delta_Y, A_Y$ to $\bar{Y}$ respectively.
    Then $K_{\bar{Y}} + \bar{\Delta}_Y + \ol{\mb{M}}_{\bar{Y}}$
    is the flat pullback of $K_Y + \Delta_Y + \mb{M}_Y$, and it is 
    the limit of $K_{\bar{Y}} + \bar{\Delta}_Y + \ol{\mb{M}}_{\bar{Y}} + \lambda_i \bar{A}_Y$ as $i\to \infty$.
    Let $\bar{X}_i\coloneqq X_i\otimes_k \bar{k}$, and 
    let $\bar{\Delta}_i, \bar{A}_i$ be the flat pullbacks of $\Delta_i, A_i$ to $\bar{X}_i$ respectively.
    Then $K_{\bar{X}_i} + \bar{\Delta}_i + \ol{\mb{M}}_{\bar{X}_i} + \lambda_i \bar{A}_i$
    is nef$/\bar{X}$, hence movable$/\bar{X}$.
    Thus, $K_{\bar{Y}} + \bar{\Delta}_Y + \ol{\mb{M}}_{\bar{Y}}$ is a limit
    of movable$/\bar{X}$ divisors, hence it is nef on \emph{very general} curves$/\bar{X}$
    of any exceptional divisors of $\bar{Y}\to \bar{X}$.
    Let $E_Y$ be the pushdown of $E$ to $Y$, and let
    $\bar{E}_{\bar{Y}}$ be its flat pullback to $\bar{Y}$.
    By Lemma~\ref{fppf-peff}, we have
    \[ K_{\bar{Y}} + \bar{\Delta}_Y + \ol{\mb{M}}_{\bar{Y}} \equiv \bar{E}_{\bar{Y}}/\bar{X}. \]
    By \cite[Lemma~3.3]{birkar12-special-LMMP}, we see that $\bar{E}_{\bar{Y}} = 0$, 
    hence $E_Y = 0$.  Denote by $\phi$ the birational contraction $Y\to X$.  Then
    \[ K_Y + B_Y + \mb{M}_Y = \phi^*(K_X+B+\mb{M}_X). \]
    By construction, none of the $S_i$'s is contracted by $W\bir Y$.
    Moreover, if $E_j$ is an exceptional divisor of $f\colon W\to X$ with $j>r$,
    then $E_j$ is contracted by $W\bir Y$ if $a(E_j, X, B+\mb{M}) >0$.
    Thus, each exceptional prime divisor of $\phi$ is one of the $S_i$'s
    or an irreducible component of $\rddown{B_Y}$.
    Moreover, by Corollary~\ref{run-g-dlt-mmp}, $(Y, B_Y + \mb{M})$
    is a $\QQ$-factorial g-dlt g-pair.
\end{proof}

Let $k$ be a field of characteristic zero.
Let $(X, B + \mb{M})$ be a g-lc g-pair over $\spec k$.
Then Lemma~\ref{exist-g-dlt-model} shows the existence of 
a birational contraction $\phi\colon Y\to X$ from a normal variety $Y$ such that
\begin{itemize}
    \item $Y$ is $\QQ$-factorial,
    \item $K_Y + B_Y + \mb{M}_Y = \phi^*(K_X + B + \mb{M}_X)$,
    \item each exceptional prime divisor of $\phi$ is a component of $\rddown{B_Y}$, and
    \item $(Y, B_Y + \mb{M})$ is a g-dlt g-pair.
\end{itemize}
We call $(Y, B_Y + \mb{M})$ a \emph{$\QQ$-factorial g-dlt model} of $(X, B + \mb{M})$.
This is consistent with the notion of $\QQ$-factorial dlt model in \cite{KK10} when $\mb{M} = 0$.



\subsection{Divisors with log discrepancy close to $0$}
The following result is a version of \cite[Lemma~2.48]{B-Fano} over non-closed fields. 
The key point is that the constant $\epsilon>0$ is independent of the ground field.

\begin{lemma}\label{l-ep-to-0}
    Let $d\in\NN$ and $\Phi\subset [0,1]$ be a DCC set. 
    Then there is an $\epsilon>0$ depending only on $d, \Phi$ 
    so that the following holds.
    
    Let $k$ be a field of characteristic zero.
    Assume that $(X,B)$ is a projective pair over $k$,
    and $D$ is a prime divisor on birational models of $X$ such that
\begin{itemize}
    \item $(X,B)$ is lc of dimension $d$ and $(X,0)$ is klt,
    \item $K_X+B\sim_{\RR} 0/\spec k$ and $B\in \Phi$, and
    \item $a(D,X,B)<\epsilon$.
\end{itemize}
Then $a(D,X,B)=0$.
\end{lemma}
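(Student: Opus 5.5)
The plan is to reduce to the algebraically closed case, namely \cite[Lemma~2.48]{B-Fano}. The constant $\epsilon=\epsilon(d,\Phi)$ given there depends only on $d$ and $\Phi$, and we will check that base change to $\bar k$ preserves every hypothesis and the relevant log discrepancy. One subtlety is that \cite[Lemma~2.48]{B-Fano} is stated over a fixed algebraically closed field of characteristic zero, usually $\CC$. So we must first make sure its constant is independent of the algebraically closed field. The argument there runs through ACC for lc thresholds and global ACC \cite{HMX-ACC}, which holds over any algebraically closed field of characteristic zero with constants independent of the field. Alternatively, a Lefschetz principle argument reduces any algebraically closed field of characteristic zero to $\CC$: the data $(X,B,D)$ is defined over a finitely generated subfield, which embeds into $\CC$, and the resulting counterexample over $\CC$ gives the same contradiction.

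\emph{Step 1 (base change).} Let $\bar X=X\otimes_k\bar k$ and let $\bar B$ be the flat pullback of $B$. Since $X$ is geometrically integral, $\bar X$ is a normal projective variety of dimension $d$ (\S\ref{ss-sch-var-defn}(4)). By \S\ref{ss-flat-pullback}, the coefficients of $\bar B$ coincide with those of $B$, so $\bar B\in\Phi$. By \S\ref{ss-descent-b-nef}(5) with $\mb M=0$, $(\bar X,\bar B)$ is lc and $(\bar X,0)$ is klt. By Lemma~\ref{linear-0-to-closure}, $K_{\bar X}+\bar B\sim_\RR 0$.

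\emph{Step 2 (transfer the divisor).} Choose a log resolution $W\to X$ on which $D$ is a prime divisor, and set $K_W+B_W=\pi^*(K_X+B)$, so that $\coeff_D B_W=1-a(D,X,B)$. Then $\bar W=W\otimes_k\bar k\to\bar X$ is a log resolution of $(\bar X,\bar B)$. The flat pullback of $K_W+B_W$ is the pullback of $K_{\bar X}+\bar B$. Since $D\otimes_k\bar k$ is reduced by characteristic zero, any irreducible component $\bar D$ of it satisfies $\coeff_{\bar D}\bar B_{\bar W}=\coeff_D B_W$. Hence $a(\bar D,\bar X,\bar B)=a(D,X,B)<\epsilon$.

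\emph{Step 3 (conclude).} By \cite[Lemma~2.48]{B-Fano} over $\bar k$, with the field-independent $\epsilon$, we get $a(\bar D,\bar X,\bar B)=0$. Therefore $a(D,X,B)=0$.

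The main obstacle is the justification of field-independence in the preamble. Steps 1--3 are routine given the descent results already established in the paper.
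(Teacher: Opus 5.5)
Your proposal is correct and is essentially the paper's argument. The paper likewise reduces to \cite[Lemma~2.48]{B-Fano} over $\CC$: it spreads out $(X,B,D)$, together with the rational functions witnessing $K_X+B\sim_\RR 0$, over a subfield $k'\subset k$ finitely generated over $\QQ$, embeds $k'\hookrightarrow\CC$, and picks an irreducible component of $D_\CC$, whose log discrepancy equals $a(D,X,B)$. The only difference is that you first pass to $\bar k$ and then invoke the Lefschetz principle, whereas the paper descends directly from $k$. This makes your intermediate base change to $\bar k$, and your appeal to field-independence of the ACC results, unnecessary.
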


\begin{proof}
    Let $Y\to X$ be a birational contraction between normal $k$-varieties
    so that $D$ is a divisor on $Y$.  Let $\alpha_1, \dots, \alpha_r$ be rational functions on $X$ so that
    \[ K_X+ B = \sum_{i=1}^r a_i \Div_X(\alpha_i), \]
    where $a_i\in \RR$.    
    There exists a subfield $k'\subset k$
    that is finitely generated over $\QQ$, a birational contraction between normal $k'$-varieties,
    rational functions $\alpha_1', \dots, \alpha_r'$ on $X'$,
    a prime divisor $D'$ on $Y'$, and an $\RR$-divisor $B'$ on $X'$ such that
    \begin{itemize}
        \item $X = X'\otimes_{k'}k$, $Y = Y'\otimes_{k'}k$, $D = D'\otimes_{k'} k$,
        \item $Y\to X$ is the base change of $Y'\to X'$ to $\spec k$, 
        \item $B$ is the flat pullback of $B'$ to $X$, and 
        \item each $\alpha_i$ is the image of $\alpha_i'$ via the canonical homomorphism $K(X')\to K(X)$ so that
              \[ K_{X'} + B' = \sum_{i=1}^r a_i \Div_{X'}(\alpha_i'). \]
    \end{itemize}
    By \S\ref{ss-pullback-R-Cartier-pairs}, $(X', B')$ is lc of dimension $d$, $(X', 0)$ is klt, and
    $a(D', X', B') < \epsilon$.  Let
    \[ \iota\colon k'\to \CC \]
    be an embedding of fields.  Let
    \[ Y_{\CC}\coloneqq Y'\otimes_{k'}\CC~~\mbox{and}~~X_{\CC}\coloneqq X'\otimes_{k'}\CC, \]
    where the fibre products are formed via $\iota$.
    Then the induced $Y_{\CC}\to X_{\CC}$ is also a birational contraction.
    Let $B_{\CC}$ be the flat pullback of $B'$ to $X_{\CC}$.
    Again by \S\ref{ss-pullback-R-Cartier-pairs}, $(X_{\CC}, B_{\CC})$
    is lc of dimension $d$ and $(X_{\CC}, 0)$ is klt.
    Set $D_{\CC}\coloneqq D'\otimes_{k'} \CC$,
    which is a reduced Weil divisor on $Y_{\CC}$.
    Note that $D$ may not be geometrically irreducible over $k$.
    Hence $D_{\CC}$ may admit several irreducible components.
    Pick an irreducible component $E$ of $D_{\CC}$.  Then
    \[ a(E, X_{\CC}, B_{\CC}) = a(D', X', B') = a(D, X, B) < \epsilon. \]
    Then the result follows from \cite[Lemma~2.48]{B-Fano}.
\end{proof}


\subsection{Boundedness of Fano varieties}

\begin{lemma}\label{fano-bnd-very-ample}
    Let $d\in \NN$ and $\epsilon\in \RR^{>0}$.
    Then there exist $n, r\in \NN$ depending only on $d,\epsilon$ so that the following holds.

    Let $k$ be a field of characteristic zero.
    Let $X$ be a $k$-variety such that $(X,0)$ is $\epsilon$-lc and $-K_X$ is ample over $\spec k$,
    i.e. $X$ is an $\epsilon$-lc Fano variety over $k$.
    Then $-r K_X$ is very ample over $\spec k$ and $(-r K_X)^d_k < n$.
\end{lemma}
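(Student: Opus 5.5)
We reduce to the algebraically closed case by base change to $\bar{k}$ and then descend. Put $\bar{X}\coloneqq X\otimes_k\bar{k}$. Since $X$ is a variety, $\bar{X}$ is integral, and it is normal by \S\ref{ss-sch-var-defn}(4). The canonical divisor pulls back to the canonical divisor, because $\bar{X}\to X$ is flat with regular (étale-local) fibres. Hence $K_{\bar{X}}$ is $\QQ$-Cartier by Lemma~\ref{descend-R-Cartier}, and $-K_{\bar{X}}$ is ample by Lemma~\ref{fppf-ample}. By \S\ref{ss-descent-b-nef}(5), applied with $\mb{M}=0$ and $B=0$, the variety $\bar{X}$ is $\epsilon$-lc. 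So $\bar{X}$ is an $\epsilon$-lc Fano variety of dimension $d$ over $\bar{k}$.

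\textbf{Step 1 (boundedness over $\bar{k}$, uniform in the field).} We want $n,r$ depending only on $d,\epsilon$, not on $\bar{k}$. Over $\CC$, the BAB theorem (Birkar) says such Fano varieties form a bounded family. Hence there are $r,n$ such that $-rK$ is very ample and $(-rK)^d<n$ for all of them. To make this independent of $\bar{k}$, we use the spreading-out trick of Lemma~\ref{l-ep-to-0}. The variety $X$, together with the finitely many data witnessing that $K_X$ is $\QQ$-Cartier and the ampleness, is defined over a subfield $k'\subset k$ finitely generated over $\QQ$, say $X=X'\otimes_{k'}k$. Choose an embedding $k'\to\CC$ and set $X_\CC\coloneqq X'\otimes_{k'}\CC$. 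Then $X_\CC$ is a complex $\epsilon$-lc Fano variety of dimension $d$, again by \S\ref{ss-pullback-R-Cartier-pairs}(5) and Lemma~\ref{fppf-ample}. Thus $-rK_{X_\CC}$ is very ample and $(-rK_{X_\CC})^d<n$, with $r,n$ coming from BAB.

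\textbf{Step 2 (descent).} First, the Cartier index: the divisor $rK_{X_\CC}$ is Cartier. Since $\CC/k'$ is faithfully flat, and by the argument of Lemma~\ref{descend-R-Cartier} with integer coefficients (fpqc descent of invertibility of $\mcO(rK)$ together with compatibility of reflexive sheaves with flat base change), $rK_{X'}$ is Cartier, and hence so is $rK_X$. Next, $\mcO_{X'}(-rK_{X'})\otimes_{k'}\CC\cong\mcO_{X_\CC}(-rK_{X_\CC})$, so very ampleness descends to $X'$ by Lemma~\ref{fppf-ample}, which covers the very ample case via \cite[Corollaire~(2.7.2)]{EGA-IV-2}. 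It then ascends from $X'$ to $X$ by the same lemma. Finally, intersection numbers are invariant under field extension, by \cite[VI.2.7, VI.2.9]{Kol-rc} or Lemma~\ref{volume-fibre-constant} applied to the nef divisor $-rK$:
\[
(-rK_X)^d_k=(-rK_{X'})^d_{k'}=(-rK_{X_\CC})^d_{\CC}<n .
\]

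\textbf{Main obstacle.} The main difficulty is Step 1: making the BAB constants field-independent, and justifying that "effective very ampleness of $-rK$" holds uniformly over the bounded family rather than just boundedness. BAB gives a bounded family. To get a single $r$, we would stratify the family so that $K$ is $\QQ$-Cartier with bounded index on the relevant strata. Then, on each stratum, we use Noetherian induction on the base and openness of very ampleness to obtain finitely many $r$'s, and we take their product, since a positive multiple of a very ample divisor is still very ample. The descent steps are routine given the lemmas in \S2.
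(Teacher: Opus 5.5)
Your proposal is correct and follows essentially the same route as the paper. You spread $X$ out to a subfield $k'\subset k$ finitely generated over $\QQ$, embed $k'\hookrightarrow\CC$, apply BAB \cite[Theorem~1.1]{B-BAB} to $X_\CC$, and descend very ampleness and the intersection bound back to $X$ via Lemma~\ref{fppf-ample} and invariance of intersection numbers under field extension. Three remarks: the initial passage to $\bar{k}$ is never used; your care about descending the Cartier property of $rK$ and extracting a uniform $r$ from the bounded family spells out points the paper treats as immediate from BAB; and for the field-extension invariance of $(-rK)^d$ cite flat base change of cohomology, since Lemma~\ref{volume-fibre-constant} compares fibres of one flat family and does not apply directly.
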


\begin{proof}
    Let $k'$ be a subfield of $k$ that is finitely generated over $\QQ$
    so that there is a $k'$-variety $X'$ satisfying $X= X'\otimes_{k'} k$.
    By \S\ref{ss-descent-b-nef}(5), we see that $X'$ is also $\epsilon$-lc.
    Moreover, by Lemma~\ref{fppf-ample}, $-K_{X'}$ is ample over $\spec k'$.
    Hence $X'$ is an $\epsilon$-lc Fano variety over $k'$.
    Let $\iota\colon k'\hookrightarrow \CC$ be an embedding of fields, and let
    $X_{\CC}\coloneqq X'\otimes_{k'}\CC$.  For the same reason,
    $X_{\CC}$ is an $\epsilon$-lc Fano variety over $\CC$.
    By \cite[Theorem~1.1]{B-BAB}, $\epsilon$-lc Fano varieties of dimension $d$ form a bounded family.
    Thus, we can take $n, r\in \NN$ depending only on $d, \epsilon$ such that
    $-r K_{X_{\CC}}$ is very ample and that $(-rK_{X_{\CC}})^d_{\CC} < n$.
    Then by Lemma~\ref{fppf-ample}, $-r K_X$ is very ample over $\spec k$;
    moreover, by \cite[Corollary~VI.2.14]{Kol-rc} and \cite[(7.9.5)]{EGA-III-2}, 
    we have $(-rK_X)^d_k<n$.
\end{proof}


\section{\bf Descent to finite type datum over number fields}\label{s-descent}

\subsection{Descent of contractions}\label{ss-descent-contractions}
Let $d\in \NN$, and let $\mf{S}$ be a set of real numbers.
Let $k$ be a field of characteristic zero.
Let $f\colon X\to Z$ be a contraction of $k$-varieties such that
\[ \dim (X/Z) = d~~\mbox{and}~~B\in \mf{S}. \]
Since the schemes $X,Z$ and irreducible components of $B$ are of finite type over $\spec k$, 
by \cite[Corollaire~(8.7.3), Th\'eor\`eme~(8.10.5)]{EGA-IV-3}, there exist 
    \begin{itemize}
        \item a finitely generated (over $\ZZ$) algebra $A$ contained in $k$,
        \item a morphism of \emph{integral} schemes $f_A\colon X_A\to Z_A$ over $\spec A$,
        \item an $\RR$-divisor $B_A$ on $X_A$,
    \end{itemize}
     such that 
     \begin{itemize}
         \item $f_A$ is projective,
         \item $Z_A\to \spec A$ is quasi-projective,
         \item $f\colon X\to Z$ is the base change of $f_A\colon X_A\to Z_A$ from $\spec A$ to $\spec k$, 
         \item $B$ is the flat pullback of $B_A$ from $X_A$ to $X$, and
         \item each irreducible component of $B$ is the flat pullback of an irreducible component of $B_A$ (in particular, the set of coefficients of $B_A$ is equal to that of $B$).
     \end{itemize}
We show that $f_A$ is indeed a contraction between normal varieties (up to shrinking $\spec A$).

\medskip
(1)
Replacing $A$ by $A\otimes_{\ZZ}\QQ$, we can assume that $A$ is of finite type over $\QQ$.
As $A$ is excellent, the normalisation $A^{\nor}$ of $A$ is finite over $A$.
Hence replacing $A$ by $A^{\nor}$, we can also assume that $A$ is normal.
Moreover, by \cite[Proposition~(9.9.4)]{EGA-IV-3},
shrinking $\spec A$ near its generic point, we assume $\spec A$
is regular.
Then $\spec A$ is a regular irreducible scheme of finite type over $\QQ$.
\[\xymatrix{
X\ar[d]^f\ar[r] & X_A\ar[d]^{f_A} \\
Z\ar[d]\ar[r] & Z_A\ar[d] \\
\spec k\ar[r] & \spec A
}\]
Let $k_A$ be the algebraic closure of $Q$ in the fraction field of $A$.
Then $k_A/\QQ$ is a finite extension, i.e.
$k_A$ is a number field, and $\spec A$ is geometrically connected over $k_A$.
As $\chara k_A =0$, $\spec A$ is also geometrically regular 
over $k_A$ \cite[Proposition~(6.7.4)]{EGA-IV-2}.
Therefore, $\spec A$ is a geometrically regular variety over $\spec k_A$.

\medskip
(2)
As $X, Z$ are $k$-varieties, they are geometrically integral by definition,
so the generic fibres of $X_A\to \spec A, Z_A\to \spec A$ are geometrically integral.
By \cite[Th\'eor\`eme~(9.7.7)]{EGA-IV-3}, up to shrinking $\spec A$, we can assume that
each fibre of $X_A\to \spec A$ and $Z_A\to \spec A$ is geometrically integral.
Moreover, by generic flatness \cite[Th\'eor\`eme~(6.9.1)]{EGA-IV-2}, we can assume that
$X_A\to \spec A, Z_A\to \spec A$ are flat.  
Since $\spec A$ is geometrically irreducible over $k_A$, by \cite[Proposition~(4.5.7)]{EGA-IV-2},
$X_A, Z_A$ are also geometrically irreducible over $k_A$.
In particular, we can conclude that $X_A, Z_A$ are $k_A$-varieties.

\medskip
(3)
Since $X, Z$ are geometrically normal over $k$ \cite[Corollaire~(6.14.2)]{EGA-IV-2},
we can assume that each fibre of $X_A\to \spec A$ and $Z_A\to \spec A$ is geometrically normal;
see \cite[Corollaire~(9.9.5)]{EGA-IV-3}.
Note that $X_A\to \spec A, Z_A\to \spec A$ are flat.  
Then by \cite[Proposition~(6.14.1)]{EGA-IV-2}, $X_A, Z_A$ are normal schemes;
in particular, $X_A, Z_A$ are geometrically normal over $k_A$ \cite[Corollaire~(6.14.2)]{EGA-IV-2}.

\medskip
(4)
As $f$ is a contraction, we have $f_*\OO_X = \OO_Z$.  By flat base change of cohomology
\cite[Proposition~(1.4.15)]{EGA-III-1}, we have
\[ \big((f_A)_*\OO_{X_A} \big)\otimes_A k = f_* \OO_X. \]
Thus, $(f_A)\mcO_{X_A}$ and $\mcO_{Z_A}$ are canonically isomorphic at the generic point of $\spec A$.
Then by \cite[Chap.~0, (5.2.7)]{EGA-I},
up to shrinking $\spec A$ to an open subset, we have
\[ (f_A)_* \mcO_{X_A} = \mcO_{Z_A}. \]
Therefore, $f_A$ is a contraction between normal $k_A$-varieties.

\medskip
(5)
As $\dim (X/Z) = d$, the generic fibre of $X\to Z$ is of dimension $d$.
Hence the generic fibre of $X_A\to Z_A$ also has dimension $d$.
By \cite[Th\'eor\`eme~(6.9.1)]{EGA-IV-2} and \cite[Corollary~III.9.6]{Hart}, 
a general fibre of $f_A$ is of dimension $d$, i.e. $\dim (X_A/Z_A) = d$.


\subsection{Descent of singularities}\label{ss-descent-singularities}
Keep the notation and assumptions in \S\ref{ss-descent-contractions}.  
Pick $\epsilon\in \RR^{\ge 0}$, and assume that
\[ (X, B)~\mbox{is an}~\epsilon\mbox{-lc pair}. \]
We show that $(X_A, B_A)$ is also $\epsilon$-lc (up to shrinking $\spec A$).

\medskip
(1)
As $X_A$ is normal, the canonical divisor $K_{X_A}$ is well-defined.

\medskip
(2)
Now we show that $(X_A, B_A)$ is a pair up to shrinking $\spec A$.
As $(X, B)$ is a pair, there are Cartier divisors $D_i$ and real numbers 
$a_i\in \RR$ such that
\[ K_X+B = \sum_i a_i D_i. \]
We can assume there are integral Weil divisors $D_{A,i}$ on $X_A$
whose flat pullback to $X$ are equal to $D_i$.
By \cite[Proposition~(8.5.5)]{EGA-IV-3}, we can assume each $D_{A,i}$
is a Cartier divisor on $X_A$.
Let
\[ E_A \coloneqq K_{X_A} + B_A - \sum_i a_i D_{A,i}. \]
Then $E_A$ is zero over the generic point of $\spec A$.
As the image of $\Supp E_A$ in $\spec A$ is constructible
(see \cite[Corollaire~(1.8.5)]{EGA-IV-1}) and avoids the generic point of $\spec A$,
we see that $E_A$ is zero over an open subset of $\spec A$.
Thus, shrinking $\spec A$ to an open subset,
we can assume that $E_A=0$.  Hence $K_{X_A} + B_A$ is $\RR$-Cartier, i.e.
$(X_A, B_A)$ is a pair.

\medskip
(3)
Next we show that a log resolution can be spread out over $\spec A$.
Let $\pi\colon W\to X$ be a log resolution of $(X, B)$ and write
\[ K_W + \Gamma = \pi^*(K_X+B). \]
Let $A\subset k$ be the $k_A$-algebra as in \S\ref{ss-descent-contractions}.
Denote by $\Lambda$ the reduced exceptional divisor of $\pi$, 
which is an snc divisor on $W$.
We can assume that there exist
\begin{itemize}
    \item a projective birational morphism $\pi_A\colon W_A\to X_A$, 
    \item a reduced divisor $\Lambda_A$ on $W_A$, and
    \item an $\RR$-divisor $\Gamma_A$ on $W_A$
\end{itemize}
such that
\begin{itemize}
    \item the base change of $\pi_A$ to $\spec k$ is equal to $\pi$,
    \item the flat pullback of $\Gamma_A$ to $W$ is equal to $\Gamma$,
    \item each irreducible component of $\Lambda$ is the flat pullback of an irreducible component of $\Lambda_A$, and
    \item each irreducible component of $\Gamma$ is the flat pullback of an irreducible component of $\Gamma_A$ (in particular, the set of coefficients of $\Gamma_A$ is equal to that of $\Gamma$).
\end{itemize}
By the same argument in \S\ref{ss-descent-contractions}~(2),
we can assume $W_A$ is a $k_A$-variety.
Moreover, we assume $W_A\to\spec A$ is flat.
By \cite[Corollaire~(9.9.5)]{EGA-IV-3}, we can assume further
each fibre of $W_A\to \spec A$ is geometrically regular.
Then by \cite[Corollaire~(6.5.2)]{EGA-IV-2}, $W_A$ is a nonsingular $k_A$-variety.
Applying the same argument to each stratum of $\Lambda_A$,
we can assume that up to shrinking $\spec A$,
$(W_A, \Gamma_A)$ is a log resolution of $(X_A, B_A)$.

\medskip
(4)
Write
\[ F_A \coloneqq K_{W_A} + \Gamma_A - \pi_A^*(K_{X_A} + B_A). \]
Then $F_A$ is zero over the generic point of $\spec A$.
Again by \cite[Corollaire~(1.8.5)]{EGA-IV-1}, the image of $\Supp F_A$
in $\spec A$ is constructible.
Since the image of $\Supp F_A$ in $\spec A$ does not contain the generic point of $\spec A$,
we deduce that $F_A$ is zero over an open subset of $\spec A$.
Thus, up to shrinking $\spec A$, we can assume $F_A = 0$.
By construction in (3), the coefficients of $\Gamma_A$ are $\le 1-\epsilon$.
Hence $(X_A, B_A)$ is also $\epsilon$-lc.


\subsection{Descent of log Calabi-Yau fibrations}\label{ss-descent-log-CY}
Keep the notation and assumptions in \S\ref{ss-descent-contractions}.  Assume that
$(X, B)\to Z$ is a log Calabi-Yau fibration, that is, $(X, B)$ is lc and 
\begin{equation}\label{log-CY-condition}
    K_X+B\sim_{\RR}0/Z.
\end{equation}
We show that $(X_A, B_A)\to Z_A$ is also a log Calabi-Yau fibration (up to shrinking $\spec A$).

By \S\ref{ss-descent-singularities}, we can assume $(X_A, B_A)$ is lc.  Moreover,
\eqref{log-CY-condition} says that
\[ K_X+B = f^*N + \sum_j b_j \Div_X(\alpha_j) \]
for an $\RR$-Cartier $\RR$-divisor $N$ on $Z$ and finitely many rational functions $\alpha_j\in K(X)$.
We can choose the finitely generated algebra $A\subset k$ so that 
\begin{itemize}
    \item there is an $\RR$-divisor $N_A$ on $Z_A$ whose flat pullback to $Z$ is $N$, and
    \item each $\alpha_j$ is a rational function via the injection $K(X_A)\to K(X)$.
\end{itemize}
By \cite[Proposition~(8.5.5)]{EGA-IV-3}, we can assume $N_A$ is also $\RR$-Cartier.
Write
\[ G_A\coloneqq K_{X_A} + B_A  - f_A^*N_A - \sum_j b_j \Div_{X_A}(\alpha_j). \]
As $G_A$ is zero over the generic point of $\spec A$, 
again by \cite[Corollaire~(1.8.5)]{EGA-IV-1}, we can assume $G_A=0$ over the whole $\spec A$.
Thus, we have
\[ K_{X_A} + B_A \sim_{\RR}0/Z_A. \]
Hence $(X_A, B_A)\to Z_A$ is a log Calabi-Yau fibration over $k_A$.


\subsection{Descent of Fano type fibrations}\label{ss-descent-FT}
Keep the notation and assumptions in \S\ref{ss-descent-contractions}.
Assume the contraction
\[ X\to Z~\mbox{is of Fano type}. \]
We show that $f_A\colon X_A\to Z_A$ is also of Fano type (up to shrinking $\spec A$).

By assumption, there is a big$/Z$ $\QQ$-divisor $\Lambda$ on $X$ such that $(X, \Lambda)$
is klt, and $K_X+\Lambda\sim_{\QQ}0/Z$.
We can take the algebra $A\subset k$ so that there is a $\QQ$-divisor $\Lambda_A$ on $X_A$
whose flat pullback to $X$ is equal to $\Lambda$.
By \S\ref{ss-descent-singularities}, we can assume $(X_A, \Lambda_A)$ is klt.
Moreover, by \S\ref{ss-descent-log-CY}, we can assume 
\[ K_{X_A} + \Lambda_A \sim_{\QQ} 0/Z_A. \]
By \cite[Lemma~5.21]{lyu-takumi-2026relativeminimalmodelprogram}, $\Lambda_A$ is big$/Z_A$.
Hence $f_A\colon X_A\to Z_A$ is of Fano type.


\subsection{Descent of relatively ample divisors}\label{ss-descent-ample}
Keep the notation and assumptions in \S\ref{ss-descent-contractions}.
Assume that $H$ is an ample$/Z$ $\RR$-divisor on $X$.
We show that $H$ can descend to an ample $\RR$-divisor on $X_A$.

Since $H$ is ample$/Z$, we can write
$H =\sum_i a_i H_i$,
where $a_i\in \RR^{>0}$, and each $H_i$ is an ample$/Z$ Cartier divisor on $X$.
We can choose the algebra $A\subset k$ and an $\RR$-divisor $H_A$ on $X_A$
such that every irreducible component of $H$ is the flat pullback of an irreducible component of $H_A$.
In particular, each $H_i$ is the flat pullback of an integral divisor $H_{A,i}$ on $X_A$.
By \cite[Lemme~(8.10.5.2)]{EGA-IV-3}, we can assume every $H_{A,i}$ is ample$/Z_A$.
Thus, we can conclude that $H_A$ is ample$/Z_A$.


\subsection{Descent of relatively nef divisors}\label{ss-descent-nef}
Keep the notation and assumptions in \S\ref{ss-descent-contractions}.
Assume that $N$ is a nef$/Z$ $\RR$-divisor on $X$.
As relative nefness is not an open condition on the base \cite{nef-not-open},
we can not spread out the divisor $N$ to a nef$/Z_A$ divisor $N_A$ on $X_A$.
Instead, we assume that
\[ f\colon X\to Z~\mbox{is of Fano type, and}~N~\mbox{is a nef}/Z~\RR\mbox{-divisor on}~X. \]
By Corollary~\ref{fano-nef-semiample-cor}, $N$ is semiample$/Z$.
Thus, there exist contractions $g\colon X\to Y$, $h\colon Y\to Z$, and an ample$/Z$ $\RR$-divisor $H$ on $Y$
such that $f = h\circ g$ and $N = g^* H$.

By \S\ref{ss-descent-contractions},
we can choose the algebra $A\subset k$ and contractions $g_A\colon X_A\to Y_A$, $h_A\colon Y_A\to Z_A$
of normal $k_A$-varieties such that $H$ is the flat pullback of an $\RR$-Cartier $\RR$-divisor $H_A$ on $Y_A$ and that
$N_A = g_A^* H_A$.  
Moreover, by \S\ref{ss-descent-ample}, we can assume $H_A$ is ample$/Z_A$.
Therefore, we can conclude that $N_A$ is semiample$/Z_A$;
in particular, $N_A$ is nef$/Z_A$.


\subsection{Descent of \texorpdfstring{$\bb$-$\RR$-Cartier $\bb$-divisors}{TEXT}}\label{ss-b-Cartier}
Keep the notation and assumptions in \S\ref{ss-descent-contractions}.

\medskip
(1)
Let $\mb{M}$ be a $\bb$-$\RR$-Cartier $\bb$-divisor over $X$.
Assume that $\mb{M}$ descends to a projective birational model $Y\to X$.
We can choose an algebra $A\subset k$, a projective birational morphism
between $k_A$-varieties $Y_A\to X_A$, and an $\RR$-Cartier divisor $\mb{N}_{Y_A}$ on $Y_A$ such that
\begin{itemize}
    \item the base change of $Y_A\to X_A$ to $\spec k$ is $Y\to X$, and
    \item the pullback of $\mb{N}_{Y_A}$ to $Y_A$ is $\mb{M}_Y$.
\end{itemize}
Then $\mb{N}_{Y_A}$ induces a $\bb$-$\RR$-Cartier $\bb$-divisor $\mb{N}$ over $X_A$.
It is evident that the base change of $\mb{N}_{X_A}$ to $X$ is equal to $\mb{M}_X$.

\medskip
(2)
Now assume that $X\to Z$ is a contraction over another $k$-variety $S$, and
that $\mb{M}$ is $\bb$-nef over $S$, which descends to a 
nef$/Z$ $\RR$-divisor $\mb{M}_Y$ on a projective birational model $Y\to X$.  
We can assume $X_A\to Y_A$ is a contraction over a $k_A$-variety $S_A$
whose base change to $\spec k$ is equal to $S$.
Let $\mb{N}$ be the $\bb$-$\RR$-Cartier $\bb$-divisor over $X_A$ as in (1).
As relative nefness is not open on the base \cite{nef-not-open},
$\mb{N}$ is not necessarily $\bb$-nef over $S_A$.
However, let $K(A)\to K$ be an arbitrary field extension, where $K(A)$ is the fraction field of $A$, and let
\begin{itemize}
    \item $f_K\colon X_K\to Z_K$ be the base change of $f_A$ to $\spec K$, 
    \item $Y_K\to X_K$ be the base change of $Y_A\to X_A$, and
    \item $B_K$ be the flat pullback of $B_A$ to $X_K$.
\end{itemize}
It is evident that $Y_K\to X_K$ is a projective birational morphism of $K$-varieties, and that
the flat pullback of $\mb{N}_{Y_A}$ is an $\RR$-Cartier $\RR$-divisor on $Y_K$.
Hence $\mb{N}_{Y_A}$ induces a $\bb$-$\RR$-Cartier $\bb$-divisor $\mb{P}$ over $X_K$.
Let $S_K \coloneqq S_A\otimes_A K$.
As $\mb{M}_Y$ is nef$/S$, 
we can deduce that $\mb{P}_{Y_K}$ is nef$/S_K$.
Hence $(X_K, B_K + \mb{P})$ is a g-pair.


\subsection{Descent of closed points}\label{ss-descent-pts}
Keep the notation and assumptions in \S\ref{ss-descent-contractions}.
Let $z\in Z$ be a closed point and $k(z)$ its residue field.
Since $k(z)/k$ is a finite extension, we can assume there is 
an integral scheme $z_A$ over $\spec A$ whose base change to $\spec k$
is equal to $\spec k(z)$.
Moreover, by \cite[Th\'eor\`eme~(8.10.5)]{EGA-IV-3}, we can assume that
$z_A$ is a closed subscheme of $Z_A$.


\section{\bf Boundedness of complements}

\subsection{General remarks}\label{ss-compl-remarks}
Let $k$ be a field of characteristic zero.
Let $(X,B+\mb{M})$ be a projective g-pair over $\spec k$.

\medskip

(1)
Assume that there is a $\QQ$-divisor $B^+\ge B$ 
such that $(X, B^+ + \mb{M})$ is g-lc, $n\mb{M}$ is $\bb$-Cartier, and that
\[ n(K_X+B^+ + \mb{M}_X)\sim 0/\spec k \]
for some $n\in\NN$. 
We show $K_X+B^+ + \mb{M}_X$ is an $n$-complement 
of $K_X+B + \mb{M}_X$. 
Write $B =T + \Delta$, where $T = \rddown{B}$, then it suffices to show that
\[ nB^+\ge nT + \rddown{(n+1)\Delta}. \]
The proof of this fact is identical to \cite[\S 6.1(1)]{B-Fano}.  
Moreover, for any $m\in \NN$, $mn\mb{M}$ is also $\bb$-Cartier, and we have
\[ mn(K_X+B^++\mb{M}_X)\sim 0/\spec k. \]
Hence $K_X+B^+ + \mb{M}_X$ is also an $mn$-complement of $K_X+B+\mb{M}_X$
for any $m\in \NN$.

\medskip

(2)
Assume $X\bir X'$ is a birational map to a normal projective $k$-variety $X'$.
(Note that $X\bir X'$ may not be a birational contraction.)
Let $\phi\colon W\to X$ be a resolution of $X$ such that the induced map $\psi\colon W\bir X'$
is a morphism.  Assume that
\begin{itemize}
    \item $\phi^*(K_X + B + \mb{M}_X) + P = \psi^*(K_{X'} + B' + \mb{M}_{X'})$ for some $P\ge 0$ and $B'\ge 0$, and
    \item $K_{X'}+B' + \mb{M}_{X'}$ has an $n$-complement $K_{X'}+{B'}^+ + \mb{M}_{X'}$ with ${B'}^+\ge B'$.
\end{itemize}
Then $K_X + B + \mb{M}_X$ also has an $n$-complement $K_X + B^+ + \mb{M}_X$ with $B^+\ge B$. 

Let $C'={B'}^+ - B'$ and let $B^+ \coloneqq B + \phi_*(P+\psi^*C')$. Then 
\begin{align*}
    K_X + B^+ + \mb{M}_X &= K_X + B + \mb{M}_X + \phi_*P + \phi_*\psi^*C' \\
      &= \phi_*\psi^*(K_{X'} + B' + \mb{M}_{X'}) + \phi_*\psi^*C' \\
      &= \phi_*\psi^*(K_{X'} + {B'}^+ + \mb{M}_{X'}),
\end{align*}
which implies that 
\[ n(K_X + B^+ + \mb{M}_X)\sim 0 ~~\text{and}~~
\phi^*(K_X + B^+ + \mb{M}_X) = \psi^*(K_{X'} + B'^+ + \mb{M}_{X'}). \]
Thus, $(X,B^+ + \mb{M})$ is gl-c, then we can apply (1).
In other words, the birational pullback of a monotonic $n$-complement of $K_{X'} + B' + \mb{M}_{X'}$
gives a monotonic $n$-complement of $K_X + B + \mb{M}_X$.

\medskip

(3) Assume $X\bir X'$ is a partial MMP on $-(K_X + B + \mb{M}_X)$ and $B'$ is the pushdown of $B$. 
Then there is $P\ge 0$ as in (2). Thus if $K_{X'} + B' + \mb{M}_{X'}$ has an $n$-complement 
$K_{X'} + {B'}^+ + \mb{M}_{X'}$ with ${B'}^+\ge B'$, then  
$K_X + B + \mb{M}_X$ has an $n$-complement $K_X + B^+ + \mb{M}_X$ with $B^+\ge B$. 


\subsection{Linear systems}
Let $X$ be a normal scheme, and let $M$ be an $\RR$-divisor on $X$.
We set
\[ \hg^0(X, M) \coloneqq \hg^0(X, \mc{O}_X(\rddown{M}))
= \Set{0\neq \alpha\in K(X)\mid \Div_X(\alpha) + M \ge 0}\cup \Set{0}, \]
where $\Div_X(\alpha)$ is the principal divisor associated to $\alpha$ on $X$.
Assume $\hg^0(X, M)\neq \Set{0}$.  The \emph{linear system} $\abs{M}$ is defined as
\[ \abs{M}\coloneqq \Set{0\le N\sim M} = \Set{\Div_X(\alpha) + M \mid 0\neq \alpha\in \hg^0(X, M)}. \]
Note that $\abs{M}$ is not equal to $\abs{\rddown{M}}$ unless $M$ is integral.

Let $X$ be a variety over a field $k$, and let $M$ be an $\RR$-divisor on $X$.
Then $\hg^0(X, M)$ is a $k$-linear space, which may be infinite dimensional.
Moreover, there is a canonical map
\[ \hg^0(X, M)\to \abs{M} \]
sending each $\alpha\in \hg^0(X, M)$ to $M+\Div_X(\alpha)$.
If $V\subseteq \hg^0(X, M)$ is a $k$-linear subspace,
we denote by $\abs{V}$ its image in $\abs{M}$, 
and we call $\abs{V}$ the \emph{linear system of $V$}.
Note that $\abs{V}$ is merely a set 
unless $X$ is projective over $\spec k$,
in which case $V$ is finite dimensional over $k$, 
and $\abs{V}$ is equal to the projective space $\PP(V)$.


\begin{lemma}\label{g-lc-Bertini}
    Let $(X/S, B + \mb{M})$ be a g-lc g-pair over an algebraically closed field $\mbb{K}$
    of characteristic zero.  Let $N$ be an $\RR$-Cartier divisor on $X$,
    and let $V\subseteq \hg^0(X, N)$ be a finite dimensional $\mbb{K}$-linear subspace.
    Assume that there exists an $\alpha_0\in V$ such that 
    \[ (X, B + N + \Div_X(\alpha_0) + \mb{M}) \]
    is a g-lc g-pair.  Then for a general $\alpha\in V$,
    \[ (X, B + N + \Div_X(\alpha) +\mb{M}) \]
    is also a g-lc g-pair.
\end{lemma}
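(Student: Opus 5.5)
We reduce to a log smooth situation on a high resolution and use the convexity of log discrepancies from Lemma~\ref{link-log-discrepancies}, together with the classical Bertini theorem for the movable part of a linear system.

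First, write $N_\alpha \coloneqq N + \Div_X(\alpha)\ge 0$ for $\alpha\in V\setminus\{0\}$. The divisors $N_\alpha$ form the linear system $\abs{V}\cong \PP(V)$. Let $F$ be its fixed part, i.e. the largest $\RR$-divisor with $N_\alpha\ge F$ for all $\alpha\neq 0$. Then $N_\alpha = F + M_\alpha$, where $\{M_\alpha\}$ is a linear system without fixed components, and for general $\alpha$ the divisor $M_\alpha$ is reduced away from a finite set of multiplicities coming from $\rddown{\cdot}$. Since $K_X+B+\mb{M}_X$ and $N$ are $\RR$-Cartier, every $K_X+B+N_\alpha+\mb{M}_X$ is $\RR$-Cartier. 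So only the g-lc condition needs to be checked.

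Next, take a log resolution $\phi\colon W\to X$ of $(X, B+N_{\alpha_0})$ such that:
\begin{itemize}
    \item $\mb{M}$ descends to $W$;
    \item the base locus of $\phi^*\abs{V}$ becomes a divisor, so that $\phi^*N_\alpha = G + H_\alpha$ with $G$ the fixed part and $H_\alpha$ moving in a base point free linear system;
    \item $\Supp G\cup \Supp B_W\cup\Ex(\phi)\cup\Supp\phi^*N_{\alpha_0}$ is snc.
\end{itemize}
Write $K_W+B_W+\mb{M}_W=\phi^*(K_X+B+\mb{M}_X)$. Then
\[ \phi^*(K_X+B+N_\alpha+\mb{M}_X)=K_W+B_W+G+H_\alpha+\mb{M}_W. \]
By Bertini over $\mbb{K}$, for general $\alpha$ the divisor $H_\alpha$ is smooth (possibly with fixed integral multiplicities), meets all strata of the above snc divisor transversally, and has no common components with them. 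Hence $(W, B_W+G+H_\alpha)$ is log smooth.

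The key point is comparing coefficients. The g-sub-lc property of $(X,B+N_\alpha+\mb{M})$ then reduces to two conditions:
\begin{itemize}
    \item the coefficients of $B_W+G$ are $\le 1$;
    \item the coefficients of $H_\alpha$, which come only from the movable part pushed down, are $\le 1$.
\end{itemize}
For $\alpha_0$ we have $\phi^*N_{\alpha_0}\ge G$, so $B_W+G\le B_W+\phi^*N_{\alpha_0}$, and the latter has coefficients $\le 1$ by the hypothesis on $\alpha_0$. This gives the first condition. For the second, a moving component of $H_\alpha$ with coefficient $c$ corresponds to a component of $N_\alpha$ on $X$ whose coefficient is at least $c$ and which is not in $\Supp B$. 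By semicontinuity of coefficients in a linear system, that coefficient is bounded by the corresponding multiplicity for $\alpha_0$, which is $\le 1$.

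I expect the main obstacle to be this control of the multiplicities of the movable part, since $N$ is an $\RR$-divisor and $\abs{V}$ is formed via $\rddown{\cdot}$. I would handle it by noting that $N_\alpha - N = \Div_X(\alpha)$, so the non-integral part of $N$ lies in the fixed part. The movable part $H_\alpha$ is then the moving part of an honest integral linear system, and for general $\alpha$ it is reduced. Its coefficient $1$ is then fine, because it is disjoint generically from $\Supp(B_W+G)$.

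Effectivity of the boundary is clear. Hence $(X,B+N_\alpha+\mb{M})$ is a g-lc g-pair for general $\alpha\in V$.
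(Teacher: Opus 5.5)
Your proposal is correct and follows essentially the same route as the paper. You resolve so that $\phi^*\abs{V}$ splits as a fixed part plus a base point free moving part (the paper does this via $\lceil \pi^*N\rceil$ and Lazarsfeld's resolution of the base locus), bound the fixed part by $\phi^*N_{\alpha_0}$ to get coefficients of $B_W+G$ at most $1$, and then use Bertini to make a general moving member reduced, snc with everything, and without common components. You should drop the ``semicontinuity'' justification in your second bullet, since the components of $H_\alpha$ move with $\alpha$ and so there is no corresponding multiplicity at $\alpha_0$; the correct reason is the one in your last paragraph, namely that a general member of a base point free system is reduced in characteristic zero, which also makes your parenthetical about fixed integral multiplicities unnecessary.
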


\begin{proof}
    Let $\pi\colon X'\to X$ be a log resolution of $(X,B+\mb{M}_X)$
    such that $\mb{M}$ descends to a nef$/S$ divisor on $X'$.
    Set $N'\coloneqq \pi^*N$.  By \cite[Lemma~3.2]{vol-hilb-R-divisors}, 
    $\hg^0(X, N) = \hg^0(X', N')$ via the identification $K(X)=K(X')$.  
    Then $V$ is identified with a finite dimensional $\mbb{K}$-linear subspace $V'\subseteq \hg^0(X', N')$. 

    Let $A'\coloneqq \ceil{N'}$, which is an integral Cartier divisor on $X'$.
    As $A'\ge N'$, it is evident that $V'\subseteq \hg^0(X', A')$.
    Let $\psi\colon X''\to X'$ be a resolution.
    Denote by $N'' \coloneqq \psi^*N'$ and $A''\coloneqq \psi^*A'$.
    Let $V''\subseteq \hg^0(X'', A'')$ be the $\mbb K$-linear subspace 
    corresponding to $V'$; cf. \cite[Lemma~3.2]{vol-hilb-R-divisors}.
    Then by \cite[\S9.1.B]{Rob-II}, we can choose $\psi$ and 
    an effective integral divisor $E''$ on $X''$ such that 
    \begin{itemize}
        \item $E'' + \Ex(\psi)$ has snc support, and
        \item $\abs{V''} = \abs{W''} + E''$, where the linear system $\abs{W''}$ of
        \[ W''\subseteq \hg^0(X'', A'' - E'') \]
        is base point free.
    \end{itemize}
    (Note that \cite[\S9.1.B]{Rob-II} works over $\CC$, 
    but the construction is valid over any
    algebraically closed field of characteristic zero.)
    Denote by $\phi\coloneqq \psi\circ \pi\colon X''\to X$, and by
    \[ F'' \coloneqq N'' - A'' + E''. \]
    Let $B''$ be the $\RR$-divisor defined by
    \[ K_{X''} + B'' + \mb{M}_{X''} = \phi^*(K_X+B+\mb{M}_X). \]
    Replacing $X''$ by higher resolutions of $X'$, we can assume that $B'' + F''$ has snc support.
    
    For every $\alpha\in V\setminus\set{0}$, let
    \[ P_{\alpha}\coloneqq N + \Div_X(\alpha). \]
    Then we can write
    \[ \phi^*P_{\alpha} = N'' + \Div_{X''}(\alpha)
    = F'' + (A'' - E'' + \Div_{X''}(\alpha)), \]
    where
    \[ G_{\alpha}'' \coloneqq A'' - E'' + \Div_{X''}(\alpha) \]
    belongs to the base point free linear system $\abs{W''}$.
    
    Since $(X, B+P_{\alpha_0}+\mb{M})$ is g-lc, the coefficients of $B''+\phi^*P_{\alpha_0}$ are all $\le 1$.
    We view $V$ as the affine $\mbb K$-variety $\AAA_{\mbb K}^{\dim V}$; cf. \cite[\S 3.4.3]{joins-intersections}.
    Then by \cite[Corollary~3.4.9]{joins-intersections}, 
    there exists a Zariski open subset $U$ of $V$ 
    such that for each $\mbb K$-point $\alpha\in U$, 
    \begin{itemize}
        \item $G_{\alpha}''$ is a reduced divisor on $X''$,
        \item $G_{\alpha}''$ has no irreducible components in common with $B'' + F''$, and
        \item $B'' + F'' + G_{\alpha}''$ has snc support.
    \end{itemize}
    In particular, for each $\mbb K$-point $\alpha\in U$, the coefficients 
    of $B'' + F'' + G_{\alpha}''$ are all $\le 1$.  As
    \[ K_{X''} + B'' + F'' + G_{\alpha}'' + \mb{M}_{X''}
    = \phi^*(K_X + B + P_{\alpha} + \mb{M}_X), \]
    we can conclude that $(X, B + N + \Div_X(\alpha) + \mb{M})$ is g-lc for a general $\alpha\in V$.
\end{proof}


\subsection{Universal boundedness of complements}


\begin{theorem}\label{universal-local-complement}
    Let $d\in \NN$ and $\mf{R}$ be a finite set of rational numbers.
    Then there exists an $n\in \NN$ depending only on $d, \mf{R}$ satisfying the following.

    Let $\mbb{K}$ be an algebraically closed field of characteristic zero.
    Assume $(X, B)$ is a pair over $\mbb K$ and $f\colon X\to Z$ is a contraction of $\mbb K$-varieties such that
    \begin{itemize}
        \item $(X, B)$ is lc,
        \item $\dim X = d$ and $\dim Z > 0$,
        \item $B\in \Phi(\mf{R})$,
        \item $X$ is of Fano type over $Z$, and
        \item $-(K_X + B)$ is nef over $Z$.
    \end{itemize}
    Then for any closed point $z\in Z$, there is an $n$-complement $K_X + B^+$ of $K_X + B$
    over $z$ such that $B^+\ge B$.
    Moreover, the complement is also an $mn$-complement for any $m\in \NN$.
\end{theorem}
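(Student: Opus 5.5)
The plan is to reduce to the case $\mbb K=\CC$, where the statement is \cite[Theorem~1.8]{B-Fano} (boundedness of relative complements for Fano type fibrations with hyperstandard coefficients $\Phi(\mf{R})$, nef anti-log canonical divisor, $\dim Z>0$). That theorem gives an $n$ depending only on $d,\mf{R}$, and we keep this $n$. The last sentence of the statement then follows from \S\ref{ss-compl-remarks}(1): an $n$-complement $K_X+B^+$ with $B^+\ge B$, $B^+$ a $\QQ$-divisor and $n(K_X+B^+)\sim 0$ over $z$, is automatically an $mn$-complement. So everything hinges on transporting the complex result to an arbitrary algebraically closed $\mbb K$ of characteristic zero without changing $n$.

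First, spread the data out over a finitely generated field. Since $X$, $Z$, $f$, $B$, the point $z$ and a Fano type boundary $\Lambda$ on $X$ are all of finite type, there is a subfield $k'\subset\mbb K$, finitely generated over $\QQ$, together with $k'$-models $f'\colon X'\to Z'$, $B'$, $\Lambda'$ and a closed point $z'\in Z'$ whose base changes recover the given data, with $z$ lying over $z'$. The descent steps in \S\ref{s-descent} (applied over $k'$ rather than over a ring $A$, or via \S\ref{ss-descent-pts}) and \S\ref{ss-descent-b-nef} show that $(X',B')$ is lc, $X'\to Z'$ is of Fano type, $B'\in\Phi(\mf{R})$, and $-(K_{X'}+B')$ is nef$/Z'$ (Lemma~\ref{fppf-peff}). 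Next choose an embedding $\iota\colon \ol{k'}\hookrightarrow\CC$; because $\mbb K$ is algebraically closed, $\ol{k'}\subset\mbb K$. Base change to $\CC$ preserves all the hypotheses by the same lemmas, and \cite{B-Fano} yields an $n$-complement $K_{X_\CC}+B_\CC^+$ over a closed point $z_\CC$ lying over $z'$. To compare with $\mbb K$, it is better to pass through $\ol{k'}$: the complement is defined over a finitely generated extension of $\ol{k'}$, so specialising it to an $\ol{k'}$-point (existence of an $n$-complement over $z$ amounts to existence of a section of $|-nK-nB'|$ whose divisor is lc over $z$, an open condition on a parameter space of finite type) produces an $n$-complement over $\ol{k'}$ near the appropriate point. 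Base change from $\ol{k'}$ to $\mbb K$ then preserves the conditions $B^+\ge B$, $n(K+B^+)\sim 0$ over $z$, and lc-ness (\S\ref{ss-pullback-R-Cartier-pairs}(5)). Galois conjugates of the point over $\ol{k'}$ are handled by applying automorphisms of $\ol{k'}$.

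The main obstacle is the specialisation step from $\CC$ (or from a transcendental extension of $\ol{k'}$) down to $\ol{k'}$. I would treat it as follows. Shrink $Z'$ to an affine neighbourhood of $z'$. The divisors $0\le G\sim -n(K+B)-\lfloor(n+1)\{B\}\rfloor$-type data, namely effective divisors $G$ with $nB^+ = nB' + G$ suitably rounded, form the projectivisation of a finite-dimensional space of sections over a neighbourhood (after restricting to a Zariski neighbourhood of the fibre, e.g. by pulling to the completion or using global generation relative to $Z'$). The locus where $(X,B+G/n)$ is lc over the point is constructible and, by Lemma~\ref{g-lc-Bertini}, open in each linear subsystem containing one good member. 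A constructible set with a $\CC$-point that is defined over $\ol{k'}$ has a $\ol{k'}$-point, since $\ol{k'}$ is algebraically closed. If this route becomes too delicate, an alternative is to redo the proof of \cite[Theorem~1.8]{B-Fano} over $\mbb K$ directly, since its ingredients (MMP, Lemma~\ref{l-ep-to-0}, boundedness via Lemma~\ref{fano-bnd-very-ample}) have already been shown to hold with field-independent constants.
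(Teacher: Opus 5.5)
Your proposal is correct. Its skeleton is the paper's: spread everything out over a finitely generated $k'\subset\mbb K$, pass to a fixed algebraically closed field where \cite[Theorem~1.8]{B-Fano} applies, and bring the complement back to a finite extension of $k'$ inside $\mbb K$. Choose $k'$ so that $z$ is the base change of a $k'$-rational point; then your Galois-conjugate remark becomes unnecessary. The paper uses $\Omega=\ol{\QQ(t_1,t_2,\dots)}$ where you use $\CC$, which makes no difference.

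The genuine difference is the specialisation step. The paper spreads the $\Omega$-complement itself, together with its neighbourhood $U_\Omega$, over a finitely generated $L$-algebra $R\subset\Omega$. It then restricts to the fibre over a general closed point of $\spec R$. That residue field is finite over $L$, hence embeds in $\mbb K$, and lc-ness, $B^+\ge B$ and $n(K+B^+)\sim 0$ survive by the spreading-out results of \S\ref{s-descent}. No linear systems are involved.

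You instead parametrise candidates $B+\frac1n(N+\Div(w))$, with $N=-n(K+B)$, by a finite-dimensional space $W$ of sections defined over the small field. No rounding is needed, by \S\ref{ss-compl-remarks}(1). This is essentially the mechanism the paper uses later in Theorem~\ref{t-bnd-compl-usual-local}. For it to work you must do two things you only gesture at:
\begin{itemize}
\item[(i)] The complex section lives only over a neighbourhood of $z_\CC$. Multiply it by $g^a$ with $g(z_\CC)\neq 0$ to make it global, and only then choose $W$ over $k'$ with $W\otimes\CC$ containing it.
\item[(ii)] Phrase the good condition as ``the non-lc locus misses $f^{-1}(z)$'', since $U_\CC$ need not descend.
\end{itemize}

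With these in place, constructibility is not needed. Lemma~\ref{g-lc-Bertini}, applied over $U_\CC$ and pulled back along the surjective restriction map, gives a nonempty Zariski open set of good members in $W\otimes\CC\cong\AAA^m_\CC$. Such a set contains a point with coordinates in the infinite field $k'$ (or $\ol{k'}$). Lc-ness then descends by \S\ref{ss-pullback-R-Cartier-pairs}(5).

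Your route is more uniform, since it also adapts to non-closed fields; the paper's uses only generic behaviour in families. Your fallback of redoing \cite{B-Fano} over $\mbb K$ is unnecessary.
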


\begin{proof}
    \emph{Step~1}.
    In this step, we descend $(X, B)\to Z$ to the universal domain of countable transcendence degree
    of characteristic zero , i.e.
    \[ \Omega\coloneqq \ol{\QQ (t_1, t_2, \dots) }, \]
    where $t_1, t_2, \dots$ is an infinite sequence of independent variables. 
    
    By \S\ref{s-descent}, there is a subfield $L\subset \mbb K$, 
    which is finitely generated over $\QQ$, such that $X, Z, \Supp B$ are all defined over $L$;
    in other words, there exist a contraction between $L$-varieties $f_L \colon X_L\to Z_L$
    and a $\QQ$-divisor $B_L$ on $X_L$ such that 
    \begin{itemize}
        \item $(X_L, B_L)$ is lc,
        \item $\dim X_L = d$ and $\dim Z_L > 0$,
        \item $B_L\in \Phi(\mf{R})$,
        \item $X_L\to Z_L$ is of Fano type,
        \item $-(K_{X_L} + B_L)$ is nef over $Z_L$,
        \item $f$ is the base change of $f_L$,
        \item $X = X_L\otimes_L \mbb K$ and $Z = Z_L\otimes_L \mbb K$, and
        \item $B$ is the flat pullback of $B_L$ to $X$.
    \end{itemize}
    As $z\in Z$ is a $\mbb K$-point, we can also assume that there exists an $L$-point $z_L\in Z_L$
    whose base change to $\spec \mbb K$ is equal to $z$.  Since $L$ is finitely generated over $\QQ$,
    we can pick an inclusion of fields
    \[ \iota\colon L\to \Omega. \]
    Let
    \[ X_{\Omega}\coloneqq X_L\otimes_L \Omega,~
    Z_{\Omega} \coloneqq Z_L\otimes_L \Omega,~\mbox{and}~z_{\Omega}\coloneqq z_L\otimes_L \Omega, \]
    where the fibre products are formed via the inclusion $\iota$.
    Denote by $B_{\Omega}$ the flat pullback of $B_L$ to $X_{\Omega}$.  Then 
    \begin{itemize}
        \item $(X_{\Omega}, B_{\Omega})$ is lc (see \S\ref{ss-pullback-R-Cartier-pairs}(5))
        \item $\dim X_{\Omega} = d$ and $\dim Z_{\Omega} > 0$,
        \item $B_{\Omega}\in \Phi(\mf{R})$,
        \item $f_{\Omega}\colon X_{\Omega}\to Z_{\Omega}$ is of Fano type (see Lemma~\ref{Fano-type-base-change}), and
        \item $-(K_{X_{\Omega}} + B_{\Omega})$ is nef over $Z_{\Omega}$ (see \cite[Lemma~2.18]{Keeler2003}).
    \end{itemize}
    Then by \cite[Theorem~1.8]{B-Fano}, there is an open subset $U_{\Omega}\subseteq Z_{\Omega}$ of $z_{\Omega}$
    such that $K_{X_{\Omega}} + B_{\Omega}$ admits a monotonic $n$-complement $Y_{\Omega}+ B_{\Omega}^+$ over $U_{\Omega}$,
    where
    \[ Y_{\Omega} \coloneqq f_{\Omega}^{-1}(U_{\Omega})~~\mbox{and}~~
    B_{\Omega}^+ \ge B_{\Omega}|_{Y_{\Omega}}. \]
    Note that $n\in \NN$ depends on $d, \mf{R}, \Omega$, hence depends only on $d,\mf{R}$.

    \medskip

    \emph{Step~2}.
    In this step, we descend the complement $K_{Y_{\Omega}} + B_{\Omega}^+$ to a finitely generated $L$-algebra.
    Indeed, by \S\ref{s-descent}, there exist
    \begin{itemize}
        \item a finitely generated $L$-algebra $R\subset \Omega$,
        \item an open subset $U_R\subset Z_R\coloneqq Z_L\otimes L R$,
        \item an open subset $Y_R\subset X_R\coloneqq X_L\otimes_L R$, and 
        \item a $\QQ$-divisor $B_R^+$ on $Y_R$,
    \end{itemize}
    such that
    \begin{itemize}
        \item $Y_{\Omega}\to U_{\Omega}$ is the base change of $Y_R\to U_R$ to $\spec \Omega$,
        \item $B_{\Omega}^+$ is the flat pullback of $B_R^+$ to $Y_{\Omega}$, and
        \item $(Y_R, B_R^+)$ is lc.
    \end{itemize}
    Let $B_R$ be the flat pullback of $B_L$ to $X_R$.
    As $B_{\Omega}^+\ge B_{\Omega}$, up to shrinking $\spec R$, we can assume that 
    \[ B_R^+ \ge B_R|_{Y_R}. \]
    For convenience, we include the relevant commutative diagram as following.
    \[\xymatrix{
    X\ar[d]\ar[r] & X_L\ar[d] & X_R\ar[d]\ar[l] & Y_R\ar[d]\ar[l] & Y_{\Omega}\ar[d]\ar[l] \\
    Z\ar[r] & Z_L & Z_R\ar[l] & U_R\ar[l] & U_{\Omega}\ar[l]
    }\]
    Let
    \[ z_R \coloneqq z_L\otimes_L R. \]
    Then $z_R$ is a closed subscheme of $Z_R$ that is mapped isomorphically onto $\spec R$
    via $Z_R\to \spec R$.  In particular, $z_R$ is a section of $Z_R\to \spec R$.  

    \medskip

    \emph{Step~3}.
    In this step, we construct an $n$-complement of $K_X+B$ over $z\in Z$
    by taking a general fibre of $(Y_R, B_R^+)$ over $\spec R$.
    By generic flatness \cite[Th\'eor\`eme~(6.9.1)]{EGA-IV-2}, 
    we can assume that each irreducible component of $B_R,B_R^+$ is flat over $\spec R$.
    Moreover, by construction, the generic point of $z_R$ is contained in $U_R$.
    Pick a general closed point $s\in \spec R$.
    As $z_R\subset Z_R$ is a section of $Z_R\to \spec R$,
    $s$ corresponds uniquely to a closed point $z'\in z_R$ that is contained in $U_R$.
    Denote by $L'$ the residue field of $Z_R$ at $z'$, which is finite over $L$.
    It is evident that the two composite homomorphisms are equal:
    \[ L\to k(s) \to L'~~\mbox{and}~~ L \to R\to L'. \]
    Let 
    \[ X_{L'}\coloneqq X_R\otimes_R k(s)\cong X_L\otimes_L L' ~~\mbox{and}~~
    Z_{L'}\coloneqq Z_R\otimes_R k(s)\cong Z_L\otimes_L L'. \]
    In addition, let
    \[ Y_{L'} \coloneqq Y_R\otimes_R k(s) ~~\mbox{and}~~ U_{L'} \coloneqq U_R\otimes_R k(s). \]
    Then $U_{L'}$ is an open subset of $Z_{L'}$ containing $z'$.
    Since irreducible components of $B_R, B_R^+$ are flat over $\spec R$, the restriction 
    of $B_R$ to $X_{L'}$ and the restriction of $B_R^+$ to $Y_{L'}$ 
    are well-defined $\QQ$-divisors on $X_{L'}, Y_{L'}$ respectively, which we denote by
    \[ B_{L'}~~\mbox{and}~~B_{L'}^+~~\mbox{respectively}. \]
    As $s\in \spec R$ is a general closed point, we can assume that
    \begin{itemize}
        \item $(Y_{L'}, B_{L'}^+)$ is an lc pair,
        \item $B_{L'}^+ \ge B_{L'}|_{Y_{L'}}$, and
        \item $n(K_{Y_{L'}} + B_{L'}^+)\sim 0/U_{L'}$.
    \end{itemize}
    Note that the image of $z'\in \Z_R$ in $Z_L$ is $z_L$.
    Since $L'/L$ is a finite extension, and since $\mbb K$ is algebraically closed, 
    we can assume that 
    \begin{itemize}
        \item $L'$ embeds into $\mbb K$, and
        \item $z\in Z$ is the base change of $z'\in Z_{L'}$ to $\spec \mbb K$.
    \end{itemize}
    Let $U\coloneqq U_{L'}\otimes_{L'}\mbb K$, which is an open subset of $Z$ containing $z$, 
    and let $B^+$ be the flat pullback of $B_{L'}^+$ to $Y\coloneqq Y_{L'}\otimes_{L'}\mbb K$.
    It is evident that $Y$ is an open subset of $X$ satisfying $Y = X\times_Z U$.
    Then $K_Y + B^+$ is an $n$-complement of $K_X+B$ over $U$ satisfying $B^+ \ge B$ over $U$.
    Moreover, by \S\ref{ss-compl-remarks}, this is also an $mn$-complement for any $m\in \NN$.
\end{proof}


\begin{theorem}\label{universal-global-complement}
    Let $d,p\in\NN$, and let $\mf{R}$ be a finite set of rational numbers.
    Then there exists an $n\in\NN$ depending only on
    $d,p,\mf{R}$ so that the following holds.
    
    Let $\mbb{K}$ be an algebraically closed field of
    characteristic zero, and let $(X,B+\mb{M})$ be a
    projective g-pair over $\mbb{K}$ such that
    \begin{itemize}
        \item $X$ is of Fano type and $\dim X=d$,
        \item $B\in\Phi(\mf{R})$ and $p\mb{M}$ is $\bb$-Cartier,
        \item $(X,B+\mb{M})$ is g-lc, and
        \item $-(K_X+B+\mb{M}_X)$ is nef.
    \end{itemize}
    Then there is an $n$-complement
    $K_X+B^++\mb{M}_X$ of $K_X+B+\mb{M}_X$
    with $B^+\ge B$.
    Moreover, this is also an $mn$-complement for every $m\in\NN$.
\end{theorem}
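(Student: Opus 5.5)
This is the global, generalised analogue of Theorem~\ref{universal-local-complement}, and I would prove it the same way: spread out to a field of finite type over $\QQ$, embed into the universal domain $\Omega \coloneqq \ol{\QQ(t_1,t_2,\dots)}$, apply the known result over $\Omega$ (\cite[Theorem~1.10]{B-Fano}, boundedness of complements for projective Fano type g-pairs), and then specialise back. The constant $n$ produced over $\Omega$ depends only on $d,p,\mf{R}$ and on the fixed field $\Omega$, so it is independent of $\mbb K$.

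\emph{Step~1: descent to $\Omega$.} By \S\ref{s-descent}, choose a subfield $L\subset\mbb K$ finitely generated over $\QQ$ such that the following are defined over $L$:
\begin{itemize}
\item $X_L$, with $X = X_L\otimes_L\mbb K$;
\item the divisor $B_L$;
\item a birational model $Y_L\to X_L$ on which $\mb{M}$ descends, together with the $\RR$-divisor $\mb{M}_{Y_L}$;
\item a klt boundary $\Lambda_L$ witnessing Fano type.
\end{itemize}
Embed $L\hookrightarrow\Omega$ and base change. Then:
\begin{itemize}
\item $(X_\Omega,B_\Omega+\mb{M}^\Omega)$ is g-lc by \S\ref{ss-pullback-R-Cartier-pairs}(5);
\item $X_\Omega$ is of Fano type by Lemma~\ref{Fano-type-base-change};
\item $B_\Omega\in\Phi(\mf{R})$;
\item $p\mb{M}^\Omega$ is $\bb$-Cartier, since Cartierness of $p\mb{M}_{Y}$ descends by \cite[Proposition~(8.5.5)]{EGA-IV-3}.
\end{itemize}
Nefness of $\mb{M}_{Y_\Omega}$ and of $-(K_{X_\Omega}+B_\Omega+\mb{M}^\Omega_{X_\Omega})$ needs more care, because nefness is not open in families (\S\ref{ss-descent-nef}). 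Here we are over a field, not a family: $Y_L$ and $X_L$ are $L$-varieties. Nefness therefore transfers along the field extensions $L\subset\mbb K$ and $L\subset\Omega$ by Lemma~\ref{fppf-peff-K-k}, with no spreading over $\spec A$ required. This is the main point distinguishing the argument from the local case, and it is why the base case $Z=\spec k$ is easier here. Now apply \cite[Theorem~1.10]{B-Fano} over $\Omega$ to obtain $B_\Omega^+\ge B_\Omega$ with $(X_\Omega,B_\Omega^++\mb{M}^\Omega)$ g-lc and $n(K_{X_\Omega}+B_\Omega^++\mb{M}^\Omega_{X_\Omega})\sim0$.

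\emph{Step~2: spread out and specialise.} Descend $B_\Omega^+$ and the linear equivalence to a finitely generated $L$-algebra $R\subset\Omega$. Using \S\ref{ss-descent-singularities}, applied on a log resolution where $\mb{M}$ descends, shrink $\spec R$ so that:
\begin{itemize}
\item $B_R^+\ge B_R$;
\item $n(K_{X_R}+B_R^++\mb{M}_{X_R})\sim0$;
\item every fibre over $\spec R$ is g-lc.
\end{itemize}
Note that g-lc only requires the coefficients on the spread-out log resolution to be $\le1$; nefness of the moduli part on fibres is already known from Step~1 via Lemma~\ref{fppf-peff-K-k}, because each fibre over a closed point $s$ is $X_L\otimes_L k(s)$. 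Take a general closed point $s$ with residue field $L'$ finite over $L$, embed $L'\hookrightarrow\mbb K$ over $L$, and base change to $\mbb K$. This yields $B^+\ge B$ on $X$ with $(X,B^++\mb{M})$ g-lc and $n(K_X+B^++\mb{M}_X)\sim0$.

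\emph{Step~3: conclusion.} By \S\ref{ss-compl-remarks}(1), $K_X+B^++\mb{M}_X$ is a monotonic $n$-complement, and also an $mn$-complement for every $m\in\NN$.

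\emph{Main obstacle.} The delicate point is checking that the g-lc condition and the relation $n(K+B^++\mb{M})\sim0$ survive specialisation at a general closed point of $\spec R$. The g-lc condition is handled by spreading out a log resolution on which $\mb{M}$ descends and applying the constructibility arguments of \S\ref{ss-descent-singularities}. The linear equivalence is handled as in \S\ref{ss-descent-log-CY}, with integral coefficients so that no real combinations arise.
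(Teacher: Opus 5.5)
Your proposal is correct and is essentially the paper's proof. The paper simply reruns the argument of Theorem~\ref{universal-local-complement}: descend to a finitely generated field $L$, embed $L$ into $\Omega$, apply \cite[Theorem~1.10]{B-Fano} in place of \cite[Theorem~1.8]{B-Fano}, spread the complement over a finitely generated $L$-algebra, and specialise at a general closed point. One detail you leave implicit is that \S\ref{ss-compl-remarks}(1) needs $n\mb{M}$ to be $\bb$-Cartier; you get this by taking $n$ divisible by $p$, which the $mn$-clause of \cite[Theorem~1.10]{B-Fano} allows and which the paper does in Theorem~\ref{thm:global-complements-nonclosed-hyperstandard}.
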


\begin{proof}
    The proof follows the same strategy as that of Theorem~\ref{universal-local-complement},
    using \S\ref{s-descent} and applying \cite[Theorem~1.10]{B-Fano} in place of \cite[Theorem~1.8]{B-Fano}.
\end{proof}


\subsection{Boundedness of global complements}

The following boundedness result for global complements is not used
elsewhere in this paper. We include it for completeness in our treatment
of complements over non-closed fields.

\begin{theorem}\label{thm:global-complements-nonclosed-hyperstandard}
    Let $d,p\in\NN$, and let $\mf{R}$ be a finite set of rational numbers.
    Then there exists an $n\in\NN$ depending only on
    $d,p,\mf{R}$ satisfying the following.

    Let $k$ be a field of characteristic zero.
    Suppose that $(X, B + \mb{M})$ is a g-pair over $\spec k$ such that
    \begin{itemize}
        \item $X\to \spec k$ is a contraction of Fano type and $\dim X = d$,
        \item $B\in \Phi(\mf{R})$ and $p \mb{M}$ is $\bb$-Cartier,
        \item $(X, B + \mb{M})$ is g-lc, and
        \item $-(K_X + B + \mb{M}_X)$ is nef over $\spec k$.
    \end{itemize}
    Then there is an $n$-complement $K_X + B^+ + \mb{M}_X$ of $K_X +B + \mb{M}_X$
    with $B^+ \ge B$.  Moreover, the complement is also an $mn$-complement for any $m\in \NN$.
\end{theorem}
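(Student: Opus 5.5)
We reduce to the algebraically closed case, Theorem~\ref{universal-global-complement}, and then descend an $n$-complement from $\bar k$ to $k$ by Galois averaging. The difficulty is that Galois averaging does not preserve linear equivalence of integral divisors. So we do not average the boundary $B^+$ itself. Instead we find a $k$-rational member of the relevant linear system over $k$.

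Let $n_0$ be the integer from Theorem~\ref{universal-global-complement} for $(d,p,\mf{R})$, and put $n\coloneqq p n_0$ (or a multiple absorbing the constants below). Base change to $\bar X\coloneqq X\otimes_k\bar k$ with $\bar B$ and the induced $\ol{\mb M}$.
\begin{itemize}
\item $\bar X$ is of Fano type by Lemma~\ref{fano-type-to-closure}.
\item $(\bar X,\bar B+\ol{\mb M})$ is g-lc by \S\ref{ss-descent-b-nef}(5), with $\bar B\in\Phi(\mf{R})$.
\item $p\ol{\mb M}$ is $\bb$-Cartier, since it descends as in \S\ref{ss-descent-b-nef}(2).
\item $-(K_{\bar X}+\bar B+\ol{\mb M}_{\bar X})$ is nef by Lemma~\ref{fppf-peff}.
\end{itemize}
Theorem~\ref{universal-global-complement} then gives an $n_0$-complement $\bar B^+\ge\bar B$ over $\bar k$, hence also an $n$-complement.

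Next we encode complements linearly. Write $B=T+\Delta$ with $T=\rddown{B}$. An $n$-complement with $B^+\ge B$ amounts to an effective divisor
\[ G\in\bigl|-nK_X-nT-\rddown{(n+1)\Delta}-n\mb{M}_X\bigr| \]
such that $(X,B^++\mb M)$ is g-lc, where $nB^+\coloneqq nT+\rddown{(n+1)\Delta}+G$. Here $n\mb M_X$ is an integral Weil divisor because $p\mid n$. Set $L\coloneqq-nK_X-nT-\rddown{(n+1)\Delta}-n\mb M_X$, an integral divisor defined over $k$. Flat base change gives $\hg^0(\bar X,\bar L)=\hg^0(X,L)\otimes_k\bar k$.

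The complement over $\bar k$ shows that $V\coloneqq\hg^0(\bar X,\bar L)$ contains some $\alpha_0$ for which the g-pair is g-lc. By Lemma~\ref{g-lc-Bertini}, with $N=\bar L$ and $\bar B$ replaced by $\bar T+\tfrac1n\rddown{(n+1)\bar\Delta}$, suitably scaled by $1/n$, the set of $\alpha\in V$ giving g-lc pairs contains a nonempty Zariski open subset $U\subseteq V\cong\AAA^{\dim V}_{\bar k}$. This openness needs care: Lemma~\ref{g-lc-Bertini} uses a resolution and bpf decomposition over $\bar k$. Both can be chosen $\Gal(\bar k/k)$-stably by working over a finite Galois extension $k'$ and using Lemma~\ref{dom-by-k-varieties}. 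The resulting open set $U$ is then defined over $k$, as the complement of a Galois-stable closed subset.

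The main obstacle is producing a $k$-point of $U$. Since $\hg^0(X,L)$ is a $k$-form of $V$ and $k$ is infinite (characteristic zero), the $k$-points $\hg^0(X,L)$ are Zariski dense in $\AAA^{\dim V}_{\bar k}$. Hence some $\alpha\in\hg^0(X,L)$ lies in $U$. Its divisor $G=L+\Div_X(\alpha)$ gives $B^+$ on $X$ with $n(K_X+B^++\mb M_X)\sim0$. The g-lc property holds after base change to $\bar k$, so it descends to $X$ by \S\ref{ss-descent-b-nef}(5). By \S\ref{ss-compl-remarks}(1), $K_X+B^++\mb M_X$ is then an $n$-complement with $B^+\ge B$, and it is also an $mn$-complement for every $m$.

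If the Galois-stability of $U$ proves awkward, we would fall back on a direct argument. The conditions ``coefficients of $B''+F''+G''_\alpha\le1$'' and ``snc'' are constructible in $\alpha$. They cut out an open set defined over any field of definition of the data. Density of $k$-points again finishes the proof.
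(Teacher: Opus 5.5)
Your overall route is the paper's. You base change to $\bar k$, apply Theorem~\ref{universal-global-complement}, use Lemma~\ref{g-lc-Bertini} to get a nonempty open set of good sections, pick a $k$-rational section, and descend g-lc by \S\ref{ss-descent-b-nef}(5). Your direct appeal to Zariski density of the $k$-points in $V$ is fine and makes the Galois-stability discussion unnecessary; the paper instead pushes the closed complement of $\bar U$ down from a finite extension.

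The gap is in your encoding through the integral divisor $L=-nK_X-nT-\rddown{(n+1)\Delta}-n\mb{M}_X$. First, the claim that monotonic $n$-complements ``amount to'' members $G\in\abs{L}$ is only half true. $G\ge 0$ gives only $B^+\ge T+\frac1n\rddown{(n+1)\Delta}$, and $\frac1n\rddown{(n+1)b}$ can be smaller than $b$ for $b\in\Phi(\mf{R})$; for instance $b=1/3\in\Phi(\set{2/3})$ with $n=4$ gives $1/4$. A general $G$ need not contain the corresponding component of $\Delta$, so your $B^+$ can fail $B^+\ge B$. Then \S\ref{ss-compl-remarks}(1), which assumes $B^+\ge B$, does not apply. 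Nothing in the choice $n=pn_0$ rules this out; you would need something like $n\mf{R}\subset\ZZ$, which forces $\rddown{(n+1)b}\ge nb$ on $\Phi(\mf{R})$. Second, $\bar L$ differs from the $\QQ$-Cartier divisor $-n(K_{\bar X}+\bar B+\ol{\mb M}_{\bar X})$ by $n\bar\Delta-\rddown{(n+1)\bar\Delta}$. This is nonzero as soon as some coefficient $b$ of $\Delta$ has $nb\notin\ZZ$. Since $X$ is not assumed $\QQ$-factorial, neither $\bar L$ nor $K_{\bar X}+\bar T+\frac1n\rddown{(n+1)\bar\Delta}+\ol{\mb M}_{\bar X}$ need be $\RR$-Cartier. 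So Lemma~\ref{g-lc-Bertini}, which requires a g-pair as input and an $\RR$-Cartier $N$, does not apply in the form you invoke it.

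Both problems disappear if you encode as the paper does. Take $N\coloneqq-n(K_X+B+\mb M_X)$, which is $\QQ$-Cartier, and note that $n(\bar B^+-\bar B)\in\abs{\bar N}$. Apply Lemma~\ref{g-lc-Bertini}, rescaled by $1/n$, to the g-lc g-pair $(\bar X,\bar B+\ol{\mb M})$ and $\bar N$. For a $k$-rational $\alpha$ in the good locus, set $B^+\coloneqq B+\frac1n\big(N+\Div_X(\alpha)\big)$. Then $B^+\ge B$ is automatic, $n(K_X+B^++\mb M_X)=\Div_X(\alpha)\sim 0$, and $nB^+$ is integral because $p\mid n$. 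When $n\mf{R}\subset\ZZ$ one has $L=\rddown{N}$, so your sections and your boundary $B^+=-K_X-\mb M_X+\frac1n\Div_X(\alpha)$ coincide with the paper's. Only the bookkeeping and the choice of base g-pair in the Bertini step need to change.
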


\begin{proof}
    \emph{Step~1}.
    Let $\mbb{K}$ be an algebraic closure of $k$.
    As $X\to \spec k$ is a contraction, $X$ is projective and geometrically integral over $\spec k$.
    Let $\bar{X} \coloneqq X\otimes_k \mbb K$, and let $\bar{B}$ be the flat pullback of $B$ to $\bar{X}$.
    Denote by $\ol{\mb M}$ the $\bb$-divisor over $\bar{X}$ that is induced by $\mb{M}$.
    By \S\ref{ss-descent-b-nef}~(6), $(\bar{X}, \bar{B} + \ol{\mb M})$ is a g-lc pair over 
    $\spec \mbb{K}$.  By Lemma~\ref{fano-type-to-closure}, 
    $\bar{X}$ is of Fano type over $\spec \mbb{K}$.
    Moreover, the flat pullback of $-(K_X+B+\mb{M}_X)$ to $\bar{X}$
    is $-(K_{\bar{X}} + \bar{B} + \ol{\mb{M}}_{\bar{X}})$, which is nef over $\spec \mbb{K}$.

    By Theorem~\ref{universal-global-complement}, there is an $n$-complement 
    $K_{\bar{X}} + \bar{B}^+ + \ol{\mb{M}}_{\bar{X}}$ of
    $K_{\bar{X}} + \bar{B} + \ol{\mb{M}}_{\bar{X}}$ with $\bar{B}^+ \ge \bar{B}$, 
    where $n\in \NN$ depends only on $d, p, \mf{R}$.  
    Since the complement is also an $mn$-complement for every $m\in \NN$,
    and since $p\mb{M}$ is $\bb$-Cartier, 
    we can assume that $n$ is a multiple of $p$ and $n\mb{M}_X$ is an integral divisor.  Set
    \[ N\coloneqq -n(K_X+B+\mb{M}_X)~~\mbox{and}~~ 
    \bar{N}\coloneqq -n(K_{\bar{X}} + \bar{B} + \ol{\mb{M}}_{\bar{X}}). \]
    Then $N$ is a $\QQ$-Cartier $\QQ$-divisor on $X$, and $\bar{N}$ is the flat pullback of $N$.
    As
    \[ n(K_{\bar{X}} + \bar{B}^+ + \ol{\mb{M}}_{\bar{X}})\sim 0/\spec \mbb{K}, \]
    we see that
    \[ \bar{P}_0 \coloneqq n(\bar{B}^+ - \bar{B}) \sim \bar{N} /\spec \mbb{K}. \]
    In other words, $\bar{P}_0\in \abs{\bar{N}}$.
    By Lemma~\ref{g-lc-Bertini}, for a general $\bar{P}\in \abs{\bar{N}}$,
    $(\bar{X}, \bar{B} + \frac{1}{n}\bar{P} + \ol{\mb{M}})$ is a g-lc g-pair over $\spec \mbb{K}$.

    \medskip

    \emph{Step~2}.
    Denote by $\bar{U}\subseteq \PP(\hg^0(\bar{X}, \bar{N}))$ a Zariski open subset
    such that every closed point $\bar{u}\in \bar{U}$ gives a member $\bar{P}_{\bar{u}}\in \abs{\bar{N}}$ 
    with $(\bar{X}, \bar{B}+\frac{1}{n}\bar{P}_{\bar{u}}+\ol{\mb M})$ a g-lc g-pair.
    By \cite[Proposition~III.9.3]{Hart}, we have
    \[ \PP(\hg^0(\bar{X}, \bar{N})) = \PP(\hg^0(X, N))\otimes_k \mbb{K}. \]
    Let $k'/k$ be a finite field extension such that there is an open subset $U'\subset \PP(\hg^0(X', N'))$
    satisfying $\bar{U} = U'\otimes_{k'} \mbb{K}$,
    where $X'\coloneqq X\otimes_k k'$, and $N'$ is the flat pullback of $N$ to $X'$.  Let 
    \[ \phi\colon \PP(\hg^0(X', N')) \to \PP(\hg^0(X, N)) \]
    be the induced morphism of projective spaces, which is finite and \'etale.
    As the generic point of $\PP(\hg^0(X, N))$ is not contained in the image $T$
    of $T'\coloneqq\PP(\hg^0(X', N')) \setminus U'$ in $X$,
    \[ U\coloneqq\PP(\hg^0(X, N)) \setminus T \]
    is an open dense subset of $\PP(\hg^0(X, N))$.
    By construction, the inverse image $\phi^{-1}(U)$ is contained in $U'$,
    hence $U_{\mbb K}\coloneqq U\otimes_k \mbb{K}$ is contained in $\bar{U}$, where
    $U_{\mbb K}$ is viewed as the inverse image of $U$ under 
    the canonical projection $\PP(\hg^0(\bar{X}, \bar{N}))\to \PP(\hg^0(X, N))$.

    As $k$ is an infinite field, the set of $k$-points is dense in $\PP(\hg^0(X, N))$.
    Thus, the set of $k$-points $U(k)$ of $U$ is also dense in $U$.
    Pick an arbitrary $k$-point $u\in U$, which corresponds to an element $P_u\in \abs{N}$.
    The base change of $u$ to $\spec \mbb{K}$ gives a $\mbb{K}$-point $\bar{u}$ of $U_{\mbb{K}}\subseteq \bar{U}$,
    which in turn gives a $\bar{P}_{\bar{u}}\in \abs{\bar{N}}$.
    It is evident that $\bar{P}_{\bar{u}}$ is the flat pullback of $P_u$ to $\PP(\hg^0(\bar{X}, \bar{N}))$.
    By construction of $\bar{U}$, $K_{\bar{X}} + \bar{B} + \frac{1}{n}\bar{P}_{\bar{u}} + \ol{\mb{M}}_{\bar{X}}$
    is an $n$-complement of $K_{\bar{X}} + \bar{B} + \ol{\mb{M}}_{\bar{X}}$.
    Then by \S\ref{ss-descent-b-nef}~(6), we can conclude that 
    $(X, B + \frac{1}{n}P_u + \mb{M})$ is also g-lc.
    Moreover, as $P_u\in \abs{N}$, we see that
    \[ n\bigg(K_X + B + \frac{1}{n}P_u + \mb{M}_X\bigg) \sim 0/\spec k. \]
    Thus, $B^+ \coloneqq B + \frac{1}{n}P_u$ is an $n$-complement of $K_X + B + \mb{M}_X$ with $B^+\ge B$.
    Furthermore, by \S\ref{ss-compl-remarks}~(1), for any $m\in \NN$,
    $K_X+B^+ + \mb{M}_X$ is also an $mn$-complement of $K_X+B+\mb{M}_X$.
\end{proof}


\subsection{Boundedness of relative complements}


\begin{theorem}\label{t-bnd-compl-usual-local}
    Let $d\in \NN$ and $\mf{R}$ be a finite set of rational numbers.
    Then there exists an $n\in \NN$ depending only on $d, \mf{R}$ satisfying the following.

    Let $k$ be a field of characteristic zero.
    Assume $(X, B)$ is a pair over $k$ and $f\colon X\to Z$ is a contraction of $k$-varieties such that
    \begin{itemize}
        \item $(X, B)$ is lc of dimension $d$ and $\dim Z > 0$,
        \item $B\in \Phi(\mf{R})$,
        \item $X$ is of Fano type over $Z$, and
        \item $-(K_X + B)$ is nef over $Z$.
    \end{itemize}
    Then for any closed point $z\in Z$, there is an $n$-complement $K_X + B^+$ of $K_X + B$
    over $z$ such that $B^+\ge B$.
    Moreover, the complement is also an $mn$-complement for any $m\in \NN$.
\end{theorem}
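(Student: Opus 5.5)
The plan mirrors the proof of Theorem~\ref{thm:global-complements-nonclosed-hyperstandard}, with Theorem~\ref{universal-local-complement} playing the role of Theorem~\ref{universal-global-complement}. The main new feature is working over a neighbourhood of a closed point $z$ whose residue field $k(z)$ need not equal $k$.

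\emph{Step~1: pass to $\mbb K = \bar{k}$.} Let $\bar{X}\to\bar{Z}$ be the base change, $\bar{B}$ the flat pullback of $B$, and $\bar{z}_1,\dots,\bar{z}_r$ the closed points of $\bar{Z}$ over $z$. By \S\ref{ss-descent-b-nef}~(5), $(\bar{X},\bar{B})$ is lc, and $\bar{B}\in\Phi(\mf{R})$ since coefficients are preserved. By Lemma~\ref{fano-type-to-closure}, $\bar{X}\to\bar{Z}$ is of Fano type, and by Lemma~\ref{fppf-peff}, $-(K_{\bar X}+\bar B)$ is nef$/\bar Z$. Theorem~\ref{universal-local-complement} then gives $n=n(d,\mf{R})$ and, for each $i$, a monotonic $n$-complement over $\bar{z}_i$. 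Replacing $n$ by a multiple, we may assume $n$ clears the denominators of $\mf{R}$, so $N\coloneqq -n(K_X+B)$ is an integral Weil divisor with $\bar{N}$ its flat pullback.

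\emph{Step~2: one $\bar{z}$ suffices.} Shrink $Z$ to an affine neighbourhood $U=\spec R$ of $z$. Then $V\coloneqq \hg^0(f^{-1}U,N)$ is an $R$-module with $V\otimes_k\mbb K=\hg^0(\bar{f}^{-1}\bar U,\bar N)$ by flat base change. The complement over $\bar z_1$ gives an element $\bar\alpha_1$ with $(\bar X,\bar B+\frac1n(\bar N+\Div\bar\alpha_1))$ lc over a neighbourhood of $\bar z_1$. The Galois group permutes the $\bar z_i$ transitively and preserves $\bar B$ and $\bar N$, so conjugates of $\bar\alpha_1$ give complements over every $\bar z_i$.

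\emph{Step~3: Bertini over a finite-dimensional space.} Choose a finite-dimensional $k$-subspace $V_0\subset V$ such that $V_0\otimes\mbb K$ contains all these conjugates. Apply a local version of Lemma~\ref{g-lc-Bertini} over a neighbourhood of each $\bar z_i$. Its proof is local on $X$, so it applies after restricting to $\bar f^{-1}$ of a neighbourhood. This yields a nonempty Zariski open subset of $V_0\otimes\mbb K$ on which lc-ness holds near $\bar f^{-1}(\bar z_i)$. Intersecting these finitely many open sets gives a nonempty open $\bar W$. As in Step~2 of the proof of Theorem~\ref{thm:global-complements-nonclosed-hyperstandard}, the image of its complement is a proper closed subset of $V_0$. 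Since $k$ is infinite, there is a $k$-point $\alpha\in V_0$ whose base change lies in $\bar W$.

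\emph{Step~4: descend.} Set $B^+\coloneqq B+\frac1n(N+\Div\alpha)$. Then $B^+\ge B$ and $n(K_X+B^+)=\Div\alpha\sim0$ over $U$. Lc-ness of $(X,B^+)$ over a neighbourhood of $z$ follows from \S\ref{ss-descent-b-nef}~(5), since the non-lc locus is closed and its geometric base change avoids every $\bar f^{-1}(\bar z_i)$. By \S\ref{ss-compl-remarks}~(1), $K_X+B^+$ is an $n$-complement and also an $mn$-complement for every $m$.

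\emph{Main obstacle.} The delicate step is Step~3. The issue is ensuring that a single general member works simultaneously at all $\bar z_i$, and that the local Bertini statement holds for the sheaf-level linear system over the affine base rather than the global projective one. I would handle this by restricting to the finite-dimensional $V_0$ containing the conjugate sections, where openness in $\AAA^{\dim V_0}$ is exactly what the proof of Lemma~\ref{g-lc-Bertini} provides.
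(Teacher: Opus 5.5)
Your strategy is the paper's. You base change to $\mbb K=\bar k$, apply Theorem~\ref{universal-local-complement} at each $\bar z_i$, and collect the relevant sections in a finite-dimensional $k$-subspace of $\hg^0(f^{-1}U,N)$. You then apply Lemma~\ref{g-lc-Bertini} near each $\bar z_i$, intersect the open sets, descend the open set to $k$, and pick a $k$-point. Your Galois-conjugation shortcut and your non-lc-locus descent in Step~4 are harmless variants.

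\textbf{The gap.} Theorem~\ref{universal-local-complement} only gives $n(K_{\bar X}+\bar B_1^+)\sim 0$ over some neighbourhood $\bar U_1$ of $\bar z_1$, which may be much smaller than $\bar U=U\otimes_k\mbb K$. Shrink $\bar U_1$ to a principal open $D(\bar g_1)$ on which the Cartier divisor pulled back from the base is principal. You then get a rational function $\gamma_1$ with $\bar N+\Div(\gamma_1)=n(\bar B_1^+-\bar B)\ge 0$, but only on $\bar f^{-1}D(\bar g_1)$. In general $\gamma_1$ has poles relative to $\bar N$ over $\bar U\setminus D(\bar g_1)$, so it is \emph{not} an element of $V\otimes_k\mbb K=\hg^0(\bar f^{-1}\bar U,\bar N)$. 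Two steps of your argument are therefore unjustified as written:
\begin{itemize}
\item your choice of $V_0$ with $V_0\otimes_k\mbb K$ containing the complement sections;
\item the existence, in the restriction of $V_0\otimes_k\mbb K$ to $\bar f^{-1}(\bar U_i)$, of the lc member $\alpha_0$ that Lemma~\ref{g-lc-Bertini} requires.
\end{itemize}

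\textbf{The fix.} This is the content of the paper's Step~2. Write $\bar U=\spec\bar A$. Since $\bar U$ is affine, $\hg^0(\bar f^{-1}D(\bar g_1),\bar N)=\hg^0(\bar f^{-1}\bar U,\bar N)\otimes_{\bar A}\bar A_{\bar g_1}$. Hence $\bar g_1^{a}\gamma_1\in\hg^0(\bar f^{-1}\bar U,\bar N)$ for $a\gg0$. Because $\bar g_1$ is a unit on $D(\bar g_1)$, the divisor $\bar N+\Div(\bar g_1^{a}\gamma_1)$ still equals $n(\bar B_1^+-\bar B)$ over $D(\bar g_1)$. Insert this for each $i$; the rest of your argument then goes through as in the paper's Steps~3--5.

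\textbf{A minor error.} Your claim that enlarging $n$ makes $N=-n(K_X+B)$ integral is false. $\Phi(\mf R)$ consists of coefficients $1-r/m$ with $m\in\NN$ arbitrary, so the denominators are unbounded. This does no harm: keep $N$ a $\QQ$-divisor with $\hg^0(X,N)=\hg^0(X,\mcO_X(\rddown{N}))$. That is the convention under which Lemma~\ref{g-lc-Bertini} and the paper's proof operate.
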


\begin{proof}
    \emph{Step~1}.
    We fix some notation in this step.
    Shrinking $Z$ around $z$, we can assume $Z$ is affine, say, $Z = \spec A$
    for a finitely generated $k$-algebra $A$.
    Denote by $\bar{f}\colon \bar{X}\to \bar{Z}$ the base change of $f\colon X\to Z$ to $\mbb K$.
    Let $\bar{B}$ be the flat pullback of $B$ to $\bar{X}$.
    Let $\bar{z}_1,\dots, \bar{z}_r$ be the inverse images of $z$ in $\bar{Z}$.
    By Theorem~\ref{universal-local-complement}, 
    for each $\bar{z}_i$, there is an $n$-complement $K_{\bar{X}} + \bar{B}_i^+$
    of $K_{\bar{X}} + \bar{B}$ over a Zariski open neighbourhood $\bar{U}_i\subseteq \bar{Z}$ of $\bar{z}_i$,
    where $n\in \NN$ depends only on $d, \mf{R}$.  
    We can assume that each $\bar{U}_i$ is a principal open subset of $\bar{Z}$,
    say, $\bar{U}_i = D(\bar{g}_i)\subseteq \bar{Z}$ for some $\bar{g}_i\in \bar{A}\coloneqq A\otimes_k \mbb{K}$.  Let
    \[ \bar{X}_i\coloneqq \bar{f}^{-1}(\bar{U}_i). \]
    Denote by $\bar{B}_i$ the restriction of $\bar{B}$ to $\bar{X}_i$.  Let
    \[ N \coloneqq -n(K_X+B)~~\mbox{and}~~ \bar{N}\coloneqq -n(K_{\bar{X}} + \bar{B}), \]
    which are $\QQ$-Cartier $\QQ$-divisors on $X$ and $\bar{X}$, respectively.
    Denote by $\bar{N}_i$ the restriction of $\bar{N}$ to $\bar{X}_i$.
    Then
    \begin{equation}\label{local-compl-each-i}
        n(\bar{B}_i^+ - \bar{B}_i)\sim \bar{N}_i/\bar{U}_i.
    \end{equation}
    Note that \eqref{local-compl-each-i} is equivalent to that
    \begin{equation}\label{local-compl-explicit}
        n(\bar{B}_i^+ - \bar{B}_i) = \bar{N}_i + \bar{f}_i^* (\bar{M}_i) + \Div(\alpha_i),
    \end{equation}
    where $\alpha_i\in K(\bar{X}_i)$, $\bar{M}_i$ is a Cartier divisor on $\bar{U}_i$, 
    and $\bar{f}_i$ denotes the morphism $\bar{X}_i\to \bar{U}_i$.
    Shrinking $\bar{U}_i$ around $\bar{z}_i$, we can assume $\bar{M}_i = \Div(\beta_i)$
    for some $\beta_i\in K(\bar{U}_i)$.  Then \eqref{local-compl-explicit} gives
    \begin{equation}\label{local-compl-Div}
        n(\bar{B}_i^+ - \bar{B}_i) = \bar{N}_i + \Div(\gamma_i),
    \end{equation}
    where $\gamma_i \coloneqq \alpha_i \cdot \bar{f}_i^*(\beta_i)$.
    Hence $n(\bar{B}_i^+ - \bar{B}_i) \in \abs{\bar{N}_i}$.
    
    \medskip

    \emph{Step~2}.
    As $\bar{X}_i$ is an open subset of $\bar{X}$, it is evident that
    \[ \rddown{\bar{N}_i} = \rddown{\bar{N}}\big|_{\bar{X}_i}. \]
    Then by \cite[Proposition~III.9.3]{Hart}, we have the canonical isomorphism
    \begin{equation}\label{global-local-sections}
        \begin{aligned}
            \hg^0(\bar{X}_i, \bar{N}_i) &= \hg^0(\bar{X}_i, \mc{O}_{\bar{X}_i}(\rddown{\bar{N}_i}))
        =\hg^0(\bar{X}_i, \mc{O}_{\bar{X}_i}(\rddown{\bar{N}\big|_{\bar{X}_i} })) \\
        &=\hg^0(\bar{X}_i, \big(\mc{O}_{\bar{X}}(\rddown{\bar{N}})\big)\big|_{\bar{X}_i})
        \cong \hg^0(\bar{X}, \mc{O}_{\bar{X}}(\bar{N}))\otimes_{\bar{A}} \bar{A}_{\bar{g}_i} \\
        &= \hg^0(\bar{X}, \bar{N})\otimes_{\bar{A}} \bar{A}_{\bar{g}_i}.
        \end{aligned}
    \end{equation}
    Let 
    \[ \rho_i\colon \hg^0(\bar{X}, \bar{N}) \to \hg^0(\bar{X}_i, \bar{N}_i) \]
    be the canonical $\mbb{K}$-linear map of restriction of global sections.
    
    For each $1\le i\le r$, let $\bar{V}_i\subseteq \hg^0(\bar{X}_i, \bar{N}_i)$ be a finite dimensional
    $\mbb{K}$-linear subspace containing the $\gamma_i$ in \eqref{local-compl-Div},
    which corresponds to $n(\bar{B}_i^+ - \bar{B}_i) \in \abs{\bar{N}_i}$.
    We claim that there exist an integer $a_i\in \NN$ and a $\mbb{K}$-linear map
    \[ \tau_i\colon \bar{V}_i \to \hg^0(\bar{X}, \bar{N}) \]
    such that 
    \begin{equation}\label{proj-relation}
        (\rho_i\circ \tau_i)(v) = \bar{g}_i^{a_i} v ~~\mbox{for every}~v\in \bar{V}_i.
    \end{equation}
    To this end, let $\Set{v_{i1}, \dots, v_{it}}$ be a set of basis of $\bar{V}_i$ over $\mbb{K}$.
    By \eqref{global-local-sections}, there exist rational functions $m_{ij}\in \hg^0(\bar{X}, \bar{N})$
    and integers $a_{ij}\in \NN$ such that
    \[ v_{ij} = \frac{m_{ij}}{\bar{g}_i^{a_{ij}}}~~\mbox{for}~1\le j\le t. \]
    Take $a_i\coloneqq \max_{1\le j\le t} a_{ij}$.
    Let $\tau_i\colon \bar{V}_i\to \hg^0(\bar{X}, \bar{N})$ be the $\mbb{K}$-linear map
    sending each $v_{ij}$ to $\bar{g}_{i}^{a_i-a_{ij}}\cdot m_{ij}$ with $1\le j\le t$.
    Then it is evident that $\tau_i$ satisfies \eqref{proj-relation} as desired.

    \medskip

    \emph{Step~3}.
    In this step, we construct a $\mbb{K}$-linear subspace of $\hg^0(\bar{X}, \bar{N})$
    whose general $\mbb{K}$-points give an $n$-complement of $K_{\bar{X}} + \bar{B}$
    over every $\bar{z}_1,\dots, \bar{z}_r$.
    Set
    \[ \bar{W}_i \coloneqq \tau_i(\bar{V}_i)\subseteq \hg^0(\bar{X},\bar{N})~~\mbox{and}~~ 
    \bar{W}\coloneqq \bar{W}_1 +\cdots + \bar{W}_r\subseteq \hg^0(\bar{X}, \bar{N}), \]
    which are finite dimensional $\mbb{K}$-linear subspaces of $\hg^0(\bar{X}, \bar{N})$.  
    Since
    \[ \hg^0(\bar{X}, \bar{N}) = \hg^0(X, N)\otimes_k \mbb{K}, \]
    there exists a finite dimensional $\mbb{K}$-linear subspace $W\subseteq \hg^0(X, N)$
    such that 
    \[ \bar{W}\subseteq W_{\mbb K}\coloneqq W\otimes_k \mbb{K}. \]
    For every $1\le i\le r$, set
    \[ \bar{V}_i' \coloneqq \rho_i(W_{\mbb K}) \subseteq \hg^0(\bar{X}_i, \bar{N}_i). \]
    Note that $\bar{V}_i'$ may not be equal to $\bar{V}_i$.
    However, as $\gamma_i \in \bar{V}_i$, by \eqref{proj-relation}, we have 
    \[ \bar{g}_i^{a_i} \gamma_i \in \rho_i(\bar{W})\subseteq \rho_i(W_{\mbb K}) = \bar{V}_i'. \]
    Since $\bar{g}_i^{a_i}$ is invertible on $\bar{U}_i$, we see that
    \[ n(\bar{B}_i^+ - \bar{B}_i) = \bar{N}_i + \Div_{\bar{X}_i}(\gamma_i)
    = \bar{N}_i + \Div_{\bar{X}_i}(\bar{g}_i^{a_i}\gamma_i) \in \abs{\bar{V}_i'}. \]
    Then by Lemma~\ref{g-lc-Bertini}, there exists a Zariski open subset $\bar{\Omega}_i\subseteq \bar{V}_i'$
    such that for each $\mbb K$-point $\delta_i\in \bar{\Omega}_i$, 
    \[ \big(\bar{X}_i, \bar{B}_i + \frac{1}{n}(\bar{N}_i + \Div_{\bar{X}_i}(\delta_i))\big) \]
    is an lc pair.  Moreover, by construction, we have
    \[ K_{\bar{X}_i} + \bar{B}_i + \frac{1}{n}(\bar{N}_i + \Div_{\bar{X}_i}(\delta_i)) \sim 0/\spec \mbb{K}. \]
    Hence $K_{\bar{X}_i} + \bar{B}_i + \frac{1}{n}(\bar{N}_i + \Div_{\bar{X}_i}(\delta_i))$
    is an $n$-complement of $K_{\bar{X}_i} + \bar{B}_i$ over $\bar{U}_i$.
    Since
    \[ \rho_i\big|_{W_{\mbb K}}\colon W_{\mbb K}\to \bar{V}_i' \]
    is a $\mbb{K}$-linear map of finite dimensional $\mbb K$-linear spaces,
    it is also a morphism of affine $\mbb{K}$-varieties.
    Then as $\rho_i\big|_{W_{\mbb K}}$ is surjective by construction, 
    the inverse image $(\rho_i\big|_{W_{\mbb K}})^{-1}(\bar{\Omega}_i)$
    is a Zariski dense open subset of $W_{\mbb K}$.  Let
    \[ \bar{\Omega} \coloneqq \bigcap_{1\le i\le r} (\rho_i\big|_{W_{\mbb K}})^{-1}(\bar{\Omega}_i). \]
    Then $\bar{\Omega}$ is a Zariski dense open subset of $W_{\mbb K}$.
    Let $\gamma\in \bar{\Omega}$ be an arbitrary $\mbb K$-point, and let
    \[ \bar{P}_{\gamma} \coloneq \bar{N} + \Div_{\bar{X}}(\gamma). \]
    By construction, $K_{\bar{X}} + \bar{B} + \frac{1}{n}\bar{P}_{\gamma}$
    is an $n$-complement of $K_{\bar{X}} + \bar{B}$ over each $\bar{U}_i$ for $1\le i\le r$.

    \medskip

    \emph{Step~4}.
    We claim that there exists a nonempty open subset $\Omega\subseteq W$ such that its base change
    $\Omega_{\mbb K} \coloneqq \Omega\otimes_k \mbb{K}$ is contained in $\bar{\Omega}$.
    To this end, let $L/k$ be a finite field extension so that $\bar{\Omega}$ is the base change
    of an open subset $\Omega'\subseteq W_L \coloneqq W\otimes_k L$, i.e.
    $\bar{\Omega} = \Omega' \otimes_L \mbb{K}$.
    Let $T$ be the closed subset $W_L\setminus \Omega'$, 
    equipped with the reduced scheme structure.
    As $W_L\to W$ is finite, the image $T_W$ of $T$ in $W$ is a proper closed subset of $W$.  Let
    \[ \Omega\coloneq W\setminus T_W. \]
    It is evident that
    \[ \Omega\otimes_k L \subseteq W_L\setminus T \subseteq \Omega'. \]
    Hence $\Omega_{\mbb K}$ is contained in $\Omega'\otimes_L \mbb{K} = \bar{\Omega}$ as desired.

    \medskip

    \emph{Step~5}.
    As $k$ is an infinite field, the set of $k$-points of $W$ is dense in $W$.
    Hence the set of $k$-points of $\Omega$ is also nonempty and dense in $\Omega$.
    Pick an arbitrary $k$-point $s\in \Omega$.  Then by Step~4, the inverse image $\bar{s} \coloneqq s\otimes 1$
    of $s$ in $W_{\mbb K}$ is contained in $\bar{\Omega}$.  Thus,
    \[ K_{\bar{X}} + \bar{B} + \frac{1}{n}\Div_{\bar{X}}(\bar{s}) \]
    is an $n$-complement of $K_{\bar{X}} + \bar{B}$ over each $\bar{U}_i$ for $1\le i\le r$.
    
    Let $\bar{U}$ be the union of $\bar{U}_1,\dots, \bar{U}_r$.
    Let $L'/k$ be a finite field extension such that there exists an open subset 
    $U'\subseteq Z\otimes_k L'$ whose base change to $\mbb{K}$ is equal to $\bar{U}$, i.e.
    $\bar{U} = U'\otimes_{L'} \mbb{K}$.  In particular, the inverse images of $z$
    in $Z\otimes_k L'$ is contained in $U'$.
    Denote by $S'$ the complement $(Z\otimes_k L')\setminus U'$, 
    and by $S$ its image in $Z$.  As $\Set{z}\otimes_k L'$ is contained in $U'$,
    the closed point $z$ is not contained in $S$.
    Let $U\coloneqq Z\setminus S$.  Then $U$ is an open neighbourhood of $z$ such that 
    $U\otimes_k L'$ is contained in $U'$.  Hence $U\otimes_k \mbb{K}$ is contained in $\bar{U}$.
    Denote by
    \[ B^+ \coloneqq B + \frac{1}{n}(N + \Div_X(s)). \]
    By \S\ref{ss-descent-b-nef} (6), we can conclude that $K_X + B^+$
    is an $n$-complement of $K_X+B$ over $z\in U$.
    Moreover, by \S\ref{ss-compl-remarks}, this is also an $mn$-complement for any $m\in \NN$.
\end{proof}


\section{\bf Stein degree on fibrations of relative dimension one}\label{s-rel-dim-one}

We prove $\mc{V}_{\univ}(1, t)$ in this section, which is the first step
towards the inductive proof of $\mc{V}_{\univ}(d, t)$ on the relative dimension $d$.

\subsection{Proof of $\mc{H}_{\univ}(1,t)$}\label{pf-S-1-t}
In this subsection, we prove $\mcH_{\univ}(1,t)$,
which will be applied in the proof of $\mcV_{\univ}(1,t)$.
	
\begin{lemma}[$\mc{H}_{\univ}(1, t)$]\label{base-case-r-dim=1}
    Let $t\in (0,1]$.  There exists an $N_h(1,t)\in \NN$ depending only on $t$ so that
    the following holds.

    Let $k$ be a field of characteristic zero.
    Let $(X, B + \mb{M})\to Z$ be a generalised log Calabi-Yau fibration over $k$ of relative dimension one.
    Let $S$ be a horizontal$/Z$ irreducible component of $B$ such that $\coeff_S B \ge t$.
    Then $\sdeg (S^{\nor}/Z) \le N_h(1,t)$.
\end{lemma}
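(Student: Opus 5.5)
The plan is to reduce everything to a degree computation on the geometric generic fibre. That fibre is a curve over an algebraically closed field, so the ground field plays no role.

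By Lemma~\ref{stein-generic-fibre}, $\sdeg(S^{\nor}/Z)$ equals the number of connected components of the geometric generic fibre of $S^{\nor}\to Z$. Since $S$ is horizontal and the relative dimension is one, $S^{\nor}\to Z$ is generically finite. Hence $\sdeg(S^{\nor}/Z)$ equals the number of geometric points of $S$ over $\ol{K(Z)}$, which is at most $\deg(S^{\nor}/Z)$ and in fact equal to it in characteristic zero, by separability.

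The first step is to pass to the generic fibre. Let $\eta$ be the generic point of $Z$, $K=\ol{K(Z)}$, and $X_K\coloneqq X\times_Z\spec K$. Then $X_K$ is a normal, hence smooth, projective curve over $K$, and $X_K$ is connected because $X\to Z$ is a contraction. Restricting $(X,B+\mb{M})$ gives a g-lc g-pair $(X_K,B_K+\mb{M}_K)$ with $K_{X_K}+B_K+\mb{M}_{K,X_K}\equiv 0$. On a curve, $\mb{M}$ descends to $X_K$ itself and is nef, so $\deg \mb{M}_{K,X_K}\ge 0$. By \S\ref{ss-flat-pullback}, $S_K$ is a reduced set of points, each with coefficient $\coeff_S B\ge t$ in $B_K$. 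The number of these points is exactly $\sdeg(S^{\nor}/Z)$, since normalisation does not change the generic fibre, which is a reduced zero-dimensional scheme in characteristic zero.

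The second step is the degree count. We have
\[ \deg K_{X_K}=-\deg B_K-\deg\mb{M}_{K,X_K}\le -t\cdot\#S_K. \]
Since $\deg K_{X_K}=2g-2\ge -2$, this gives $\#S_K\le 2/t$. So $N_h(1,t)\coloneqq\lfloor 2/t\rfloor$ works, and it depends only on $t$.

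The main point to check is the legitimacy of restricting to the geometric generic fibre. I would verify three things. First, $B_K$ is still a boundary, so the other components contribute nonnegative degree. Second, the trace of $\mb{M}$ on $X_K$ is nef, which should follow from \S\ref{ss-pullback-R-Cartier-pairs}(3), with Lemma~\ref{fppf-peff} applied to the base change to $\spec K$. Third, the relation $\sim_{\RR}0/Z$ restricts to numerical triviality over $K$. None of these uses the ground field $k$, which is why the bound is universal. If the g-lc hypothesis causes any subtlety, only effectivity of $B_K$ and nefness of $\mb{M}_K$ are actually used.
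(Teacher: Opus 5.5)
Your proposal is correct and is essentially the paper's argument: restrict to the geometric generic fibre, use effectivity of $B$ and $\deg \mb{M}_X|_F\ge 0$, and bound the number of points of $S$ there by $\deg(-K)/t\le 2/t$, giving $N_h(1,t)=\lfloor 2/t\rfloor$. The only cosmetic difference is that the paper first observes $\deg K_F<0$ forces $\bar F\cong\PP^1_{\ol{K(Z)}}$ before counting, which is the same as your use of $2g-2\ge -2$.
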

	
\begin{proof}
    Let $F$ be the generic fibre of $X\to Z$.
	As $\dim (X/Z) = 1$, $F$ is a geometrically irreducible nonsingular curve.
    It is evident that $S^{\nor}\to Z$ and $S\to Z$ are generically finite.  Thus, we have
    \[ \sdeg (S^{\nor}/Z) = \sdeg (S/Z) = \deg (S/Z). \]
    Write
    \[ K_F + B_F + M_F = (K_X + B + \mb{M}_X)|_F \sim_{\RR} 0, \]
    where $B_F \coloneqq B|_F$, and $M_F \coloneqq \mb{M}_X |_F$ is an $\RR$-divisor on $F$ with $\deg M_F \ge 0$.
    As $S|_F$ is effective and nonzero, $\deg K_F < 0$.  Hence
    \[ \bar{F}\coloneqq F\otimes_{K(Z)}\ol{K(Z)} \cong \PP^1_{\ol{K(Z)}}. \]
    Thus, we get
   \[ \deg (S/Z) = \deg (S|_{\bar{F}}) \le \frac{1}{t} \deg (-K_{\bar{F}}) = \frac{2}{t}. \]
   Then we can take $N_h(1,t) = \rddown{2/t}$.
\end{proof}


\subsection{Proof of $\mc{V}_{\univ}(1,t)$}\label{pf-V-1-t}

Recall that a \emph{Fano type (generalised log Calabi-Yau) fibration} over a field $k$ means 
a generalised log Calabi-Yau fibration $(X, B + \mb{M})\to Z$ over $k$ whose underlying $k$-variety 
$X$ is of Fano type over $Z$.

\begin{theorem}[$\mc{V}_{\univ}(1, t)$]\label{vertical-case-fib-dim-one}
    Let $t\in \RR^{>0}$.  There exists an $N_v(1,t)\in \NN$ depending only on $t$
    such that the following holds.

    Let $k$ be a field of characteristic zero.  Let 
    \[ f\colon (X, B + \mb{M}) \to Z \] 
    be a Fano type fibration over $k$ of relative dimension one.
    Let $S$ be a vertical$/Z$ irreducible component of $B$ such that
    \begin{itemize}
        \item the coefficient of $S$ in $B$ is $\ge t$, and
        \item the image $T$ of $S$ is a prime divisor on $Z$.
    \end{itemize}
    Then $\sdeg (S^{\nor}/T) \le N_v(1,t)$, where $S^{\nor}$
    is the normalisation of $S$.
\end{theorem}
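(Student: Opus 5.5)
Since the relative dimension is one, $S$ dominates the prime divisor $T$ and $\dim S=\dim T$, so $S\to T$ is generically finite. Hence $\sdeg(S^{\nor}/T)=\deg(S/T)$. The problem is to bound, uniformly in $k$, the number of geometric components of the fibre of $S$ over the generic point of $T$.

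First reduce to the case where $Z$ is a curve germ. Localise $Z$ at the generic point $\eta_T$ of $T$. This replaces $Z$ by $\spec\mcO_{Z,\eta_T}$, a DVR whose residue field is $K(T)$. Alternatively, stay in the variety category by cutting with general hyperplane sections as in Lemma~\ref{fano-type-induction} until $\dim Z=1$; the lemma applies because $\dim T=\dim Z-1$ and preserves the coefficient $\ge t$. Now $T$ is a closed point $z$ of a curve, and $S$ is a component of the fibre $X_z$. The Stein degree $\sdeg(S^{\nor}/T)$ is the number of geometric components of $S$ over $k(z)$. Base change to the algebraic closure of $k(z)$ then turns this into the number of irreducible components of $S\otimes_{k(z)}\ol{k(z)}$.

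The key step is to bound these components by intersection with a horizontal curve. Since $X\to Z$ is of Fano type, the generic fibre $F$ is a smooth conic over $K(Z)$, because $-K_F$ is ample. Take a relatively ample divisor or the anticanonical class: $-K_X$ is big over $Z$, and $\deg(-K_X|_F)=2$. Over $\bar k$, the fibre $X_{\bar z}$ is a connected tree of rational curves (possibly nonreduced), because $X$ is of Fano type and the fibration is a degeneration of $\PP^1$. The components of $S_{\bar z}$ are among its components and are permuted transitively by Galois. To bound their number, I would run an MMP over $Z$ to contract all fibre components other than $S$. By Lemma~\ref{fano-type-MMP}, an MMP on $-S$ over $Z$ (or on a suitable divisor) contracts everything except the Galois orbit $S$, reaching a model $X'$ over a neighbourhood of $z$ whose fibre over $z$ is exactly $S'$. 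Its geometric fibre consists of the Galois conjugates $C_1,\dots,C_r$, which form a connected configuration. Since each component of the fibre of a conic bundle over a DVR has $-K$-degree $>0$ on a klt surface germ, and the total $-K_{X'}\cdot X'_{\bar z}=2$, we get $\sum m_i(-K_{X'}\cdot C_i)=2$. On klt surfaces, however, the intersection numbers can be fractional, so this alone does not give a bound.

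I would therefore use the coefficient condition instead, and this is the step I expect to be hardest. Passing to the algebraic closure via \S\ref{ss-descent-b-nef}, $(\bar X,\bar B+\ol{\mb M})\to\bar Z$ is a Fano type fibration in which the $r$ conjugate components of $S$ all have coefficient $\ge t$. Apply adjunction for fibre spaces (Theorem~\ref{t-adj-fib-spaces}) or directly invoke the algebraically closed result \cite{birkar2025steindegreelogcalabiyau}, which bounds the number of components of $\bar S$ over a closed point $\bar z$ — that is, $\sdeg$ of the relative-dimension-one case. The bound depends only on $t$, and is in particular independent of the field. If citing that result is inadmissible, I would argue as follows. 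On the model $X'$, the conjugates $C_i$ are all disjoint or meet at a Galois-invariant point set. They form a tree of geometrically rational curves on the lc surface germ, and $r$ is bounded because an lc surface fibre whose components all carry coefficient $\ge t$ in $B$ has at most $\rddown{2/t}$-type many components. That follows from computing $(K+B)\cdot$ a general horizontal section and the $\RR$-linear triviality. I would take $N_v(1,t)$ to be the resulting constant.
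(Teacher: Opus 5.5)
There is a genuine gap at exactly the step you flag as hardest, and neither of your two routes closes it.

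\textbf{Citing the algebraically closed result does not help.} Write $\bar S=\bigcup_i\bar S_i$ and $\bar T=\bigcup_j\bar T_j$ over $\bar k$. Then $\sdeg(S^{\nor}/T)$ equals $\sdeg(\bar S_1^{\nor}/\bar T_1)$ times the number of Galois-conjugate components $\bar S_i$ lying over $\bar T_1$. The theorem of \cite{birkar2025steindegreelogcalabiyau} bounds only the first factor. After your reduction to a curve base it is even vacuous, since each $\bar S_i$ is an irreducible curve over a point. The second factor is precisely the arithmetic content of the statement.

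\textbf{Your fallback claim is false as stated.} Repeatedly blow up torus-fixed points in one fibre of $\PP^1\times\PP^1\to\PP^1$ and take the reduced toric boundary. This gives a toric, hence Fano type, surface with $K_X+B\sim 0$ whose fibre is a chain of arbitrarily many components of coefficient one. Nor can $(K+B)\cdot H=\deg(H/Z)\cdot\deg L_Z$ see the fibre at all. The bound $2/t$ is the \emph{horizontal} bound of Lemma~\ref{base-case-r-dim=1}: it counts points of a horizontal component over a point of $Z$, not fibre components. A correct count must use that the fibre components are conjugate, and it must be set-theoretic rather than numerical; as you noticed, intersection numbers on klt surfaces are fractional.

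\textbf{How the paper closes the gap.}
\begin{enumerate}
\item After the $(-S)$-MMP giving $S=\Supp f^*T$, it takes a bounded $n$-complement (Theorem~\ref{t-bnd-compl-usual-local}). This passes to a usual lc pair with $n(K_X+B)\sim 0/Z$.
\item Since $-K_X$ is big$/Z$, this yields a horizontal component $H$ of coefficient $\ge 1/n$. Its degree over $Z$ is bounded by Lemma~\ref{base-case-r-dim=1}, and a $(-H)$-MMP makes $H$ ample$/Z$.
\item For a general $\bar k$-point $\bar u$ of $\bar T$, Lemma~\ref{finite-fibre} bounds the number of points of $\bar H_{\bar u}$.
\item Each of the $\sdeg(S^{\nor}/T)$ components of $\bar S_{\bar u}$ meets one of these points, because $H$ is ample$/Z$.
\item Take a general hyperplane section $\bar L$ of $\bar Z$ with $\bar u\in\bar L\cap\bar T$. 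On the lc surface $\bar X_{\bar L}$, Corollary~\ref{surface-comp-bnd} bounds the number of boundary components of coefficient $\ge 1/n$ through each point.
\end{enumerate}

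Your tree remark could also be completed, though your proposal does not do this. On the geometric fibre over $\eta_T$, $h^1(\mathcal{O})=0$ makes the component--point incidence graph a tree. Galois fixes the centre of that tree. Transitivity then forces either a single component or all components through one common point. Corollary~\ref{surface-comp-bnd}, after spreading out and cutting as above, then bounds their number.

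\textbf{Two smaller points.}
\begin{itemize}
\item $\dim S=\dim T+1$, so $S\to T$ is not generically finite. The invariant you then describe, the geometric components of the generic fibre of $S\to T$, is nonetheless the right one.
\item Lemma~\ref{fano-type-induction} only cuts a vertical $S$ down to $\dim Z=2$. A further general cut meets $T$ in finitely many points, so $S_L$ is no longer irreducible. This is why the paper keeps $\dim Z=2$ and works at a general point of $T$. Your DVR localisation instead leaves the setting of varieties over a field, in which the MMP and complement results are stated.
\end{itemize}
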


\begin{proof}
    Shrinking $Z$ near the generic point of $T$, we can assume that $Z$ is nonsingular.
    By Lemma~\ref{fano-type-induction}, we can then assume that $\dim X = 3$, $\dim Z = 2$ and $\dim T = 1$.
    As $X\to Z$ is of Fano type, by \cite[Corollary~22.3]{lyu-takumi-2026relativeminimalmodelprogram}
    and Lemma~\ref{fano-type-at-dlt-model}, we can assume $X$ is $\QQ$-factorial.

    \medskip
    \emph{Step~1}.
    As $X$ is of Fano type over $Z$, by Lemma~\ref{fano-type-MMP}, 
    we can run $\phi\colon X\dashrightarrow X'$
    a $(-S)$-MMP over $Z$ that ends with a $\QQ$-factorial good minimal model $X'$ of $-S$.  
    By negativity lemma \cite[Lemma~5.15]{lyu-takumi-2026relativeminimalmodelprogram}, 
    $S$ is not contracted by $\phi$.  Set $S' \coloneqq \phi_* S$.
    Let $g\colon X'\to Z'$ be the contraction induced by $-S'$.
    As $S'$ is vertical$/Z$, 
    the contraction $Z'\to Z$ is birational. 
    Let $T'$ be the birational transform of $T$ to $Z'$.
    As $-S'$ is nef$/Z'$, $S'$ is the unique irreducible component of $\Supp (g^* T')$.
    Thus, we can write
    \begin{equation}
        S' = b\cdot g^* T' \label{S-whole-fibre-3}
    \end{equation}
    for some $b\in \QQ^{>0}$.   
    Shrinking $Z$ near the generic point of $T$, we can identify $(Z', T')$ with $(Z, T)$.
    Set $B' \coloneqq \phi_* B$.
    By Lemma~\ref{CY-log-discre-no-change}, we have 
    \[ \coeff_{S'} B' = \coeff_S B \ge t. \]
    Moreover, by Lemma~\ref{Fano-type-pushdown}, $(X', B' + \mb{M})\to Z$ is also a Fano type fibration.  
    Decreasing $t$ if necessary, we can assume that $t\in \QQ^{>0}$.
    Run $\varphi\colon X'\dashrightarrow X''$ a $(- K_{X'} - t S')$-MMP 
    over $Z$ that ends with a good minimal model $X''\to Z$ of $-K_{X'} - t S'$, then $-(K_{X''}+ t S'')$ is nef$/Z$,
    where $S'' \coloneqq \varphi_* S'$.
    By \eqref{S-whole-fibre-3}, $S'$ is not contracted by $\varphi$. 

    By Lemma~\ref{Fano-type-pushdown}, $X''$ is also of Fano type over $Z$.
    Then by Theorem~\ref{t-bnd-compl-usual-local}, 
    up to shrinking $Z$ near the generic point of $T$, 
    $K_{X''} + t S''$ has a monotonic $n$-complement $K_{X''} + B^+$
    for some bounded $n\in \NN$ depending only on $t$, that is,
    \begin{itemize}
        \item $(X'', B^+)$ is an lc $k$-pair,
        \item $n(K_{X''} + B^+)\sim 0/Z$, and
        \item $t S''\le B^+$.
    \end{itemize}
    Hence from now on, instead of generalised pairs, we can assume that 
    \begin{itemize}
        \item $(X, B)$ is an lc $k$-pair,
        \item $f\colon X\to Z$ is of Fano type,
        \item $S$ is an irreducible component of $B$ with nonzero coefficient,
        \item the image $T$ of $S$ in $Z$ is a prime divisor and $S = \Supp f^*T$, and
        \item $n(K_X+B)\sim 0/Z$, where $n\in \NN$ depends only on $t$.
    \end{itemize}
    As $-K_X$ is big$/Z$, $B$ admits a horizontal$/Z$ irreducible component $H$ 
    whose coefficient in $B$ is $\ge 1/n$.  

    \medskip
	\emph{Step~2}.  
    By Lemma~\ref{base-case-r-dim=1}, there exists an $N_h(1,t)\in \NN$
    depending only on $t$ so that
    \[ \sdeg (H^{\nor}/Z) \le N_h(1,t). \]
    By Lemma~\ref{fano-type-MMP}, we can
	run $\phi\colon X\dashrightarrow X'$ a $(-H)$-MMP over $Z$ that ends with a Mori fibre space
	$f'\colon X'\to Z'$ over $Z$.  Then the pushdown $H'$ of $H$ to $X'$ is ample$/Z'$.
    As $\dim (X/Z) = 1$, $Z'\to Z$ is a birational contraction of
	normal surfaces over $k$.
    Shrinking $Z$ near the generic point of $T$, we can assume that $Z'\to Z$
    is an isomorphism of nonsingular surfaces over $k$.
	Denote by $B'$ the pushdown of $B$ to $X'$.
    Set $S'\coloneqq \phi_* S$.
	Then by Lemma~\ref{CY-log-discre-no-change}, 
    we also have $K_{X'} + B'\sim_{\QQ}0/Z'$ and $\coeff_{S'}B' \ge 1/n$.
    Note that since $S = \Supp f^* T$ by assumption, $S$ is not contracted by $\phi$.
    Thus, we can assume $H$ is ample$/Z$ in the rest of the proof.

    Denote by $\bar{k}$ the algebraic closure of $k$.  Let
    \[ \bar{X} \coloneqq X\otimes_k\bar{k}~~
    \mbox{and}~~ \bar{Z} \coloneqq Z\otimes_k\bar{k}, \]
    which are varieties over $\bar{k}$, and the induced $\bar{f}\colon \bar{X}\to \bar{Z}$
    is a $k$-contraction of Fano type; see Lemma~\ref{fano-type-to-closure}.  
    Denote by $\bar{B}$ the flat pullback of $B$ to $\bar{X}$.
    By \S\ref{ss-descent-b-nef}, $(\bar{X}, \bar{B})$ is an lc pair over $\bar{k}$.
    Let 
    \[ \bar{H}\coloneqq H\otimes_k\bar{k}~~\mbox{and}~~\bar{S}\coloneqq S\otimes_k\bar{k}, \]
    which may not be irreducible schemes.  However, every irreducible component of $\bar{S}$
    has coefficient $\ge 1/n$ in $\bar{B}$.  By Lemma~\ref{fppf-ample}, $\bar{H}$ is ample$/\bar{Z}$.
    Moreover, let 
    \[ \bar{T}\coloneqq T\otimes_k \bar{k}, \]
    which may not be irreducible neither.  By Lemma~\ref{finite-fibre},
    there is an open subset $U\subset Z$ such that $\codim(Z\setminus U, Z) = 2$,
    and for each scheme-theoretic point $z\in U$, we have
    \[ \mf{n}(H_z\otimes_{k(z)}\ol{k(z)}) \le \sdeg(H^{\nor}/Z), \]
    where $k(z)$ denotes the residue field at $z$.  Let
    \[ \bar{U}\coloneqq U\otimes_k\bar{k}. \]
    Then we also have $\codim (\bar{Z}\setminus \bar{U}, \bar{Z}) = 2$.
    Let $\bar{L}\subset \bar{Z}$ be a general hyperplane section.
    By Bertini's theorem \cite[3.4.9]{joins-intersections}, $\bar{L}$ is a nonsingular curve.
    We can assume that $\bar{L}$ intersects with $\bar{T}$ at general points of $\bar{T}\cap \bar{U}$.
    Pick $\bar{u}\in \bar{L}\cap \bar{T}$, and let $u$ be its image in $U$.  Then
    \[ \mf{n}(\bar{H}_{\bar{u}}) = \mf{n}(H_u\otimes_{k(u)}\ol{k(u)})
    \le \sdeg(H^{\nor}/Z) \le N_h(1,t). \]
    On the other hand, it is evident that
    \[ \sdeg(S^{\nor}/T) = \mf{n}(S_{u}\otimes_{k(u)}\ol{k(u)}) = \mf{n}(\bar{S}_{\bar{u}}). \]
    Let 
    \[ \bar{X}_{\bar{L}}\coloneqq \bar{X}\times_{\bar{Z}}\bar{L},~~\mbox{and}~~
    \bar{H}_{\bar{L}}\coloneqq \bar{H}\times_{\bar{Z}}\bar{L}. \]
    By Bertini's theorem \cite[3.4.9]{joins-intersections}, $\bar{X}_{\bar{L}}$
    is a normal surface over $\bar{k}$.  By adjunction, we can write
    \[ K_{\bar{X}_{\bar{L}}} + \bar{B}_{\bar{L}} = (K_{\bar{X}} + \bar{B} + \bar{X}_{\bar{L}})|_{\bar{X}_{\bar{L}}} \]
    for some $\QQ$-divisor $\bar{B}_{\bar{L}}$.
    By \cite[Remark~1.15]{Filipazzi-bnd-g-pair-surface},
    $(\bar{X}_{\bar{L}}, \bar{B}_{\bar{L}})$ is a $\QQ$-factorial lc pair over $\bar{k}$.
    By construction, every irreducible component of $\bar{S}_{\bar{u}}$
    is an irreducible component of $\bar{B}_{\bar{L}}$ with coefficient $\ge 1/n$.
    Write
    \[ \red(\bar{H}_{\bar{u}}) = \Set{\bar{x}_1, \dots, \bar{x}_r} \]
    for some natural number $r\le N_h(1,t)$.  As $\bar{H}$ is ample$/\bar{Z}$,
    every irreducible component of $\bar{S}_{\bar{u}}$
    intersects at least one of the $\bar{x}_i$'s.
    However, for each fixed $\bar{x}_i$, by Corollary~\ref{surface-comp-bnd},
    the number of irreducible components of $\bar{B}_{\bar{L}}$
    passing through $\bar{x}_i$ is bounded from above depending only on $n$
    (whence depending only on $t$).
    Therefore, we can conclude that there exists an $N_v(1,t)\in \NN$ so that
    \[ \sdeg(S^{\nor}/T) = \mf{n}(\bar{S}_{\bar{u}}) \le N_v(1,t). \qedhere \]
\end{proof}


\section{\bf Stein degree of vertical divisors}

\begin{lemma}[cf. \protect{\cite[Lemma 3.3]{B-moduli}}]\label{sdeg-subvariety}
    Let $X\to Z$ be a surjective projective morphism of integral schemes.
    Let $S$ be an integral scheme admitting a morphism $S\to X$ 
    so that the induced morphism $S\to Z$ is projective and surjective.
    Then $\sdeg (X/Z)\le \sdeg (S/Z)$.
\end{lemma}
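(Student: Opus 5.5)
The plan is to compare the two Stein factorisations through the universal property of the finite part. Let $X\to V\to Z$ and $S\to W\to Z$ be the Stein factorisations of $X\to Z$ and of the composite $S\to Z$. Both $V$ and $W$ are integral, because $X$ and $S$ are integral and the maps $X\to V$, $S\to W$ are surjective. By definition $\sdeg(X/Z)=\deg(V/Z)$ and $\sdeg(S/Z)=\deg(W/Z)$. The goal is a dominant $Z$-morphism $W\to V$ of finite integral $Z$-schemes; then $K(V)\subseteq K(W)$ over $K(Z)$, and degrees are multiplicative in towers, so
\[ \deg(W/Z)=[K(W):K(V)]\cdot\deg(V/Z)\ \ge\ \deg(V/Z). \]

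To produce $W\to V$, consider the composite $S\to X\to V$, which is a $Z$-morphism. Since $V\to Z$ is finite, hence separated, and $S\to Z$ is proper, the morphism $S\to V$ is proper. It therefore has its own Stein factorisation, and its finite part $\mathcal{O}_V\to (S\to V)_*\mathcal{O}_S$ is a finite $V$-scheme, hence finite over $Z$. On the other hand $W=\spec_Z (S\to Z)_*\mathcal{O}_S$. Because $V\to Z$ is affine, pushing $\mathcal{O}_S$ forward to $Z$ in two steps gives the same $\mathcal{O}_Z$-algebra, so $\spec_V (S\to V)_*\mathcal{O}_S = W$. The structure map of this relative spectrum is the desired $Z$-morphism $W\to V$, and $S\to W\to V$ recovers $S\to V$.

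Dominance comes from surjectivity. The map $S\to Z$ is surjective and $V\to Z$ is finite, so the image of $S$ in $V$ is closed and maps onto $Z$, which means it has dimension $\dim Z=\dim V$. As $V$ is integral, this image is all of $V$, and hence $W\to V$ is surjective. Both $W$ and $V$ are finite and dominant over $Z$, so their generic points lie over the generic point of $Z$. The surjective map $W\to V$ therefore sends the generic point of $W$ to that of $V$, giving the field inclusion used above.

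I expect the main obstacle to be the identification of the finite part of $S\to V$ with $W$, that is, the compatibility of Stein factorisation under the affine morphism $V\to Z$. I would take it directly from the construction in \cite[Chap.~III, (4.3)]{EGA-III-1}. Alternatively, the multiplicativity $\deg(W/Z)=\deg(W/V)\deg(V/Z)$ can be checked at generic points via $K(V)=$ the algebraic closure of $K(Z)$ in $K(X)$ and $K(W)=$ the algebraic closure of $K(Z)$ in $K(S)$. That route needs $K(X)\to K(S)$, which requires the generic point of $S$ to map to that of $X$; this may fail, so the scheme-level argument above is preferable.
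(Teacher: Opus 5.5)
Your proposal is correct and is essentially the paper's argument: take the Stein factorisation $X\to V\to Z$, take the Stein factorisation of the surjective proper map $S\to V$, and use multiplicativity of degrees; your affineness argument, identifying the finite part of $S\to V$ with that of $S\to Z$, makes explicit a step the paper leaves implicit. Two small tidy-ups: surjectivity of $S\to V$ follows more cleanly from the fact that the generic point of $V$ is the only point over the generic point of $Z$ (your dimension count tacitly assumes $\dim Z<\infty$), and the aside you discarded also fails for a further reason, since for non-normal $X$ the field $K(V)$ can be strictly smaller than the algebraic closure of $K(Z)$ in $K(X)$.
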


\begin{proof}
    Let $X\to V\to Z$ be the Stein factorisation of $X\to Z$.  
    As $S\to Z$ is surjective, the composite morphism $S\to X\to V$ is also surjective.
    Let $S\to U\to V$ be the Stein factorisation of $S\to V$.
    Then we see that $\sdeg (S/Z) = \deg(U/V)\cdot \deg (V/Z) \ge \deg (V/Z) = \sdeg(X/Z)$,
    whence the result.
\end{proof}


\subsection{Normal multiplicity and analytic branches}
In this subsection, we show that if the coefficient of an irreducible component of an lc pair 
is bounded from below, then the degree of normalisation of this irreducible component is bounded from above.

\medskip
(1)
Let $X$ be an integral scheme, and let $\pi\colon X'\to X$ be the normalisation.
Let $x\in X$ be a closed point.  The \emph{normal multiplicity} (respectively, 
\emph{geometric normal multiplicity}) of $X$ at $x$ 
is the number of points of 
\[ X'\times_X\spec k(x)~ \mbox{(respectively,}~ X'\times_X\spec \ol{k(x)}),\] 
where $k(x)$ denotes the residue field of $X$ at $x$.


\medskip
(2)
Let $X$ be an integral scheme and $x\in X$ a closed point.
Denote by $\wh{\OO}_{X,x}$ the completion of $\mcO_{X,x}$
with respect to its maximal ideal.
We call the irreducible components of $\spec \wh{\mcO}_{X,x}$
the \emph{analytic branches} of $X$ at $x$.

\medskip
(3)
We first treat lc pair over $\CC$.

\begin{lemma}\label{mult-branches}
    Let $X$ be a variety over $\CC$ and $x\in X$ a closed point.
    Then the number of analytic branches of $X$ at $x$ 
    is equal to the normal multiplicity.
\end{lemma}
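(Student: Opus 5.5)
The plan is to compare both quantities with the completion of the normalisation. Let $\pi\colon X'\to X$ be the normalisation. Since $X$ is of finite type over $\CC$, it is excellent, so $\pi$ is finite. Moreover $k(x)=\CC$ is algebraically closed, so the normal multiplicity and the geometric normal multiplicity coincide: both equal the cardinality of the finite set $\pi^{-1}(x)=\Set{x'_1,\dots,x'_m}$.

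\emph{Step~1: decompose the completion of the normalisation.} The ring $\OO_{X',\pi^{-1}(x)}\coloneqq (\pi_*\OO_{X'})_x$ is finite over $\OO_{X,x}$ and semilocal with maximal ideals corresponding to $x'_1,\dots,x'_m$. By finiteness, completion commutes with tensoring:
\[ (\pi_*\OO_{X'})_x\otimes_{\OO_{X,x}}\wh{\OO}_{X,x}\cong \prod_{i=1}^m \wh{\OO}_{X',x'_i}, \]
using the standard decomposition of the completion of a finite semilocal algebra.

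\emph{Step~2: each factor is a domain.} Each $\OO_{X',x'_i}$ is a normal local ring essentially of finite type over $\CC$, hence excellent. For excellent normal local rings the completion is again normal (EGA~IV, (7.8.3)), in particular a domain. Therefore $\spec$ of the right-hand side in Step~1 has exactly $m$ irreducible components, and they are pairwise disjoint.

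\emph{Step~3: compare with $\spec\wh{\OO}_{X,x}$.} I expect this to be the main obstacle. Since $X$ is reduced and excellent, $\wh{\OO}_{X,x}$ is reduced (EGA~IV, (7.8.3)). The inclusion $\OO_{X,x}\hookrightarrow \OO'\coloneqq(\pi_*\OO_{X'})_x$ is finite and becomes an isomorphism after inverting a nonzerodivisor $c$ in the conductor. Completion is flat, so $\wh{\OO}_{X,x}\hookrightarrow \wh{\OO'}$ stays injective and finite. After inverting $c$ it becomes an isomorphism, and $c$ remains a nonzerodivisor in $\wh{\OO}_{X,x}$ by flatness. Hence $\wh{\OO'}$ lies in the total ring of fractions of $\wh{\OO}_{X,x}$, and the two rings have the same minimal primes. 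Since $\wh{\OO}_{X,x}$ is reduced, its total ring of fractions is the product of the fields at its minimal primes. Thus the minimal primes of $\wh{\OO}_{X,x}$ correspond bijectively to those of $\wh{\OO'}=\prod_i\wh{\OO}_{X',x'_i}$, whose number is $m$ by Step~2. So the number of analytic branches equals $m$, the normal multiplicity.

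Alternatively, Step~3 can be cited directly as Zariski's theorem on analytic branches, for example via Nagata's \emph{Local rings}, (37.5); the argument above is essentially that proof.
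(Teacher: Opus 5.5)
Your argument is correct, but it takes a genuinely different route from the paper's.

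The paper proves the lemma analytically. By Kuhlmann's theorem, the analytification of $X^{\nu}\to X$ is the normalisation of $X^{\an}$ in the category of complex spaces. The result of Acquistapace--Broglia--Fernando matches the points of the fibre of the analytic normalisation over $x$ with the irreducible components of the germ $(X^{\an},x)$. SGA1, Exp.~XII, 1.1, which gives $\wh{\OO}_{X,x}\cong\wh{\OO}_{X^{\an},x}$, then connects this to $\spec\wh{\OO}_{X,x}$.

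You stay entirely within commutative algebra, using Zariski's classical argument. Its ingredients are the decomposition $\OO'\otimes_{\OO_{X,x}}\wh{\OO}_{X,x}\cong\prod_i\wh{\OO}_{X',x'_i}$, the analytic normality of excellent normal local rings, and a comparison of minimal primes through a conductor element $c$.

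What your route buys: it counts the formal branches exactly as the paper defines them. It does not need the additional fact that components of the convergent germ correspond to minimal primes of its completion. It also uses nothing about $\CC$ and works for any excellent reduced local ring, so over an arbitrary field it equates the number of branches with the number of points of $X'\times_X\spec k(x)$.

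What the analytic route buys: it interprets branches as local irreducible components in the complex topology. That is precisely what Lemma~\ref{surface-analytic-comp-bnd} uses next, so with your proof that comparison would have to be supplied there instead.

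One small simplification: reducedness of $\wh{\OO}_{X,x}$ is not needed in Step~3. The element $c$ is a nonzerodivisor in both Noetherian rings $\wh{\OO}_{X,x}$ and $\wh{\OO'}=\prod_i\wh{\OO}_{X',x'_i}$, so it lies in none of their minimal primes. Hence both sets of minimal primes are in bijection with those of $\wh{\OO}_{X,x}[1/c]=\wh{\OO'}[1/c]$.
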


\begin{proof}
    Let $X^{\nu}\to X$ be the normalisation.
    Denote by $X^{\an}$ the analytic space associated to $X$.
    By \cite[Satz~4, page~122]{algebraic-complex-normalisation}, the analytic space associated to $X^{\nu}$
    is the normalisation of $X^{\an}$ is the category of analytic spaces.
    Then the result follows from \cite[Expos\'e~XII, Th\'eor\`eme et d\'efinition~1.1]{SGA1} and
    \cite[Theorem~2.1~(i)]{normalisation-complex-variety}.
\end{proof}


\begin{lemma}\label{surface-analytic-comp-bnd}
    Let $t\in \RR^{>0}$.
    Let $(X, \Delta)$ be a $\QQ$-Gorenstein surface lc pair over $\CC$.
    Let $x\in X$ be an arbitrary closed point.
    Then the number of analytic branches of $\Delta$ at $x$, which have coefficients $\ge t$,
    is bounded from above depending only on $t$.
\end{lemma}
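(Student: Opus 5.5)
The plan is to pass to a log resolution and use the fact that lc singularities of surfaces have bounded local combinatorics. Let $\pi\colon Y\to X$ be the minimal resolution of $X$ near $x$, followed if necessary by further blow-ups, so that $\pi$ is a log resolution of $(X,\Delta)$. Write
\[ K_Y+\Delta_Y=\pi^*(K_X+\Delta). \]
Then $(Y,\Delta_Y)$ is sub-lc. Let $\Delta^{\ge t}$ denote the sum of the components of $\Delta$ with coefficient $\ge t$.

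The first step is to read off the number of analytic branches from $Y$. The analytic branches at $x$ of a component $D$ of $\Delta$ correspond to the points of $D^{\nu}$ over $x$, by Lemma~\ref{mult-branches} applied to $D$. On a log resolution where the birational transform $D_Y$ is smooth, these points are exactly the points of $D_Y\cap\pi^{-1}(x)$. So it suffices to bound
\[ \sum_{D} \#\bigl(D_Y\cap \pi^{-1}(x)\bigr), \]
the sum running over the components $D$ of $\Delta^{\ge t}$.

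For the main estimate, I would first reduce to the case where $x$ is a singular point or a point where $\Delta$ is not snc. Otherwise the count is at most $2$.

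\begin{itemize}
\item \emph{Smooth $x$.} If $X$ is smooth at $x$, then lc forces $\mathrm{mult}_x\Delta\le 2$, by blowing up $x$ and computing $a(E,X,\Delta)=2-\mathrm{mult}_x\Delta\ge 0$. Each branch of coefficient $\ge t$ contributes at least $t$ to the multiplicity, so the number of branches is at most $2/t$.
\item \emph{Singular $x$.} Here I would use the classification of lc surface germs: the dual graph of the minimal resolution is a chain, a graph with one fork of three arms, or elliptic/cusp type. Alternatively, and more uniformly, pass to the index-one cover or a quasi-étale cover. A cleaner route is to take a general curve $C$ through $x$, or better the minimal resolution $\mu\colon\tilde X\to X$, and write
\[ K_{\tilde X}+\tilde\Delta+\sum e_iE_i=\mu^*(K_X+\Delta), \]
where $e_i\ge 0$ and $e_i\le 1$.
\end{itemize}

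Each branch through $x$ of coefficient $\ge t$ meets some exceptional curve $E_i$. Fix an $E_i$ with $e_i\le 1$. If $e_i<1$, adjunction-type inequalities on $E_i\cong\PP^1$ (or its normalisation) give
\[ -2+\sum_{\text{branches meeting }E_i} t\,(\tilde D\cdot E_i)+\sum_{j\ne i}e_j(E_j\cdot E_i)\le (K_{\tilde X}+E_i+\cdots)\cdot E_i\le 0 \]
up to the contribution $(1-e_i)E_i^2\le 0$. This bounds the number of branches meeting $E_i$ by $2/t$. If $e_i=1$, I would reduce the coefficient slightly, or perturb via a dlt model, and use the same inequality with the boundary $\tilde\Delta+\sum E_j$.

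The main obstacle is that the number of exceptional curves $E_i$ is unbounded, for instance in long chains. To handle it, I would argue that if $\Delta^{\ge t}\ne 0$ passes through $x$, then the relevant curves are constrained. Through a non-klt or boundary-touching configuration, the inequality $\sum$ (coefficients meeting the chain) $\le 2$ on the whole connected exceptional graph bounds the total number of branches. Concretely, sum the above inequalities over all $E_i$, using $\mu^*(K_X+\Delta)\cdot E_i=0$, and use the fact that the graph has at most three ends or is a cycle. I expect this to give a total bound of order $2/t$ (or $4/t$) independent of the length. If that fails, I would instead invoke the known bound that an lc surface germ with boundary of coefficient $\ge t$ has $\sum_{\text{branches}} t\le 2$, obtained by ACC-type arguments on log canonical thresholds of the general hyperplane through $x$.
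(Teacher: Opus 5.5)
Your smooth-point case and your per-curve estimate $\tilde\Delta\cdot E_i\le 2$ are correct. You also locate the difficulty correctly: the minimal resolution can have arbitrarily many exceptional curves, and different branches may meet different $E_i$. But the step meant to overcome this is only announced (``I expect this to give a total bound''). The fallback (``the known bound $\sum t\le 2$ obtained by ACC-type arguments'') is essentially the lemma itself, with neither an argument nor a reference. Read literally, summing your per-curve inequalities after discarding $(1-e_i)E_i^2\le 0$ only gives $\tilde\Delta\cdot E+\sum_j e_jv_j\le 2n$. Here $E=\sum_iE_i$, $n$ is the number of exceptional curves and $v_j$ is the valency of $E_j$. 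On a long chain of $(-2)$-curves (an $A_n$ point with $\Delta=\epsilon L$) all $e_j$ are tiny, so this bound grows with $n$. As it stands, the proposal does not prove the lemma.

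What is missing is to keep exactly the terms you threw away: intersect $\mu^*(K_X+\Delta)$ with the reduced divisor $E$ and use adjunction for $E$ as a whole. Writing $E_i^2=-b_i$, this gives
\[ \tilde\Delta\cdot E=2-2p_a(E)+\sum_i(1-e_i)\,(E_i\cdot E),\qquad E_i\cdot E=v_i-b_i\le v_i-2. \]
Hence $\tilde\Delta\cdot E\le 2+\sum_i\max(0,v_i-2)$. For a tree of smooth rational curves this is at most $\max(2,\text{number of ends})$, and $\tilde\Delta\cdot E\le 0$ when $p_a(E)=1$. Since each branch of coefficient $\ge t$ contributes at least $t$ to $\tilde\Delta\cdot E$, you then need the graph classification. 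Your list is incomplete: lc trees can have two forks and four ends (the $(2,2,2,2)$ star, the $\tilde D$-type quotients of cusps).

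There are two ways to finish. One is to use the full lc list, which gives at most $4/t$ branches. The other is to use the $\QQ$-Gorenstein hypothesis, which you never invoke. It makes $\Delta$ $\RR$-Cartier, so if some branch passes through $x$ then $(X,0)$ is klt at $x$. The graph is then a chain or a three-armed star, giving at most $3/t$.

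The klt observation also yields a resolution-free argument. Analytically $(X,x)\cong(\CC^2/G,0)$ with $G$ small, and the quotient map $q$ is \'etale in codimension one. So $(\CC^2,q^*\Delta)$ is lc with the same coefficients. Distinct branches at $x$ lift to disjoint nonempty sets of branches at $0$, and $\mathrm{mult}_0\,q^*\Delta\le 2$ bounds their number by $2/t$. The index-one cover you mention is only Du Val, so it does not suffice.

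For comparison, the paper gives no new argument here. It observes that analytic branches are the local irreducible components of $\Delta$ at $x$ in the complex topology and refers to the proof of \cite[Lemma~4.4]{birkar2025steindegreelogcalabiyau}.
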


\begin{proof}
    Since analytic branches are exactly the local irreducible components of $\Delta$ at $x$ 
    in the complex topology, the proof is the same as \cite[Lemma~4.4]{birkar2025steindegreelogcalabiyau}.
\end{proof}


\begin{corollary}\label{surface-comp-bnd}
    Let $t\in \RR^{>0}$.
    Let $(X, \Delta)$ be a $\QQ$-Gorenstein surface lc pair 
    over an algebraically closed field $k$ of characteristic zero.
    Let $x\in X$ be an arbitrary closed point.
    Then the number of irreducible components of $\Delta$ 
    passing through $x\in X$ and having coefficients $\ge t$
    is bounded from above depending only on $t$.
\end{corollary}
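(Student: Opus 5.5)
The plan is to reduce Corollary~\ref{surface-comp-bnd} to Lemma~\ref{surface-analytic-comp-bnd} over $\CC$ by a spreading-out argument like the ones in Lemma~\ref{l-ep-to-0} and Lemma~\ref{fano-bnd-very-ample}. Over $\CC$ we then compare global irreducible components through $x$ with analytic branches at $x$.

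\emph{Descent to $\CC$.} Let $D_1,\dots,D_s$ be the irreducible components of $\Delta$ passing through $x$ with $\coeff_{D_i}\Delta\ge t$. Choose a subfield $k'\subset k$ finitely generated over $\QQ$ such that the following data are defined over $k'$: $X=X'\otimes_{k'}k$, $\Delta$ is the flat pullback of $\Delta'$, the point $x$ comes from a $k'$-point $x'$, and each $D_i$ comes from a geometrically irreducible prime divisor $D_i'$. Geometric irreducibility can be arranged after a finite enlargement of $k'$, since $k$ is algebraically closed. Enlarging $k'$ further, we may also assume that an index $m$ with $m(K_{X}+\Delta)$ Cartier descends, namely $m(K_{X'}+\Delta')$ is Cartier. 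By \S\ref{ss-pullback-R-Cartier-pairs}(5), $(X',\Delta')$ is lc.

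Fix an embedding $k'\hookrightarrow\CC$ and base change to get $(X_\CC,\Delta_\CC)$ and $x_\CC$. This pair is a $\QQ$-Gorenstein lc surface pair, again by \S\ref{ss-pullback-R-Cartier-pairs}(5) and Lemma~\ref{descend-R-Cartier}. The divisors $D_{i,\CC}$ are pairwise distinct prime divisors, because they are geometrically irreducible and distinct already over $k'$. Each $D_{i,\CC}$ passes through $x_\CC$ and has coefficient $\ge t$.

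\emph{Counting over $\CC$.} Each prime divisor $D_{i,\CC}$ through $x_\CC$ contributes at least one analytic branch of $\Delta_\CC$ at $x_\CC$. Indeed, the completion $\wh{\mcO}_{D_{i,\CC},x_\CC}$ is nonzero and so has at least one minimal prime; by Lemma~\ref{mult-branches} the number of branches equals the normal multiplicity, which is $\ge1$.

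Branches coming from distinct $D_{i,\CC}$ are distinct. This holds because $D_{i,\CC}\cap D_{j,\CC}$ is finite near $x_\CC$ for $i\ne j$, so the two curves cannot share a one-dimensional analytic germ. Every such branch inherits the coefficient $\coeff_{D_i}\Delta\ge t$. Hence $s$ is at most the number of analytic branches of $\Delta_\CC$ at $x_\CC$ with coefficient $\ge t$. Lemma~\ref{surface-analytic-comp-bnd} bounds that number in terms of $t$ alone.

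\emph{Main obstacle.} The delicate point is the descent step. We must ensure that the distinct components through $x$ stay distinct and irreducible after base change to $\CC$, and that the $\QQ$-Gorenstein condition is preserved. Both are handled by first enlarging $k'$ to a finite extension inside $k$ (possible since $k$ is algebraically closed) and then invoking Lemma~\ref{descend-R-Cartier}.
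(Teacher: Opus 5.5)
Your proposal is correct and follows essentially the same route as the paper. Like the paper, you spread the data out over a finitely generated field $k'$ with $x$ a $k'$-point and the relevant components geometrically irreducible, embed $k'\hookrightarrow\CC$, and apply Lemma~\ref{surface-analytic-comp-bnd}. The only difference is that you spell out a step the paper leaves implicit: distinct global components through $x_\CC$ contribute distinct analytic branches, which is what converts the branch count of Lemma~\ref{surface-analytic-comp-bnd} into a bound on irreducible components.
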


\begin{proof}
    Let $k'\subset k$ be a subfield that is finitely generated over $\QQ$
    so that there exist a normal $k'$-surface $X'$ and an $\RR$-divisor $\Delta'$ on $X'$,
    where $X = X'\otimes_{k'}k$, and $\Delta$ is the flat pullback of $\Delta'$ to $X$.
    We can assume that $x$ is the unique inverse image of a $k'$-point $x'$ of $X'$,
    and that every irreducible component of $\Delta'$ is geometrically irreducible.
    As $k'$ is finitely generated over $\QQ$, we can take an embedding of fields
    $\iota\colon k'\to \CC$.  Let
    \[ X_{\CC}\coloneqq X'\otimes_{k'}\CC, \]
    where the fibre product is formed via $\iota$, and let $\Delta_{\CC}$ be the flat pullback of $\Delta'$ to $X_{\CC}$.
    By Lemma~\ref{descend-R-Cartier} and \S\ref{ss-pullback-R-Cartier-pairs}(5),
    $(X_{\CC}, \Delta_{\CC})$ is a $\QQ$-Gorenstein surface lc pair.
    As $x'\in X'$ is a $k'$-point, the inverse image of $x'$ in $X_{\CC}$ is a single 
    closed point $\bar{x}\in X_{\CC}$.  As each irreducible component of $\Delta'$
    is geometrically irreducible, the number of irreducible components of $\Delta$ passing through $x\in X$
    is equal to that of $\Delta_{\CC}$ passing through $\bar{x}\in X_{\CC}$.
    Thus, it suffices to consider $\QQ$-Gorenstein surface lc pairs over $\CC$,
    and this follows from Lemma~\ref{surface-analytic-comp-bnd}.
\end{proof}


\begin{lemma}\label{normal-mult-bnd}
    Let $t\in \RR^{>0}$.
    Let $(X, B)$ be a $\QQ$-Gorenstein lc pair over $\CC$
    and $S$ an irreducible component of $B$ with $\coeff_S B \ge t$.
    Then there exists a closed subset $Z\subset S$ of codimension $\ge 2$ in $S$ such that 
    for every closed point $s\in S\setminus Z$, the normal multiplicity of $S$ at $s$ 
    is bounded from above depending only on $t$.
\end{lemma}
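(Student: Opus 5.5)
We reduce to the surface case by cutting with general hyperplanes, and then apply Lemma~\ref{surface-analytic-comp-bnd} together with Lemma~\ref{mult-branches}. Let $d=\dim X$. If $d=2$, then $S$ is a curve, and we take $Z=\emptyset$. The normal multiplicity of $S$ at $s$ equals the number of analytic branches of $S$ at $s$ by Lemma~\ref{mult-branches}. Each such branch is an analytic branch of $B$ at $s$ with coefficient $\ge t$, so Lemma~\ref{surface-analytic-comp-bnd} bounds their number in terms of $t$ alone. (If $d=1$, $S$ is a point and there is nothing to prove.)

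For $d\ge 3$ we argue as follows. Let $\nu\colon S^{\nor}\to S$ be the normalisation and consider the non-normal locus of $S$. Over a dense open subset of each irreducible component of this locus, $\nu$ is finite with geometric fibres of constant cardinality, equal to the normal multiplicity at general points. We choose $Z\subset S$ to be a closed subset of codimension $\ge 2$ in $S$ outside of which the following hold:
\begin{itemize}
    \item the non-normal locus $N$ of $S$ is a (possibly empty) divisor on $S$;
    \item over each component of $N$, the fibre cardinality of $\nu$ is locally constant;
    \item the normal multiplicity is also locally constant, by semicontinuity of the fibre cardinality of the finite map $\nu$ on a stratification.
\end{itemize}
This is possible by a generic-flatness and constructibility argument applied to $\nu$, in the spirit of Lemma~\ref{finite-fibre}. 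Points of $S\setminus(Z\cup N)$ have normal multiplicity $1$, so it remains to bound the normal multiplicity at a general point $s$ of a component $N_j$ of $N$ (which has dimension $d-2$).

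Take $d-2$ general hyperplane sections $H_1,\dots,H_{d-2}$ of $X$ through a general point $s\in N_j$; more precisely, take general members passing through $s$ of a very ample system. Set $Y=X\cap H_1\cap\cdots\cap H_{d-2}$. Iterated adjunction, using Bertini for normality \cite[Corollary~3.4.9]{joins-intersections} and \cite[Theorem~5.50]{km98}, gives a $\QQ$-Gorenstein lc surface pair $(Y,B_Y)$ in a neighbourhood of $s$. Here $B_Y=B|_Y$ for general sections, and $S\cap Y$ is a curve whose components have coefficient $\ge t$. Passing through $s$ is the delicate point. Inversion of adjunction is not needed because we only need lc after restriction. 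To ensure lc we instead use the approach of Lemma~\ref{g-lc-Bertini}: first choose general sections, then pick $s$ as a general point of $N_j\cap H_1\cap\cdots\cap H_{d-2}$, which is a finite set of general points of $N_j$ since $\dim N_j=d-2$. This restores genericity.

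At such $s$, the slice $S\cap Y$ is a curve whose analytic branches at $s$ correspond to the points of $\nu^{-1}(s)$. Indeed, $S^{\nor}\cap\nu^{-1}(Y)$ is a general slice of the normal variety $S^{\nor}$, hence normal, so it is the normalisation of $S\cap Y$ near $s$. Each branch of $S\cap Y$ at $s$ is a branch of $B_Y$ with coefficient $\ge t$, and the surface case then bounds $\#\nu^{-1}(s)$ in terms of $t$.

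The main obstacle is this compatibility between the general slice and the normalisation, that is, ensuring that $(S\cap Y)^{\nor}=S^{\nor}\times_S(S\cap Y)$ near $s$ and that the normal multiplicity is unchanged by slicing. We would handle this by choosing $Z$ large enough that $\nu$ is flat with reduced fibres over $N_j\setminus Z$, and then applying Bertini to $S^{\nor}$.
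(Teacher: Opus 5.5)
Your proposal is correct and takes the same route as the paper, which also cuts down to a surface by general hyperplane sections and then invokes Lemmas~\ref{mult-branches} and \ref{surface-analytic-comp-bnd}; your choice of $Z$ via the non-normal locus and your Bertini argument on $S^{\nor}$ supply details the paper leaves implicit. One small correction: as literally stated, your final remark that $\nu$ is flat over $N_j\setminus Z$ is impossible, since a finite birational morphism that is flat over a point is an isomorphism there. This does not matter, because your earlier argument already gives $(S\cap Y)^{\nor}=S^{\nor}\times_S(S\cap Y)$ without flatness: $\nu^{-1}(Y)$ is a normal, irreducible general slice of $S^{\nor}$, finite and birational over $S\cap Y$.
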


\begin{proof}
    Taking general hyperplane sections, we can assume that $\dim X = 2$.
    Then the result follows from Lemmas~\ref{mult-branches} and \ref{surface-analytic-comp-bnd}.
\end{proof}


\medskip
(4)
Now we show the same result as Lemma~\ref{normal-mult-bnd} for lc pair over non-closed fields.

\begin{proposition}\label{normal-mult-bnd-general-fields}
    Let $t\in \RR^{>0}$.
    Let $(X, B)$ be a $\QQ$-Gorenstein lc pair over a field $k$ of characteristic zero.
    Let $S$ be an irreducible component of $B$ with $\coeff_S B \ge t$.
    Then there exists a closed subset $Z\subset S$ of codimension $\ge 2$ such that 
    for every closed point $s\in S\setminus Z$, the geometric normal multiplicity of $S$ at $s$ 
    is bounded from above depending only on $t$.
\end{proposition}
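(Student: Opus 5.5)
The plan is to reduce Proposition~\ref{normal-mult-bnd-general-fields} to Lemma~\ref{normal-mult-bnd} over $\CC$. The reduction uses the same descent pattern as in Corollary~\ref{surface-comp-bnd} and Lemma~\ref{l-ep-to-0}.

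\emph{Step~1: pass to $\bar k$.} Let $\bar X \coloneqq X\otimes_k \bar k$ and let $\bar B$ be the flat pullback of $B$. By Lemma~\ref{descend-R-Cartier} and \S\ref{ss-descent-b-nef}~(5), $(\bar X,\bar B)$ is a $\QQ$-Gorenstein lc pair. The scheme $\bar S \coloneqq S\otimes_k \bar k$ is reduced, and its irreducible components $\bar S_1,\dots,\bar S_m$ (the Galois conjugates) each have coefficient $\coeff_S B\ge t$ in $\bar B$. Normalisation commutes with the separable base change $k\to\bar k$ (\S\ref{ss-sch-var-defn}~(4)). Hence $S^{\nor}\otimes_k\bar k$ is normal and finite birational over $\bar S$, so it equals $\bigsqcup_i \bar S_i^{\nor}$.

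For a closed point $s\in S$, the geometric normal multiplicity of $S$ at $s$ is the number of points of $S^{\nor}\times_S \spec\ol{k(s)}$. This equals the number of points of $S^{\nor}\otimes_k\bar k$ lying over a fixed point $\bar s\in\bar S$ above $s$. Indeed, $k(s)\otimes_k\bar k$ is a product of copies of $\bar k$, one for each preimage $\bar s$, and the Galois group permutes these preimages transitively. Therefore the geometric normal multiplicity at $s$ equals $\sum_{i:\,\bar s\in \bar S_i} \#(\bar S_i^{\nor}\times_{\bar S_i}\bar s)$.

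\emph{Step~2: control the intersections.} The intersections $\bar S_i\cap \bar S_j$ for $i\ne j$ have codimension $\ge 1$ in $\bar S_i$, not $\ge 2$. So the naive approach would require bounding the number of conjugates through a point. Better, I would reorganise the sum as the normal multiplicity of the reduced divisor $\bar S$ itself, namely the number of points of $\bar S^{\nor}=\bigsqcup \bar S_i^{\nor}$ over $\bar s$. This is exactly the count handled by Lemma~\ref{surface-analytic-comp-bnd}: after cutting by general hyperplanes, it is the number of analytic branches at $\bar s$ of all components of $\bar B$ with coefficient $\ge t$. So the statement needed is the version of Lemma~\ref{normal-mult-bnd} for the reduced divisor $\bar S=\sum_i \bar S_i$, all of whose components have coefficient $\ge t$. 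The proof of Lemma~\ref{normal-mult-bnd} works verbatim for it. Cut by $\dim X-2$ general hyperplanes and use Lemma~\ref{mult-branches}, applied component by component, together with Lemma~\ref{surface-analytic-comp-bnd}. This yields a bound $N(t)$ outside a closed subset $\bar Z\subset\bar S$ of codimension $\ge 2$.

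\emph{Step~3: descend to $\CC$ and to $k$.} First, descend $(\bar X,\bar B,\bar S)$ to a subfield $k'\subset\bar k$ finitely generated over $\QQ$. Choose $k'$ so that the components $\bar S_i$ are defined and geometrically irreducible over $k'$, and so that normalisations are compatible. Then embed $k'\hookrightarrow\CC$ as in Corollary~\ref{surface-comp-bnd}. The resulting pair over $\CC$ is still $\QQ$-Gorenstein and lc, and normal multiplicities at points defined over $\bar{\QQ}$-closed points are preserved. Applying Step~2 over $\CC$ gives an exceptional locus. Its preimage is a $\mathrm{Gal}$-invariant closed subset of codimension $\ge 2$ after replacing it by the union of its conjugates. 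Finally, let $Z$ be the image of $\bar Z$ in $S$. Since $\bar S\to S$ is finite, $Z$ is closed of codimension $\ge 2$, and every closed point $s\notin Z$ has all its preimages outside $\bar Z$.

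I expect the main obstacle to be the bookkeeping in Step~3. Normal multiplicity must be compared across the base changes $\bar k\leftarrow k'\to\CC$, and the exceptional locus must be shown to descend. I would handle this by fixing the finite normalisation morphism over $k'$ and using constructibility (\cite[Corollaire~(1.8.5)]{EGA-IV-1}) of the fibre cardinality function.
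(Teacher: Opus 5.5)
Your proposal is correct, but it is organised differently from the paper's proof.

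\textbf{The paper's route.} It first proves the statement over an algebraically closed field for a single irreducible component. It descends to $L$, embeds $L$ in $\CC$ and applies Lemma~\ref{normal-mult-bnd}. It then spreads $Z_{\CC}$ out over a finitely generated $R\subset\CC$, shrinks $\spec R$ by constructibility, and specialises to a closed point of $\spec R$ whose residue field embeds in $k$. Over a general $k$, it applies this to each conjugate $\bar S_i$ separately. Finally, it bounds how many conjugates pass through a point by a separate argument: \cite[Lemma~4.4]{birkar2025steindegreelogcalabiyau} on surface sections, plus the estimate $\dim\bar P\le\dim X-3$.

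\textbf{Your route.} You apply the surface branch bound once, to the whole reduced divisor $\bar S=\sum_i\bar S_i$. Since $\bar S^{\nor}=\bigsqcup_i\bar S_i^{\nor}$ and distinct components of a reduced curve share no analytic branch, the fibre of $\bar S^{\nor}$ over a point of a general surface section is bounded by the total number of branches of coefficient $\ge t$. This absorbs the conjugate-counting step.

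Your transfer from $\CC$ is also shorter. Let $E'\subset S'$ be the locus where the geometric fibre of the fixed finite morphism $S'^{\nor}\to S'$ has more than $N(t)$ points.
\begin{itemize}
\item $E'$ is constructible; the correct reference is \cite[Corollaire~(9.7.9)]{EGA-IV-3}, not \cite[Corollaire~(1.8.5)]{EGA-IV-1}.
\item Its preimages in $S'\otimes_{k'}\CC$ and in $\bar S$ are the corresponding bad loci, because normalisation commutes with field extension in characteristic zero.
\item Over $\CC$, the closed points of the preimage $E_{\CC}$ lie in the codimension $\ge2$ set from your Step~2. Hence so does $\overline{E_{\CC}}$, since a constructible set in a Jacobson scheme lies in the closure of its closed points.
\item Base change along a field extension is open and preserves dimension, so $\overline{E'}$ has codimension $\ge2$, and you can take $\bar Z$ to be its preimage in $\bar S$.
\end{itemize}
This avoids the paper's second spreading-out over $R$ and the specialisation step. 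What the paper's route gains is modularity: the closed-field statement for a single component comes out as a separate result.

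\textbf{Two fixes for the write-up.} First, Step~3 cannot be done pointwise: closed points of $\bar S$ need not descend to $k'$-points, so the sentence about normal multiplicities being preserved at individual points has no content. It must go through $E'$ as above. Second, $\bar S\to S$ is integral rather than finite. This still suffices for the image of $\bar Z$ to be closed of the same dimension; alternatively, descend $\bar Z$ to a finite extension of $k$, as the paper does.
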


\begin{proof}
    In Steps~1 to 3, we assume $k$ is algebraically closed.
    In Step~4, we treat the situation when $k$ is an arbitrary field of characteristic zero.

    \medskip
    \emph{Step~1}.
    In this step, we spread out all the data to a finitely generated subalgebra of $\CC$.
    Let $\pi\colon S^{\nu}\to S$ be the normalisation of $S$.
    By \S\ref{s-descent}, there exist
    \begin{itemize}
        \item a subfield $L\subset k$ that is finitely generated over $\QQ$,
        \item a $\QQ$-Gorenstein lc pair $(X_L ,B_L)$ over $L$,
        \item an irreducible component $S_L$ of $B_L$ with $\coeff_{S_L}B_L \ge t$, and 
        \item a normal scheme $S_L^{\nu}$ admitting a finite birational morphism $\pi_L\colon S_L^{\nu}\to S_L$,
    \end{itemize}
    such that 
    \begin{itemize}
        \item $X = X_L\otimes_L k$ and $S^{\nu} = S_L^{\nu}\otimes_L k$, 
        \item $B, S$ are the flat pullbacks of $B_L, S_L$ to $X$ respectively, and
        \item $\pi\colon S^{\nu}\to S$ is the base change of $\pi_L\colon S_L^{\nu}\to S_L$.
    \end{itemize}
    As $L$ is finitely generated over $\QQ$, there exists an embedding of fields
    \[ \iota\colon L\hookrightarrow \CC. \]
    Let 
    \[ X_{\CC}\coloneqq X_L\otimes_L \CC~~\mbox{and}~~S_{\CC}^{\nu} \coloneqq S_L^{\nu}\otimes_L \CC, \]
    where the fibre product is formed via the embedding $\iota$,
    and let $B_{\CC}, S_{\CC}$ be the flat pullbacks of $B_L, S_L$ to $X_{\CC}$ respectively.
    By \cite[Proposition~(4.5.1)]{EGA-IV-2}, $S_{\CC}$ is also irreducible.
    Let $Z_{\CC}\subset S_{\CC}$ be a closed subset of codimension $\ge 2$ as in Lemma~\ref{normal-mult-bnd}.
    Denote by 
    \[ \pi_{\CC}\colon S_{\CC}^{\nu}\to S_{\CC} \]
    the base change of $\pi_L$ to $\spec \CC$.
    Then for each closed point $w\in S_{\CC}\setminus Z_{\CC}$, the cardinality of $\pi_{\CC}^{-1}(w)$
    is bounded from above depending only on $t$.
    \[\xymatrix{
    X\ar[d]\ar[r] & X_L\ar[d] & X_R\ar[d]\ar[l] & X_{\CC}\ar[d]\ar[l] \\
    \spec k\ar[r] & \spec L & \spec R\ar[l] & \spec \CC\ar[l]
    }\]
    By \S\ref{s-descent}, there exist
    \begin{itemize}
        \item a subalgebra $R\subset\CC$ that is finitely generated over $L$,
        \item a $\QQ$-Gorenstein lc pair $(X_R, B_R)$ over $\spec R$,
        \item an irreducible component $S_R$ of $B_R$, 
        \item a normal scheme $S_R^{\nu}$, 
        \item a finite birational morphism $\pi_R\colon S_R^{\nu}\to S_R$, and
        \item a closed subset $Z_R\subset S_R$ of codimension $\ge 2$,
    \end{itemize}
    such that 
    \begin{itemize}
        \item $\spec R$ is a regular scheme,
        \item $X_{\CC}=X_R\otimes_R\CC$, $Z_{\CC}=Z_R\otimes_R \CC$, $S_{\CC}= S_R\otimes_R \CC$ and $S_{\CC}^{\nu} = S_R^{\nu}\otimes_R \CC$, 
        \item $X_R, S_R, S_R^{\nu}$ and irreducible components of $Z_R, B_R$ are flat over $\spec R$, and
        \item $\pi_{\CC}$ is the base change of $\pi_R$ to $\spec \CC$.
    \end{itemize}

    \medskip
    \emph{Step~2}.
    Let $\eta_R$ be the generic point of $\spec R$.  Denote by
    \[ S_{\eta_R}^{\nu}\coloneqq S_R^{\nu}\times_{\spec R}\eta_R,~ S_{\eta_R} \coloneqq S_R\times_{\spec R} \eta_R,~\mbox{and}~
    Z_{\eta_R} \coloneqq Z_R\times_{\spec R} \eta_R. \]
    By construction in Step~1, we have
    \[ S_{\CC}^{\nu}=S_{\eta_R}^{\nu}\times_{\eta_R}\spec \CC, ~S_{\CC}=S_{\eta_R}\times_{\eta_R}\spec \CC, ~
    Z_{\CC} = Z_{\eta_R}\times_{\eta_R}\CC. \]
    Moreover, let 
    \[ U_R\coloneqq S_R\setminus Z_R,~
    U_R^{\nu} \coloneqq U_R\times_{S_R}S_R^{\nu},~
    U_{\eta_R}\coloneqq U_R\times_{\spec R}\eta_R,~\mbox{and}~
    U_{\eta_R}^{\nu}\coloneqq U_R^{\nu}\times_{\spec R}\eta_R. \]
    By Step~1, there exists a $\delta\in \NN$ \emph{depending only on $t$} so that 
    for each scheme-theoretic point $q\in U_{\eta_R}$,
    the cardinality of geometric irreducible components of the fibre of 
    $U^{\nu}_{\eta_R} \to U_{\eta_R}$ over $q$ is $\le \delta$.
    By \cite[Corollaire~(9.7.9)]{EGA-IV-3}, 
    there is an open dense subset $W_R\subset U_R$ containing $U_{\eta_R}$
    such that the cardinality of geometric irreducible 
    components of the fibre of 
    \[ W_R\times_{S_R}{S_R^{\nu}}\to W_R \]
    over each closed point of $W_R$ is $\le \delta$.  Set
    \[ T_R\coloneqq U_R\setminus W_R. \]
    As $T_R$ is quasi-compact, $T_R$ admits a finite affine open cover $\spec A_i$ with
    $i=1,\dots, l$.  Since $T_R\cap U_{\eta_R} = \emptyset$, we have
    $A_i\otimes_R K = \emptyset$ for each $1\le i\le l$, where $K$ is the fraction field of $R$.
    Then there exists a nonzero $a_i\in R$ so that the image of $a_i$ in $A_i$ is zero.  Let
    \[ \alpha \coloneqq a_1\cdot a_2\cdots a_l, \]
    which is a nonzero element in $R$.  Denote by $R_{\alpha}$
    the localisation of $R$ by $\alpha$, which is also a regular ring of finite type over $L$.
    Then by assumption,
    \[ T_R\otimes_R R_{\alpha} = \emptyset. \]
    In other words, the inverse image of $\spec R_{\alpha}$ in $U_R$ is contained in $W_R$.
    Replacing $R$ by $R_{\alpha}$, we can assume that for each 
    scheme-theoretic point $u\in U_R$, the cardinality of geometric irreducible components of the fibre of 
    $U^{\nu}_R \to U_R$ over $u$ is $\le \delta$.
    Moreover, as $S_R$ and irreducible components of $Z_R$ are flat over $\spec R$,
    we can assume for each scheme-theoretic point $b\in \spec R$,
    the closed subset $Z_{R,b}$ has codimension $\ge 2$ in $S_{R,b}$,
    where $Z_{R,b}, S_{R,b}$ are the fibres of $Z_R, S_R$ over $b$ respectively; 
    see \cite[Corollaire~(14.2.2)]{EGA-IV-3}.
    
    \medskip
    \emph{Step~3}.
    Pick a general closed point $p\in \spec R$.  Denote by $L'$ the residue field of $\spec R$ at $p$,
    which is finite over $L$.
    Denote by $X_{L'}, Z_{L'}, S_{L'}, S_{L'}^{\nu}$ the fibres of $X_R, Z_R, S_R, S_R^{\nu}$ over $p$ respectively.
    Then
    \[ X_{L'} = X_L\otimes_L L',~S_{L'} = S_L\otimes_L L',~\mbox{and}~S_{L'}^{\nu} = S_L^{\nu}\otimes_L L'. \]
    Again by \cite[Proposition~(4.5.1)]{EGA-IV-2}, $S_{L'}$ is irreducible.
    As irreducible components of $B_R$ are flat over $\spec R$, 
    the restriction of $B_R$ to $X_{L'}$ is well-defined,
    which we denote by $B_{L'}$.  
    It is evident that $B_{L'}$ is the flat pullback of $B_L$ to $X_{L'}$.
    
    As $k$ is algebraically closed, there is an embedding between fields
    \[ \tau\colon L'\to k \]
    so that the composite homomorphism $L \to L'\xrightarrow{\tau}k$
    is equal to $L\to k$.  Then we have
    \[ X = X_{L'}\otimes_{L'}k,~S=S_{L'}\otimes_{L'}k, ~\mbox{and}~S^{\nu} = S_{L'}^{\nu}\otimes_{L'}k, \]
    where the fibre products are form via $\tau\colon L'\to k$.  Let
    \[ Z \coloneqq Z_{L'}\otimes_{L'}k~~\mbox{and}~~ U\coloneqq S\setminus Z. \]
    It is evident that $Z$ has codimension $\ge 2$ in $S$.
    Moreover, by Step~2, for each closed point $s\in U$, the fibre of $S^{\nu}\to S$ over $s$
    has cardinality $\le \delta$.  
    Note that $\delta$ \emph{depends only on $t$}, but not on the field $k$.
    This proves the desired result
    when $k$ is an algebraically closed field.
    
    \medskip
    \emph{Step~4}.
    Now we show that the result holds for arbitrary field $k$ of characteristic zero.
    Denote by $\bar{k}$ the algebraic closure of $k$.
    Let $\bar{X}\coloneqq X\otimes_k \bar{k}$, and let $\bar{B}$
    be the flat pullback of $B$ to $\bar{X}$.
    By \S\ref{ss-pullback-R-Cartier-pairs}, $(\bar{X}, \bar{B})$ is a 
    $\QQ$-Gorenstein lc pair over $\bar{k}$.
    Let $\bar{S}$ be the base change of $S$ to $\spec \bar{k}$.
    Denote by $\bar{S}_1,\dots, \bar{S}_r$ the irreducible components of $\bar{S}$.
    Then each $\bar{S}_i$ has coefficient $\ge t$ in $\bar{B}$.
    For each $\bar{S}_i$, let $\bar{S}_i^{\nu}\to \bar{S}_i$ be the normalisation of $\bar{S}_i$.
    Let $S^{\nu}\to S$ be the normalisation of $S$.
    Then by \cite[Corollaire~(6.14.5)]{EGA-IV-2}, we see that $S^{\nu}\otimes_k \bar{k}$
    is equal to the disjoint union $\bar{S}_1^{\nu}\cup \cdots \cup \bar{S}_r^{\nu}$.
    Let $\bar{Z}_i\subset \bar{S}_i$ be a closed subset of codimension $\ge 2$ in $\bar{S}_i$
    as in Step~3.  Let $k'/k$ be a finite extension so that there are closed subschemes $Z_i'\subset X'\coloneqq X\otimes_k k'$
    satisfying $\bar{Z}_i = Z_i'\otimes_{k'}\bar{k}$.
    As $X'\to X$ is finite, the image $T_i$ of $Z_i'$ in $X$ is contained in $S$, and $T_i$
    is of codimension $\ge 2$ in $S$.  Let
    \[ T \coloneqq T_1\cup \cdots \cup T_r. \]
    Then $T$ is a closed subset of $S$ of codimension $\ge 2$.
    Let $s\in S\setminus T$ be an arbitrary closed point.
    Pick one of the inverse images $\bar{u}\in S\otimes_k \bar{k}$ of $s$ in $\bar{X}$. 
    By construction, if $\bar{u}\in \bar{S}_i$ for some $1\le i\le r$,
    then $\bar{u}\in \bar{S}_i\setminus \bar{Z}_i$.
    Note that $\bar{u}$ can be contained in several $\bar{S}_i$'s.

    If $\dim X = 2$, by \cite[Lemma~4.4]{birkar2025steindegreelogcalabiyau}, there exists
    an $N(t)\in \NN$ \emph{depending only on $t$} so that the number of $\bar{S}_i$'s
    passing through $\bar u$ is $\le N(t)$.  Now assume that $\dim X \ge 3$.  Set
    \[ \bar{P}\coloneqq \bigcup_{\substack{J\subseteq \Set{1,\dots, r} \\ \abs{J}=N(t)+1}}
    \bigg(\bigcap_{j\in J} \bar{S}_j\bigg). \]
    As $\dim (\bar{S}_i\cap \bar{S}_{i'}) \le \dim X -2$ for any distinct $i, i'$,
    it is evident that $\dim \bar{P} \le \dim X - 2$.  We claim that 
    \begin{equation}\label{dim-wt-T-3}
        \dim \bar{P} \le \dim X - 3.
    \end{equation}
    To this end, assume that $\bar{P}$ admits an irreducible component $\Gamma$ with $\dim \Gamma = \dim X - 2$.
    By applying Bertini's theorem and adjunction inductively, we can pick a general 
    complete intersection surface $V\subset \bar{X}$ so that
    \begin{itemize}
        \item $V$ is normal and $\QQ$-Gorenstein,
        \item $(V, \bar{B}|_V)$ is lc, and
        \item $V\cap \Gamma$ is nonempty.
    \end{itemize}
    Taking $V$ sufficiently general, we can assume that each $V\cap \bar{S}_j$ is an irreducible curve.
    It is evident that each $V\cap \bar{S}_i$ has coefficient $\ge t$ in $\bar{B}|_V$.
    Pick $v\in V\cap \Gamma$.  Then by construction, there are $N(t)+1$ irreducible components of $\bar{B}|_V$
    with coefficient $\ge t$ passing through $v$, contradicting \cite[Lemma~4.4]{birkar2025steindegreelogcalabiyau}.
    Thus, we can conclude that \eqref{dim-wt-T-3} holds.

    Let $k'/k$ be a finite extension such that there exists a closed subscheme $P'\subset X'$
    satisfying $\bar{P} = P'\otimes_{k'}\bar{k}$.
    As $X'\to X$ is finite, the image $P$ of $P'$ in $X$ is contained in $S$, and $P$
    is of codimension $\ge 3$ in $S$.  Let
    \[ Z \coloneqq T \cup P. \]
    By construction, $Z$ is a closed subset of $S$ of codimension $\ge 2$.
    For each closed point $s\in S\setminus Z$, and for each inverse image $\bar u\in S\otimes_k \bar{k}$,
    we see that the number of $\bar{S}_i$'s passing through $\bar{u}$ is $\le N(t)$.
    If $\bar{u}\not\in \bar{S}_i$, we set the fibre product 
    $\bar{S}_i^{\nu}\times_{\bar{S}_i} \spec k(\bar{u})$ as empty by convention.  Then
    it is evident that
    \[ \mf{n}(S^{\nu}\times_S \spec \ol{k(s)})
    = \sum_{i=1}^r \mf{n}(\bar{S}_i^{\nu}\times_{\bar{S}_i} \spec k(\bar{u})). \]
    By Step~3, each $\mf{n}(\bar{S}_i^{\nu}\times_{\bar{S}_i} \spec k(\bar{u}))$ 
    is bounded from above \emph{depending only on $t$}.
    Hence $\mf{n}(S^{\nu}\times_S \spec \ol{k(s)})$, which is equal to the geometric normal multiplicity 
    of $S$ at $s$, is bounded from above \emph{depending only on $t$}.
\end{proof}


\subsection{Vertical Stein degree}

\begin{definition}\label{stable-CY-defn}
    Let $k$ be a field of characteristic zero.
    A \emph{stable log Calabi-Yau fibration} 
    \[ \Set{(X, B), A}\to Z~\mbox{over}~ k \]
    consists of $(X, B)\to Z$ and an $\RR$-divisor $A\ge 0$, where
    \begin{itemize}
        \item $(X, B)$ is an lc pair over $k$,
        \item $X\to Z$ is a contraction over $k$ such that $K_X + B \sim_{\RR}0/Z$,
        \item $A$ is ample$/Z$, and
        \item $(X, B + tA)$ is lc for some $t>0$.  
    \end{itemize}
    If $\dim Z = 0$, we call $\Set{(X, B), A}$ a \emph{stable log Calabi-Yau pair over $k$}.
\end{definition}


\begin{lemma}\label{complement-coeffs}
    Let $n\in \NN$.  Let $k$ be a field of characteristic zero.
    Let $X$ be a normal $k$-scheme, which may not be geometrically irreducible, 
    and let $D$ be a $\QQ$-divisor on $X$.  Assume $(X, D)$ is lc and that
    \[ n(K_X + D)~\mbox{is Cartier}. \]
    Let $W$ be a codimension two irreducible closed subscheme of $X$ contained in $\rddown{D}$.  
    If $W$ is not an lc centre of $(X, D)$, then any Weil divisor is $\QQ$-Cartier 
    along $W$, and the Cartier index along $W$ is bounded from above depending only on $n$.
\end{lemma}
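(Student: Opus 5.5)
The statement is local at the generic point of $W$, so first I will localise: replace $X$ by $\spec \mcO_{X,\eta_W}$, where $\eta_W$ is the generic point of $W$. This is a two-dimensional normal excellent local scheme of characteristic zero. Its residue field $k(\eta_W)$ need not be algebraically closed, and this is why the lemma allows $X$ not to be geometrically irreducible. Being $\QQ$-Cartier along $W$ and the Cartier index are both detected on this local ring, so it suffices to show that on a two-dimensional lc germ $(X\ni x, D)$ with $x$ not an lc centre, $x\in\rddown{D}$, and $n(K_X+D)$ Cartier, every Weil divisor is $\QQ$-Cartier with index bounded in terms of $n$.

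Next I would reduce to surfaces over an algebraically closed field. To do this I spread out and pass to a geometric situation, as in \S\ref{s-descent} and Proposition~\ref{normal-mult-bnd-general-fields}: take a general complete intersection surface through a general closed point of $W$, over $\bar k$, by Bertini and adjunction. This preserves the lc condition, the condition that the point is not an lc centre, the hypothesis $\rddown{D}\ni$ point, and the condition that $n(K+D)$ is Cartier. Alternatively, I can base change the complete local ring to the completion of a strict henselisation. Either way, the index of a Weil divisor does not increase under the faithfully flat étale or completion base change, and it is recovered by descent (compare Lemma~\ref{descend-R-Cartier} and Lemma~\ref{Galois-descent-R-Cartier}). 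I expect this reduction to be the main technical obstacle. The difficulty is to make sure that the class group of the local surface germ behaves well under the base change, so that a bound for geometric branches gives a bound for the original germ, and that "not an lc centre" survives after the geometric fibre splits into several points.

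On an algebraically closed surface germ $(X\ni x, D)$, lc but with $x$ not an lc centre and $x\in \rddown{D}$, the classification of lc surface singularities gives the following. Since a component $C$ of $\rddown{D}$ passes through $x$, $(X,C)$ is lc but not dlt-degenerate at $x$, i.e.\ plt at $x$. Hence $X$ is klt at $x$, in fact a cyclic quotient singularity $\frac{1}{m}(1,q)$, $C$ is a toric boundary curve, and $C$ is the unique component of $\rddown{D}$ through $x$. By plt adjunction, $(K_X+D)|_{C^\nu}=K_{C^\nu}+\mathrm{Diff}$, and the coefficient of the different at $x$ is $1-\frac1m+\sum_j\frac{d_j(C\cdot D_j)_x}{1}$. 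Because $n(K_X+D)$ is Cartier, $n(K_{C^\nu}+\mathrm{Diff})$ is Cartier, so this coefficient lies in $\frac1n\ZZ\cap[0,1)$. Since it is at least $1-\frac1m$, we get $1-\frac1m\le 1-\frac1n$, that is $m\le n$. A cyclic quotient singularity of order $m$ has local class group $\ZZ/m$, so every Weil divisor is $\QQ$-Cartier with index dividing $m\le n$. Finally I would take $\mathrm{lcm}(1,\dots,n)$ as a uniform bound, and transport it back through the descent step.
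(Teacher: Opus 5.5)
Your treatment of the surface germ is essentially the paper's Step~3. There, $(X,C)$ is plt at $x$, and the coefficient of the different at $x$ is at least $1-\frac1m$ and lies in $\frac1n\mathbb{Z}\cap[0,1)$. Hence $m\le n$, and every Weil divisor has index dividing $m$. The paper phrases $m$ as Shokurov's index of $K_X+P$; you phrase it as the order of the cyclic quotient.

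\textbf{The gap is the reduction to that germ}, which you flag but do not supply. In your main route you cut $X\otimes_k\bar k$ by a general complete intersection surface $V$ through a general point $p$ of $W$, and you check that the hypotheses pass to $(V,D_V)$. But the argument needs the converse transfer: if $jH|_V$ is Cartier at $p$, then $jH$ is Cartier at $p$, and hence at $\eta_W$.

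Restriction to $V$ is not a flat base change, so your descent remark does not cover it. The implication is also not formal at points of higher codimension. On the threefold node $xy=zw$, the plane $x=z=0$ is not $\mathbb{Q}$-Cartier, yet every Weil divisor on a general hyperplane section through the vertex (an $A_1$ point) is.

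The missing idea is the paper's Step~2, a depth argument, and it is where $\operatorname{codim}(W,X)=2$ is really used:
\begin{itemize}
\item $\mathcal{O}_{X,\eta_W}$ is a two-dimensional normal local ring, so the rank-one reflexive modules $\mathcal{O}_X(jH)_{\eta_W}$ are Cohen--Macaulay.
\item After shrinking around $\eta_W$, the local equations of the hyperplanes form a regular sequence on $M_j=\mathcal{O}_X(jH)_p$.
\item So $M_j/IM_j$ is Cohen--Macaulay, hence reflexive of rank one, hence equal to $\mathcal{O}_V(jH|_V)_p$.
\item A generator of $\mathcal{O}_V(jH|_V)_p$ then lifts to a generator of $M_j$ by Nakayama.
\end{itemize}
In your cutting route, the two-dimensionality of $\mathcal{O}_{X,\eta_W}$ that you note in your first step is never exploited, and this is exactly where it is needed.

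Your alternative is to base change $A=\mathcal{O}_{X,\eta_W}$ to the completion $\widehat{A^{\mathrm{sh}}}$ of its strict henselisation. This would avoid cutting altogether. Your worry about the fibre splitting does not arise there, since the closed fibre is a single point. Faithfully flat descent of freeness of the reflexive modules $\mathcal{O}(jH)$ would then pull the bound back.

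However, $\widehat{A^{\mathrm{sh}}}$ is a complete local ring with residue field $\overline{k(\eta_W)}$, not a surface over an algebraically closed field. You would still have to justify, for such rings, the surface inputs you use: $\mathbb{Q}$-factoriality of dlt germs, the plt classification, the formula for the different, and $\operatorname{Cl}\cong\mathbb{Z}/m$. You would also need compatibility of canonical modules, log resolutions and lc centres with the regular map $A\to\widehat{A^{\mathrm{sh}}}$. None of this is carried out.

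The paper avoids these issues by first base changing the variety to $\bar k$. There each component $\bar W_i$ of $W\otimes_k\bar k$ is still not an lc centre, and freeness at a single $\bar\eta_i$ descends along the faithfully flat map $\mathcal{O}_{X,\eta_W}\to\mathcal{O}_{\bar X,\bar\eta_i}$. Only then does it cut down to a surface by the depth argument above.
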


\begin{proof}
    \emph{Step~1}.
    In this step, we reduce to the case of pairs over algebraically closed fields.
    Taking the Stein factorisation of $X\to \spec k$, we can assume
    that $X$ is geometrically irreducible.
    Let $\bar{k}$ be the algebraic closure of $k$ and $\bar{X}\coloneqq X\otimes_k\bar{k}$.
    Denote by $\bar{D}$ the flat pullback of $D$ to $\bar{X}$.
    By \S\ref{ss-descent-b-nef} (5), $(\bar{X}, \bar{D})$ is an lc pair over $\bar{k}$.
    Let $\bar{W}_1,\dots, \bar{W}_r$ be the irreducible components of $W\otimes_k\bar{k}$.
    It is evident that each $\bar{W}_i$ is a closed subscheme of $\rddown{\bar{D}}$
    of codimension $\ge 2$.  Again by \S\ref{ss-descent-b-nef} (5),
    every $\bar{W}_i$ is not an lc centre of $(\bar{X}, \bar{D})$.
    Let $H$ be an arbitrary Weil divisor containing $W$,
    and let $\bar{H}$ be the flat pullback of $H$ to $\bar{X}$.
    Then every $\bar{W}_i$ is contained in $\Supp \bar{H}$.
    Assume that $\bar{H}$ is $\QQ$-Cartier along each $\bar{W}_i$
    and that the Cartier index is $\ell\in \NN$, i.e. $\ell\bar{H}$ is Cartier along each $\bar{W}_i$.
    Let $\eta_W$ be the generic point of $W$, and let $\bar{\eta}_i$
    be the generic point of $\bar{W}_i$.  Then the local homomorphism
    \[ \mcO_{X, \eta_W} \to \mcO_{\bar{X}, \bar{\eta}_i} \]
    is faithfully flat.  Let 
    $\mcO_X(\ell H)_{\eta_W}, \mcO_{\bar{X}}(\ell \bar{H})_{\bar{\eta}_i}$
    be the localisations of $\mcO_X(\ell H), \mcO_{\bar{X}}(\ell\bar{H})$ at $\eta_W, \bar{\eta}_i$ respectively.
    By \cite[Propositions~1.6, 1,8]{Hart-stable-refl}, we have
    \[ \mcO_X(\ell H)_{\eta_W} \otimes_{\mcO_{X,\eta_W}} \mcO_{\bar X,\bar\eta_i}
       \cong \mcO_{\bar X}(\ell\bar{H})_{\bar{\eta}_i}. \]
    As $\ell \bar{H}$ is Cartier, $\mcO_{\bar X}(\ell\bar{H})_{\bar{\eta}_i}$ is free of rank one.
    Then by \cite[Proposition~14.48]{GW-agbook}, $\mcO_X(\ell H)_{\eta_W}$
    is also free of rank one.  Thus, $\ell H$ is Cartier along $W$.
    Hence in the rest of proof, we can assume that $k$ is algebraically closed.

    \medskip
    \emph{Step~2}.
    In this step, we reduce to the case when $X$ is a surface.
    To this end, we assume the lemma has been established for lc surface pairs
    over an algebraically closed field of characteristic zero;
    in particular, let $N$ be the upper bound of Cartier indices in this case,
    which depends only on $n$.
    Let $H$ be an arbitrary Weil divisor on $X$ such that $\Supp H$ contains $W$.
    For each $1\le j\le N$, let
    \[ \mcM_j\coloneqq \mcO_X(jH), \]
    which is a reflexive sheaf of rank one.
    As $\mcO_{X, \eta_W}$ is a normal local ring of dimension 2, it is Cohen-Macaulay;
    see \cite[\S2.2]{Cohen-Macaulay-rings-book}.
    Denote by $(\mcM_j)_{\eta_W}$ the localisation of $\mcM_j$ at $\eta_W$.
    Since $\mcM_j$ is reflexive, by \cite[Proposition~1.4.1]{Cohen-Macaulay-rings-book},
    $(\mcM_j)_{\eta_W}$ is also Cohen-Macaulay.
    Thus, by \cite[Corollaire~(6.11.3)]{EGA-IV-2}, and by shrinking $X$ near $\eta_W$, we can assume that 
    $X$ and all the sheaves $\mcM_1,\dots, \mcM_N$ are Cohen-Macaulay.

    Let $d\coloneqq \dim X$, and let $L_1, \dots, L_{d-2}$ be general hyperplane sections of $X$.  Set
    \[ V\coloneqq X\cap L_1\cap \cdots \cap L_{d-2}. \]
    By \cite[Th\'eor\`eme~6.10]{jou-bertini} and \cite[3.4.9]{joins-intersections}, $V$ is a normal $k$-variety.
    Moreover, by adjunction \cite[\S4.1]{Kol_singularities_of_MMP}, we can write
    \[ K_V + D_V = (K_X+D+L_1+\cdots + L_{d-2})|_V, \]
    where $(V, D_V)$ is an lc pair.  As $L_1,\dots, L_{d-2}$ are Cartier, we see that
    \[ n(K_V+D_V)~\mbox{is Cartier}. \]
    Note that $D_V$ is equal to the restriction of $D$ to $V$.
    As $\codim (W, X) = 2$, $W\cap V$ consists of finitely many closed points.
    Pick $p\in W\cap V$.
    It is evident that $p\in \rddown{D_V}$, and that $p$ is not an lc centre of $(V, D_V)$.  Let
    \[ A\coloneqq \mcO_{X, p}~~\mbox{and}~~\bar{A}\coloneqq \mcO_{V,p}. \]
    Moreover, let $g_1, \dots, g_{d-2}$ be local defining equations of $V$ near $p$,
    where each $L_i$ is defined by $g_i$.
    For each $g_i$, let $g_{i,p}$ be its image in $A$, and let
    \[ I \coloneqq (g_{1,p}, \dots, g_{d-2,p})\subset A. \]
    Then we have 
    \[ \bar{A} = A/I. \]
    For each $1\le j\le N$, let $M_j$ be the localisation of $\mcM_j$ at $p$.
    Then by \cite[Theorem~2.1.2]{Cohen-Macaulay-rings-book}, 
    $\Set{g_{1,p}, \dots, g_{d-2,p}}$ is a regular sequence for $A$ and all the $M_j$.
    Thus, by \cite[Chap.~3, Exercise~19]{AM-comm-algebra} and \cite[Theorem~2.1.3]{Cohen-Macaulay-rings-book}, each
    \[ \bar{M}_j\coloneqq M_j/I M_j \]
    is a Cohen-Macaulay $\bar{A}$-module of dimension two.
    Hence $\bar{M}_j$ is a reflexive $\bar{A}$-module by \cite[Proposition~1.4.1]{Cohen-Macaulay-rings-book}.
    Let $q\in V$ be a codimension one point such that $p$ is contained 
    in the closure of $q$.
    As $L_1,\dots, L_{d-2}$ are general, we can assume that
    $q$ is contained in the nonsingular locus of $X$.  Then
    \[ (\bar{M}_j)_q = (\mcM_j|_V)_q = (\mcO_V(jH|_V))_q, \]
    which is a free $\bar{A}_q$-module of rank one.
    Thus, $\bar{M}_j$ is a reflexive $\bar{A}$-module of rank one.
    By \cite[Proposition~1.6]{Hart-stable-refl}, we get
    \[ \bar{M}_j \cong \mcO_V(j H|_V)_p. \]
    We claim that $\bar{M}_j$ is free over $\bar{A}$ if and only if $M_j$ is free over $A$.
    It is evident that $\bar{M}_j$ is free if $M_j$ is so.
    Conversely, assume that $\bar{M}_j$ is a free $\bar{A}$-module.
    In this case, as $\bar{M}_j$ has rank one, $\bar{M}_j$ is generated by a single element $\bar{e}\in \bar{M}_j$
    as an $\bar{A}$-module.  Let $e\in M_j$ so that the image of $e$ in $\bar{M}_j$ is $\bar{e}$.  Then
    \[ M_j = A e+ I M_j. \]
    By Nakayama's lemma \cite[page~8]{CA-Mat}, we have $M_j = Ae$.
    As $M_j$ is torsion free, we can conclude that $M_j$ is an free $A$-module generated by $e$,
    whence the claim.

    By taking $L_1,\dots, L_{d-2}$ sufficiently generally, 
    we can assume that for each $1\le j\le N$, if $\mcM_j$ is locally free at $\eta_W$,
    then it is locally free at $p\in W\cap V$.  Hence by the discussion as above, for each $1\le j\le N$,
    we have that 
    \[ jH~\mbox{is Cartier at}~\eta_W~\Longleftrightarrow~jH|_V~\mbox{is Cartier at}~p. \]
    By assumption, there exists a $1\le j\le N$ so that $j H|_V$ is Cartier at $p$.
    Then we deduce that $H$ is $\QQ$-Cartier along $W$, and its Cartier index is also $\le N$.

    \medskip
    \emph{Step~3}.
    Now we treat the case when $(X, D)$ is an lc surface pair over an algebraically closed field.
    In this case, $W$ is a closed point of $X$ contained in $\rddown{D}$.
    Let $P$ be an irreducible component of $\rddown{D}$ containing $W$,
    and let $\nu\colon P^{\nu}\to P$ be the normalisation of $P$.  By adjunction, we can write
    \[ K_{P^{\nu}} + D_{P^{\nu}} = \nu^*(K_X + D). \]
    Since $n(K_X + D)\sim 0$, every coefficient of $D_{P^{\nu}}$ is of the form $l/n$ for some natural number
    $1\le l\le n$; cf. \cite[\S 4.1]{Kol_singularities_of_MMP}.
    As $W$ is not an lc centre, $(X, D)$ is log terminal near $W$.
    Thus, $X$ is $\QQ$-factorial near $W$ by \cite[Proposition~4.11]{km98}.
    Let $m\in \NN$ be the Cartier index of $K_X + P$ at $W$.
    By \cite[Proposition~3.9]{Sh-3-fold-log-flips}, $m=1$ if and only if $X$ is nonsingular along $P$,
    so we can assume that $m\ge 2$.  Let $p\in P^{\nu}$ be a closed point mapping to $W$.
    Then by \cite[Corollary~3.10]{Sh-3-fold-log-flips}, 
    \[ \coeff_p D_{P^{\nu}} = \frac{m-1}{m} + \sum \frac{k_i}{m}d_i, \]
    where $k_i$ are natural numbers and $d_i$ are coefficients of some irreducible components of $D$.
    As $W$ is not an lc centre of $(X, D)$, $p$ is not an lc centre of $(P^{\nu}, D_{P^{\nu}})$.
    Hence we get
    \[ \frac{m-1}{m}\le \coeff_p D_{P^{\nu}} \le \frac{n-1}{n}. \]
    Then we see that $m\le n$.  Since the Cartier index of any integral divisor along $W$ divides $m$ 
    \cite[Proposition~3.9]{Sh-3-fold-log-flips}, we get the desired result.
\end{proof}


\begin{lemma}[cf. \protect{\cite[Theorem~1.1]{birkar-geo-polar-variety}}]\label{vol-bir}
    Let $d\in \NN$ and $\epsilon\in \RR^{>0}$.
    Then there exists a natural number $m\in \NN$ 
    depending only on $d,\epsilon$ such that the following properties hold.

    Let $k$ be a field of characteristic zero.  Assume that
    \begin{itemize}
        \item $X$ is a projective $\epsilon$-lc $k$-variety of dimension $d$,
        \item $N$ is an integral divisor on $X$ that is nef and big over $\spec k$, and
        \item $N-K_X$ is pseudo-effective over $\spec k$.
    \end{itemize}
    Then $\abs{m' N + L}$ and $\abs{K_X + m'N + L}$ define birational maps for any natural number $m'\ge m$
    and any integral pseudo-effective $\QQ$-Cartier divisor $L$.
\end{lemma}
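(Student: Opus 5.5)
The plan is to reduce to the algebraically closed case, use \cite[Theorem~1.1]{birkar-geo-polar-variety} there, and then descend birationality back to $k$. The key point is that birationality of the map defined by a linear system is a geometric property. Fix $k$, $X$, $N$, $L$ and $m'\ge m$, where $m$ is the constant of \cite[Theorem~1.1]{birkar-geo-polar-variety} for $(d,\epsilon)$ over $\CC$. We need this $m$ to be independent of the algebraically closed field, which we arrange below via the universal-domain trick already used in the proofs of Lemma~\ref{l-ep-to-0} and Lemma~\ref{fano-bnd-very-ample}.

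\emph{Step 1 (base change to $\bar k$).} Let $\bar X\coloneqq X\otimes_k\bar k$, and let $\bar N,\bar L$ be the flat pullbacks of $N,L$. By \S\ref{ss-descent-b-nef}(5), $\bar X$ is $\epsilon$-lc, and $K_{\bar X}$ is the pullback of $K_X$. By Lemma~\ref{fppf-peff-K-k}, $\bar N$ is nef, $\bar N-K_{\bar X}$ and $\bar L$ are pseudo-effective, and $\bar N$ is big, since its volume equals that of $N$ by flat base change. By Lemma~\ref{descend-R-Cartier}, $\bar L$ is $\QQ$-Cartier.

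\emph{Step 2 (field-independence of $m$).} Spread out $(X,N,L)$ over a subfield $k'\subset\bar k$ finitely generated over $\QQ$, as in \S\ref{s-descent}. Embed $k'\hookrightarrow\CC$. The $\epsilon$-lc property, nefness, bigness and pseudo-effectivity all persist under the field extensions $k'\subset\bar k$ and $k'\subset\CC$, again by Lemma~\ref{fppf-peff-K-k} and \S\ref{ss-descent-b-nef}. We then apply \cite[Theorem~1.1]{birkar-geo-polar-variety} over $\CC$ to get that $|m'N_\CC+L_\CC|$ and $|K_{X_\CC}+m'N_\CC+L_\CC|$ define birational maps.

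\emph{Step 3 (descending birationality).} By flat base change, \cite[Proposition~III.9.3]{Hart}, we have $\hg^0(X_K,D_K)=\hg^0(X_{k'},D_{k'})\otimes_{k'}K$ for $K\in\{k,\bar k,\CC\}$ and $D=m'N+L$ or $K_X+m'N+L$. So the rational map given by the complete linear system over $K$ is the base change of the one over $k'$. A rational map $X_{k'}\dashrightarrow\PP^M_{k'}$ is birational onto its image if and only if its base change to some (equivalently, any) extension field is. Indeed, the image is formed compatibly with flat base change, and birationality of the dominant map is equivalent to the degree of the generic-point field extension being $1$. That degree is preserved under geometrically integral base change, because $X_{k'}$ is geometrically integral, being a $k'$-variety. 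This transfers birationality from $\CC$ to $k'$ and then to $k$.

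The main obstacle is Step 2. We have to ensure that every hypothesis, especially pseudo-effectivity of $N-K_X$ and of $L$, survives passage $k\to\bar k\to k'\to\CC$. Going down from $\bar k$ to $k'$ is fine by the "if and only if'' in Lemma~\ref{fppf-peff-K-k}, provided $N$, $L$ and $K_X$ are all defined over $k'$, which we may arrange. We also need $X$ geometrically integral at each stage, which holds since these are varieties. Once this is checked, Steps 1 and 3 are routine.
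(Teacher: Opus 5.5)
Your proposal is correct and follows essentially the same route as the paper: descend $(X,N,L)$ to a subfield $k'\subset k$ finitely generated over $\QQ$, embed $k'\hookrightarrow\CC$, apply \cite[Theorem~1.1]{birkar-geo-polar-variety} over $\CC$, and transfer birationality back via flat base change of global sections \cite[Proposition~III.9.3]{Hart}. Your Step~1 passage through $\bar k$ is unnecessary, since the paper spreads out directly over $k'\subset k$; your explicit degree-of-function-field argument in Step~3 spells out what the paper leaves to that citation.
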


\begin{proof}
    Let $N, L$ be integral divisors on $X$ as in the lemma.
    Let $k'\subset k$ be a subfield that is finitely generated over $\QQ$
    such that $X = X'\otimes_{k'}k$ for a $k'$-variety $X'$, and $N, L$
    are the flat pullbacks of integral divisors $N', L'$ on $X'$.
    Then $X'$ is a projective $\epsilon$-lc $k'$-variety of dimension $d$,
    $N'$ is nef and big over $\spec k'$, and $N' - K_{X'}$ is pseudo-effective over $\spec k'$.
    As $k'$ is finitely generated over $\QQ$, we can take an embedding of fields
    \[ \iota\colon k'\hookrightarrow \CC. \]
    Let 
    \[ X_{\CC}\coloneqq X'\otimes_{k'}\CC, \]
    where the fibre product is formed via $\iota$.
    Let $N_{\CC}, L_{\CC}$ be the flat pullbacks of $N', L'$ to $X_{\CC}$ respectively.
    Then it is evident that
    \begin{itemize}
        \item $X_{\CC}$ is a projective $\epsilon$-lc complex variety,
        \item $N_{\CC}$ is a nef and big integral divisor on $X_{\CC}$, and
        \item $N_{\CC} - K_{X_{\CC}}$ and $L_{\CC}$ are pseudo-effective.
    \end{itemize}
    By \cite[Theorem~1.1]{birkar-geo-polar-variety}, there exists an $m\in \NN$
    depending only on $d,\epsilon$ such that
    $\abs{m' N_{\CC} + L_{\CC}}$ and $\abs{K_{X_{\CC}} + m'N_{\CC} + L_{\CC}}$ 
    define birational maps for any natural number $m'\ge m$.
    By \cite[Proposition~III.9.3]{Hart}, $\abs{m' N + L}$ and $\abs{K_X + m' N + L}$
    also define birational maps.
\end{proof}


\begin{theorem}\label{vertical-thm}
    Let $\epsilon, t\in \RR^{>0}$ and $n,d\in \NN$.
    Let $k$ be a field of characteristic zero.
    Let 
    \[ f\colon (X, B)\to Z \]
    be a Fano type fibration over $k$ of relative dimension $d$ satisfying that
    \begin{itemize}
        \item $-K_X$ is ample$/Z$,
        \item $n(K_X + B)\sim 0/Z$, and
        \item $X$ is $\epsilon$-lc over the generic point of $Z$.        
    \end{itemize}
    Let $S$ be a prime divisor on $X$ such that
    \begin{itemize}
        \item the coefficient of $S$ in $B$ is $\ge t$, and
        \item the image $T$ of $S$ in $Z$ is a prime divisor.
    \end{itemize}
    Assume $\mc{H}_{\univ}(d,t)$ holds.
    Then $\sdeg(S^{\nor}/T)$ is bounded from above depending only on $n,d,\epsilon,t$,
    where $S^{\nor}$ is the normalisation of $S$.
\end{theorem}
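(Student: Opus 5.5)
We will follow the strategy of the relative dimension one case (Theorem~\ref{vertical-case-fib-dim-one}), replacing the surface arguments by the tools prepared in the preceding subsection: the bound on geometric normal multiplicity (Proposition~\ref{normal-mult-bnd-general-fields}), Lemma~\ref{complement-coeffs} on Cartier indices, and effective birationality (Lemma~\ref{vol-bir}).

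\emph{Reduction.} First shrink $Z$ near the generic point of $T$ so that $Z$ is nonsingular. Then cut by general hyperplane sections of $Z$ using Lemma~\ref{fano-type-induction} until $\dim Z=2$ and $T$ is a curve; all hypotheses are preserved, including $n(K_X+B)\sim 0/Z$, the ampleness of $-K_X$ over $Z$ and the $\epsilon$-lc property over the generic point. Next pass to a general closed point of $T$, or rather its geometric version. By Lemma~\ref{stein-generic-fibre} and Lemma~\ref{finite-fibre}, $\sdeg(S^{\nor}/T)$ equals the number of connected components of $S^{\nor}_{\bar u}$ for a general point $u\in T$, and this is at most $\mf{n}(S^{\nor}\times_T\bar u)$. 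Base change to $\bar k$ as in the proof of Theorem~\ref{vertical-case-fib-dim-one}. This reduces the theorem to bounding the number of irreducible components of $\bar S_{\bar u}$, counted through the normalisation, where $\bar S$ lies in a fibre $\bar X_{\bar u}$ of dimension $d$. By Proposition~\ref{normal-mult-bnd-general-fields}, away from codimension two in $S$ the normal multiplicity is bounded in terms of $t$. Since a general fibre over $T$ avoids these bad loci in codimension one, it suffices to bound the number of irreducible components of $\bar S_{\bar u}$ itself.

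\emph{Horizontal input.} Since $-K_X$ is ample$/Z$ and $K_X+B\sim_\QQ 0/Z$, the horizontal part of $B$ is big, and it has coefficients in $\frac1n\ZZ$. Choose a horizontal component $H$ with coefficient $\ge 1/n$. By $\mc H_{\univ}(d,t)$ (or $\mc H_{\univ}(d,1/n)$), $\sdeg(H^{\nor}/Z)$ is bounded. After running a $(-H)$-MMP over $Z$ as in Step~2 of the relative dimension one proof, we may assume $H$ is ample over $Z$ away from codimension two. Every component of $\bar S_{\bar u}$ then meets $\bar H_{\bar u}$, and $\bar H_{\bar u}$ has a bounded number of connected pieces. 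It therefore suffices to bound the number of components of $\bar S_{\bar u}$ meeting a single connected piece of $\bar H_{\bar u}$.

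\emph{Bounding the fibre.} On the generic fibre, $X$ is an $\epsilon$-lc Fano variety, so by Lemma~\ref{fano-bnd-very-ample} it is bounded: $-rK$ is very ample with bounded volume. The fibre $X_u$ over a general point of $T$ is a flat degeneration, so by Lemma~\ref{volume-fibre-constant} the total intersection number $(-rK_X)^{d}\cdot \bar X_{\bar u}$ is bounded. The class of $\bar X_{\bar u}=f^*u$ is a sum of its components with multiplicities. Each component $\bar S_i$ of $\bar S_{\bar u}$ gives a summand $(-rK_X)^{d}\cdot \bar S_i$, which is a positive integer provided $-rK_X$ is Cartier along $\bar S_i$. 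Thus the number of components is bounded by the volume once the Cartier index is controlled.

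\emph{Main obstacle.} The hard step is controlling the Cartier index, or more precisely a lower bound for each summand $(-rK_X)^d\cdot\bar S_i$, near the codimension two locus where the fibre meets $S$. For this, use Lemma~\ref{complement-coeffs} applied to $(X,B)$: the generic points of the intersections of distinct components of $\bar S_{\bar u}$, and of $\bar S_{\bar u}$ with $H$, are codimension two subsets inside $\rddown{B}$ after a suitable modification. After increasing coefficients using the complement, they are either non-lc-centres, where the Cartier index is bounded by $n$, or lc centres, whose number is controlled by dlt models. If the Cartier index approach fails, a fallback is Lemma~\ref{vol-bir}: it gives a bounded $m$ with $|-mK|$ birational on the fibres, and the components of $\bar S_{\bar u}$ could then be separated by a bounded-degree linear system.
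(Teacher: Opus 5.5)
\textbf{There is a genuine gap.} It is exactly the step you label the main obstacle. Nothing in your proposal actually bounds the number of components of $\bar S_{\bar u}$, and the tools you suggest cannot do it on $X$.

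\emph{The volume count on $X$.} Since $X$ is $\epsilon$-lc only over the generic point of $Z$, the hypotheses give no control on the Cartier index of $K_X$ along $f^{-1}(T)$. So your volume count $\sum_i m_i(-rK_X)^d\cdot\bar S_i$ gives no lower bound on the individual summands.

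\emph{Lemma~\ref{complement-coeffs} does not apply on $X$.} It needs the relevant codimension-two locus to lie in $\rddown{D}$, but the components of $f^*T$ need not lie in $\rddown{B}$ (the coefficient of $S$ is only $\ge t$), or even in $\Supp B$. Even where it does apply (on the preimage of a general curve through $u$, where fibre components are divisors), it only controls the index at generic points of codimension-one subvarieties of the fibre components. For $d\ge 2$ this makes the restricted divisor integral as a Weil divisor, but gives no lower bound for top intersection numbers. That is why the paper needs Lemma~\ref{vol-bir}.

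\emph{The horizontal input does not help.} For $d\ge 2$, a $(-H)$-MMP over $Z$ ends with a Mori fibre space over some $Z'$ that need not be birational to $Z$. So $H$ need not become ample over $Z$, and nothing protects $S$ from being contracted. The step also uses $\mc{H}_{\univ}(d,1/n)$, which is not assumed. Finally, bounding the number of connected pieces of $\bar H_{\bar u}$ is useless here: these pieces are positive-dimensional, while Corollary~\ref{surface-comp-bnd} is a statement about points.

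\textbf{The missing idea} is to pass to a model on which the whole special fibre is log Calabi-Yau of bounded index, and then to handle the fact that $S$ may not survive there.

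\emph{The new model.} The paper takes a log resolution, gives coefficient $1$ to every component of $h^*T$, and adds the closure $A$ of a general member of $\lvert -lK_{X_\eta}\rvert$ (Lemma~\ref{fano-bnd-very-ample}). It then runs MMPs followed by an ample model, obtaining $Y$ with:
\begin{itemize}
\item $\Supp g^*T\subseteq\rddown{\Delta_Y}$;
\item $\ell(K_Y+\Delta_Y)\sim 0/Z$ for bounded $\ell$ (Theorem~\ref{t-bnd-compl-usual-local});
\item $A_Y$ an ample$/Z$ prime divisor containing no lc centre, whose fibre volumes lie in a finite set.
\end{itemize}

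\emph{Bounding the fibre components.} Take a component $R$ of the fibre over a general point of $T$. Adjunction to $R$, then Lemma~\ref{complement-coeffs} (which gives only that $q(\ell)A_{R^\nu}$ is integral), then Lemma~\ref{vol-bir} on a $\tfrac{1}{\ell}$-lc dlt model, combined with the volume lemma of Hacon--McKernan, give $\vol(A_Y|_{\bar R_i})\ge\delta(d,\ell)>0$ for each geometric component. This bounds $\sdeg(P^{\nor}/T)$ for every component $P$ of $g^*T$.

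\emph{The extraction step.} Because $S$ may have been contracted on $Y$, one extracts it and finds a component $J$ of $\hat P\cap\hat S$ of codimension one in both. Via adjunction, $\hat J$ is a horizontal component of coefficient $\ge t$ of the log Calabi-Yau fibration $\hat P^{\nor}\to\hat T$ of relative dimension $d$, and $\mc{H}_{\univ}(d,t)$ applies to it. Proposition~\ref{normal-mult-bnd-general-fields} and Lemma~\ref{sdeg-subvariety} then transfer the bound to $S$.

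This is where the hypothesis $\mc{H}_{\univ}(d,t)$ is genuinely used. Your proposal contains neither the modified model nor the extraction step.
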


\begin{proof}
    Denote by $\bar{k}$ the algebraic closure of $k$.
    If $\dim Z\ge 3$, applying Bertini's theorem and shrinking $Z$ near the generic point of $T$, 
    we can assume that $Z$ is a nonsingular surface and that $T$ is a nonsingular curve.  
    If $\dim Z = 1$, then $Z$ is a nonsingular curve over $k$, and $T$ is a closed point on $Z$.
    Denote by $k(T)$ the residue field of $Z$ at $T$.  As 
    \[ \sdeg\big((\AAA^1_k\times_{\spec k}S)^{\nor}/(\AAA_k^1\otimes_k k(T))\big) = \sdeg (S^{\nor}/\spec k(T)), \]
    we can also assume that $Z$ is a nonsingular surface by replacing $Z$ by $Z\times_{\spec k}\AAA_k^1$.

    \medskip

    \emph{Step~1}.
    In this step, we pick an appropriate log resolution of $X$.
    Let $\pi\colon W\to X$ be a log resolution of $(X, B + \Supp f^*T)$, 
    and let $h\colon W\to Z$ be the induced contraction.
    Denote by $\eta$ the generic point of $Z$, and 
    by $W_{\eta}, X_{\eta}$ the fibres of $W\to Z, X\to Z$ over $\eta$ respectively.
    Let $\pi_{\eta}\colon W_{\eta}\to X_{\eta}$ be the induced birational contraction,
    which is a log resolution of $(X_{\eta}, B|_{X_{\eta}})$.
    
    By Lemma~\ref{fano-bnd-very-ample}, we can take an $l\in \NN$
    and a finite set of natural numbers $\mf{R}$ depending only on $d, \epsilon$ 
    such that $-l K_{X_{\eta}}$ is very ample over $\spec K(Z)$ and that 
    \[ \vol_{/K(Z)}(-l K_{X_{\eta}}) \in \mf{R}. \]
    Let 
    \[ 0\le A_{\eta}\sim -l K_{X_{\eta}} \]
    be a general element of $\abs{-l K_{X_{\eta}}}$.
    Denote by $A_{\eta}^{\sim}$ the birational transform of $A_{\eta}$ to $W_{\eta}$.
    Then by taking $A_{\eta}$ sufficiently general, we can assume that
    $A_{\eta}$ is a prime divisor on $X_{\eta}$, and that
    \[ \pi_{\eta}^* A_{\eta} = A_{\eta}^{\sim}. \]
    In particular, we can assume that $\pi_{\eta}$ is a log resolution of $(X_{\eta}, B|_{X_{\eta}} + A_{\eta})$.
    Let $A$ be the closure of $A_{\eta}$ in $X$, which is a prime divisor on $X$.
    (Note that $A$ may not be $\QQ$-Cartier.)
    Let $A^{\sim}$ be the birational transform fo $A$ to $W$.
    By taking a higher log resolution without modifying $W_{\eta}$,
    we can assume that $\pi\colon W\to X$ is a log resolution of $(X, B + \Supp f^* T + A)$.    

    \medskip
    
    \emph{Step~2}.
    In this step, we run a sequence of MMPs over $Z$ to produce a boundary divisor 
    whose irreducible components over $T$ all have coefficient one.
    Let $B_h$ be the horizontal$/Z$ part of $B$ and
    $B_h^{\sim}$ the birational transform of $B_h$ to $W$.
    Denote by $\Ex(\pi)$ the reduced exceptional divisor of $\pi$.  
    Shrinking $Z$ around the generic point of $T$, 
    we can assume that each vertical$/Z$ irreducible component of $\Ex(\pi)$ dominates $T$.
    Write
    \[ K_W+B_W = \pi^*(K_X+B). \]
    Let $\Delta_W$ be the $\QQ$-divisor on $W$ that satisfies the following properties:
    assume $E$ is a prime divisor on $W$,
    \begin{itemize}
        \item if $E\subseteq \Supp B_h^{\sim}$, $\coeff_E \Delta_W = \coeff_E B_h^{\sim}$,
        \item if $E\subseteq \Supp h^*T$, $\coeff_E \Delta_W = 1$,
        \item if $E$ is horizontal$/Z$ and $E\subseteq \Ex(\pi)$, $\coeff_E \Delta_W = \max\Set{\coeff_E B_W, 1/2n}$, and
        \item $\coeff_E \Delta_W=0$ otherwise.
    \end{itemize}
    It is evident that 
    \[ B_W \le \Delta_W~~\mbox{and}~~\Delta_W\in \mb{T}_{2n}. \]
    By assumption, $B$ is $\QQ$-Cartier and ample$/Z$, hence $\pi^* B$ is nef and big over $Z$.
    It is evident that $\Delta_W\ge a\pi^*B$ for some sufficiently small $a\in \QQ^{>0}$.
    By \cite[Lemma~5.21]{lyu-takumi-2026relativeminimalmodelprogram}, we can take a sufficiently general
    \[ 0\le G\sim_{\QQ} a\pi^*B/Z \]
    so that it intersects $\Supp \Delta_W$ transversally.
    Moreover, pick $0<c\ll 1$ sufficiently small, and let
    \[ \wt{\Delta}_W \coloneqq (\Delta_W - a\pi^*B - c h^*T) + G. \]
    Then we get
    \[ 0\le \wt{\Delta}_W \sim_{\QQ}\Delta_W/Z. \]
    It is evident that $(W, \wt{\Delta}_W + (1/2)A^{\sim})$ is a klt pair over $k$.
    
    By Theorem~\ref{bchm-type-mmp}, we can run $\phi\colon W\dashrightarrow Y'$
    an MMP$/Z$ on the divisor $K_W + \wt{\Delta}_W + (1/2)A^{\sim}$ that ends with a good minimal model
    $Y'\to Z$ of $K_W + \wt{\Delta}_W + (1/2)A^{\sim}$.  As $\Delta_W\sim_{\QQ}\wt{\Delta}_W/Z$,
    $\phi\colon W\dashrightarrow Y'$ is also a $(K_W + \Delta_W + (1/2)A^{\sim})$-MMP over $Z$.  Let
    \[ \wt{\Delta}_{Y'} \coloneqq \phi_* \wt{\Delta}_W,~ \Delta_{Y'} \coloneqq \phi_*\Delta_W,~
    \mbox{and}~ A' \coloneqq \phi_*A^{\sim}. \]
    By construction in Step~1, $A^{\sim}$ is big$/Z$, hence by Lemma~\ref{no-van-big-divisor}, 
    $A'$ is also big$/Z$.
    Again by Theorem~\ref{bchm-type-mmp}, we can run $\varphi\colon Y'\bir Y''$ an MMP$/Z$
    on $K_{Y'} + \wt{\Delta}_{Y'}$ with scaling of $(1/2)A'$
    that ends with a good minimal model of $(Y', \wt{\Delta}_{Y'})$.  
    Then $\varphi\colon Y'\bir Y''$ is also an MMP$/Z$ on $K_{Y'}+\Delta_{Y'}$
    with scaling of $(1/2)A'$.
    Let
    \[ \wt{\Delta}_{Y''} \coloneqq \varphi_* \wt{\Delta}_{Y'},~ 
    \Delta_{Y''} \coloneqq \varphi_*\Delta_{Y'},~
    \mbox{and}~ A'' \coloneqq \varphi_*A'. \]
    Thus, we have that
    \[ K_{Y''} + \Delta_{Y''}\sim_{\QQ} K_{Y''} + \wt{\Delta}_{Y''}/Z \]
    is semiample$/Z$.  
    
    As $\varphi$ is an MMP$/Z$ with scaling of $(1/2)A'$,
    there exists a $\lambda>0$ such that
    \[ K_{Y''} + \wt{\Delta}_{Y''} + \lambda A'' ~\mbox{is nef}/Z. \]
    Then for any $0<\lambda_0<1$, 
    \[ (1-\lambda_0)(K_{Y''} + \wt{\Delta}_{Y''}) + \lambda_0
    (K_{Y''} + \wt{\Delta}_{Y''} + \lambda A'') = K_{Y''} + \wt{\Delta}_{Y''} + \lambda_0\lambda A''~
    \mbox{is nef}/Z. \]
    We can pick a sufficiently small $0<\lambda_0\ll 1$ so that 
    $(Y'', \wt{\Delta}_{Y''} + \lambda_0\lambda A'')$ is klt.
    As $\wt{\Delta}_{Y''}, A''$ are big$/Z$, the boundary $\wt{\Delta}_{Y''} + \lambda_0\lambda A''$
    is big$/Z$.  Then by \cite[Lemma~17.2, Corollary~17.8]{lyu-takumi-2026relativeminimalmodelprogram},
    we can conclude that $K_{Y''} + \wt{\Delta}_{Y''} + \lambda_0\lambda A''$ is semiample$/Z$.
    
    Let $\psi\colon Y''\to Y$ be the ample model of $K_{Y''} + \wt{\Delta}_{Y''} + \lambda_0\lambda A''$,
    that is, $\psi$ is a contraction so that $K_{Y''} + \wt{\Delta}_{Y''} + \lambda_0\lambda A''$
    is the pullback of an ample$/Z$ divisor on $Y$.
    \[\xymatrix{
    W\ar[d]_{\pi}\ar@{-->}[r]^{\phi} & Y'\ar@{-->}[r]^-{\varphi} & Y''\ar[r]^-{\psi} & Y \\
    X & & 
    }\]  
    Let
    \[ \wt{\Delta}_{Y} \coloneqq \psi_* \wt{\Delta}_{Y''},~ 
    \Delta_{Y} \coloneqq \psi_*\Delta_{Y''},~
    \mbox{and}~ A_Y \coloneqq \psi_*A''.  \]
    Since $K_{Y''}+\Delta_{Y''} + \lambda_0\lambda A''$ is big$/Z$, 
    $\psi$ is a birational contraction.  

    \medskip

    \emph{Step~3}.
    Let $Z^{\circ}\subset Z$ be an open subset over which $K_X+B+(1/2)A$ is ample,
    and let $X^{\circ}\subset X$ (respectively, $Y^{\circ}\subset Y$, 
    respectively, $Y''^{\circ}\subset Y''$, respectively, $W^{\circ}\subset W$)
    be the inverse image of $Z^{\circ}$ in $X$
    (respectively, respectively, in $Y$, in $Y''$, respectively, in $W$).
    Then $W^{\circ}\to X^{\circ}$ is the ample model of 
    $(K_W+\Delta_W+\lambda_0\lambda A^{\sim})|_{W^{\circ}}$.
    Since ample model is unique up to unique isomorphism 
    \cite[Lemma~26.3]{lyu-takumi-2026relativeminimalmodelprogram}, the pairs
    \[ (X^{\circ}, (B+\lambda_0\lambda A)|_{X^{\circ}})~~
    \mbox{and}~~ (Y^{\circ}, (\Delta_Y+\lambda_0\lambda A_Y)|_{Y^{\circ}}) \]
    are isomorphic.  Thus,
    \[ (K_Y+\Delta_Y)|_{Y^{\circ}}\sim_{\QQ}0/Z^{\circ}. \]
    In particular, the divisors $K_Y+\Delta_Y$ and $A_Y$ are $\QQ$-Cartier on $Y^{\circ}$.
    Write
    \[ E''^{\circ} \coloneqq (K_{Y''} + \Delta_{Y''})|_{Y''^{\circ}} - \psi^*((K_Y+\Delta_Y)|_{Y^{\circ}})~~
    \mbox{and}~~ F''^{\circ}\coloneqq \psi^*(A_Y|_{Y^{\circ}}) - A''|_{Y''^{\circ}}. \]
    Since $A''$ is the strict transform of $A_Y$ on $Y''$, the divisors $E''^{\circ}, F''^{\circ}$
    are exceptional over $Y^{\circ}$.  Moreover, by construction, we have
    \[ E''^{\circ} - \lambda_0\lambda F''^{\circ} \sim_{\QQ}0/Y^{\circ}. \]
    Then by negativity lemma (see Lemma~\ref{neg-lemma}), we have
    \[ E''^{\circ} = \lambda_0\lambda F''^{\circ} \ge 0. \]
    On the other hand, as $K_{Y''} + \Delta_{Y''}$ is semiample over $Z$, $E''^{\circ}$ is nef over $Y^{\circ}$.
    Again by Lemma~\ref{neg-lemma}, we get $E''^{\circ} \le 0$.  
    Therefore, we get 
    \[ E''^{\circ} = F''^{\circ} = 0. \]  
    Hence
    \[ (K_{Y''}+\Delta_{Y''})|_{Y''^{\circ}} = \psi^*((K_Y+\Delta_Y)|_{Y^{\circ}})\sim_{\QQ}0/Z^{\circ}. \]
    Since $K_{Y''}+\Delta_{Y''}$ is semiample$/Z$, by Lemma~\ref{semi-ample-codim-1},
    and by shrinking $Z$ near the generic point of $T$, we can assume that
    \[ K_{Y''}+\Delta_{Y''}\sim_{\QQ}0/Z. \]
    Pushing down to $Y$, we deduce that
    \[ K_Y+\Delta_Y\sim_{\QQ}0/Z. \]
    Hence $A_Y$ is an ample$/Z$ $\QQ$-Cartier prime divisor on $Y$.
    Moreover, since $(Y'', \wt{\Delta}_{Y''})$ is klt, 
    and since $\wt{\Delta}_{Y''}$ is big$/Z$, we see that $Y''$ is of Fano type over $Z$.
    Hence $Y$ is also of Fano type over $Z$.
    By construction, the coefficients of $\Delta_Y$ belong to the finite set $\mb T_{2n}$.
    By Theorem~\ref{t-bnd-compl-usual-local}, there exist an $\ell\in \NN$
    depending only on $d, n$ and a monotonic $\ell$-complement $K_Y + \Delta_Y^+$
    of $K_Y + \Delta_Y$ over the generic point of $T$.
    Set $E=\Delta_Y^+-\Delta_Y\geq 0$.
    Since both $K_Y+\Delta_Y$ and $K_Y+\Delta_Y^+$ are
    $\QQ$-linearly trivial over $Z$, we have
    $E\sim_{\QQ}0/Z$.  Effectivity implies that $E$ is vertical over $Z$. 
    Denote by $g$ the contraction $Y\to Z$.  
    After shrinking $Z$ around the generic point of $T$, we get
    \[ \Supp E\subseteq\Supp(g^*T)\subseteq\lfloor\Delta_Y\rfloor. \]
    As $(Y, \Delta_Y^+)$ is lc, $E=0$.  Hence $\ell(K_Y+\Delta_Y)\sim 0/Z$.
    
    Let $\gamma\colon V\to W, \beta\colon V\to Y$ be a common resolution, and let 
    $\alpha\colon V\to X$ the the composite birational contraction $\pi\circ\gamma$.  Write
    \[ K_V + B_V = \alpha^*(K_X+B)~~\mbox{and}~~K_V+\Delta_V = \beta^*(K_Y + \Delta_Y). \]
    Set
    \[ M \coloneqq \Delta_V - B_V = \beta^*(K_Y + \Delta_Y) - \alpha^*(K_X+B). \]
    Then $\beta_* M$ is equal to the pushdown of $\Delta_W-B_W$ to $Y$.  Hence $\beta_* M \ge 0$.
    Moreover, as $K_X+B\sim_{\QQ}0/Z$, we get $M\equiv 0/Y$.
    Thus, by Lemma~\ref{neg-lemma}, we have that $M\ge 0$.
    Let $S_V$ be the birational transform of $S$ to $V$.  Then we get
    \[ \coeff_{S_V} \Delta_V \ge \coeff_{S_V} B_V = \coeff_S B \ge t. \]
    Therefore, we can conclude that $a(S, Y, \Delta_Y)\le 1-t$.
    
    \medskip

    \emph{Step~4}.
    By construction, $\Supp (g^* T) \subseteq \rddown{\Delta_Y}$.
    In this step, we show that for every horizontal$/T$ 
    irreducible component $P$ of $g^*T$, $\sdeg(P^{\nor}/T)$
    is bounded from above depending only on $n,d, \epsilon$, 
    where $P^{\nor}$ is the normalisation of $P$.
    
    Shrinking $Z$ near the generic point of $T$, we can assume that $g\colon Y\to Z$ is flat. 
    Then by Lemma~\ref{volume-fibre-constant},
    for every scheme-theoretic point $z\in Z$, we have
    \[ \vol_{/k(z)}(A_Y|_{Y_z}) = \vol_{/k(u)}(A|_{X_u}), \]
    where $u\in Z$ is a general closed point, and $Y_z, X_u$ are the fibres of $Y\to Z, X\to Z$
    over $z, u$ respectively.  Thus, 
    \begin{equation}\label{vol-constant-in-pf}
        \vol_{/k(z)}(A_Y|_{Y_z}) \in \mf{R},
    \end{equation}
    where $\mf{R}$ is the finite set of natural numbers depending only on $d,\epsilon$ in Step~1.
    
    Let $H\subset Z$ be a general hyperplane section and $G \coloneqq g^* H$.
    By Bertini's Theorem \cite[3.4.9, 3.4.10]{joins-intersections}, $G$ is a normal $k$-variety.  Let
    \[ \Delta_G \coloneqq \Delta_Y|_G ~~\mbox{and}~~ A_G \coloneqq A_Y|_G. \]
    By construction, $A'$ contains no lc centre of $(Y', \Delta_{Y'})$.
    Then by the description of MMP$/Z$ steps in \S\ref{existence-MMP-g-pair},
    and by \cite[Lemma~20.4]{lyu-takumi-2026relativeminimalmodelprogram}, $A''$ does not 
    contain lc centre of $(Y'', \Delta_{Y''})$ neither.
    Moreover, by Steps~1 and 2, $(Y'', \Delta_{Y''} + \lambda_0\lambda A'')$
    is the crepant pullback of $(Y, \Delta_Y + \lambda_0\lambda A_Y)$.
    Hence $A_Y$ does not contain lc centre of $(Y, \Delta_Y)$.
    By construction, 
    \[ \ell (K_G + \Delta_G)\sim 0/H. \]
    Thus, by Definition~\ref{stable-CY-defn},
    $\Set{(G, \Delta_G), A_G}\to H$ is a stable Calabi-Yau fibration over $k$.
    
    Let $z$ be one of the closed points $T\cap H$.  Taking $H$ generally,
    we can assume that $z$ is a general closed point of $T$.
    Denote by $R$ the reduction of the fibre $G_z = Y_z$.
    Then $R$ is contained in $\rddown{\Delta_G}$.  Denote by $\nu\colon R^{\nu}\to R$ the normalisation of $R$.
    By adjunction \cite[\S 4.1]{Kol_singularities_of_MMP}, we can write
    \[ K_{R^{\nu}} + \Delta_{R^{\nu}} = \nu^*(K_G + \Delta_G), \]
    where the coefficients of $\Delta_{R^{\nu}}$ belong to a finite set of rational numbers depending only on $\ell$.
    Let
    \[ A_{R^{\nu}} \coloneqq \nu^* A_G. \]
    By \cite[Lemma~4.8]{Kol_singularities_of_MMP}, 
    there is no lc centre of $(R^{\nu}, \Delta_{R^{\nu}})$ contained in $A_{R^{\nu}}$.
    By Lemma~\ref{complement-coeffs}, $q(\ell) A_{R^{\nu}}$ is an integral divisor 
    for some $q(\ell)\in \NN$ depending only on $\ell$.

    Let 
    \[ \bar{R}\coloneqq R\otimes_{k(z)} \ol{k(z)}, \]
    where $k(z)$ denotes the residue field of $Z$ at $z$.
    Let $\bar{R}_1,\dots, \bar{R}_r$ be irreducible components of $\bar{R}$,
    and let $\bar{R}_i^{\nu}$ be the normalisation of $\bar{R}_i$.  
    By \cite[Corollaire~(6.14.5)]{EGA-IV-2}, we have
    \[ R^{\nu}\otimes_{k(z)}\ol{k(z)} = \bigsqcup_{i=1}^r \bar{R}_i^{\nu}. \]
    Let $\Delta_{\bar{R}_i^{\nu}}$ (respectively, $A_{\bar{R}_i^{\nu}}$) be the 
    flat pullback of $\Delta_{R^{\nu}}$ (respectively, of $A_{R^{\nu}}$) to $\bar{R}_i^{\nu}$.
    \[\xymatrix{
    \bar{R}\ar[d]\ar[r] & R\ar[d]\ar[r] & G\ar[d]\ar[r] & Y\ar[d] \\
    \spec \ol{k(z)}\ar[r] & z\ar[r] & H\ar[r] & Z
    }\]
    As $A_Y$ is ample$/Z$, every $A_{\bar{R}_i^{\nu}}$ is nonzero.
    Moreover, $q(\ell) A_{\bar{R}_i^{\nu}}$ is an integral $\QQ$-Cartier divisor
    that is ample over $\spec \ol{k(z)}$.
    By construction, we have
    \[ \ell (K_{R^{\nu}} + \Delta_{R^{\nu}}) \sim 0/\spec k(z). \]
    Hence
    \begin{equation}\label{ell-trivial}
        \ell(K_{\bar{R}_i^{\nu}} + \Delta_{\bar{R}_i^{\nu}}) \sim 0/\spec \ol{k(z)}.
    \end{equation}
    By Lemma~\ref{exist-g-dlt-model}, let 
    \[ \mu_i\colon (\bar{R}_i^{\dagger}, \Delta_{\bar{R}_i^{\dagger}})\to (\bar{R}_i^{\nu}, \Delta_{\bar{R}_i^{\nu}}) \]
    be a $\QQ$-factorial dlt model of $(\bar{R}_i^{\nu}, \Delta_{\bar{R}_i^{\nu}})$.  
    Then by \eqref{ell-trivial}, $\bar{R}_i^{\dagger}$ is $(1/\ell)$-lc.
    Denote by
    \[ q(\ell) A_{\bar{R}_i^{\dagger}} = \mu_i^* (q(\ell) A_{\bar{R}_i^{\nu}}). \]
    As $A_{\bar{R}_i^{\nu}}$ contains no lc centre of $(\bar{R}_i^{\nu}, \Delta_{\bar{R}_i^{\nu}})$,
    $q(\ell)A_{\bar{R}_i^{\dagger}}$ is an integral $\QQ$-Cartier divisor on $\bar{R}_i^{\dagger}$
    that is nef and big over $\spec \ol{k(z)}$.  Note that
    \[ q(\ell)A_{\bar{R}_i^{\dagger}} - K_{\bar{R}_i^{\dagger}} \sim_{\QQ} 
    q(\ell)A_{\bar{R}_i^{\dagger}} + \Delta_{\bar{R}_i^{\dagger}}, \]
    which is pseudo-effective over $\spec \ol{k(z)}$.
    By Lemma~\ref{vol-bir}, there exists an $m\in \NN$ depending only on $d, \ell$ such that
    \[ \abs{m q(\ell)A_{\bar{R}_i^{\dagger}}}~\mbox{defines a birational map}. \]
    Thus by \cite[Lemma~2.2]{Hacon-McKernan-06-bnd}, we get
    \begin{equation}\label{vol-lower-bnd}
        \vol_{/\ol{k(z)}}(A_{\bar{R}_i^{\nu}}) = \vol_{/\ol{k(z)}}(A_{\bar{R}_i^{\dagger}}) 
        \ge \delta\coloneqq \frac{1}{m^dq(\ell)^d},
    \end{equation}
    where the right hand side depends only on $d, \ell$.  Write 
    \[ [G\times_H \spec \ol{k(z)}] = \sum_{i=1}^r m_i [\bar{R}_i], \]
    where $m_i\in \NN$ is the multiplicity of $\bar{R}_i$ in geometric fibre of $G\to H$ over $z$,
    and the brackets denote the fundamental cycle of schemes; see \cite[\S3.1]{Kol-rc}.
    Then by \cite[Proposition~VI.2.7]{Kol-rc} and \cite[Proposition~III.9.3]{Hart}, we have
    \[ \vol_{/k(z)}(A_Y|_{Y_z}) = \sum_{i=1}^r m_i \vol_{/\ol{k(z)}}(A_Y|_{\bar{R}_i})
    = \sum_{i=1}^r m_i \vol_{/\ol{k(z)}}(A_Y|_{\bar{R}_i^{\nu}}). \]
    By \eqref{vol-constant-in-pf} and \eqref{vol-lower-bnd}, we deduce that
    \[ \delta r\le \sum_{i=1}^r m_i \vol_{/\ol{k(z)}}(A_Y|_{\bar{R}_i^{\nu}}) = \vol_{/k(z)}(A_Y|_{Y_z}) \in \mf{R}. \]
    Hence $r$ is bounded from above depending only on $d, \ell, \epsilon$.
    Since every geometric irreducible component of the fibre of $P^{\nor}\to T$ over $z$ is one of $\bar{R}_1,\dots, \bar{R}_r$,
    we can conclude that $\sdeg (P^{\nor}/T)$ is bounded from above depending only on $n,d, \epsilon$.

    \medskip
    \emph{Step~5}.
    We can assume that $\centre_Y S$ is not a divisor on $Y$.
    As $Y\to Z$ is of Fano type,
    by taking a $\QQ$-factorialisation \cite[Corollary~22.3]{lyu-takumi-2026relativeminimalmodelprogram},
    which is a small birational contraction,
    we can assume that $Y$ is $\QQ$-factorial.
    Let $P$ be a horizontal$/T$ irreducible component of $g^*T$ that contains $\centre_Y S$.
    Let $\hat{Y}\to Y$ be a birational contraction that extracts only the geometric valuation of $S$,
    where $\hat{Y}$ is $\QQ$-factorial; see Lemma~\ref{exist-g-dlt-model}.
    Write $K_{\hat{Y}} + \Delta_{\hat{Y}}$ as the pullback of $K_Y + \Delta_Y$.
    Denote by $\hat{P}, \hat{S}$ the centres of $P, S$ on $\hat{Y}$.
    We claim that $\hat{P}\cap \hat{S}$ has a horizontal$/T$ irreducible component $J$ such that 
    $J$ is of codimension one in $\hat{P}$ and $\hat{S}$.  
    Indeed, $\hat{P}\sim_{\QQ}-b\hat{S}/Y$ for some $b\in \QQ^{>0}$,
    so $\hat{P}$ is ample$/Y$; see \cite[\S 1.12.1]{Kol_singularities_of_MMP}.  
    Then $\hat{P}$ intersects every integral curve in the fibre of $\hat{S}\to \centre_Y S$, 
    which implies the existence of an irreducible component $J$ as desired.

    Let $\hat{P}^{\nor} \to \hat{P}$ be the normalisation.  Then by adjunction, we can write
    \[ K_{\hat{P}^{\nor}} + \Delta_{\hat{P}^{\nor}} = (K_{\hat Y} + \Delta_{\hat Y})|_{\hat{P}^{\nor}}. \]
    Let $\hat{J}$ be a prime divisor on $\hat{P}^{\nor}$ mapping onto $J\subset \hat{P}$.  Then
    by \cite[Theorem~4.9(3)]{Kol_singularities_of_MMP},
    \[ \coeff_{\hat{J}} \Delta_{\hat{P}^{\nor}} \ge t. \]
    Note that 
    \[ K_{\hat{P}^{\nor}} + \Delta_{\hat{P}^{\nor}} \sim_{\QQ}0/T. \]
    Let 
    \[ \hat{P}^{\nor}\to \hat{T} \to T \]
    be the Stein factorisation of $\hat{P}^{\nor} \to T$.  By Step~4, 
    \[ \deg(\hat{T}/T) \le \sdeg(\hat{P}^{\nor}/T) = \sdeg (P^{\nor}/T) \]
    is bounded from above depending only on $n,d,\epsilon$.  
    Denote by $\hat k$ the algebraic closure of $k$ in the function field $K(\hat T)$ of $\hat T$.  Then 
    \[ (\hat{P}^{\nor}, \Delta_{\hat{P}^{\nor}}) \to \hat{T} \]
    is a log Calabi-Yau fibration over $\hat k$.
    By assumption $\mc{H}_{\univ}(d, t)$, we see that $\sdeg(\hat{J}^{\nor}/\hat{T})$ is bounded from above depending
    only on $d, t$.  Since
    \[ \sdeg(\hat{J}^{\nor}/T) \le \sdeg(\hat{J}^{\nor}/\hat{T})\cdot \deg(\hat{T}/T), \]
    we can conclude that $\sdeg (\hat{J}^{\nor}/T)$ is bounded from above depending only on $n,d,t,\epsilon$.
    Since $\hat{J}\to J$ is a surjective morphism, and since $\dim \hat{J} = \dim J$, we see that
    \[ \sdeg (J^{\nor}/T) \le \sdeg (\hat{J}^{\nor}/T) \]
    is bounded from above depending only on $n, d, t, \epsilon$, 
    where $J^{\nor}$ is the normalisation of $J$.

    \medskip
    \emph{Step~6}.
    Let $\hat{S}^{\nor}\to \hat{S}$ be the normalisation of $\hat{S}$.  As $\coeff_{\hat{S}}\Delta_{\hat{Y}}\ge t$,
    by Proposition~\ref{normal-mult-bnd-general-fields}, 
    there exist a closed subset $\hat{U}\subset \hat{S}$ satisfying $\codim (\hat{U}, \hat{S})\ge 2$
    such that for every closed point $s\in \hat{S}\setminus \hat{U}$, 
    the geometric normal multiplicity of $\hat{S}$ at $s$ is bounded 
    from above depending only on $t$.  
    Let $I\subset \hat{S}^{\nor}$ be a prime divisor mapped onto $J$.
    As the geometric normal multiplicity of $\hat{S}$ outside $\hat{U}$ is bounded, we see that $\deg (I/J)$
    is bounded from above depending only on $t$.  
    Denote by $I^{\nor}\to I$ the normalisation of $I$.  
    Then by Lemma~\ref{stein-multiplicity},
    \[ \sdeg (I^{\nor}/T)\le \deg(I/J)\cdot \sdeg(J^{\nor}/T), \]
    which is bounded from above depending only on $n,d,t, \epsilon$.
    By Lemma~\ref{sdeg-subvariety}, we have 
    \[ \sdeg (\hat{S}^{\nor}/T)\le \sdeg(I^{\nor}/T). \]
    Thus, we can conclude that $\sdeg (S^{\nor}/T) = \sdeg (\hat{S}^{\nor}/T)$ 
    is bounded from above depending only on $n, d, \epsilon, t$.
\end{proof}


\section{\bf Proof of main results}\label{pf-main-results}


\subsection{Stein degree of horizontal divisors}

\begin{theorem}\label{pf-dim-d-t}
    Let $d\in \NN$ and let $t\in \RR^{>0}$.  
    Assume $\mcV_{\univ}(\ell,s)$ holds for each $1\le \ell\le d-1$ and each $s\in (0,1]$.
    Then $\mc{H}_{\univ}(d, t)$ holds.
\end{theorem}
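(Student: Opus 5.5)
We argue by induction on $d$; the case $d=1$ is Lemma~\ref{base-case-r-dim=1}, so assume $d\ge 2$. As in the proof of $\mc{V}_{\univ}(1,t)$, we first reduce to a Fano type situation in which $S$ spans a whole fibre structure. By Lemma~\ref{stein-generic-fibre}, $\sdeg(S^{\nor}/Z)$ equals the number of connected components of the geometric generic fibre of $S^{\nor}\to Z$. Replacing $Z$ by its generic point (or shrinking $Z$), the statement becomes a statement over the field $K(Z)$. By \S\ref{ss-descent-b-nef} the g-pair stays g-lc and $K_X+B+\mb{M}_X\sim_{\RR}0$ over this field. Since $t$ is fixed and the bound must not depend on the field, we may therefore assume $\dim Z=0$ after base change. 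The task is then to bound the number of geometric connected components of $S^{\nor}$ for a projective g-lc log Calabi-Yau $(X,B+\mb{M})$ of dimension $d$ over a field $k$, with $\coeff_S B\ge t$.

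Next we take a $\QQ$-factorial g-dlt model (Lemma~\ref{exist-g-dlt-model}) and run an MMP on $-S$, or more precisely on $K_X+B-tS+\mb{M}_X\sim_{\RR}-tS$, using Theorem~\ref{mmp-g-pairs}(a). Because $S$ is effective and nonzero on a Calabi-Yau, $-S$ is not pseudo-effective, so this MMP ends with a Mori fibre space $X'\to V$ on which $S'$ is ample over $V$. We then split into cases according to $\dim V$.

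\emph{Case $\dim V=0$.} Then $X'$ is a Fano type variety of Picard number one. We pass to a bounded complement via Theorem~\ref{thm:global-complements-nonclosed-hyperstandard}, after perturbing the coefficients to a finite set below $t$ and absorbing $\mb{M}$ as in the proof of Theorem~\ref{vertical-case-fib-dim-one}. We then argue with volumes and the lc-threshold as in Theorem~\ref{vertical-thm}, Step~4: each geometric component $\bar S_i$ of $S'$ is ample, so it has intersection number at least $1$ with a fixed bounded very ample class. Their total is bounded by $-K$-degree considerations, which bounds the number of components.

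\emph{Case $\dim V>0$.} Then $X'\to V$ is a Fano type fibration of relative dimension $\ell<d$. If $S'$ is horizontal over $V$, we apply the induction hypothesis $\mc{H}_{\univ}(\ell,t)$ together with an adjunction to $V$ (Theorem~\ref{t-adj-fib-spaces}), and bound the Stein degree of $V$ over $k$ by induction in dimension $\dim V<d$, using the discriminant component. If $S'$ is vertical over $V$, ampleness of $S'$ over $V$ forces $S'$ to map onto a prime divisor $T\subset V$. Then $\mcV_{\univ}(\ell,t)$ bounds $\sdeg(S'^{\nor}/T)$. By Lemma~\ref{l-adj-fib-eps-lc}, $T$ is a component of the discriminant $B_V$ with coefficient $\ge t$, so induction on dimension ($\mc{H}_{\univ}(\dim V,t)$ for the g-pair $(V,B_V+\mb{N})$) bounds the number of geometric components of $T^{\nor}$. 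The two bounds combine via Lemma~\ref{stein-multiplicity}.

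The main obstacle is making the induction on $\dim V$ legitimate, since $\dim V$ can equal $d-1$ while the pair on $V$ is generalised. We must also ensure that Stein degree over $k$ is controlled for $S'$ versus $S$, via Lemma~\ref{sdeg-subvariety} and the fact that $S$ is not contracted by the $(-S)$-MMP (negativity). A further difficulty is the $\dim V=0$ case, which needs a field-independent volume bound; here we expect to invoke Lemma~\ref{fano-bnd-very-ample} after reducing to an $\epsilon$-lc Fano via the complement and Lemma~\ref{l-ep-to-0}.
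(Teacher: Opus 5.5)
\textbf{There is a genuine gap, and it sits in your case $\dim V=0$.} Your reduction to the generic fibre, the first MMP on $K_X+B-\alpha S+\mb{M}_X\sim_{\RR}-\alpha S$, and the case $\dim V>0$ with $S'$ horizontal are fine and match the paper's Step~1. However, your vertical subcase there is vacuous. Since $X'\to V$ is a Mori fibre space of $-S'$, $S'$ is ample over $V$ and so dominates $V$. Also $\sdeg(V/k)=1$, because $V$ is the base of a contraction of a geometrically integral $X'$, so no induction on $V$ is needed. Consequently $\mc{V}_{\univ}$ is never genuinely used in your plan, and all the difficulty falls into the case $\dim V=0$.

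\textbf{Why that case fails as written.} When $\dim V=0$, $X'$ is of Fano type with $\rho(X'/k)=1$, but it is not bounded, and a bounded complement does not make it $\epsilon$-lc. For example, take $X'=\PP(1,1,r)$, let $S,L$ be two lines through the vertex, and let $C\in|\mcO(r)|$ be general. Then $K_{X'}+S+L+C\sim 0$ is a $1$-complement, yet the exceptional curve of the minimal resolution has log discrepancy $2/r$. To use Lemma~\ref{l-ep-to-0} and Lemma~\ref{fano-bnd-very-ample}, you must pass to a $\QQ$-factorial dlt model of $(X',B^+)$ and run a $K$-MMP to a Mori fibre space $\wt X\to\wt Z$, as in the paper's Steps~2--3. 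Two things your proposal does not handle then occur.

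\emph{First problem: $\wt Z$ can be positive-dimensional with $S$ vertical over it.} In the example, the dlt model is $\mathbb{F}_r\to\PP^1$ and $S$ is a fibre. This is exactly where the paper needs $\mc{V}_{\univ}$:
\begin{enumerate}
\item extract $S$ and run a $(-S)$-MMP over $\wt Z$, so that $S$ becomes a multiple of the pullback of a prime divisor $T^{\triangle}$ on a birational model $Z^\triangle$ of $\wt Z$;
\item get $\coeff_{T^\triangle}B_{Z^\triangle}\ge t$ from Lemma~\ref{l-adj-fib-eps-lc};
\item bound $\sdeg((T^\triangle)^{\nor})$ by $\mc{H}_{\univ}$ applied to the base, and $\sdeg((S^\triangle)^{\nor}/T^\triangle)$ by $\mc{V}_{\univ}$.
\end{enumerate}

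\emph{Second problem: if $\wt Z$ is a point, $S$ may have been contracted.} Then $S$ is only a divisor \emph{over} the bounded $\epsilon$-lc Fano $\wt X$, with $a(S,\wt X,\wt B)<1$. Your count $\sum_i H^{d-1}\cdot\bar S_i\le t^{-1}H^{d-1}\cdot(-K)$ requires $S$ to be a divisor on the bounded model, so it does not apply. The paper handles this as follows:
\begin{enumerate}
\item bound the pairs $(F,\Supp G)$, with $G=mn\wt B|_F$, via Hilbert schemes over $\QQ$;
\item take a log resolution $(Y,E)$ of the universal family that is log smooth over an open stratum of the base;
\item observe that $a(S_F,Y,E)\le a(S_F,F,\Gamma)<1$ forces $S_F$ to be an lc place of the log smooth pair $(Y,E)$, so its centre is a stratum of $E$ with a bounded number of geometric components;
\item use that $K(S_F)$ is purely transcendental over the function field of that stratum.
\end{enumerate}

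\textbf{A smaller error.} Your claim that each geometric component $\bar S_i$ is ample is false over non-closed fields, since $\rho(X'_{\bar k})$ can exceed $\rho(X'/k)=1$. Ampleness is not needed for the degree count, however.
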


\begin{proof}
    We argue by induction on $d$.  If $d = 1$, then $\mc{H}_{\univ}(1, t)$ holds by Lemma~\ref{base-case-r-dim=1}.
    Thus, we can assume that $\mc{H}_{\univ}(l, t)$ holds for every $1\le l\le d-1$.
    
    Let $(X, B + \mb{M})\to Z$ be a generalised log Calabi-Yau fibration of relative dimension $d$
    over a field $k$ of characteristic zero, and let $S$ be a horizontal$/Z$ 
    irreducible component of $B$ such that $\coeff_S B\ge t$.
    By the same proof of Lemma~\ref{fano-type-induction},
    we can assume $\dim Z = 1$.

    \medskip
    \emph{Step~1}.
    In this step, we reduce to the case of $\QQ$-factorial g-klt g-pairs.
    Set $\alpha \coloneqq \coeff_S B$.
    Taking a $\QQ$-factorial g-dlt model of $(X, B + \mb{M})$ (see Lemma~\ref{exist-g-dlt-model}), 
    we can assume that $(X, B + \mb{M})$ is a $\QQ$-factorial g-dlt g-pair.  
    In particular, $X$ has klt singularities.
    By Theorem~\ref{mmp-g-pairs}, as $-S$ is non-pseudo-effective$/Z$, 
    we can run $\phi\colon X\dashrightarrow X'/Z$ an MMP over $Z$
    on the divisor
    \[ K_X + (B - \alpha S) + \mb{M}_X \sim_{\RR} -\alpha S/Z, \]
    which ends with a Mori fibre space $g\colon X' \to Z'$ of $S$, so $S'\coloneqq \phi_*S$ is ample$/Z'$.
    By Lemma~\ref{CY-log-discre-no-change}, 
    the coefficient of $S'$ in $B' \coloneqq \phi_* B$ is also equal to $\alpha$.

    If $\dim (X'/Z')\le d-1$, then $\sdeg \big((S')^{\nor}/Z'\big)$ is bounded from above depending only on $d, t$
    by assumption, hence by Lemmas~\ref{stein-multiplicity}, 
    \[ \sdeg (S^{\nor}/Z) = \sdeg \big( (S')^{\nor}/Z \big) \le \sdeg \big( (S')^{\nor}/Z' \big). \]
    Thus, we can assume that $Z'\to Z$ is an isomorphism of normal curves. 
    Then $S'$ is ample$/Z$. 

    Since $(X, (B - \alpha S) + \mb{M})$ is a $\QQ$-factorial g-dlt g-pair, 
    \[ (X', (B' - \alpha S') + \mb{M}) \]
    is also a $\QQ$-factorial g-dlt g-pair; see Corollary~\ref{run-g-dlt-mmp}.  
    Pick $0< \epsilon < 1$ and write
    \[ B' = (1-\epsilon)(B' - \alpha S') + \bigg(\epsilon (B' - \alpha S') + \frac{\alpha}{2} S'\bigg) + \frac{\alpha}{2} S'.  \]
    For sufficiently small $0< \epsilon \ll 1$, the divisor 
    $\epsilon (B' - \alpha S') + (\alpha/2)S'$
    is also ample$/Z$.  Let $H'$ be a general effective ample$/Z$ divisor such that
    \begin{equation}
        \epsilon (B' - \alpha S') + \frac{\alpha}{2}S' \sim_{\RR} H'/Z. \label{linear-H}
    \end{equation}
    Set
    \[ B_{\epsilon}' \coloneqq (1-\epsilon) (B' - \alpha S'). \]
    As $(X', (B' - \alpha S') + \mb{M})$ is $\QQ$-factorial and g-dlt, $(X', B_{\epsilon}' + \mb{M})$
    is a $\QQ$-factorial g-klt g-pair.  On the other hand, since
    $(X', B_{\epsilon}' + \epsilon (B'-\alpha S') + \alpha S' + \mb{M})$ is g-lc 
    by Lemma~\ref{CY-log-discre-no-change}, the g-pair $(X', B_{\epsilon}' + \alpha S' + \mb{M})$
    is also g-lc, hence
    \[ (X', B_{\epsilon}' + \frac{\alpha}{2} S' + \mb{M}) \]
    is a $\QQ$-factorial g-klt g-pair; cf. \cite[Corollary 2.35(5)]{km98}.
    Thus, as $H'$ is general, 
    \[ (X', C' + \mb{M}) \coloneqq (X', B_{\epsilon}' + H' + \frac{\alpha}{2} S' + \mb{M}) \]
    is also a $\QQ$-factorial g-klt g-pair.  By~\eqref{linear-H}, it is evident that
    $(X', C' + \mb{M}) \to Z$ is also a generalised log Calabi-Yau fibration.
    Moreover, as $S'$ is ample$/Z$, by Lemma~\ref{fano-type-conditions},
    $X'\to Z$ is of Fano type.  By replacing $t$ with $t/2$, it suffices to consider the 
    Fano type generalised log Calabi-Yau fibration $(X', C' +\mb{M})\to Z$.

    \medskip
    \emph{Step~2}.  
    In this step, we reduce to the case of usual pairs by taking bounded complements.
    Decreasing $t$ if necessary, we can assume that $t\in \QQ^{>0}$.
    Since $X'$ is of Fano type over $Z$, by Lemma~\ref{fano-type-MMP}, 
    we can run an MMP$/Z$ on the divisor 
    \[ -K_{X'} - t S' \sim_{\RR} (C'- t S') + \mb{M}_X, \]
    which is pseudo-effective$/Z$.  Denote by $\varphi\colon X'\dashrightarrow X''$
    the $(-K_{X'} - tS')$-MMP over $Z$ that ends with a good minimal model $X''\to Z$ 
    of the divisor $-K_{X'} - tS'$.  Set $S'' \coloneqq \varphi_* S'$, then
    $- (K_{X''} + t S'')$ is nef$/Z$.  Note that $S''$ is a nonzero prime divisor on $X''$
    as $S'$ is ample$/Z$; see Lemma~\ref{no-van-big-divisor}.  Moreover,
    by Lemma~\ref{Fano-type-pushdown}, $X''$ is also of Fano type over $Z$.

    By Theorem~\ref{t-bnd-compl-usual-local}, up to shrinking
    $Z$ near its generic point, there is a monotonic $n$-complement $K_{X''} + B^+$ of $K_{X''} + t S''$
    for some $n\in \NN$ depending on $d,t$, that is,
    \begin{itemize}
        \item $(X'', B^+)$ is lc,
        \item $n(K_{X''} + B^+) \sim 0/Z$, and
        \item $t S'' \le B^+$.
    \end{itemize}
    In particular, the coefficients of $B^+$ belong to $\mathbf{T}_n$.  
    Set $\beta \coloneqq \coeff_{S''} B^+ \in \mathbf{T}_n$. 

    \medskip
    \emph{Step~3}.  
    Let $(X''', B''')$ be a $\QQ$-factorial dlt model of $(X'', B^+)$; see Lemma~\ref{exist-g-dlt-model}.
    The coefficients of $B'''$ also belong to the set $\mathbf{T}_n$.
    Denote by $S'''$ the birational transform of $S''$ to $X'''$.
    Note that $S'''$ may not be big$/Z$.  
    By Lemma~\ref{l-ep-to-0}, shrinking $Z$ near its generic point if necessary,
    we can assume that $(X''', 0)$ is $\epsilon$-lc for some $\epsilon>0$ depending only on $d, n$, whence only on $d,t$.
    By Lemma~\ref{fano-type-at-dlt-model}, $X'''\to Z$ is also of Fano type.
    Hence by Lemma~\ref{fano-type-MMP}, we can
    run $\psi\colon X''' \dashrightarrow \wt{X}$ a $K_{X'''}$-MMP over $Z$
    that ends with a Mori fibre space $\wt{f}\colon \wt{X}\to \wt{Z}$ of $K_{X'''}$. 
    \[\xymatrix{
     & & X'''\ar[d]\ar@{-->}[r]^{\psi} & \wt{X}\ar[dd]^{\wt{f}} \\
    X\ar@{-->}[r]^{\phi}\ar[d]_f & X'\ar@{-->}[r]^{\varphi} & X'' &  \\
    Z & & &  \wt{Z}\ar[lll]
    }\]
    Then $-K_{\wt{X}}$ is ample$/\wt{Z}$.
    By Lemma~\ref{discrep-mmp}, $(\wt{X}, 0)$ is also $\epsilon$-lc. 

    \medskip
    \emph{Step~4}.
    Assume that $\dim (\wt{Z}/Z) \ge 1$.  By Lemma~\ref{exist-g-dlt-model},
    let $\pi\colon X^{\circ} \to \wt{X}$ be a birational contraction extracting only the valuation of $S$,
    where $X^{\circ}$ is $\QQ$-factorial;
    if the centre of the valuation of $S$ on $\wt{X}$ is a divisor, then $\pi$ is the identity map.
    Let $\wt{B}$ be the pushdown of $B'''$ to $\wt{X}$.  Write
    \[ K_{X^{\circ}} + B^{\circ} = \pi^*(K_{\wt{X}} + \wt{B}). \]
    By Lemma~\ref{fano-type-at-dlt-model}, 
    $(X^{\circ}, B^{\circ}) \to Z$ is a Fano type log Calabi-Yau fibration.
    Hence by Lemma~\ref{fano-type-upper}, $(X^{\circ}, B^{\circ}) \to \wt{Z}$ 
    is also a Fano type log Calabi-Yau fibration.
    
    Let $S^{\circ}$ be the centre of $S$ on $X^{\circ}$.
    If $S^{\circ}$ is horizontal$/\wt{Z}$, then
    \[ \sdeg \big( (S^{\circ})^{\nor}/\wt{Z}\big) \]
    is bounded from above depending only on $d,n$ (hence depending only on $d,t$)
    by applying induction hypothesis 
    to the Fano type fibration $(X^{\circ}, B^{\circ})\to \wt{Z}$.
    Then by Lemma~\ref{stein-multiplicity}, we have
    \[ \sdeg (S^{\nor}/Z) = \sdeg \big( (S^{\circ})^{\nor}/Z \big) \le \sdeg \big( (S^{\circ})^{\nor}/\wt{Z}\big), \]
    which shows that $\sdeg (S^{\nor}/Z)$ is bounded from above depending only on $d,t$.
    Thus, we can assume that $S^{\circ}$ is vertical$/\wt{Z}$.

    Recall that $X^{\circ}$ is also of Fano type over $\wt{Z}$.  By Lemma~\ref{fano-type-MMP},
    we can run $\upsilon\colon X^{\circ}\dashrightarrow X^{\triangle}$ a $(-S^{\circ})$-MMP over $\wt{Z}$
    that ends with a good minimal model $X^{\triangle}$ of $-S^{\circ}$.
    By negativity lemma, $S^{\circ}$ is not contracted by $\upsilon$.
    Denote by $f^{\triangle}\colon X^{\triangle}\to Z^{\triangle}$ the contraction$/\wt{Z}$ induced by $-S^{\triangle}$,
    where $S^{\triangle} \coloneqq \upsilon_* S^{\circ}$.
    As $S^{\triangle}$ is vertical$/\wt{Z}$, the contraction $Z^{\triangle}\to \wt{Z}$ is birational.
    \[\xymatrix{
    X^{\circ}\ar@{-->}[r]^{\upsilon}\ar[d] & X^{\triangle}\ar[d]^{f^{\triangle}} \\
    \wt{Z}\ar[d] & Z^{\triangle}\ar[l]\ar[ld] \\
    Z &
    }\]
    Denote by $T^{\triangle}$ the image of $S^{\triangle}$ in $Z^{\triangle}$,
    which is a prime divisor on $Z^{\triangle}$.  
    As $-S^{\triangle}$ is nef$/Z^{\triangle}$, $S^{\triangle}$ is 
    the unique irreducible component of $\Supp ((f^{\triangle})^* T^{\triangle})$.
    Then we can write
    \[ S^{\triangle} = b\cdot (f^{\triangle})^* T^{\triangle} \]
    for some $b\in \QQ^{>0}$.  Let $B^{\triangle}$ be the pushdown of $B^{\circ}$ to $X^{\triangle}$.
    We can write
    \[ K_{X^{\triangle}} + B^{\triangle} \sim_{\RR} (f^{\triangle})^*(K_{Z^{\triangle}} + B_{Z^{\triangle}} + \mb{N}_{Z^{\triangle}}), \]
    where $(Z^{\triangle}, B_{Z^{\triangle}} + \mb{N})$ is a g-lc g-pair.
    By Lemma~\ref{fano-type-on-base}, $Z^{\triangle}$ is of Fano type over $Z$.  Then 
    \[ (Z^{\triangle}, B_{Z^{\triangle}} + \mb{N}) \to Z \]
    is also a Fano type fibration of relative dimension 
    \[ \dim (Z^{\triangle}/Z) = \dim (\wt{Z}/Z) \le d-1. \]
    By Lemma~\ref{l-adj-fib-eps-lc}, the coefficient of $T^{\triangle}$ in $B_{Z^{\triangle}}$
    is $\ge \beta$, where $\beta = \coeff_{S''} B^+$ is $\ge t$ as in Step~2.
    By induction hypothesis $\mcH_{\univ}(l, t)$ for $1\le l\le d-1$, 
    \[ \sdeg \big( (T^{\triangle})^{\nor}/Z\big) \]
    is bounded from above depending only on $d,t$.  
    Moreover, as we assume that $\mcV_{\univ}(\ell, t)$ 
    holds for each $1\le \ell \le d-1$,
    \[ \sdeg \big( (S^{\triangle})^{\nor}/T^{\triangle} \big)  \]
    is also bounded from above depending only on $d,t$.  As $S^{\triangle}\to Z$ has the factorisation
    \[ S^{\triangle} \to T^{\triangle} \to Z, \]
    we can conclude that
    \[ \sdeg (S^{\nor}/Z) = \sdeg \big((S^{\triangle})^{\nor}/Z\big) 
    \le \sdeg \big( (S^{\triangle})^{\nor}/T^{\triangle} \big) \cdot \sdeg \big( (T^{\triangle})^{\nor}/Z\big), \]
    which is bounded from above depending only on $d,t$ by Lemma~\ref{stein-multiplicity}.
    Therefore, we can assume that $\wt{Z}\to Z$ is an isomorphism of nonsingular curves. 

    \medskip
    \emph{Step~5}.  
    By construction in Step~3, we have that
    \begin{itemize}
        \item $(\wt{X}, 0)$ is $\epsilon$-lc,
        \item $-K_{\wt{X}}$ is ample$/Z$, and
        \item $n(K_{\wt{X}} + \wt{B}) \sim 0/Z$.
    \end{itemize}
    Note that $S'''$ may be contracted by $\psi$.  
    Denote by $F$ the generic fibre of $\wt{X}\to Z$, i.e.
    \[ F\coloneqq \wt{X}\times_Z \spec K(Z), \]
    which is an $\epsilon$-lc Fano variety of dimension $d$ over $K(Z)$.
    By Lemma~\ref{fano-bnd-very-ample},
    there exist $m, r\in \NN$ depending only on $d,\epsilon$ (whence only on $d, t$)
    such that $-mK_F$ is very ample, and $(-mK_F)^d < r$.
    Denote by $\Gamma$ the restriction of $\wt{B}$ to $F$.  Then
    \[ G\coloneqq mn \Gamma \sim -mn K_F/\spec K(Z) \]
    is a very ample Cartier divisor on $F$.  Let
    \[ N+1\coloneqq \dim_{K(Z)}\hg^0(F, \mcO_F(-mK_F)). \]
    By \cite[Chap.~VI, (2.15.8.5)]{Kol-rc}, we have
    \[ N+1 \le (-mK_F)^d + d< r+d, \]
    where the right most quantity depends only on $d, t$.
    Then the very ample divisor $-mK_F$ gives a closed immersion
    \[ \iota_F\colon F\hookrightarrow \PP_{K(Z)}^N, \]
    whose image in $\PP_{K(Z)}^N$ is nondegenerate.  
    By \cite[Lemma~1.4]{Salberger-r-pts}, there are only finitely many Hilbert polynomials
    of such $F$ with respect to $\iota_F$.
    These Hilbert polynomials depend only on $N, r$, whence only on $d,t$.
    
    Let $P_F(u)\in \QQ[u]$ be the Hilbert polynomial of $F$ with respect to $\iota_F$, 
    and let $\mcO_F(1)\coloneqq \mcO_F(-mK_F)$.  Then for each $u\in \NN$, we have the exact sequence
    \[ 0\to \mcO_F(u-n) \to \mcO_F(u)\to \mcO_G(u)\to 0. \]
    Thus, we deduce that
    \[ P_G(u) = P_F(u) - P_F(u-n), \]
    where $P_G(u)\in \QQ[u]$ denotes the Hilbert polynomial of $G$ with respect to 
    the closed immersion $G\hookrightarrow \PP_{K(Z)}^N$ induced by $\iota_F$.
    Thus, there are only finitely many Hilbert polynomials for such $G$
    depending only on $d,t$.  We can assume that $P_F, P_G$ are the only polynomials.  Let
    \[ \mcH\coloneqq \hilb_{P_F}(\PP_{\QQ}^N)~~\mbox{and}~~ \mcD\coloneqq \hilb_{P_G}(\PP_{\QQ}^N). \]
    Moreover, let
    \[ \mcF\to \mcH~~\mbox{and}~~\mcG\to \mcD \]
    be the universal families over $\mcH, \mcD$ respectively.
    Note that $\mcH, \mcD$ are $\QQ$-schemes, so they depends only on $d,t$, 
    but not on the ground field $K(Z)$ of $F, G$.

    Set
    \[ \mcW \coloneqq \mcH\times_{\spec \QQ} \mcD. \]
    Moreover, let
    \[ \mcX\coloneqq \mcF\times_{\mcH} \mcW~~\mbox{and}~~ 
    \mcC\coloneqq \mcG\times_{\mcD}\mcW. \]
    Then we have closed immersions
    \[ \mcX \hookrightarrow \PP_{\mcW}^N~~\mbox{and}~~\mcC\hookrightarrow \PP_{\mcW}^N, \]
    which are morphisms over $\mcW$.

    Denote by $p$ the canonical projection $\PP_{\mcW}^N\to \mcW$, and by $\mcO(1)$
    the very ample invertible sheaf on $\PP_{\mcW}^N$ corresponding to hyperplane sections.
    Consider the composite homomorphism between coherent sheaves
    \[ \mcI_{\mcX} \to \mcO_{\PP_{\mcW}^N} \to \mcO_\mcC, \]
    where $\mcI_{\mcX}$ is the ideal sheaf on $\PP_{\mcW}^N$ defining $\mcX$.
    Pick $b\in \NN$ sufficiently large so that 
    \[ p^*(p_*\mcI_{\mcX}(b)) \to \mcI_{\mcX}(b)~~\mbox{and}~~
    p^*(p_*\mcO_{\mcC}(b)) \to \mcO_{\mcC}(b) \]
    are surjective; see \cite[Theorem~III.8.8]{Hart}.  For each scheme-theoretic point $w\in W$,
    let $\mcX_w, \mcC_w$ be the fibres of $\mcX\to \mcW, \mcC\to \mcW$ over $w$ respectively.
    Then it is evident that $\mcC_w\subset \mcX_w$ if and only if
    the image of the induced homomorphism 
    \[ (p_*\mcI_{\mcX}(b))\otimes_{\mcO_\mcW} k(w) \to (p_*\mcO_{\mcC}(b))\otimes_{\mcO_\mcW} k(w) \]
    is zero, where $k(w)$ denotes the residue field of $\mcW$ at $w$.  
    Denote by $\mcW^{\circ}\subseteq \mcW$ the subset consisting points $w$
    such that $\mcC_w\subset \mcX_w$.
    By assumption, $G$ is a closed subscheme of $F$.  Hence there exists a $\QQ$-morphism
    \[ i_{(F, G)}\colon \spec K(Z)\to \mcW \]
    whose image is contained in $\mcW^{\circ}$.
    Thus, $\mcW^{\circ}$ is a nonempty closed subset of $\mcW$.
    Replacing $\mcW$ with $\mcW^{\circ}$, we can assume that
    \[ \mcC\subset\mcX \subset \PP_{\mcW}^N. \]
    Taking an irreducible component containing the image of $i_{(F, G)}$, 
    and taking its reduction, we can assume that $\mcW$ is irreducible.
    By \cite[Th\'eor\`eme~(9.7.7)]{EGA-IV-3} and \cite[Theorem~9.3.7]{FGA-explained},
    replacing $\mcW$ with an open subset, we can assume that
    $\mcX\to \mcW, \mcC\to \mcW$ are flat, each fibre of $\mcX\to \mcW$
    is geometrically integral, and that $\mcC$ is a relative effective Cartier divisor on $\mcX\to \mcW$.
    By \cite[Corollaires~(2.3.5) and~(3.3.5)]{EGA-IV-2}, $\mcX$ is integral.

    By \S\ref{log-resolution}, there exists a log resolution
    \[ \mu\colon \mcY\to \mcX \]
    of $(\mcX, \Supp \mcC)$.  Let
    \[ \mcE\coloneqq \Ex(\mu)\cup \mu^{-1}(\Supp \mcC). \]
    Then $\mcY$ is a regular $\QQ$-scheme, and $\mcE$ is an snc divisor on $\mcY$.  Write
    \[ \mcE = \mcE_1 + \cdots + \mcE_s, \]
    where each $\mcE_j$ is an irreducible component of $\mcE$.
    By generic smoothness \cite[Proposition~(17.7.8), Corollaire~(17.15.2)]{EGA-IV-4},
    there exits an open subset $\mcU\subset\mcW$ such that
    \[ \mcY_{\mcU} \to \mcU~~\mbox{and each}~~\mcE_{j, \mcU} \to \mcU \]
    are smooth morphisms, where
    \[ \mcY_{\mcU}\coloneqq \mcU\times_{\mcW}\mcY~~\mbox{and}~~
    \mcE_{j, \mcU}\coloneqq \mcU\times_{\mcW} \mcE_j. \]
    Again by generic smoothness, we can assume each stratum of $\mcE$ is smooth over $\mcU$.
    We can also assume that for each $w\in \mcU$, the induced morphism between
    fibres over $w$, $\mcY_w\to \mcX_w$, is a birational contraction.
    
    As $\mcW$ is integral, every irreducible component of $\mcW\setminus \mcU$
    has dimension strictly less than that of $\mcW$.
    Thus, doing induction on $\dim \mcW$, we can assume that
    the image of $i_{(F, G)}$ is contained in $\mcU$.  Denote by
    \[ Y\coloneqq \spec K(Z)\times_{\mcW} \mcY~~\mbox{and}~~
    E \coloneqq \spec K(Z)\times_{\mcW} \mcE, \]
    where the fibre products are formed via $i_{(F,G)}$.  Then by construction, the induced
    \[ \mu_F\colon Y\to F \]
    is a log resolution of $(F, \Supp G)$; in particular, 
    \[ E = \Ex(\mu_F)\cup \mu_F^{-1}(\Supp G) \]
    is an snc divisor on $Y$.  Write
    \[ K_Y + \Gamma_Y = \mu_F^*(K_F + \Gamma). \]
    Then $\Gamma_Y \le E$ as $(F, \Gamma)$ is lc.  Let $S_F$ be the prime divisor over $F$
    induced by $S'''$.  It is evident that
    \[ a(S_F, F, \Gamma) = a(S, \wt{X}, \wt{B}) < 1- t < 1. \]
    Then we have
    \[ a(S_F, Y, E) \le a(S_F, Y, \Gamma_Y) = a(S_F, F, \Gamma) <1. \]
    As $(Y, E)$ is log smooth, we can conclude that
    \[ a(S_F, Y, E) = 0. \]
    Then $R\coloneqq \centre_Y (S_F)$ is an lc centre of $(Y, E)$.  Hence $R$ is a strata of $E$.
    In particular, there exists a strata $\mcR$ of $\mcE$ such that $R$ is an irreducible component of
    \[ \spec K(Z)\times_{\mcW}\mcR. \]
    By \cite[Proposition~(9.7.8)]{EGA-IV-3}, the number of geometric irreducible components
    \[ \mf{n}\big(R\otimes_{K(Z)} \ol{K(Z)}\big) \]
    is bounded from above depending only on $\mcR\to \mcW$, whence only on $d,t$.
    Moreover, by \cite[Corollaire~(19.4.3)]{EGA-IV-4} and \cite[Lemma~2.45]{km98}, we can write
    \[ K(S_F) \cong K(R)(u_1, \dots, u_{c-1}), \]
    where $c \coloneqq \codim(R, Y)$, and $u_1,\dots, u_{c-1}$ are independent variables.  
    Then the generic fibre of $S_F\to R$
    is geometrically irreducible and rational.  
    By Lemma~\ref{stein-multiplicity}, we deduce that
    \[ \sdeg(S^{\nor}/Z) = \sdeg (S_F/\spec K(Z)) \le \mf{n}(S_F/R)\cdot \mf{n}(R/\spec K(Z)) 
    = \mf{n}\big(R\otimes_{K(Z)} \ol{K(Z)}\big). \]
    Therefore, $\sdeg (S^{\nor}/Z)$
    is bounded from above depending only on $d,t$.
\end{proof}


\subsection{Stein degree of vertical divisors}

\begin{theorem}[=Theorem~\ref{vertical-bnd-intro}]\label{pf-vertical-case}
    Let $d\in \NN$ and let $t\in \RR^{>0}$.  Then $\mc{V}_{\univ}(d, t)$ holds.
\end{theorem}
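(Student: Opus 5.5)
We argue by induction on $d$, interweaving with $\mc{H}_{\univ}$. The base case $d=1$ is Theorem~\ref{vertical-case-fib-dim-one}. Assume $\mc{V}_{\univ}(\ell,s)$ for all $1\le \ell\le d-1$ and $s\in(0,1]$. Then Theorem~\ref{pf-dim-d-t} gives $\mc{H}_{\univ}(\ell,s)$ for all $\ell\le d$, in particular $\mc{H}_{\univ}(d,t)$. The goal is to reduce a given Fano type fibration $(X,B+\mb{M})\to Z$ and vertical $S$ with image a prime divisor $T$ to the setting of Theorem~\ref{vertical-thm}. That theorem requires $-K_X$ ample$/Z$, $n(K_X+B)\sim 0/Z$ with $n$ bounded, and $X$ being $\epsilon$-lc over the generic point of $Z$ with $\epsilon$ bounded, all depending only on $d,t$.

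\emph{Reduction to a Mori fibre space.} Shrink $Z$ near the generic point of $T$, cut by general hyperplanes (Lemma~\ref{fano-type-induction}), and pass to a small $\QQ$-factorialisation. Then, exactly as in Step~1 of Theorem~\ref{vertical-case-fib-dim-one}, run a $(-S)$-MMP$/Z$ (Lemma~\ref{fano-type-MMP}) so that $S$ becomes the support of the pullback of $T$. Next run a $-(K_X+tS)$-MMP$/Z$ with $t\in\QQ$, and apply Theorem~\ref{t-bnd-compl-usual-local} over the generic point of $T$. This gives an ordinary lc pair $(X,B^+)$ with $n(K_X+B^+)\sim 0/Z$, $n=n(d,t)$, $\coeff_S B^+\ge t$, and $X$ of Fano type$/Z$. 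Stein degrees are unchanged throughout by Lemma~\ref{CY-log-discre-no-change} and birationality of $S$. As in Step~3 of Theorem~\ref{pf-dim-d-t}, take a $\QQ$-factorial dlt model and apply Lemma~\ref{l-ep-to-0} on the generic fibre, which is a log Calabi-Yau pair over $K(Z)$ with coefficients in $\mathbf{T}_n$. This makes $X$ $\epsilon$-lc over the generic point of $Z$ with $\epsilon=\epsilon(d,t)$. Then run a $K_X$-MMP$/Z$ ending in a Mori fibre space $\wt X\to\wt Z$; $\epsilon$-lc-ness is preserved by Lemma~\ref{discrep-mmp}.

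\emph{Case split.} If $\wt Z\to Z$ is birational, we can shrink so that $\wt Z=Z$ near the generic point of $T$. Then $-K_{\wt X}$ is ample$/Z$, and Theorem~\ref{vertical-thm} applies to $(\wt X,\wt B)$ and the valuation of $S$. The theorem is stated for a prime divisor on $X$; when $S$ is contracted we first extract it by Lemma~\ref{exist-g-dlt-model}, which is exactly the extraction performed inside Steps~5–6 of that proof, so its argument runs verbatim. If instead $\dim(\wt Z/Z)\ge 1$, extract $S$ to $X^\circ\to\wt X$, which is still of Fano type over $\wt Z$ (Lemmas~\ref{fano-type-at-dlt-model}, \ref{fano-type-upper}). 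If the centre $S^\circ$ of $S$ is horizontal$/\wt Z$, then $\mc{H}_{\univ}$ in relative dimension $<d$ bounds $\sdeg((S^\circ)^{\nor}/\wt Z)$, where $\wt Z$ is taken over the image of $T$. We then still need to control the image of $S^\circ$ in $\wt Z$: it is a vertical divisor over $T$, bounded by $\mc{V}_{\univ}$ for the induced g-pair on $\wt Z$ obtained by Theorem~\ref{t-adj-fib-spaces}, together with Lemma~\ref{l-adj-fib-eps-lc}, and we conclude with Lemma~\ref{stein-multiplicity}. If $S^\circ$ is vertical$/\wt Z$, run a $(-S^\circ)$-MMP$/\wt Z$ as in Step~4 of Theorem~\ref{pf-dim-d-t}, making $S$ the full fibre over a prime divisor $T^\triangle\subset Z^\triangle$. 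Then $\sdeg((S^\triangle)^{\nor}/T^\triangle)$ is bounded by $\mc{V}_{\univ}(\dim(X/Z^\triangle),t)$, and $T^\triangle\to T$ is a vertical divisor for the Fano type fibration $(Z^\triangle,B_{Z^\triangle}+\mb{N})\to Z$ of lower relative dimension. That fibration has coefficient $\ge t$ by Lemma~\ref{l-adj-fib-eps-lc} and Fano type base by Lemma~\ref{fano-type-on-base}, so induction bounds $\sdeg((T^\triangle)^{\nor}/T)$. Multiplying the two bounds via Lemma~\ref{stein-multiplicity} finishes.

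\emph{Main obstacle.} The delicate point is the horizontal-over-$\wt Z$ subcase. There the image $T'$ of $S^\circ$ in $\wt Z$ must be shown to be a divisor with coefficient $\ge t$ in the discriminant, and $S^\circ\to T'$ must be handled. My first attempt is to apply $\mc{H}_{\univ}$ to the restricted fibration over the normalisation of $T'$, obtained by adjunction along the full fibre. The fallback is to reduce to the vertical subcase by first running a $(-S^\circ)$-MMP over $\wt Z$. A secondary obstacle is checking that the $\epsilon$-lc and bounded-complement conditions survive each MMP while the constants stay independent of $k$. This independence is guaranteed because every bound used comes from the universal results of \S\S3–5.
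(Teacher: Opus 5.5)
Your argument works, but it differs from the paper's at one choice, and that choice is what generates your extra case analysis.

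\textbf{The paper's route.} After obtaining the complement $K_{X''}+B^+$, the paper passes to $C''=B^+-\frac{\alpha}{2}S''$ with $\alpha=\coeff_{S''}B^+$. Since $S''=a\,h^*T'$ and $(X'',B^+)$ is lc, the pair $(X'',C'')$ has no lc centre inside $S''$. So, after shrinking, the dlt model extracts only horizontal divisors, and $S'''=a\,(h\circ\pi)^*T'$ remains a multiple of a full fibre. Hence $S'''$ is never contracted by the $K_{X'''}$-MMP, and its image $\wt T\subset\wt Z$ is automatically a prime divisor. The non-birational case then becomes two direct applications of $\mc{V}_{\univ}$ in lower relative dimension, with no extraction and no further MMP. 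In the birational case $\wt S$ is an honest divisor on $\wt X$, so Theorem~\ref{vertical-thm} applies as stated. The cost is a second bounded complement, because the subtraction destroys the index.

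\textbf{Your route.} You keep $B^+$, so the index $n$ survives the crepant dlt model and the MMP, and no second complement is needed. The price is that the fibre over $T$ may acquire extracted components and $S$ may be contracted. This forces your extraction plus the extra $(-S^\circ)$-MMP over $\wt Z$, which is the analogue of Step~4 of Theorem~\ref{pf-dim-d-t} with $\mc{V}_{\univ}$ applied on the base. It also forces a valuative version of Theorem~\ref{vertical-thm}.

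\textbf{The valuative version of Theorem~\ref{vertical-thm}.} Do not literally extract $S$ on $\wt X$ and then invoke the theorem. That destroys ampleness of $-K$ over $Z$, which Steps~1--2 of its proof use: Lemma~\ref{fano-bnd-very-ample} is applied on the generic fibre, and $B\sim_{\QQ}-K_X$ must be ample$/Z$. Instead, observe that ``$S$ is a divisor with $\coeff_S B\ge t$'' enters only in Step~3. There one needs $\coeff_{S_V}B_V\ge t$, and choosing $V$ so that $S$ is a divisor on it, this equals $1-a(S,\wt X,\wt B)\ge t$ by Lemma~\ref{CY-log-discre-no-change}. Steps~5--6 already treat $S$ as a valuation.

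\textbf{Your ``main obstacle'' is empty.} $S^\circ$ maps into $T\subsetneq Z$, while $\wt Z\to Z$ is surjective. So $S^\circ$ is always vertical over $\wt Z$, and the horizontal subcase never occurs.
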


\begin{proof}
    Keep the notation in the statement of Theorem~\ref{vertical-bnd-intro}.
    Decreasing $t$ slightly, we can assume that $t\in (0,1]\cap \QQ$.
    By Theorem~\ref{vertical-case-fib-dim-one}, $\mc{V}_{\univ}(1, t)$ holds.
    We prove the result by induction on the relative dimension $d$.  We assume that 
    $\mc{V}_{\univ}(l, s)$ holds for every $1\le l \le d-1$ and for every $s\in (0,1]$. 
    By Lemma~\ref{exist-g-dlt-model}, taking a $\QQ$-factorial g-dlt model, we can assume that $(X, B + \mb{M})$
    is a $\QQ$-factorial g-dlt g-pair.  By Lemma~\ref{fano-type-at-dlt-model}, the Fano type property 
    of $X\to Z$ is preserved. 
    By taking general Cartier hyperplane sections of $Z$ inductively, 
    we can assume that $\dim Z = 2$ and $\dim T = 1$;
    see Lemma~\ref{fano-type-induction}. 

    \medskip
    \emph{Step~1}.
    In this step, we reduce the proof to an lc pair with standard coefficients.
    Since $X$ is of Fano type over $Z$, and since
    $-S$ is pseudo-effective over $Z$, by Lemma~\ref{fano-type-MMP},
    we can run $\phi\colon X\dashrightarrow X'$ a $(-S)$-MMP  
    over $Z$ that ends with a good minimal model $X'\to Z$ of $-S'$, where $S' \coloneqq \phi_* S$.    
    Let $g\colon X'\to Z'$ be the contraction induced by $-S'$, and let $T'$ be the birational transform
    of $T$ to $Z'$.  As $-S'$ is nef$/Z'$, $S'$ is the unique irreducible component of $\Supp (g^* T')$.
    Then we can write
    \begin{equation}
        S' = a\cdot g^* T' \label{S-whole-fibre}
    \end{equation}
    for some $a\in \QQ^{>0}$.  
    Again by Lemma~\ref{fano-type-MMP},
    running $\varphi\colon X'\dashrightarrow X''$ a $(- K_{X'} - t S')$-MMP 
    over $Z'$, we have that $-(K_{X''} + t S'')$
    is nef$/Z'$, where $S'' \coloneqq \varphi_* S'$.
    By \eqref{S-whole-fibre}, $S'$ is not contracted by $\varphi$.
    Denote by $h\colon X'' \to Z'$ the induced contraction over $Z$. 
    Then by \eqref{S-whole-fibre}, we also have
    \begin{equation}
        S'' = a\cdot h^* T'. \label{S-whole-fibre-1}
    \end{equation}
    By Lemma~\ref{Fano-type-pushdown}, $X''$ is also of Fano type over $Z'$.
    Thus, by Theorem~\ref{t-bnd-compl-usual-local}, 
    over the generic point of $T'$, there exists a monotonic $n$-complement
    $K_{X''} + B^+$ of $K_{X''} + t S''$ for some $n\in \NN$ depending only on $d, t$ such that
    \begin{itemize}
        \item $(X'', B^+)$ is lc,
        \item $n(K_{X''} + B^+) \sim 0/Z'$, and
        \item $t S'' \le B^+$.
    \end{itemize}
    In particular, the coefficients of $B^+$ belong to $\mb{T}_n$. 

    \medskip
    \emph{Step~2}.
    In this step, we reduce the proof to Mori fibre spaces.
    Set $\alpha \coloneqq \coeff_{S''} B^+\in \mb{T}_n$, and set
    \[ C'' \coloneqq B^+ - \frac{\alpha}{2} S''.  \]
    Then the coefficients of $C''$ belong to the set of rational numbers $\mb{T}_{2n}$.  Moreover, 
    over the generic point of $T'$, we also have
    \begin{itemize}
        \item $(X'', C'')$ is lc,
        \item $K_{X''} + C''\sim_{\QQ} 0/Z'$, and
        \item $\frac{t}{2} S'' \le C''$.
    \end{itemize}
    As $(X'', B^+)$ is lc, there is no lc centre of $(X'', C'')$ contained in $S''$ by \eqref{S-whole-fibre-1}
    and \cite[Lemma~2.5]{Kol_singularities_of_MMP}.  
    By Lemma~\ref{exist-g-dlt-model}, let 
    \[ \pi\colon (X''', C''') \to (X'', C'') \] 
    be a $\QQ$-factorial dlt model of $(X'', C'')$.
    Then shrinking $Z'$ near the generic point of $T'$ if necessary, 
    we can assume that every stratum of $\rddown{C'''}$ is horizontal$/Z'$.
    Let $F$ be a general fibre of $X'''\to Z'$.  Since
    \[ K_F + C'''|_F \sim_{\QQ}0, \]
    and since the coefficients of horizontal$/Z'$ irreducible components of $C'''$ belong to $\mb{T}_n$, 
    there exists an $\epsilon >0$
    such that $(F, 0)$ is $\epsilon$-lc by Lemma~\ref{l-ep-to-0}.
    Thus, up to shrinking $Z'$ near the generic point of $T'$, we can assume that
    $(X''', 0)$ is $\epsilon$-lc over $Z'\setminus T'$.  
    
    As $X'''$ is of Fano type over $Z'$ (by Lemma~\ref{fano-type-at-dlt-model}), 
    $K_{X'''}$ is non-pseudo-effective$/Z'$.  By Lemma~\ref{fano-type-MMP}, we can
    run $\psi\colon X'''\dashrightarrow \wt{X}$ a $K_{X'''}$-MMP over $Z'$
    that ends with a Mori fibre space $\wt{f}\colon \wt{X} \to \wt{Z}$ over $Z'$;
    in particular, $-K_{\wt{X}}$ is ample$/\wt{Z}$.  
    \[\xymatrix{
     & & X'''\ar[d]^{\pi}\ar@{-->}[r]^-{\psi} & \wt{X}\ar[dd]^{\wt{f}} \\
    X\ar@{-->}[r]^-{\phi}\ar[d]_f & X'\ar@{-->}[r]^-{\varphi}\ar[d]_g & X''\ar[ld]^-h & \\
    Z & Z'\ar[l] & & \wt{Z}\ar[ll]
    }\]
    Let $S'''$ be the birational transform of $S''$ to $X'''$.  
    Since every exceptional divisor of $\pi$ is horizontal$/Z'$, we also have that
    (up to shrinking $Z'$ near the generic point of $T'$)
    \begin{equation}
        S''' = a\cdot (h\circ \pi)^* T' \label{S-whole-fibre-2}
    \end{equation}
    for the $a\in \QQ^{>0}$ as in \eqref{S-whole-fibre-1}.  
    Thus, $S'''$ is not contracted by $\psi$.  Set $\wt{S} \coloneqq \psi_* S'''$. 

    \medskip
    \emph{Step~3}.
    In this step, we show that the diagram obtained in Step~2 gives a tower of two Fano type fibrations, and then
    apply induction hypothesis to these Fano type fibrations.
    Let $\wt{T}$ be the image of $\wt{S}$ in $\wt{Z}$.  
    By \eqref{S-whole-fibre-2}, $\wt{T}$ is also of codimension one in $\wt{Z}$.
    Let $\wt{C}$ be the pushdown of $C'''$ to $\wt{X}$, then
    \[ K_{\wt{X}} + \wt{C} \sim_{\QQ} 0/Z'. \]
    Assume that $\dim (\wt{Z}/Z') \ge 1$.
    By Theorem~\ref{t-adj-fib-spaces}, we can write
    \[ K_{\wt{X}} + \wt{C} \sim_{\QQ} \wt{f}^* (K_{\wt{Z}} + C_{\wt{Z}} + \mb{N}_{\wt{Z}}), \]
    where $(\wt{Z}, C_{\wt{Z}} + \mb{N}) \to Z'$ is a Fano type generalised log Calabi-Yau fibration
    by Lemma~\ref{fano-type-on-base}.
    Moreover, by Lemma~\ref{l-adj-fib-eps-lc}, we also have that
    \[ \coeff_{\wt{T}} C_{\wt{Z}} \ge \frac{t}{2}. \]
    By Lemma~\ref{fano-type-upper}, 
    $(\wt{X}, \wt{C}) \to \wt{Z}$ is also a Fano type (generalised) log Calabi-Yau fibration
    with $\coeff_{\wt{S}} \wt{C} \ge \frac{t}{2}$.
    Thus, by induction hypothesis $\mc{V}_{\univ}(l, t/2)$ for $1\le l\le d-1$, 
    \[ \sdeg \big( (\wt{S})^{\nor}/\wt{T} \big)~~\mbox{and}~~\sdeg \big((\wt{T})^{\nor}/T'\big) \]
    are bounded from above depending only on $d, t$.  Hence by Lemma~\ref{stein-multiplicity},
    \[ \sdeg (S^{\nor}/T) = \sdeg \big( (\wt{S})^{\nor}/T' \big)
    \le \sdeg \big( (\wt{S})^{\nor}/\wt{T} \big)\cdot \sdeg \big( (\wt{T})^{\nor}/T' \big) \]
    is also bounded from above depending only on $d,t$.
    Therefore, we can assume that $\dim (\wt{Z}/Z') = 0$, 
    that is, the contraction $\wt{Z}\to Z'$ is birational.
    Shrinking $Z'$ near the generic point of $T'$ if necessary, we can assume further
    that $\wt{Z} \to Z'$ is an isomorphism of nonsingular surfaces.

    As $X'''$ is $\epsilon$-lc over $Z\setminus T$,
    by Lemma~\ref{CY-log-discre-no-change}, $\wt{X}$ is $\epsilon$-lc over the generic point of $Z$.
    By construction, $-K_{\wt{X}}$ is ample$/Z'$.  
    By Theorem~\ref{t-bnd-compl-usual-local}, there exists an $m\in \NN$ depending 
    only on $d,n$, whence only on $d,t$, such that there is a monotonic $m$-complement 
    $K_{\wt{X}} + \wt{C}^+$ of $K_{\wt{X}} + \wt{C}$ over the generic point of $T$.
    Shrinking $Z$ around the generic point of $T$, we can assume that
    \[ m(K_{\wt{X}} + \wt{C}^+)\sim 0/Z. \]
    Moreover, it is evident that 
    \[ \coeff_{\wt{S}} \wt{C}^+ \ge \coeff_{\wt{S}} \wt{C} \ge \frac{t}{2}. \]
    As we assume $\mcV_{\univ}(l, s)$ holds for each $1\le l\le d-1$ 
    and each $s\in (0,1]$, by Theorem~\ref{pf-dim-d-t}, $\mcH_{\univ}(d, t/2)$ holds.
    Then by Theorem~\ref{vertical-thm} and $\mcH_{\univ}(d,t/2)$,
    we can conclude that $\sdeg(S^{\nor}/T)$ is bounded from above depending only on $d,t$.    
\end{proof}


\begin{theorem}[{=Theorem~\ref{dim-d-t}}]\label{pf-main-d-t}
    Let $d\in \NN$ and let $t\in \RR^{>0}$.  
    Then $\mc{H}_{\univ}(d, t)$ holds.
\end{theorem}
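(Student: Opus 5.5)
The plan is to combine the two implications already established. Theorem~\ref{pf-vertical-case} proves $\mc{V}_{\univ}(\ell,s)$ unconditionally for every $\ell\in\NN$ and every $s\in(0,1]$. It does so by induction on $\ell$, with base case Theorem~\ref{vertical-case-fib-dim-one}; note that $\mc{H}_{\univ}(\ell,s)$ enters that induction only through Theorem~\ref{pf-dim-d-t} at lower relative dimensions. Theorem~\ref{pf-dim-d-t} shows that $\mc{H}_{\univ}(d,t)$ follows once $\mc{V}_{\univ}(\ell,s)$ is known for all $1\le \ell\le d-1$ and all $s\in(0,1]$.

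Concretely, fix $d$ and $t$. If $d=1$, we conclude directly by Lemma~\ref{base-case-r-dim=1}. If $d\ge 2$, we apply Theorem~\ref{pf-vertical-case} with $\ell=1,\dots,d-1$ to obtain the full hypothesis of Theorem~\ref{pf-dim-d-t}, and that theorem then yields $\mc{H}_{\univ}(d,t)$. If $t>1$, no component of a boundary of an lc pair has coefficient $\ge t$, so the statement is vacuous. Otherwise $t\in(0,1]$, and we may also decrease $t$ freely, since the bound $N_h(d,t)$ is monotone in $t$.

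The only point needing care is to confirm that there is no circularity. The proof of Theorem~\ref{pf-vertical-case} at relative dimension $d'$ invokes Theorem~\ref{pf-dim-d-t} only for $\mc{H}_{\univ}(d',t/2)$. That application in turn requires only $\mc{V}_{\univ}$ in relative dimensions $<d'$, which is supplied by the induction hypothesis. The interwoven induction is therefore well founded, and $\mc{V}_{\univ}(\ell,s)$ genuinely holds for all $\ell$, with no hidden dependence on $\mc{H}_{\univ}(d,t)$ for the present $d$.

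Finally, the constant $N_h(d,t)$ obtained this way depends only on $d$ and $t$, and not on $k$. This is because each cited result produces bounds independent of the ground field.
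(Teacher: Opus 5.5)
Your proposal is correct and is the same argument as the paper's, which proves the statement in one line by combining Theorem~\ref{pf-dim-d-t} with Theorem~\ref{pf-vertical-case}. Your non-circularity check is also right: the proof of Theorem~\ref{pf-vertical-case} in relative dimension $d'$ calls Theorem~\ref{pf-dim-d-t} only for $\mc{H}_{\univ}(d',t/2)$, and that requires $\mc{V}_{\univ}$ only in relative dimensions strictly below $d'$.
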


\begin{proof}
    This follows from Theorem~\ref{pf-dim-d-t} and Theorem~\ref{pf-vertical-case}.
\end{proof}


\subsection{Uniform bound of residue degree}

\begin{proof}[Proof of Theorem~\ref{arithmetic-thm}]
    By Lemma~\ref{mld-attainment}, there exists a prime divisor $S$ over $X$,
    whose centre is equal to $x$, such that
    \[ a(S, X, B) = \mld_x(X, B) \le t. \]
    By Lemma~\ref{exist-g-dlt-model}, there exists a birational contraction
    $\phi\colon Y\to X$ such that
    \[ K_Y + B_Y = \phi^*(K_X + B), \]
    where $(Y, B_Y)$ is a $\QQ$-factorial dlt pair, and $S$ is a prime divisor on $Y$.  Then
    \[ \coeff_S B_Y = 1- a(S, X, B)\ge 1-t > 0. \]
    By Theorem~\ref{dim-d-t}, $\sdeg (S^{\nor}/\spec k)$ is bounded from above 
    depending only on $d,t$.  It is evident that $S^{\nor}\to \spec k$ has a factorisation
    \[ S^{\nor}\to \spec k(x) \to \spec k. \]
    By Lemma~\ref{stein-multiplicity}, we get
    \[ \deg (k(x)/k) \le \sdeg (S^{\nor}/\spec k). \]
    Hence $\deg(k(x)/k)$ is bounded from above depending only on $d,t$.
\end{proof}


\medskip

\printbibliography[title={\bf References}]

@misc{OAI:Arithmetic-Stein-degree-bounds-for-log-Calabi-Yau-pairs-September-25-2026,
  author = {{OpenAI}},
  title = {{Arithmetic Stein-degree bounds for log Calabi--Yau pairs}},
  howpublished = {OpenAI Math Release preprint
                  \href{https://github.com/openai/math/blob/main/preprints/Arithmetic-Stein-degree-bounds-for-log-Calabi-Yau-pairs-September-25-2026/paper.pdf}{OAI:Arithmetic-Stein-degree-bounds-for-log-Calabi-Yau-pairs-September-25-2026}},
  year = {2026},
  shorthand = {OAI26}
}

@article {Salberger-r-pts,
    AUTHOR = {Salberger, Per},
     TITLE = {On the density of rational and integral points on algebraic
              varieties},
   JOURNAL = {J. Reine Angew. Math.},
  FJOURNAL = {Journal f\"ur die Reine und Angewandte Mathematik. [Crelle's
              Journal]},
    VOLUME = {606},
      YEAR = {2007},
     PAGES = {123--147},
      ISSN = {0075-4102,1435-5345},
   MRCLASS = {11G35 (11G50 14G05)},
  MRNUMBER = {2337644},
MRREVIEWER = {Ulrich\ Derenthal},
       DOI = {10.1515/CRELLE.2007.037},
       URL = {https://doi.org/10.1515/CRELLE.2007.037},
}

@article {CL13-new-outlook,
    AUTHOR = {Corti, Alessio and Lazi\'c, Vladimir},
     TITLE = {New outlook on the minimal model program, {II}},
   JOURNAL = {Math. Ann.},
  FJOURNAL = {Mathematische Annalen},
    VOLUME = {356},
      YEAR = {2013},
    NUMBER = {2},
     PAGES = {617--633},
      ISSN = {0025-5831,1432-1807},
   MRCLASS = {14E30},
  MRNUMBER = {3048609},
MRREVIEWER = {Thomas\ Eckl},
       DOI = {10.1007/s00208-012-0858-1},
       URL = {https://doi.org/10.1007/s00208-012-0858-1},
}

@article {birkar-geo-polar-variety,
    AUTHOR = {Birkar, Caucher},
     TITLE = {Geometry of polarised varieties},
   JOURNAL = {Publ. Math. Inst. Hautes \'Etudes Sci.},
  FJOURNAL = {Publications Math\'ematiques. Institut de Hautes \'Etudes
              Scientifiques},
    VOLUME = {137},
      YEAR = {2023},
     PAGES = {47--105},
      ISSN = {0073-8301,1618-1913},
   MRCLASS = {14E30 (14D20)},
  MRNUMBER = {4588595},
MRREVIEWER = {Haidong\ Liu},
       DOI = {10.1007/s10240-022-00136-w},
       URL = {https://doi.org/10.1007/s10240-022-00136-w},
}

@article {Hacon-McKernan-06-bnd,
    AUTHOR = {Hacon, Christopher D. and McKernan, James},
     TITLE = {Boundedness of pluricanonical maps of varieties of general
              type},
   JOURNAL = {Invent. Math.},
  FJOURNAL = {Inventiones Mathematicae},
    VOLUME = {166},
      YEAR = {2006},
    NUMBER = {1},
     PAGES = {1--25},
      ISSN = {0020-9910,1432-1297},
   MRCLASS = {14E05 (14J40)},
  MRNUMBER = {2242631},
MRREVIEWER = {Meng\ Chen},
       DOI = {10.1007/s00222-006-0504-1},
       URL = {https://doi.org/10.1007/s00222-006-0504-1},
}

@book {AM-comm-algebra,
    AUTHOR = {Atiyah, M. F. and Macdonald, I. G.},
     TITLE = {Introduction to commutative algebra},
 PUBLISHER = {Addison-Wesley Publishing Co., Reading, Mass.-London-Don
              Mills, Ont.},
      YEAR = {1969},
     PAGES = {ix+128},
   MRCLASS = {13.00},
  MRNUMBER = {242802},
MRREVIEWER = {Johnny\ A.\ Johnson},
}

@book {Cohen-Macaulay-rings-book,
    AUTHOR = {Bruns, Winfried and Herzog, J\"urgen},
     TITLE = {Cohen-{M}acaulay rings},
    SERIES = {Cambridge Studies in Advanced Mathematics},
    VOLUME = {39},
 PUBLISHER = {Cambridge University Press, Cambridge},
      YEAR = {1993},
     PAGES = {xii+403},
      ISBN = {0-521-41068-1},
   MRCLASS = {13H10 (13-02)},
  MRNUMBER = {1251956},
MRREVIEWER = {Matthew\ Miller},
}

@article {vol-non-closed-fields,
    AUTHOR = {Cutkosky, Steven Dale and N\'u\~nez, Roberto},
     TITLE = {Volumes of line bundles on schemes},
   JOURNAL = {Proc. Amer. Math. Soc.},
  FJOURNAL = {Proceedings of the American Mathematical Society},
    VOLUME = {149},
      YEAR = {2021},
    NUMBER = {10},
     PAGES = {4099--4108},
      ISSN = {0002-9939,1088-6826},
   MRCLASS = {14C40 (14C17)},
  MRNUMBER = {4305966},
MRREVIEWER = {Xiping\ Zhang},
       DOI = {10.1090/proc/15526},
       URL = {https://doi.org/10.1090/proc/15526},
}

@book {Katz-Mazur-1985-elliptic,
    AUTHOR = {Katz, Nicholas M. and Mazur, Barry},
     TITLE = {Arithmetic moduli of elliptic curves},
    SERIES = {Annals of Mathematics Studies},
    VOLUME = {108},
 PUBLISHER = {Princeton University Press, Princeton, NJ},
      YEAR = {1985},
     PAGES = {xiv+514},
   MRCLASS = {11G05 (11F11 14G25 14K15)},
  MRNUMBER = {772569},
MRREVIEWER = {Kenneth\ A.\ Ribet},
       DOI = {10.1515/9781400881710},
       URL = {https://doi.org/10.1515/9781400881710},
}

@article {algebraic-complex-normalisation,
    AUTHOR = {Kuhlmann, Norbert},
     TITLE = {Die {N}ormalisierung komplexer {R}\"aume},
   JOURNAL = {Math. Ann.},
  FJOURNAL = {Mathematische Annalen},
    VOLUME = {144},
      YEAR = {1961},
     PAGES = {110--125},
      ISSN = {0025-5831,1432-1807},
   MRCLASS = {32.47 (32.50)},
  MRNUMBER = {137846},
MRREVIEWER = {H.\ R\"ohrl},
       DOI = {10.1007/BF01451331},
       URL = {https://doi.org/10.1007/BF01451331},
}

@article {normalisation-complex-variety,
    AUTHOR = {Acquistapace, Francesca and Broglia, Fabrizio and Fernando,
              Jos\'e{} F.},
     TITLE = {Normalization of complex analytic spaces from a global
              viewpoint},
   JOURNAL = {J. Geom. Anal.},
  FJOURNAL = {Journal of Geometric Analysis},
    VOLUME = {29},
      YEAR = {2019},
    NUMBER = {3},
     PAGES = {2888--2930},
      ISSN = {1050-6926,1559-002X},
   MRCLASS = {32C20 (14P15 32C05 32Q35 58A07)},
  MRNUMBER = {3969448},
MRREVIEWER = {Truong\ Le\ Hoang},
       DOI = {10.1007/s12220-018-00098-8},
       URL = {https://doi.org/10.1007/s12220-018-00098-8},
}

@misc{birkar2025steindegreelogcalabiyau,
      title={Stein degree on log Calabi-Yau fibrations}, 
      author={Caucher Birkar and Santai Qu},
      year={2025},
      eprint={2509.20948},
      archivePrefix={arXiv},
      primaryClass={math.AG},
      url={https://arxiv.org/abs/2509.20948}, 
}

@article {Sh-3-fold-log-flips,
    AUTHOR = {Shokurov, V. V.},
     TITLE = {Three-dimensional log perestroikas},
      NOTE = {With an appendix in English by Yujiro Kawamata},
   JOURNAL = {Izv. Ross. Akad. Nauk Ser. Mat.},
  FJOURNAL = {Izvestiya Rossiiskoi Akademii Nauk. Seriya Matematicheskaya},
    VOLUME = {56},
      YEAR = {1992},
    NUMBER = {1},
     PAGES = {105--203},
      ISSN = {1607-0046,2587-5906},
   MRCLASS = {14E05 (14E35)},
  MRNUMBER = {1162635},
MRREVIEWER = {A.\ S.\ Tikhomirov},
       DOI = {10.1070/IM1993v040n01ABEH001862},
       URL = {https://doi.org/10.1070/IM1993v040n01ABEH001862},
}

@article {nef-not-open,
    AUTHOR = {Lesieutre, John},
     TITLE = {The diminished base locus is not always closed},
   JOURNAL = {Compos. Math.},
  FJOURNAL = {Compositio Mathematica},
    VOLUME = {150},
      YEAR = {2014},
    NUMBER = {10},
     PAGES = {1729--1741},
      ISSN = {0010-437X,1570-5846},
   MRCLASS = {14E30 (14E07)},
  MRNUMBER = {3269465},
MRREVIEWER = {Anne-Sophie\ Kaloghiros},
       DOI = {10.1112/S0010437X14007544},
       URL = {https://doi.org/10.1112/S0010437X14007544},
}

@book {Serre-local-algebra,
    AUTHOR = {Serre, Jean-Pierre},
     TITLE = {Local algebra},
    SERIES = {Springer Monographs in Mathematics},
      NOTE = {Translated from the French by CheeWhye Chin and revised by the
              author},
 PUBLISHER = {Springer-Verlag, Berlin},
      YEAR = {2000},
     PAGES = {xiv+128},
      ISBN = {3-540-66641-9},
   MRCLASS = {13-01 (13D05 13H05 13H15)},
  MRNUMBER = {1771925},
       DOI = {10.1007/978-3-662-04203-8},
       URL = {https://doi.org/10.1007/978-3-662-04203-8},
}

@book{BLR,
 author = {Bosch, Siegfried and L{\"u}tkebohmert, Werner and Raynaud, Michel},
 title = {N{\'e}ron models},
 fseries = {Ergebnisse der Mathematik und ihrer Grenzgebiete. 3. Folge},
 series = {Ergeb. Math. Grenzgeb., 3. Folge},
 issn = {0071-1136},
 volume = {21},
 isbn = {3-540-50587-3},
 year = {1990},
 publisher = {Berlin etc.: Springer-Verlag},
 doi = {10.1007/978-3-642-51438-8},
 zbMATH = {46134},
 Zbl = {0705.14001}
}

@article{Raynaud-Gruson-plat-1971,
 author = {Raynaud, Michel and Gruson, Laurent},
 title = {Crit{\`e}res de platitude et de projectivit{\'e}. {Techniques} de ''platification'' d'un module. ({Criterial} of flatness and projectivity. {Technics} of ''flatification of a module.)},
 fjournal = {Inventiones Mathematicae},
 journal = {Invent. Math.},
 issn = {0020-9910},
 volume = {13},
 pages = {1--89},
 year = {1971},
 doi = {10.1007/BF01390094},
 url = {https://eudml.org/doc/142084},
 zbMATH = {3360302},
 Zbl = {0227.14010}
}

@article {vol-hilb-R-divisors,
    AUTHOR = {Fulger, Mihai and Koll\'ar, J\'anos and Lehmann, Brian},
     TITLE = {Volume and {H}ilbert function of {$\mathbb R$}-divisors},
   JOURNAL = {Michigan Math. J.},
  FJOURNAL = {Michigan Mathematical Journal},
    VOLUME = {65},
      YEAR = {2016},
    NUMBER = {2},
     PAGES = {371--387},
   MRCLASS = {14C20},
  MRNUMBER = {3510912},
MRREVIEWER = {Montserrat\ Teixidor i Bigas},
       DOI = {10.1307/mmj/1465329018},
       URL = {https://doi.org/10.1307/mmj/1465329018},
}

@book {Rob-II,
    AUTHOR = {Lazarsfeld, Robert},
     TITLE = {Positivity in algebraic geometry. {II}},
    SERIES = {Ergebnisse der Mathematik und ihrer Grenzgebiete. 3. Folge. A
              Series of Modern Surveys in Mathematics [Results in
              Mathematics and Related Areas. 3rd Series. A Series of Modern
              Surveys in Mathematics]},
    VOLUME = {49},
      NOTE = {Positivity for vector bundles, and multiplier ideals},
 PUBLISHER = {Springer-Verlag, Berlin},
      YEAR = {2004},
     PAGES = {xviii+385},
      ISBN = {3-540-22534-X},
   MRCLASS = {14-02 (14C20 14F05 14F17)},
  MRNUMBER = {2095472},
MRREVIEWER = {Mihnea\ Popa},
       DOI = {10.1007/978-3-642-18808-4},
       URL = {https://doi.org/10.1007/978-3-642-18808-4},
}

@article {Hart-stable-refl,
    AUTHOR = {Hartshorne, Robin},
     TITLE = {Stable reflexive sheaves},
   JOURNAL = {Math. Ann.},
  FJOURNAL = {Mathematische Annalen},
    VOLUME = {254},
      YEAR = {1980},
    NUMBER = {2},
     PAGES = {121--176},
   MRCLASS = {14F05 (14D22)},
  MRNUMBER = {597077},
MRREVIEWER = {Klaus\ Hulek},
       DOI = {10.1007/BF01467074},
       URL = {https://doi.org/10.1007/BF01467074},
}

@article {birkar12-special-LMMP,
    AUTHOR = {Birkar, Caucher},
     TITLE = {Existence of log canonical flips and a special {LMMP}},
   JOURNAL = {Publ. Math. Inst. Hautes \'Etudes Sci.},
  FJOURNAL = {Publications Math\'ematiques. Institut de Hautes \'Etudes
              Scientifiques},
    VOLUME = {115},
      YEAR = {2012},
     PAGES = {325--368},
   MRCLASS = {14E30},
  MRNUMBER = {2929730},
MRREVIEWER = {Hsian-Hua\ Tseng},
       DOI = {10.1007/s10240-012-0039-5},
       URL = {https://doi.org/10.1007/s10240-012-0039-5},
}

@article {Keeler2003,
    AUTHOR = {Keeler, Dennis S.},
     TITLE = {Ample filters of invertible sheaves},
   JOURNAL = {J. Algebra},
  FJOURNAL = {Journal of Algebra},
    VOLUME = {259},
      YEAR = {2003},
    NUMBER = {1},
     PAGES = {243--283},
   MRCLASS = {14F05 (16S38)},
  MRNUMBER = {1953719},
MRREVIEWER = {Michel\ Van den Bergh},
       DOI = {10.1016/S0021-8693(02)00557-4},
       URL = {https://doi.org/10.1016/S0021-8693(02)00557-4},
}

@misc{chen-han-liu-xie-2023,
      title={Minimal model program for algebraically integrable foliations and generalized pairs}, 
      author={Guodu Chen and Jingjun Han and Jihao Liu and Lingyao Xie},
      year={2023},
      eprint={2309.15823},
      archivePrefix={arXiv},
      primaryClass={math.AG},
      url={https://arxiv.org/abs/2309.15823}, 
}

@book {Fulton,
    AUTHOR = {Fulton, William},
     TITLE = {Intersection theory},
    SERIES = {Ergebnisse der Mathematik und ihrer Grenzgebiete. 3. Folge. A
              Series of Modern Surveys in Mathematics [Results in
              Mathematics and Related Areas. 3rd Series. A Series of Modern
              Surveys in Mathematics]},
    VOLUME = {2},
   EDITION = {Second},
 PUBLISHER = {Springer-Verlag, Berlin},
      YEAR = {1998},
     PAGES = {xiv+470},
   MRCLASS = {14C17 (14-02)},
  MRNUMBER = {1644323},
       DOI = {10.1007/978-1-4612-1700-8},
       URL = {https://doi.org/10.1007/978-1-4612-1700-8},
}

@book {Kol-rc,
    AUTHOR = {Koll\'ar, J\'anos},
     TITLE = {Rational curves on algebraic varieties},
    SERIES = {Ergebnisse der Mathematik und ihrer Grenzgebiete. 3. Folge. A
              Series of Modern Surveys in Mathematics [Results in
              Mathematics and Related Areas. 3rd Series. A Series of Modern
              Surveys in Mathematics]},
    VOLUME = {32},
 PUBLISHER = {Springer-Verlag, Berlin},
      YEAR = {1996},
     PAGES = {viii+320},
      ISBN = {3-540-60168-6},
   MRCLASS = {14-02 (14C05 14E05 14F17 14J45)},
  MRNUMBER = {1440180},
MRREVIEWER = {Yuri\ G.\ Prokhorov},
       DOI = {10.1007/978-3-662-03276-3},
       URL = {https://doi.org/10.1007/978-3-662-03276-3},
}

@incollection {Corti-3-folds,
    AUTHOR = {Corti, Alessio},
     TITLE = {3-fold flips after {S}hokurov},
 BOOKTITLE = {Flips for 3-folds and 4-folds},
    SERIES = {Oxford Lecture Ser. Math. Appl.},
    VOLUME = {35},
     PAGES = {18--48},
 PUBLISHER = {Oxford Univ. Press, Oxford},
      YEAR = {2007},
      ISBN = {978-0-19-857061-5},
   MRCLASS = {14E30},
  MRNUMBER = {2359340},
       DOI = {10.1093/acprof:oso/9780198570615.003.0002},
       URL = {https://doi.org/10.1093/acprof:oso/9780198570615.003.0002},
}

@book {GW-agbook,
    AUTHOR = {G\"ortz, Ulrich and Wedhorn, Torsten},
     TITLE = {Algebraic geometry {I}. {S}chemes---with examples and
              exercises},
    SERIES = {Springer Studium Mathematik---Master},
   EDITION = {Second},
 PUBLISHER = {Springer Spektrum, Wiesbaden},
      YEAR = {2020},
     PAGES = {vii+625},
   MRCLASS = {14-01},
  MRNUMBER = {4225278},
       DOI = {10.1007/978-3-658-30733-2},
       URL = {https://doi.org/10.1007/978-3-658-30733-2},
}

@book{FGA-explained,
 author = {Fantechi, Barbara and G{\"o}ttsche, Lothar and Illusie, Luc and Kleiman, Steven L. and Nitsure, Nitin and Vistoli, Angelo},
 title = {Fundamental algebraic geometry: {Grothendieck}'s {FGA} explained},
 fseries = {Mathematical Surveys and Monographs},
 series = {Math. Surv. Monogr.},
 issn = {0076-5376},
 volume = {123},
 isbn = {0-8218-3541-6},
 year = {2005},
 publisher = {American Mathematical Society (AMS), Providence, RI},
 zbMATH = {2229020},
 Zbl = {1085.14001},
 shorthand = {FGA\textsubscript{exp}}
}

@article {EGA-III-1,
    AUTHOR = {Grothendieck, A.},
     TITLE = {\'El\'ements de g\'eom\'etrie alg\'ebrique. {III}. \'Etude
              cohomologique des faisceaux coh\'erents. {I}.},
   JOURNAL = {Inst. Hautes \'Etudes Sci. Publ. Math.},
  FJOURNAL = {Institut des Hautes \'Etudes Scientifiques. Publications
              Math\'ematiques},
    NUMBER = {11},
      YEAR = {1961},
     PAGES = {167},
      ISSN = {0073-8301,1618-1913},
   MRCLASS = {14.55},
  MRNUMBER = {217085},
       URL = {http://www.numdam.org/item?id=PMIHES_1961__11__167_0},
 shorthand = {EGAIII\textsubscript{1}},
}

@article {EGA-III-2,
    AUTHOR = {Grothendieck, A.},
     TITLE = {\'El\'ements de g\'eom\'etrie alg\'ebrique. {III}. \'Etude
              cohomologique des faisceaux coh\'erents. {II}},
   JOURNAL = {Inst. Hautes \'Etudes Sci. Publ. Math.},
  FJOURNAL = {Institut des Hautes \'Etudes Scientifiques. Publications
              Math\'ematiques},
    NUMBER = {17},
      YEAR = {1963},
     PAGES = {91},
      ISSN = {0073-8301,1618-1913},
   MRCLASS = {14.05},
  MRNUMBER = {163911},
MRREVIEWER = {H.\ Hironaka},
       URL = {http://www.numdam.org/item?id=PMIHES_1963__17__91_0},
 shorthand = {EGAIII\textsubscript{2}},
}

@article {EGA-IV-1,
    AUTHOR = {Grothendieck, A.},
     TITLE = {\'El\'ements de g\'eom\'etrie alg\'ebrique. {IV}. \'Etude
              locale des sch\'emas et des morphismes de sch\'emas. {I}},
   JOURNAL = {Inst. Hautes \'Etudes Sci. Publ. Math.},
  FJOURNAL = {Institut des Hautes \'Etudes Scientifiques. Publications
              Math\'ematiques},
    NUMBER = {20},
      YEAR = {1964},
     PAGES = {259},
      ISSN = {0073-8301,1618-1913},
   MRCLASS = {14.05 (13.10)},
  MRNUMBER = {173675},
MRREVIEWER = {H.\ Hironaka},
       URL = {http://www.numdam.org/item/PMIHES_1964__20__5_0},
 shorthand = {EGAIV\textsubscript{1}},
}

@book {SGA1,
     TITLE = {Rev\^etements \'etales et groupe fondamental},
    SERIES = {Lecture Notes in Mathematics},
    VOLUME = {Vol. 224},
      NOTE = {S\'eminaire de G\'eom\'etrie Alg\'ebrique du Bois Marie
              1960--1961 (SGA 1),
              Dirig\'e{} par Alexandre Grothendieck. Augment\'e{} de deux
              expos\'es de M. Raynaud},
 PUBLISHER = {Springer-Verlag, Berlin-New York},
      YEAR = {1971},
     PAGES = {xxii+447},
   MRCLASS = {14-06 (14E20)},
  MRNUMBER = {354651},
  shorthand = {SGA1},
}

@misc{lyu-takumi-2026relativeminimalmodelprogram,
      title={The relative minimal model program for excellent algebraic spaces and analytic spaces in equal characteristic zero}, 
      author={Shiji Lyu and Takumi Murayama},
      year={2026},
      eprint={2209.08732},
      archivePrefix={arXiv},
      primaryClass={math.AG},
      url={https://arxiv.org/abs/2209.08732}, 
}

@book{Liu06,
 author = {Liu, Qing},
 title = {Algebraic geometry and arithmetic curves. {Transl}. by {Reinie} {Ern{\'e}}},
 fseries = {Oxford Graduate Texts in Mathematics},
 series = {Oxf. Grad. Texts Math.},
 volume = {6},
 isbn = {0-19-920249-4},
 year = {2006},
 publisher = {Oxford: Oxford University Press},
 zbMATH = {5048200},
 Zbl = {1103.14001}
}

@article {temkin-desing-chara-0,
    AUTHOR = {Temkin, Michael},
     TITLE = {Desingularization of quasi-excellent schemes in characteristic
              zero},
   JOURNAL = {Adv. Math.},
  FJOURNAL = {Advances in Mathematics},
    VOLUME = {219},
      YEAR = {2008},
    NUMBER = {2},
     PAGES = {488--522},
      ISSN = {0001-8708,1090-2082},
   MRCLASS = {14E15},
  MRNUMBER = {2435647},
MRREVIEWER = {Ana\ Bravo},
       DOI = {10.1016/j.aim.2008.05.006},
       URL = {https://doi.org/10.1016/j.aim.2008.05.006},
}

@book {km98,
    AUTHOR = {Koll\'ar, J\'anos and Mori, Shigefumi},
     TITLE = {Birational geometry of algebraic varieties},
    SERIES = {Cambridge Tracts in Mathematics},
    VOLUME = {134},
      NOTE = {With the collaboration of C. H. Clemens and A. Corti,
              Translated from the 1998 Japanese original},
 PUBLISHER = {Cambridge University Press, Cambridge},
      YEAR = {1998},
     PAGES = {viii+254},
      ISBN = {0-521-63277-3},
   MRCLASS = {14E30},
  MRNUMBER = {1658959},
MRREVIEWER = {Mark\ Gross},
       DOI = {10.1017/CBO9780511662560},
       URL = {https://doi.org/10.1017/CBO9780511662560},
}

@article {BCHM,
    AUTHOR = {Birkar, Caucher and Cascini, Paolo and Hacon, Christopher D.
              and McKernan, James},
     TITLE = {Existence of minimal models for varieties of log general type},
   JOURNAL = {J. Amer. Math. Soc.},
  FJOURNAL = {Journal of the American Mathematical Society},
    VOLUME = {23},
      YEAR = {2010},
    NUMBER = {2},
     PAGES = {405--468},
      ISSN = {0894-0347,1088-6834},
   MRCLASS = {14E30 (14E05)},
  MRNUMBER = {2601039},
MRREVIEWER = {Mark\ Gross},
       DOI = {10.1090/S0894-0347-09-00649-3},
       URL = {https://doi.org/10.1090/S0894-0347-09-00649-3},
}

@book {Kol_singularities_of_MMP,
    AUTHOR = {Koll\'ar, J\'anos},
     TITLE = {Singularities of the minimal model program},
    SERIES = {Cambridge Tracts in Mathematics},
    VOLUME = {200},
      NOTE = {With a collaboration of S\'andor Kov\'acs},
 PUBLISHER = {Cambridge University Press, Cambridge},
      YEAR = {2013},
     PAGES = {x+370},
      ISBN = {978-1-107-03534-8},
   MRCLASS = {14E30 (14B05)},
  MRNUMBER = {3057950},
MRREVIEWER = {Tommaso\ De Fernex},
       DOI = {10.1017/CBO9781139547895},
       URL = {https://doi.org/10.1017/CBO9781139547895},
}

@Article{Ambro04-Sh-boundary,
 Author = {Ambro, Florin},
 Title = {Shokurov's boundary property},
 FJournal = {Journal of Differential Geometry},
 Journal = {J. Differ. Geom.},
 ISSN = {0022-040X},
 Volume = {67},
 Number = {2},
 Pages = {229--255},
 Year = {2004},
 DOI = {10.4310/jdg/1102536201},
 zbMATH = {5033770},
 Zbl = {1097.14029}
}

@article{B-Fano,
 author = {Birkar, Caucher},
 title = {Anti-pluricanonical systems on {Fano} varieties},
 fjournal = {Annals of Mathematics. Second Series},
 journal = {Ann. Math. (2)},
 issn = {0003-486X},
 volume = {190},
 number = {2},
 pages = {345--463},
 year = {2019},
 doi = {10.4007/annals.2019.190.2.1},
 zbMATH = {7107180},
 Zbl = {1470.14078}
}

@article{B-BAB,
 author = {Birkar, Caucher},
 title = {Singularities of linear systems and boundedness of {Fano} varieties},
 fjournal = {Annals of Mathematics. Second Series},
 journal = {Ann. Math. (2)},
 issn = {0003-486X},
 volume = {193},
 number = {2},
 pages = {347--405},
 year = {2021},
 doi = {10.4007/annals.2021.193.2.1},
 url = {www.repository.cam.ac.uk/handle/1810/314696},
 zbMATH = {7331713},
 Zbl = {1469.14085}
}

@misc{B-moduli,
      title={Moduli of algebraic varieties}, 
      author={Caucher Birkar},
      year={2022},
      eprint={2211.11237},
      archivePrefix={arXiv},
      primaryClass={math.AG},
      url={https://arxiv.org/abs/2211.11237}, 
}

@article {B-Zhang,
    AUTHOR = {Birkar, Caucher and Zhang, De-Qi},
     TITLE = {Effectivity of {I}itaka fibrations and pluricanonical systems
              of polarized pairs},
   JOURNAL = {Publ. Math. Inst. Hautes \'Etudes Sci.},
  FJOURNAL = {Publications Math\'ematiques. Institut de Hautes \'Etudes
              Scientifiques},
    VOLUME = {123},
      YEAR = {2016},
     PAGES = {283--331},
   MRCLASS = {14E05 (14E30)},
  MRNUMBER = {3502099},
MRREVIEWER = {Alex\ Massarenti},
       DOI = {10.1007/s10240-016-0080-x},
       URL = {https://doi.org/10.1007/s10240-016-0080-x},
}

@Article{Filipazzi-bnd-g-pair-surface,
 Author = {Filipazzi, Stefano},
 Title = {Boundedness of log canonical surface generalized polarized pairs},
 FJournal = {Taiwanese Journal of Mathematics},
 Journal = {Taiwanese J. Math.},
 ISSN = {1027-5487},
 Volume = {22},
 Number = {4},
 Pages = {813--850},
 Year = {2018},
 DOI = {10.11650/tjm/171204},
 zbMATH = {6965398},
 Zbl = {1422.14025}
}

@book{joins-intersections,
 author = {Flenner, Hubert and O'Carroll, Liam and Vogel, Wolfgang},
 title = {Joins and intersections},
 fseries = {Springer Monographs in Mathematics},
 series = {Springer Monogr. Math.},
 issn = {1439-7382},
 isbn = {3-540-66319-3},
 year = {1999},
 publisher = {Berlin: Springer},
 doi = {10.1007/978-3-662-03817-8},
 zbMATH = {1357758},
 Zbl = {0939.14003}
}

@article {EGA-I,
    AUTHOR = {Grothendieck, A.},
     TITLE = {\'El\'ements de g\'eom\'etrie alg\'ebrique. {I}. {L}e langage
              des sch\'emas.},
   JOURNAL = {Inst. Hautes \'Etudes Sci. Publ. Math.},
  FJOURNAL = {Institut des Hautes \'Etudes Scientifiques. Publications
              Math\'ematiques},
    NUMBER = {4},
      YEAR = {1960},
     PAGES = {228},
      ISSN = {0073-8301,1618-1913},
   MRCLASS = {14.55},
  MRNUMBER = {217083},
       URL = {http://www.numdam.org/item?id=PMIHES_1960__4__228_0},
  shorthand = {EGAI},
}

@article {EGA-IV-2,
    AUTHOR = {Grothendieck, A.},
     TITLE = {\'{E}l\'{e}ments de g\'{e}om\'{e}trie alg\'{e}brique. {IV}. \'{E}tude locale des
              sch\'{e}mas et des morphismes de sch\'{e}mas. {II}},
   JOURNAL = {Inst. Hautes \'{E}tudes Sci. Publ. Math.},
  FJOURNAL = {Institut des Hautes \'{E}tudes Scientifiques. Publications
              Math\'{e}matiques},
    NUMBER = {24},
      YEAR = {1965},
     PAGES = {231},
      ISSN = {0073-8301},
   MRCLASS = {14.00},
  MRNUMBER = {199181},
MRREVIEWER = {H. Hironaka},
       URL = {http://www.numdam.org/item?id=PMIHES_1965__24__231_0},
shorthand = {EGAIV\textsubscript{2}},
}

@article {EGA-IV-3,
    AUTHOR = {Grothendieck, A.},
     TITLE = {\'El\'ements de g\'eom\'etrie alg\'ebrique. {IV}. \'Etude
              locale des sch\'emas et des morphismes de sch\'emas. {III}},
   JOURNAL = {Inst. Hautes \'Etudes Sci. Publ. Math.},
  FJOURNAL = {Institut des Hautes \'Etudes Scientifiques. Publications
              Math\'ematiques},
    NUMBER = {28},
      YEAR = {1966},
     PAGES = {255},
   MRCLASS = {14.55},
  MRNUMBER = {217086},
MRREVIEWER = {J.\ P.\ Murre},
       URL = {http://www.numdam.org/item?id=PMIHES_1966__28__255_0},
shorthand = {EGAIV\textsubscript{3}},
}

@article {EGA-IV-4,
    AUTHOR = {Grothendieck, A.},
     TITLE = {\'El\'ements de g\'eom\'etrie alg\'ebrique. {IV}. \'Etude
              locale des sch\'emas et des morphismes de sch\'emas {IV}},
   JOURNAL = {Inst. Hautes \'Etudes Sci. Publ. Math.},
  FJOURNAL = {Institut des Hautes \'Etudes Scientifiques. Publications
              Math\'ematiques},
    NUMBER = {32},
      YEAR = {1967},
     PAGES = {361},
      ISSN = {0073-8301,1618-1913},
   MRCLASS = {14.55},
  MRNUMBER = {238860},
MRREVIEWER = {J.\ P.\ Murre},
       URL = {http://www.numdam.org/item?id=PMIHES_1967__32__361_0},
       shorthand = {EGAIV\textsubscript{4}},
}

@book {Hart,
    AUTHOR = {Hartshorne, Robin},
     TITLE = {Algebraic geometry},
    SERIES = {Graduate Texts in Mathematics},
    VOLUME = {No. 52},
 PUBLISHER = {Springer-Verlag, New York-Heidelberg},
      YEAR = {1977},
     PAGES = {xvi+496},
      ISBN = {0-387-90244-9},
   MRCLASS = {14-01},
  MRNUMBER = {463157},
MRREVIEWER = {Robert\ Speiser},
}

@book{jou-bertini,
 author = {Jouanolou, Jean-Pierre},
 title = {Th{\'e}oremes de {Bertini} et applications},
 fseries = {Progress in Mathematics},
 series = {Prog. Math.},
 issn = {0743-1643},
 volume = {42},
 year = {1983},
 publisher = {Birkh{\"a}user, Cham},
 zbMATH = {3821916},
 Zbl = {0519.14002}
}

@article{KK10,
 author = {Koll{\'a}r, J{\'a}nos and Kov{\'a}cs, S{\'a}ndor J.},
 title = {Log canonical singularities are {Du} {Bois}},
 fjournal = {Journal of the American Mathematical Society},
 journal = {J. Am. Math. Soc.},
 issn = {0894-0347},
 volume = {23},
 number = {3},
 pages = {791--813},
 year = {2010},
 doi = {10.1090/S0894-0347-10-00663-6},
 zbMATH = {5775685},
 Zbl = {1202.14003}
}

@book{CA-Mat,
 author = {Matsumura, Hideyuki},
 title = {Commutative ring theory. {Transl}. from the {Japanese} by {M}. {Reid}.},
 edition = {Paperback ed.},
 fseries = {Cambridge Studies in Advanced Mathematics},
 series = {Camb. Stud. Adv. Math.},
 volume = {8},
 isbn = {0-521-36764-6},
 year = {1989},
 publisher = {Cambridge etc.: Cambridge University Press},
 zbMATH = {43569},
 Zbl = {0666.13002}
}

@book{nakayama_Zariski_decomposition,
 author = {Nakayama, Noboru},
 title = {Zariski-decomposition and abundance},
 fseries = {MSJ Memoirs},
 series = {MSJ Mem.},
 issn = {2189-1494},
 volume = {14},
 isbn = {4-931469-31-0},
 year = {2004},
 publisher = {Tokyo: Mathematical Society of Japan},
 doi = {10.2969/msjmemoirs/014010000},
 url = {projecteuclid.org/euclid.msjm/1389986103},
 zbMATH = {2121550},
 Zbl = {1061.14018}
}

@Article{Sh-3-fold-log-models,
 Author = {Shokurov, V. V.},
 Title = {3-fold log models},
 FJournal = {Journal of Mathematical Sciences (New York)},
 Journal = {J. Math. Sci., New York},
 ISSN = {1072-3374},
 Volume = {81},
 Number = {3},
 Pages = {2667--2699},
 Year = {1996},
 DOI = {10.1007/BF02362335},
 zbMATH = {953756},
 Zbl = {0873.14014}
}
 
\vspace{1em}
 
\noindent\small{Caucher Birkar} 

\noindent\small{\textsc{Yau Mathematical Sciences Center, Tsinghua University, Beijing, China} }

\noindent\small{Email: \texttt{birkar@mail.tsinghua.edu.cn}}

\vspace{1em}
 
\noindent\small{Santai Qu} 

\noindent\small{\textsc{Beijing Institute of Mathematical Sciences and Applications, Beijing, China} }

\noindent\small{Email: \texttt{qusantai@bimsa.cn}}

\end{document}